\newtheorem{theorem}{Theorem}[section]
\newtheorem{lemma}[theorem]{Lemma}
\newtheorem{proposition}[theorem]{Proposition}
\theoremstyle{definition}
\newtheorem{definition}[theorem]{Definition}
\newtheorem{remark}[theorem]{Remark}
\newcounter{theoremintro}
\newtheorem{theoremi}[theoremintro]{Theorem}
\newcommand{\id}{{\rm id}}
\newcommand{\cD}{{\mathcal D}}
\newcommand{\cE}{{\mathcal E}}
\newcommand{\cF}{{\mathcal F}}
\newcommand{\cV}{{\mathcal V}}
\newcommand{\fC}{{\mathfrak Y}}
\newcommand{\sA}{{\mathscr A}}
\newcommand{\sC}{{\mathscr C}}
\newcommand{\sD}{{\mathscr D}}
\newcommand{\sP}{{\mathscr P}}
\newcommand{\sQ}{{\mathscr Q}}
\newcommand{\sR}{{\mathscr R}}
\newcommand{\sT}{{\mathscr T}}
\newcommand{\sU}{{\mathscr U}}
\newcommand{\sW}{{\mathscr W}}
\newcommand{\Fb}{{\mathbb F}}
\newcommand{\Zb}{{\mathbb Z}}
\newcommand{\Pb}{{\mathbb P}}
\newcommand{\Nb}{{\mathbb N}}
\newcommand{\alg}{{\rm alg}}
\newcommand{\m}{{\rm m}}
\newcommand{\eps}{\varepsilon}
\newcommand{\Hamm}{{\rm Hamm}}
\newcommand{\prdct}{{\text{prod}}}
\newcommand{\primespace}{\hspace*{0.6mm}}
\DeclareMathOperator{\Hom}{Hom}
\DeclareMathOperator{\Sym}{Sym}
\begin{document}

\title{Entropy, Shannon orbit equivalence, and sparse connectivity}
\author{David Kerr}
\address{David Kerr,
Department of Mathematics,
Texas A\&M University,
College Station, TX 77843-3368, USA}
\email{kerr@math.tamu.edu}

\author{Hanfeng Li}
\address{Hanfeng Li,
Center of Mathematics, Chongqing University, Chongqing 40133, China
and
Department of Mathematics, SUNY at Buffalo, Buffalo, NY 14260-2900, USA
}
\email{hfli@math.buffalo.edu}

\date{December 5, 2019}

\begin{abstract}
We say that two free p.m.p.\ actions of countable groups are Shannon orbit equivalent
if there is an orbit equivalence between them whose associated cocycle partitions have finite Shannon entropy.
We show that if the acting groups are sofic and each has a w-normal amenable subgroup which is neither locally
finite nor virtually cyclic then Shannon orbit equivalence implies that the actions have the same
maximum sofic entropy. This extends a result of Austin beyond the finitely generated amenable setting
and has the consequence that two Bernoulli actions of a group with the properties in question
are Shannon orbit equivalent if and only if they are measure conjugate. 
Our arguments apply more generally to actions satisfying a sparse connectivity condition which we call property SC,
and yield an entropy inequality under the assumption that one of the actions has this property.
\end{abstract}

\maketitle

\tableofcontents

\section{Introduction}

One of the remarkable features of the notion of amenability for groups is that its
fundamental characterizations in terms of nonparadoxicality on the one hand and approximate invariance on the other
lead to two very different and in many ways incompatible conceptions of what its opposite should be,
namely freeness and property (T) (i.e., universal spectral gap).\footnote{This bifurcation is also reflected in the
two logically independent ways in which
the ideas of amenability and nonamenability have been adapted to operator algebras,
on the one hand through the notions of finiteness and pure infiniteness and on the other
through injectivity and finite-dimensional approximation. All of this traces back
to the basic Dedekindian alternative for defining what it means for a set to be finite,
either as the property that every injection from the set to itself is surjective,
or by the existence of a bijection between the set and $\{ 1,\dots , n\}$ for some positive integer $n$.
}
While noncyclic free groups
distill the idea of paradoxicality to its starkest form and represent the simplest and most venerable
obstruction to amenability, the fact that approximate invariance affords so much technical leverage
has meant that amenability is frequently contrasted instead with
property (T),
even sometimes in the form of a direct counterpositioning that exploits the tension between them,
as in Margulis's proof of his celebrated normal subgroup theorem.\footnote{Again this is consistent with operator algebra theory, where amenability has become synonymous
with certain kinds of finite-dimensional approximation.}

In the theory of orbit equivalence,
the dichotomy between approximate invariance and spectral gap plays out with particularly dramatic consequences.
Here a sharp wedge is already driven between amenability and nonamenability:
while the Ornstein--Weiss tiling theorem
establishes that any two free ergodic p.m.p.\ actions of countably infinite amenable groups are orbit equivalent \cite{OrnWei87},
Epstein showed, completing a line of development in \cite{Zim80,BezGol81,Hjo05,GabPop05,Ioa11},
that every countable nonamenable group admits uncountably many orbit inequivalent
free ergodic p.m.p.\ actions \cite{Eps08}.
At the more extreme end of spectral anti-amenability, a
theorem of Popa shows that Bernoulli actions (and, more generally, weakly mixing malleable actions) 
of property (T) groups are in fact superrigid for cocycles
taking values in a countable group,
which implies, in the case that the group contains no nontrivial finite normal subgroups, that the action
is orbit equivalence superrigid (i.e., an orbit equivalence with any p.m.p.\ action
of any group implies that the groups are isomorphic and the actions measure conjugate) \cite{Pop07}.
In this sense free groups exhibit more of an affinity with amenable groups, for all
nontrivial Bernoulli actions of a given free group are orbit equivalent
(Bowen \cite{Bow11}) and all nontrivial Bernoulli actions of all noncyclic free groups are stably orbit equivalent
(Bowen \cite{Bow11a}),
despite the fact that free ergodic p.m.p.\ actions of free groups of different ranks are never orbit
equivalent (Gaboriau \cite{Gab00}).

Paradoxically enough, as one ventures further into the theory of Bernoulli superrigidity
it is precisely around this alignment of amenability with freeness that 
the general picture seems to coalesce.
Indeed what one discovers is that
superrigidity is governed
less by spectral gap per se 
than by certain expressions of anti-freeness or anti-treeability.
To begin with, Popa's cocycle superrigidity theorem in \cite{Pop07}
actually covers a broader class of groups,
namely those containing a w-normal subgroup with relative property (T),
and was subsequently augmented in \cite{Pop08} by a second cocycle superrigidity theorem that gives
the same conclusion for groups that contain two commuting infinite subgroups
at least one of which is nonamenable.
Peterson and Sinclair then demonstrated
in \cite{PetSin12} that $L^2$-rigidity is sufficient to imply Bernoulli cocycle superrigidity,
which enlarges the menu of groups to include those which are nonamenable but have property Gamma,
while Ioana and Tucker-Drob subsequently observed that nonamenable
inner amenable groups can also be added to the list \cite{TucDro14}.
Orbit equivalence superrigidity results were also established for a variety of 
p.m.p.\ actions and groups in earlier groundbreaking papers of Furman on lattices in Lie groups \cite{Fur99} 
and of Monod and Shalom on bounded cohomology \cite{MonSha06} as well as in work of Kida 
on mapping class groups that merely assumes freeness of the action \cite{Kid08}. 
One common feature of the groups that effectuate 
Bernoulli cocycle or orbit equivalence superrigidity in all of these cases, 
a feature which notably distinguishes them from noncyclic free groups,
is that their first $\ell^2$-Betti number vanishes.
Indeed Peterson and Sinclair showed in \cite{PetSin12} that this is a necessary condition
for the cocycle superrigidity of Bernoulli actions with atomless base, which has led to the speculation that it might also
be sufficient within the class of nonamenable groups.
The funny thing here is that amenable groups
also have vanishing first $\ell^2$-Betti number and thus have to be explicitly ruled out.
This reflects the fact a group can have vanishing first $\ell^2$-Betti number
for two very different and incompatible reasons: either because of anti-tree-like behaviour
(even when viewed through the rather coarse lens of measure equivalence)
or because of tree-like behaviour of a degenerate rank-one kind
(understood in the similarly generous sense of being measure equivalent to $\Zb$, a property that characterizes amenability
among countably infinite groups).

One may nevertheless wonder whether, as Robin Tucker-Drob has half-jokingly mused to us, there may be
a way of reconceptualizing the idea of Bernoulli cocycle superrigidity so that it naturally extends to amenable groups.
That this is not so far-fetched is suggested by the recent paper \cite{DeSHayHofSin19},
which in the case of $L^2$-rigidity explains how amenability can be smuggled in through a perturbative maneuver.
What we show in the present paper is that 
if we shift gears in the study of Bernoulli rigidity
to the setting of what we call {\it Shannon orbit equivalence},
in which the cocycle partitions have finite Shannon entropy
(as happens in a bounded or integrable orbit equivalence), then amenable groups truly do lose their exceptional status
and can be reunited with some of their nonamenable brethren
under the kind of common umbrella that remains
a chimera in the framework of general orbit equivalence.
Amenability in this case becomes largely
realigned with anti-tree-like behaviour, which is now to be understood in a much stricter geometric sense.
One must still exclude the virtually cyclic groups,
which remain too strongly tree-like, as well as
the locally finite groups
(but not the locally virtually cyclic groups
which fall outside of these two classes, like the rational numbers).

The basic geometric principle behind this was first identified and exploited
by Austin to show that measure entropy is an invariant of integrable orbit equivalence
for ergodic p.m.p.\ actions of finitely generated amenable groups, and more generally that there is
an entropy scaling formula for stable versions of both integrable and bounded orbit equivalence \cite{Aus16}.
In the non-virtually-cyclic case, Austin's arguments also give the same conclusions for Shannon orbit equivalence,
as one can easily verify.
One corollary of Austin's work, given the Ornstein--Weiss entropy
classification for Bernoulli actions of countably infinite amenable groups,
is that if two Bernoulli actions of a finitely generated infinite amenable group are integrably
orbit equivalent (or even just Shannon orbit equivalent if the group is not virtually cyclic) then they must be
measure conjugate.

Our main result, stated next and obtained as a direct
consequence of Theorems~\ref{T-Shannon SC to entropy} and Theorem~\ref{T-ordinal SC},
is a generalization of Austin's entropy invariance result
to a wider class of groups and represents an initial step towards answering his Question~1.2 in \cite{Aus16}.
Here $h_\mu (\cdot )$ denotes the maximum sofic measure entropy (see Section~\ref{SS-sofic measure entropy}).
W-normality is a weakening of normality which is recalled
in Definition~\ref{D-w-normal}.

\begin{theoremi}\label{T-measure}
Let $G$ be a countable group containing a w-normal amenable subgroup which is
neither locally finite nor virtually cyclic.
Let $H$ be a countable group. Let $G\curvearrowright (X,\mu )$ and $H\curvearrowright (Y,\nu )$ be free
p.m.p.\ actions which are Shannon orbit equivalent. Then
\[
h_\nu (H\curvearrowright Y) \geq h_\mu (G\curvearrowright X).
\]
\end{theoremi}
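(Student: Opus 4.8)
The plan is to split the statement into its two genuinely separate ingredients and invoke the results announced just above: a structural result, Theorem~\ref{T-ordinal SC}, to the effect that the hypotheses on $G$ force every free p.m.p.\ action of $G$ to have property SC, and a transfer result, Theorem~\ref{T-Shannon SC to entropy}, to the effect that if $G\curvearrowright(X,\mu)$ has property SC and is Shannon orbit equivalent to $H\curvearrowright(Y,\nu)$ then $h_\nu(H\curvearrowright Y)\ge h_\mu(G\curvearrowright X)$. Granting these, Theorem~\ref{T-measure} is immediate: the amenable w-normal subgroup that is neither locally finite nor virtually cyclic makes $G\curvearrowright(X,\mu)$ satisfy property SC by Theorem~\ref{T-ordinal SC}, and then Theorem~\ref{T-Shannon SC to entropy} gives the inequality. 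One may freely assume $G$ is sofic: otherwise $h_\mu(G\curvearrowright X)=-\infty$ by the convention defining the maximum sofic entropy and there is nothing to prove, whereas if $G$ is sofic then the transfer construction below produces sofic approximations of $H$ along the way, so the right-hand side is computed by actual sofic approximations and the comparison makes sense.

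For Theorem~\ref{T-ordinal SC}, I would argue by transfinite induction along an ascending chain $A=A_0\trianglelefteq A_1\trianglelefteq\cdots\trianglelefteq A_\lambda=G$ witnessing w-normality of the given subgroup $A$ (Definition~\ref{D-w-normal}), which is where the ``ordinal'' in the label comes from. The base case is to show that a free p.m.p.\ action of an amenable group that is neither locally finite nor virtually cyclic has property SC; here property SC should express, in quantitative form, that the orbit equivalence relation is exhausted by finite subequivalence relations whose classes can be spanned by graphings that are sparse relative to a F{\o}lner-type exhaustion. Non-local-finiteness is used to produce long one-dimensional ``intervals'' inside the F{\o}lner sets, so that the paths needed to connect points have controllable and unbounded length, and non-virtual-cyclicity supplies the extra transverse room that keeps the associated boundaries negligible compared with those intervals --- this is exactly the geometric mechanism underlying Austin's integrable-orbit-equivalence entropy invariance. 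The inductive step is to show that property SC for the action of $A_\alpha$ (normal at this stage) is inherited by $A_{\alpha+1}$, via a Rokhlin-type decomposition over $A_{\alpha+1}/A_\alpha$; at limit ordinals one passes directly to the union.

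For Theorem~\ref{T-Shannon SC to entropy} the idea is to transfer good sofic models from $G\curvearrowright X$ to $H\curvearrowright Y$. Fix a sofic approximation $\Sigma$ of $G$ nearly realizing $h_\mu(G\curvearrowright X)$ and a large family of models $\phi\colon X\to A^{d_n}$ that are $\Sigma$-good for a fine observable. Using the orbit equivalence, its two cocycles, and the combinatorial data supplied by property SC, one builds from each $\phi$ a permutation system $\tau_n$ on the same vertex set (up to a small correction) that is a sofic approximation of $H$, together with a model for $H\curvearrowright Y$ over $\tau_n$. The number of models one obtains this way is smaller than before by a factor whose exponential rate is at most the Shannon entropy of the cocycle partition of the orbit equivalence. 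That entropy is finite by hypothesis, and --- this is the point of sparse connectivity --- the set of vertices over which the cocycle must actually be recorded occupies a vanishing proportion of the model, exactly as boundaries do in the amenable case, so the finite per-generator cost is diluted to zero per unit of the sofic approximation. Hence the transferred model count still has exponential rate at least $h_\mu(G\curvearrowright X)-\epsilon$ with respect to $\tau_n$, and taking a supremum over observables and then over sofic approximations of $H$ yields the inequality.

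The main obstacle is this transfer step. Because the cocycle image of a F{\o}lner-type set in $G$ need not be F{\o}lner in $H$, one cannot simply conjugate the sofic approximation; the substance of the argument is the construction of $\tau_n$ and the accompanying $H$-model, together with the bookkeeping that shows the entropy loss is negligible, i.e.\ that the finitely-many-bits-per-generator cost of the cocycle is absorbed into a vanishing portion of the combinatorial model precisely because of property SC. Pinning down the quantitative form of sparse connectivity that survives this transfer, and verifying in the base case that ``not locally finite'' together with ``not virtually cyclic'' is exactly the hypothesis that yields it, is the other technically substantial piece.
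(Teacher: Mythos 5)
Your proof is correct and takes essentially the same approach as the paper: Theorem~\ref{T-measure} is obtained by first invoking Theorem~\ref{T-ordinal SC} to conclude that $G$ (hence every free p.m.p.\ action of $G$) has property SC, and then applying the transfer result Theorem~\ref{T-Shannon SC to entropy}, which is precisely the decomposition stated in the introduction. Your sketches of the two ingredient theorems also follow the paper's lines (transfinite induction along the w-normal chain via Proposition~\ref{P-normal SC} and Lemma~\ref{L-union SC} with the amenable base case Proposition~\ref{P-amenable SC}, and the construction of a sofic approximation sequence for $H$ from one for $G$ with the cocycle cost controlled by sparse connectivity), with only minor inaccuracies in the heuristics that do not affect the argument for Theorem~\ref{T-measure} itself.
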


This result is also new in the case that $G$ and $H$ are amenable and $G$ is not
finitely generated. For amenable groups, the maximum sofic measure entropy
is equal to the amenable measure entropy and is realized by
every sofic approximation sequence \cite{Bow12a,KerLi13}.

Note that many measure conjugacy invariants like mixing and completely positive entropy can be destroyed
even under a bounded orbit equivalence, as shown by Fieldsteel and Friedman
in the case of ergodic p.m.p.\ $\Zb^d$-actions when $d\geq 2$ \cite{FieFri86}. On the other hand,
by a theorem of Belinskaya two ergodic p.m.p.\ $\Zb$-actions are
integrably orbit equivalent if and only if they are measure conjugate up to an isomorphism of the group
(``flip conjugate'') \cite{Bel68}. We don't know however whether one can substitute ``Shannon'' for ``integrably''
in Belinskaya's theorem.

Our proof of Theorem~\ref{T-measure}
is based on the key geometric idea of \cite{Aus16} involving the construction
of connected subgraphs which are sparse but nevertheless dense at a certain coarse scale.
This accounts for the exclusion of both local finiteness and
virtual cyclicity. In fact the conclusion of the theorem can fail
even for bounded orbit equivalence when $G$ and $H$ are locally finite,
as discovered by Vershik \cite{Ver73,Ver95} and discussed further below,
and while Austin was able to
conjure away the second restriction by an auxiliary argument we don't
see a way to remove it here
(we also note that the argument in \cite{Aus16} for handling virtually cyclic groups does not seem to work for
Shannon orbit equivalence because of its use of an ergodic theorem).
The subgraphs at play in our case will live not inside F{\o}lner sets as in \cite{Aus16}
but rather in the phase space itself.
While this permits us to cross the threshold into nonamenability,
it still imposes restrictions on the group which,
surely without coincidence,
have steered us into the realm of vanishing first $\ell^2$-Betti number (see Theorem~7.2 of \cite{Luc02}).
Indeed our strategy can be seen to fail for free groups
(see Theorem~\ref{T-without SC}).
There remains however the question of whether Theorem~\ref{T-measure} can be extended to other classes of
groups with vanishing first $\ell^2$-Betti number or related anti-tree-like geometric properties,
in particular the classes of groups for which Bernoulli cocycle superrigidity is known to hold.
Our argument still relies heavily on amenability, but in contrast to \cite{Aus16} we apply it
in the form of Ornstein--Weiss tiling technology.

We obtain from Theorem~\ref{T-measure} the following consequence for Bernoulli actions.
By the {\it base entropy} of a Bernoulli action $G\curvearrowright (X^G,\mu^G )$ we mean the Shannon entropy of $\mu$,
i.e., the supremum of the Shannon entropies of all finite partitions of $X$.
When $G$ is sofic this coincides with the sofic entropy for every sofic
approximation sequence \cite{Bow10,KerLi11}.

\begin{theoremi}\label{T-Bernoulli 1}
Let $G$ and $H$ be countable sofic groups containing a w-normal amenable subgroup
which is neither locally finite nor virtually cyclic.
Let $G\curvearrowright (X^G,\mu^G )$ and $H\curvearrowright (Y^H,\nu^H )$ be Bernoulli actions
which are Shannon orbit equivalent. Then these actions have the same base entropy.
\end{theoremi}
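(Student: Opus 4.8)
The plan is to derive Theorem~\ref{T-Bernoulli 1} directly from Theorem~\ref{T-measure} by exploiting the symmetry of the hypothesis. First I would note that the inverse of a Shannon orbit equivalence is again a Shannon orbit equivalence, so that the hypotheses of Theorem~\ref{T-measure} are met both for the pair $(G\curvearrowright X^G,\, H\curvearrowright Y^H)$ and for the pair obtained by interchanging the two actions; this is exactly where one uses that \emph{each} of $G$ and $H$ contains a w-normal amenable subgroup which is neither locally finite nor virtually cyclic. Since $G$ and $H$ are then infinite, the Bernoulli actions over nontrivial bases are essentially free (and in the degenerate case of a one-point base the conclusion is trivial, both base entropies being $0$), so Theorem~\ref{T-measure} applies in both directions and yields
\[
h_\nu (H\curvearrowright Y^H) \geq h_\mu (G\curvearrowright X^G), \qquad h_\mu (G\curvearrowright X^G) \geq h_\nu (H\curvearrowright Y^H),
\]
whence the maximum sofic measure entropies of the two actions coincide.

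It then remains to identify the maximum sofic measure entropy of a Bernoulli action of a sofic group with its base entropy. Here I would invoke the computation of sofic entropy for Bernoulli actions: by \cite{Bow10} when the base entropy is finite and by \cite{KerLi11} in general, the sofic measure entropy of $G\curvearrowright (X^G,\mu^G)$ with respect to an arbitrary sofic approximation sequence for $G$ equals the Shannon entropy of $\mu$, i.e.\ the base entropy. As this common value is attained along every sofic approximation sequence, it agrees with the supremum over all such sequences, namely with $h_\mu (G\curvearrowright X^G)$. The same holds for $H\curvearrowright (Y^H,\nu^H)$, and combining these two identifications with the equality of maximum sofic measure entropies obtained above shows that the two base entropies coincide.

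I do not anticipate any real obstacle in this argument: all of the analytic substance has been absorbed into Theorem~\ref{T-measure}, and the only points requiring a little care are the symmetry of Shannon orbit equivalence, the essential freeness of nontrivial Bernoulli actions of infinite groups, and the bookkeeping that relates "sofic entropy along a fixed approximation sequence", "maximum sofic entropy", and "base entropy" through the cited Bernoulli entropy formula. One should observe in particular that both base entropies may be infinite (precisely when the bases are atomless), in which case the desired identity reads $\infty=\infty$ and is still covered by the general computation in \cite{KerLi11}; this is the role played by the soficity hypothesis on $G$ and $H$, which is absent from Theorem~\ref{T-measure} but is what makes the notions of maximum sofic entropy and base entropy meaningful and equal for the actions at hand.
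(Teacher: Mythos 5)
Your argument is exactly the intended derivation and matches the paper's approach: Theorem~\ref{T-measure} is applied in both directions using the symmetry of Shannon orbit equivalence (which holds since both $G$ and $H$ satisfy the w-normal hypothesis), and the resulting equality of maximum sofic entropies is converted to an equality of base entropies via the Bernoulli entropy computation of \cite{Bow10,KerLi11} for sofic groups. The only cosmetic point is that the degenerate one-point-base case is automatically excluded since the standing assumption of infinite acting groups forces the relevant measures to be atomless, so no separate handling is needed.
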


The Ornstein--Weiss entropy classification of Bernoulli actions of
countably infinite amenable groups \cite{OrnWei87} and a coinduction argument of Stepin \cite{Ste75}
together show that if a countably infinite group contains an infinite amenable subgroup then any two of its
Bernoulli actions are measure conjugate whenever they have the same base entropy
(in fact this statement holds for any countably infinite group by \cite{Bow12,Sew18}).
Theorem~\ref{T-Bernoulli 1} thus specializes to the case $G=H$ as follows.

\begin{theoremi}\label{T-Bernoulli 2}
Let $G$ be a countable sofic group containing a w-normal amenable subgroup which is neither locally finite nor virtually cyclic.
Then two Bernoulli actions of $G$ are Shannon orbit equivalent if and only if they are measure conjugate.
\end{theoremi}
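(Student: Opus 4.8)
The plan is to deduce both implications from Theorem~\ref{T-Bernoulli 1} (taken with $H=G$) together with the Ornstein--Weiss entropy classification of Bernoulli actions of countably infinite amenable groups and Stepin's coinduction argument, as recalled above; all of the genuinely new work will already reside in Theorem~\ref{T-measure}.

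One direction requires nothing beyond the definitions and none of the hypotheses on $G$. Suppose the two Bernoulli actions $G\curvearrowright (X^G,\mu^G)$ and $G\curvearrowright (Y^G,\nu^G)$ are measure conjugate via $\theta$, so that $\theta(gx)=g\theta(x)$ for all $g\in G$ and a.e.\ $x$ (nothing changes if one instead allows $\theta(gx)=\phi(g)\theta(x)$ for some $\phi\in\Aut(G)$). Then $\theta$ is in particular an orbit equivalence, and the cocycle it induces sends $(g,x)$ to $g$ (resp.\ $\phi(g)$), which is essentially constant in $x$. Hence every cocycle partition of $\theta$ is the trivial one-element partition, of Shannon entropy $0$, and the same holds for $\theta^{-1}$; so $\theta$ is a Shannon orbit equivalence, and measure conjugacy implies Shannon orbit equivalence.

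For the converse, assume $G\curvearrowright (X^G,\mu^G)$ and $G\curvearrowright (Y^G,\nu^G)$ are Shannon orbit equivalent. By Theorem~\ref{T-Bernoulli 1} with $H=G$ they have the same base entropy. The w-normal amenable subgroup $A\le G$ supplied by the hypothesis is not locally finite, so it contains a finitely generated infinite subgroup $\Gamma$; being a subgroup of $A$, the group $\Gamma$ is amenable, and it is infinite, so $G$ is a countably infinite group containing an infinite amenable subgroup. The Ornstein--Weiss classification combined with Stepin's coinduction argument then shows that two Bernoulli actions of such a group with the same base entropy are measure conjugate, which yields the claim. (Alternatively one may bypass the passage through $\Gamma$ by appealing to \cite{Bow12,Sew18}, which establish this last fact for every countably infinite group.)

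I expect no real obstacle in this deduction: the substance of the theorem is carried entirely by Theorem~\ref{T-Bernoulli 1}, hence ultimately by Theorem~\ref{T-measure} and the sparse-connectivity machinery underlying it. The only steps calling for any care are checking that a measure conjugacy genuinely produces cocycle partitions of finite (in fact zero) Shannon entropy --- so that the backward implication has content --- and the extraction of an \emph{infinite amenable} subgroup of $G$ from the stated hypotheses, which is immediate once one recalls that a non-locally-finite group contains a finitely generated infinite subgroup and that subgroups of amenable groups are amenable.
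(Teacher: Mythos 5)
Your proof is correct and coincides with the paper's own derivation: Theorem~\ref{T-Bernoulli 1} with $H=G$ gives equality of base entropy, which together with the Ornstein--Weiss/Stepin coinduction argument (or, as you note, \cite{Bow12,Sew18} for arbitrary countably infinite groups) yields measure conjugacy, while the reverse implication is immediate since a measure conjugacy has constant cocycles and hence trivial cocycle partitions of zero Shannon entropy. The only cosmetic remark is that passing to a finitely generated infinite subgroup $\Gamma$ is unnecessary, since ``not locally finite'' already forces the w-normal amenable subgroup itself to be infinite, but this does not affect the validity of the argument.
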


Many of the groups $G$ satisfying the hypothesis of
Theorem~\ref{T-Bernoulli 2} have the much stronger property that their
Bernoulli actions are orbit equivalence superrigid, for example if $G$ satisfies
Bernoulli cocycle superigidity
and in addition has no nontrivial finite normal subgroups (see \cite{Pop07} or Theorem~6.16 of \cite{KerLi16}).
As mentioned above, $G$ will satisfy Bernoulli cocycle superigidity if it has property (T),
is the product of an infinite group and a nonamenable group, or is inner amenable and nonamenable,
and each of these three possibilities can occur within the class of groups in Theorem~\ref{T-Bernoulli 2}.
In particular, examples of property (T) groups whose centre is not locally virtually cyclic can be constructed
by taking products of copies of the group appearing in Example~1.7.13(iii) of \cite{BekHarVal08}.
On the other hand, it remains an open question whether Bernoulli orbit equivalence superrigidity holds
for the wreath product $\Zb\wr F_2$,
which is also covered by the above theorems.

To establish Theorem~\ref{T-measure}, we abstract
the graph-theoretic argument from \cite{Aus16} into a more generally applicable geometric principle that
we apply in a direct way to the dynamics to yield
what we call {\it property SC} for both free p.m.p.\ actions and (by universally quantifying over such actions) groups.
Given that the argument in \cite{Aus16} is localized to F{\o}lner sets, it might seem more natural here to
follow the usual recipe and instead localize to sofic approximations, which one can do successfully
in the case of topological entropy, but in testing such an approach in the measure setting we have found
ourselves unable
to control the empirical distribution of the sofic dynamical models (``microstates'')
except under special circumstances, and even then we could only derive a result for bounded orbit equivalence
(one does however get some extra mileage in such circumstances, as we will show in another paper).
We prove Theorem~\ref{T-measure} by showing that its
conclusion is valid assuming that the action of $G$ has property SC (Theorem~\ref{T-Shannon SC to entropy})
and that this hypothesis is automatic for the groups $G$ in the statement of the theorem by virtue of these
$G$ having themselves property SC (Theorem~\ref{T-ordinal SC}).

In the case of amenable groups, we show in Proposition~\ref{P-amenable SC} that property SC
is equivalent to the group being neither locally finite nor virtually cyclic.
As mentioned above, the exclusion of local finiteness cannot be removed from Theorem~\ref{T-measure},
as a theorem from Vershik's thesis demonstrates \cite{Ver73} (see the presentation in \cite{Ver95}).
Indeed suppose that $G$ and $H$ are infinite locally finite groups
and suppose that there are nested finite subgroups $G_1 \subseteq G_2 \subseteq\dots$ of $G$
with $\bigcup_{n=1}^\infty G_n = G$
and nested finite subgroups $H_1 \subseteq H_2 \subseteq\dots$ of $H$ with $\bigcup_{n=1}^\infty H_n = H$
such that $|G_n| = |H_n|$ for all $n$. Then for any two free ergodic p.m.p.\ actions
$G\curvearrowright (X, \mu)$ and $H \curvearrowright (Y, \nu)$ there are integers
$1\leq n_1 < n_2 < \dots$ and an orbit equivalence $\Psi : X\to Y$ such that
$\Psi (G_{n_k}x)=H_{n_k}\Psi (x)$ for all $x\in X$ and $k\in\Nb$
(this is a special type of bounded orbit equivalence).
To see this, for every $n$
consider the $\sigma$-algebra $\xi_n$ of $G_n$-invariant Borel subsets of $X$. Then $\{\xi_n \}$ is an ergodic homogeneous sequence in the terminology of \cite{Ver95}. Similarly, one has the ergodic homogeneous sequence
$\{ \xi'_n \}$ whose $n$th term is the $\sigma$-algebra of $H_n$-invariant Borel subsets of $Y$.
Again using terminology from \cite{Ver95},
the sequences $\{ \xi_n\}$ and $\{ \xi'_n\}$ have the same type $\{r_n\}$, in this case given by
$r_n=[G_n : G_{n-1} ]=[H_n : H_{n-1} ]$ with $G_0$ and $H_0$ denoting the trivial subgroups of $G$ and $H$, respectively.
Now Corollary~1 on page 723 of \cite{Ver95} says that any two ergodic homogeneous sequences of the same type are lacunary isomorphic in the sense that there are integers $1\leq n_1 < n_2 < \dots$ and a measure isomorphism
$\Psi :X\to Y$ which for every $k$ sends $\xi_{n_k}$ to $\xi'_{n_k}$, so that
$\Psi (G_{n_k}x)=H_{n_k}\Psi (x)$, as desired.
We thus see in particular that if $G$ is a countably infinite locally finite group
then all of the free ergodic p.m.p.\ actions of $G$ are boundedly orbit equivalent to each other.
This includes all of the nontrivial Bernoulli actions of $G$, which exhaust the possible nonzero values of measure entropy.

We begin in Section~\ref{S-preliminaries} by setting up notation and reviewing basic concepts
and terminology concerning orbit equivalence and sofic measure entropy
(for general references on these topics see \cite{KerLi16,KecMil04}).
In Section~\ref{SS-SC} we define property SC and establish two permanence properties.
In Sections~\ref{SS-shrinking} and \ref{SS-SC'} we introduce a shrinking property
and two variants of property SC which will be of subsequent technical use.
In Section~\ref{SS-normal} we prove that property SC passes from a normal subgroup to the ambient group.
In Section~\ref{SS-without SC} we identify two classes of groups without property SC (Theorem~\ref{T-without SC}),
while Section~\ref{SS-with SC} is dedicated to showing that
the groups satisfying the hypothesis of Theorem~\ref{T-measure}
have property SC (Theorem~\ref{T-ordinal SC}).
In Section~\ref{SS-product groups} we derive a result on property SC that concerns product groups.
Finally, we devote Section~\ref{S-Shannon OE} to the proof
of Theorem~\ref{T-Shannon SC to entropy}, which together with Theorem~\ref{T-ordinal SC}
yields Theorem~\ref{T-measure}.
\medskip

\noindent{\it Acknowledgements.}
The first author was partially supported by NSF grant DMS-1800633.
Preliminary stages of this work were carried out during his six-month stay in 2017-2018 at the ENS de Lyon,
during which time he held ENS and CNRS visiting professorships and was supported by Labex MILYON/ANR-10-LABX-0070.
He thanks Damien Gaboriau and Mikael de la Salle at the ENS for their generous hospitality.
The second author was partially supported by NSF grants DMS-1600717 and DMS-1900746.
We thank Robin Tucker-Drob for comments.

\section{Preliminaries}\label{S-preliminaries}

\subsection{Basic notation and terminology}

Throughout the paper $G$ and $H$ denote countably infinite discrete groups,
with identity elements $e_G$ and $e_H$ (many of our results are also valid
for finite groups, usually for trivial reasons, but we make this exclusion for the convenience
of forcing the measure in a free probability-measure-preserving action to be atomless, as reiterated below).
We denote by $\cF(G)$ the set of all nonempty
finite subsets of $G$, and by $\overline{\cF}(G)$ the set of symmetric finite subsets of $G$ containing $e_G$.

A {\it left F{\o}lner sequence} for the group $G$ is a sequence $\{ F_n \}$ of nonempty finite subsets of $G$
such that $\lim_{n\to\infty} |gF_n \Delta F_n |/|F_n| = 0$ for all $g\in G$. If $G$ admits a left F{\o}lner sequence
then it is said to be {\it amenable}.

If P is a property then one says that a group is {\it virtually P}
if it has a subgroup of finite index with property P, and {\it locally P} if each of its
finitely generated subgroups has property P.

For a nonempty finite set $V$, we denote by $\Pb_V$ the algebra of all subsets of $V$,
by $\Sym (V)$ the group of all permutations of $V$,
and by $\m$ the uniform probability measure on $V$.

By a {\it standard probability space} we mean a standard Borel space (i.e., a Polish space with its Borel $\sigma$-algebra) equipped with a probability measure. By a
partition of such a space we will always mean one that is Borel (and also, if occasion demands, one
that is really only a partition of a conull subset of the space).
By a {\it p.m.p.\ (probability-measure preserving) action} of $G$ we mean an action $G\curvearrowright (X,\mu )$
of $G$ on a standard probability space by measure-preserving transformations.
We express such an action using the concatenation $(g,x)\mapsto gx$ for $g\in G$ and $x\in X$
(in principle this will result in an ambiguity when two actions of the same group on the same space are at play,
but in context the notation chosen for group elements will make the distinction clear).
Two such actions $G\curvearrowright (X,\mu )$ and $G\curvearrowright (Y,\nu )$ are {\it measure conjugate}
or {\it isomorphic} if there exist $G$-invariant conull sets $X_0 \subseteq X$ and $Y_0 \subseteq Y$ and
a $G$-equivariant measure isomorphism $X_0 \to Y_0$.

A {\it Bernoulli action} is a p.m.p.\ action of the form $G\curvearrowright (Y^G ,\nu^G )$
where $(Y,\nu )$ is a standard probability space and $(gy)_h = y_{g^{-1} h}$ for $y\in Y^G$ and $g,h\in G$.
It is {\it nontrivial} if $\nu$ does not have an atom with full measure.

A p.m.p.\ action $G\curvearrowright (X,\mu )$ is {\it free} if the
set $X_0$ of all $x\in X$ such that $sx \neq x$ for all $s\in G\setminus \{ e_G \}$ has measure one.
For the purposes of this paper there is never any harm in replacing $X$ by a $G$-invariant conull subset
and so we will always assume that $X_0 = X$ for the purposes of argumentation, even if theorem statements
themselves do not require it.

Given a p.m.p.\ action $G\curvearrowright (X,\mu )$ and a set $S\subseteq G$,
we define an {\it $S$-path} in $X$ to be a finite tuple $(x_0 , x_1 ,\dots , x_n )$ of points in $X$
such that for every $i=1,\dots , n$ there is a $g\in S$ for which $x_i = gx_{i-1}$,
in which case we call $n$ the {\it length}
of the path and say that the path {\it connects} $x_0$ and $x_n$.
When $n=1$ we also speak of an {\it $S$-edge}.

As indicated above, our reason for making the blanket assumption that the groups $G$ and $H$ be infinite
is that for any of their free p.m.p.\ actions on a standard probability space $(X,\mu )$ the
measure $\mu$ is forced to be atomless, a fact which we will often tacitly rely on. It is required for instance
in our various applications of Ornstein--Weiss tiling technology.

\subsection{Shannon orbit equivalence}\label{SS-Shannon}

We say that two free p.m.p.\ actions $G\curvearrowright (X,\mu )$ and
$H\curvearrowright (Y,\nu )$
are {\it orbit equivalent} if there are a $G$-invariant conull set $X_0 \subseteq X$,
an $H$-invariant conull set $Y_0 \subseteq Y$, and a measure isomorphism
$\Psi : X_0 \to Y_0$ such that $\Psi (Gx) = H\Psi (x)$ for all $x\in X_0$.
Such a $\Psi$ is called an {\it orbit equivalence}.

Associated to a $\Psi$ as in the above definition are the cocycles
$\kappa : G\times X_0 \to H$ and $\lambda : H\times Y_0 \to G$ determined by
\begin{align*}
\Psi (gx) &= \kappa (g,x)\Psi (x) , \\
\Psi^{-1} (ty) &= \lambda (t,y) \Psi^{-1} (y)
\end{align*}
for all $g\in G$, $x\in X_0$, $t\in H$, and $y\in Y_0$. The defining property of a
cocycle, referred to as the {\it cocycle identity}, is expressed in the case of $\kappa$ by
\begin{align*}
\kappa (fg , x) = \kappa (f ,gx) \kappa (g,x)
\end{align*}
for all $f,g \in G$ and $x\in X_0$.
Note also that
\begin{align*}
\kappa (\lambda (t,y),\Psi^{-1} (y)) &= t , \\
\lambda (\kappa (g,x),\Psi (x)) &= g
\end{align*}
for all $t\in H$, $y\in Y_0$, $g\in G$, and $x\in X_0$.

The {\it Shannon entropy} of a countable Borel partition $\sP$ of $X$ is defined by
\[
H_\mu (\sP ) = \sum_{P\in\sP} -\mu (P)\log \mu (P) .
\]
with $-x\log x$ being interpreted as $0$ when $x=0$.
We say that the actions are {\it Shannon orbit equivalent}
if the sets $X_0$ and $Y_0$ and the measure isomorphism $\Psi$
can be chosen so that for each $g\in G$ the countable Borel partition of $X_0$ consisting of the sets
\[
X_{g,t} := \{ x\in X_0 : \Psi (gx) = t\Psi (x) \}
\]
for $t\in H$ has finite Shannon entropy and, likewise,
for each $t\in H$ the countable partition of $X_0$ consisting of the sets $X_{g,t}$ for $g\in G$
has finite Shannon entropy. In this case we refer to $\Psi$ as a
{\it Shannon orbit equivalence}.

In general, we say that a map $f: X\rightarrow H$ is {\it Shannon}
if the countable partition $\{f^{-1}(t): t\in H\}$ of $X$ has finite Shannon entropy.
A cocycle $\kappa : G\times X\rightarrow H$ is {\it Shannon} if $\kappa(g, \cdot)$
is Shannon for every $g\in G$.

\subsection{Bounded and integrable orbit equivalence}\label{SS-bounded and integrable}

Let $G\curvearrowright (X,\mu )$ and $H\curvearrowright (Y,\nu )$ be
free p.m.p.\ actions which are orbit equivalent, with $X_0$, $Y_0$, and $\Psi$
witnessing the orbit equivalence as above,
and $\kappa$ and $\lambda$ denoting the associated cocycles.

We say that the cocycle $\kappa : G\times X_0 \to H$ is {\it bounded} if $\kappa (g,X_0 )$ is finite
for every $g\in G$, and define boundedness for $\lambda$ likewise.
If $X_0$, $Y_0$, and $\Psi$
can be chosen so that each of the cocycles $\kappa$ and $\lambda$
is bounded then we say that
the actions are {\it boundedly orbit equivalent}, and refer to $\Psi$ as a
{\it bounded orbit equivalence}.

Suppose now that $G$ and $H$ are finitely generated and write $\ell_G$
and $\ell_H$ for the word length functions with respect to some symmetric finite generating sets
for $G$ and $H$, respectively. We say that the cocycle $\kappa : G\times X_0 \to H$
is {\it integrable} if for every $g\in G$ one has
\[
\int_X \ell_H (\kappa (g,x))\, d\mu (x) < \infty ,
\]
and define integrability for $\lambda$ likewise. If $X_0$, $Y_0$, and $\Psi$
can be chosen so that each of the cocycles $\kappa$ and $\lambda$
is integrable then we say that
the actions are {\it integrably orbit equivalent}, and refer to $\Psi$ as an
{\it integrable orbit equivalence}.

Obviously every bounded orbit equivalence
is integrable. Lemma~2.1 of \cite{Aus16} shows that every integrable orbit equivalence is Shannon.

\subsection{Sofic approximations}

On the set $V^V$ of maps from a nonempty finite set $V$ to itself we define the
normalized Hamming distance by
\[
\rho_\Hamm (T,S) = \frac{1}{|V|} |\{ v\in V : Tv \neq Sv \} | .
\]
By a {\it sofic approximation} for $G$ we mean a (not necessarily multiplicative) map
$\sigma : G\to\Sym (V)$ for some nonempty finite set $V$.
Given a finite set $F\subseteq G$ and an $\delta > 0$, we say that such a $\sigma$
is an {\it $(F,\delta )$-approximation} if
\begin{enumerate}
\item $\rho_\Hamm (\sigma_{st} , \sigma_s \sigma_t ) \leq \delta$ for all $s,t\in F$, and

\item $\rho_\Hamm (\sigma_s , \sigma_t ) \geq 1-\delta$ for all distinct $s,t\in F$.
\end{enumerate}
By a {\it sofic approximation sequence} for $G$ we mean a sequence
$\Sigma = \{ \sigma_k : G\to\Sym (V_k ) \}_{k=1}^\infty$
of sofic approximations for $G$ such that
for every finite set $F\subseteq G$ and $\delta > 0$ there is a $k_0 \in\Nb$ such that
$\sigma_k$ is an $(F,\delta )$-approximation for every $k\geq k_0$.

By saying that a sofic approximation $\sigma : G\to\Sym (V)$ is {\it good enough} we mean that
it is an $(F,\delta )$-approximation for some finite set $F\subseteq G$ and $\delta > 0$
and that this condition is sufficient for the purpose at hand.

The group $G$ is said to be {\it sofic} if it admits a sofic approximation sequence.
This is the case when $G$ is amenable or residually finite,
and indeed soficity can be regarded in a natural way as a simultaneous generalization of these two properties
(see Section~10.2 of \cite{KerLi16}). In particular, free groups are sofic.
It remains unknown whether nonsofic groups exist.

\subsection{Sofic measure entropy}\label{SS-sofic measure entropy}

Let $G\curvearrowright (X,\mu )$ be a p.m.p.\ action.
Let $\sC$ be a finite partition of $X$, $F$ a finite subset of $G$ containing $e_G$, and $\delta > 0$.
We write $\alg (\sC )$ for the algebra generated by $\sC$, consisting of all unions of members of $\sC$,
and denote by $\sC_F$ the join $\bigvee_{s\in F} s\sC$.
Let $\sigma : G\to \Sym(V)$ be a sofic approximation for $G$. We define $\Hom_\mu (\sC ,F,\delta ,\sigma )$ to be the
set of homomorphisms $\varphi : \alg (\sC_F )\to \Pb_V$ satisfying
\begin{enumerate}
\item $\sum_{A\in\sC} \m (\sigma_g \varphi (A) \Delta \varphi (gA)) < \delta$ for all $g\in F$, and

\item $\sum_{A\in\sC_F} |\m (\varphi (A)) - \mu (A)| < \delta$.
\end{enumerate}
For a finite partition $\sP\leq\sC$ we define $|\Hom_\mu (\sC ,F,\delta ,\sigma )|_\sP$ to be the cardinality
of the set of restrictions of elements of $\Hom_\mu (\sC ,F,\delta ,\sigma )$ to $\sP$.

Suppose now that $G$ is sofic and let $\Sigma = \{ \sigma_k : G\to\Sym (V_k ) \}_{k=1}^\infty$
be a sofic approximation sequence for $G$. For a finite partition $\sP$ of $X$ we write,
notationally omitting the action for brevity,
\begin{align*}
h_{\Sigma ,\mu} (\sP,\sC,F,\delta ) &= \limsup_{k\to\infty} \frac{1}{|V_k|} \log |\Hom_\mu (\sC ,F,\delta ,\sigma_k ) |_\sP , \\
h_{\Sigma ,\mu} (\sP) &= \inf_{\sC\geq\sP} \inf_F \inf_{\delta > 0} h_{\Sigma ,\mu} (\sC,\sP,F,\delta )
\end{align*}
where the first infimum is over all finite partitions $\sC$ of $X$ refining $\sP$ and the second
is over all finite sets $F\subseteq G$ containing $e_G$. We also write
$h_{\Sigma ,\mu} (G\curvearrowright X,\sP)$ for $h_{\Sigma ,\mu} (\sP)$ when it is necessary
to explicitly indicate the action.
We define the {\it sofic measure entropy} of the action $G\curvearrowright (X,\mu )$ with respect to $\Sigma$ by
\begin{align*}
h_{\Sigma ,\mu} (G\curvearrowright X) = \sup_{\sP} h_{\Sigma ,\mu} (\sP ),
\end{align*}
where $\sP$ ranges over all finite partitions of $X$.

For a p.m.p.\ action $G\curvearrowright (X,\mu )$ of an arbitrary $G$ we define
the {\it maximum sofic measure entropy} by
\[
h_\mu (G\curvearrowright X) = \max_\Sigma h_{\Sigma ,\mu} (G\curvearrowright X)
\]
where $\Sigma$ ranges over all sofic approximation sequences for $G$
(in the case that $G$ is nonsofic we interpret this maximum to be $-\infty$).
The following proposition shows that the maximum does indeed exist.

\begin{proposition} \label{P-maximal measure entropy}
Suppose that $G$ is sofic. Let $G\curvearrowright (X, \mu)$ be a p.m.p.\ action.
Then there is a sofic approximation sequence $\Pi$ for $G$ such that
$h_{\Pi, \mu}(G\curvearrowright X)\ge h_{\Pi', \mu}(G\curvearrowright X)$ for every
sofic approximation sequence $\Pi'$ for $G$.
\end{proposition}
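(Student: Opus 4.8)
The plan is to construct $\Pi$ by a diagonal/concatenation argument over a countable cofinal family of sofic approximation sequences, using the fact that sofic measure entropy can be computed along any subsequence and that concatenating two sofic approximation sequences produces a sofic approximation sequence whose entropy is the maximum of the two.

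\medskip

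First I would observe that, since $X$ is standard, the supremum defining $h_{\Sigma,\mu}(G\curvearrowright X)$ may be taken over a fixed countable family of finite partitions (those generating the Borel $\sigma$-algebra up to measure zero, together with their common refinements), so $h_{\Sigma,\mu}(G\curvearrowright X)$ is determined by countably many numerical invariants of $\Sigma$. More importantly, I would record the elementary ``concatenation'' lemma: if $\Sigma = \{\sigma_k : G \to \Sym(V_k)\}$ and $\Sigma' = \{\sigma'_k : G \to \Sym(V'_k)\}$ are sofic approximation sequences, then interleaving them (say $\Pi$ with $\pi_{2k} = \sigma_k$ and $\pi_{2k-1} = \sigma'_k$) is again a sofic approximation sequence, because the $(F,\delta)$-approximation condition is eventually satisfied by both. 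For such an interleaved $\Pi$ one has $h_{\Pi,\mu}(G\curvearrowright X) = \max\{h_{\Sigma,\mu}(G\curvearrowright X), h_{\Sigma',\mu}(G\curvearrowright X)\}$: the $\limsup$ over $k$ of $\frac{1}{|V_{\pi_k}|}\log|\Hom_\mu(\cdots,\pi_k)|_\sP$ equals the maximum of the two separate $\limsup$s, and passing the infima over $\sC$, $F$, $\delta$ and then the supremum over $\sP$ through this identity (the infima behave well because the bracketed quantity is monotone in the obvious way) gives the claim. Thus the supremum over sofic approximation sequences of $h_{\Sigma,\mu}(G\curvearrowright X)$ is unaffected by closing the family under finite interleavings.

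\medskip

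Next I would handle the countability issue. Write $s = \sup_\Sigma h_{\Sigma,\mu}(G\curvearrowright X) \in [-\infty, \log|X|]$ (finite since entropy is bounded by $\log$ of the cardinality of any generating partition, or $+\infty$ only if the action has a generating partition of infinite Shannon entropy — in any case the supremum over a set of reals has a countable realizing subsequence). Choose sofic approximation sequences $\Sigma^{(n)}$ with $h_{\Sigma^{(n)},\mu}(G\curvearrowright X) \nearrow s$. The remaining task is to build a single $\Pi$ dominating all $\Sigma^{(n)}$ simultaneously. I would do this by a diagonal construction: let $\Pi$ consist, for each $n$, of a suitably long block of terms drawn from $\Sigma^{(n)}$, arranged so that the blocks drawn from each fixed $\Sigma^{(n)}$ still form a sofic approximation sequence for $G$ (which forces $\Pi$ itself to be a sofic approximation sequence) while the $\limsup$ defining $h_{\Pi,\mu}(\sP,\sC,F,\delta)$ is bounded below by $h_{\Sigma^{(n)},\mu}(\sP,\sC,F,\delta)$ for every $n$. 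Concretely, one interleaves so that $\Pi$ contains infinitely many terms from each $\Sigma^{(n)}$, spaced out enough that each tail of $\Pi$ restricted to the $\Sigma^{(n)}$-terms still satisfies every $(F,\delta)$-condition; then for fixed $\sP,\sC,F,\delta$, the subsequence of $\Pi$ coming from $\Sigma^{(n)}$ witnesses $h_{\Pi,\mu}(\sP,\sC,F,\delta) \ge h_{\Sigma^{(n)},\mu}(\sP,\sC,F,\delta)$, and taking infima over $\sC,F,\delta$ and the supremum over $\sP$ gives $h_{\Pi,\mu}(G\curvearrowright X) \ge h_{\Sigma^{(n)},\mu}(G\curvearrowright X)$ for all $n$, hence $h_{\Pi,\mu}(G\curvearrowright X) \ge s \ge h_{\Pi',\mu}(G\curvearrowright X)$ for every $\Pi'$.

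\medskip

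The main obstacle I anticipate is the bookkeeping in the diagonal step: I need the terms of $\Pi$ coming from $\Sigma^{(n)}$ to remain a genuine sofic approximation sequence for $G$ \emph{after} being diluted among the terms of the other $\Sigma^{(m)}$, and I need the $\limsup$ in the definition of $h_{\Pi,\mu}(\sP,\sC,F,\delta)$ to ``see'' each $\Sigma^{(n)}$ — the subtlety is that a $\limsup$ along $\Pi$ dominates the $\limsup$ along any subsequence, so this direction is automatic, but one must make sure that diluting does not destroy the approximation property, which is handled by choosing, for each $n$, to eventually include \emph{all sufficiently late} terms of $\Sigma^{(n)}$ rather than a sparse subset (a standard ``first $n$ terms get refined infinitely often'' diagonalization). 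A clean way to package this is: enumerate pairs $(n,j)$ and let the $\ell$-th term of $\Pi$ be the $j$-th term of $\Sigma^{(n)}$ for the $\ell$-th pair $(n,j)$ in an enumeration in which, for each fixed $n$, the values of $j$ appearing are exactly $\{j : j \ge \phi(n,\ell)\}$ for some function $\phi$ growing slowly enough — but since I am only sketching, I would in the actual proof simply invoke the two lemmas (interleaving preserves soficity; interleaved entropy is the max) and run the standard diagonal argument, checking the approximation condition term by term.
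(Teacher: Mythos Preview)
Your proposal is correct and follows essentially the same route as the paper: choose sofic approximation sequences $\Sigma^{(n)}$ whose entropies approach the supremum, interleave them into a single sequence $\Pi$ (taking care that $\Pi$ remains a sofic approximation sequence), and use that the $\limsup$ defining $h_{\Pi,\mu}(\sP,\sC,F,\delta)$ dominates the $\limsup$ along each embedded subsequence. The paper's write-up is more economical: it fixes an exhaustion $F_1\subseteq F_2\subseteq\cdots$ of $G$, picks for each $n$ a cutoff $K_n$ so that $\pi_{n,k}$ is an $(F_n,1/n)$-approximation for $k\ge K_n$, and then simply takes any bijection $\Nb\to\{(n,k):k\ge K_n\}$ to define $\Pi$---this handles your ``bookkeeping'' worry in one line, and dispenses with the preliminary two-sequence interleaving lemma and the remarks on countable families of partitions, neither of which is needed.
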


\begin{proof}
Put $M=\sup_{\Pi'} h_{\Pi', \mu}(G\curvearrowright X)$ where $\Pi'$ ranges over all sofic approximation sequences $\Pi'$ for $G$. Take a sequence $\{\Pi_n =\{\pi_{n, k}\}_{k\in \Nb}\}_{n\in \Nb}$ of sofic approximation sequences for $G$
such that $h_{\Pi_n, \mu}(G\curvearrowright X)\to M$ as $n\to \infty$.
%Say, $\Pi_n=\{\pi_{n, k}\}_{k\in \Nb}$.

Choose an increasing sequence $F_1\subseteq F_2\subseteq \dots$ of finite subsets of $G$ with union $G$.
For each $n\in \Nb$ there exists a $K_n\in \Nb$ such that for every $k\ge K_n$ the map $\pi_{n, k}$
is an $(F_n, 1/n)$-approximation for $G$.
Put $\sW=\{(n, k)\in \Nb^2: k\ge K_n\}$. Then $\sW$ is countably infinite,
and so we can take a bijection $\varphi: \Nb\rightarrow \sW$. Put $\pi_k=\pi_{\varphi(k)}$ for every $k\in \Nb$ and $\Pi=\{\pi_k\}_{k\in \Nb}$. Then $\Pi$ is a sofic approximation sequence for $G$. For any $n\in \Nb$, any finite Borel partitions $\sC\preceq \sU$ of $X$, any $L\in \cF(G)$ containing $e_G$, and any $\delta>0$, we have
\[
h_{\Pi,\mu}(\sC, \sU, L, \delta)\ge h_{\Pi_n ,\mu}(\sC, \sU, L, \delta).
\]
Thus $h_{\Pi, \mu}(G\curvearrowright X)\ge h_{\Pi_n, \mu}(G\curvearrowright X)$ for every $n\in \Nb$. Therefore $h_{\Pi, \mu}(G\curvearrowright X)=M$.
\end{proof}

The measure entropy $h_{\Sigma ,\mu} (G\curvearrowright X)$ is known not to depend on
the choice of sofic approximation sequence $\Sigma$ in the following cases:
\begin{enumerate}
\item the group $G$ is amenable, in which case
we always recover the amenable measure entropy \cite{KerLi13,Bow12a},

\item the action is Bernoulli \cite{Bow10,KerLi11},

\item the action
is an algebraic action of the form $G\curvearrowright (\widehat{(\Zb G)^n / (\Zb G)^n A} ,\mu )$
where $A\in M_n (\Zb G)$ is injective as an operator on $\ell^2 (G)^{\oplus n}$ and $\mu$ is the
normalized Haar measure \cite{Hay16}.
\end{enumerate}

\section{Property SC} \label{S-SC}

A reminder that $G$ and $H$ throughout the paper are countably infinite groups,
which in particular forces the measure in any of their free p.m.p.\ actions to be atomless.

\subsection{Definition and two permanence properties} \label{SS-SC}

\begin{definition} \label{D-SC}
Let $\fC$ be a class of free p.m.p.\ actions of a fixed $G$.
We say that $\fC$ has {\it property SC} (``sparsely connected'') if for any function $\Upsilon: \cF(G)\rightarrow [0, \infty)$
there exists an $S\in \overline{\cF}(G)$ such that for any $T\in \overline{\cF}(G)$,
there are $C, n\in \Nb$, and  $S_1, \dots, S_n\in \overline{\cF}(G)$  so that for any $G\curvearrowright (X, \mu)$ in $\fC$ there are Borel sets $W,\cV_1 , \dots ,\cV_n \subseteq X$ satisfying the following conditions:
\begin{enumerate}
\item $\sum_{j=1}^n \Upsilon(S_j)\mu(\cV_j)\le 1$,
\item $SW=X$,
\item if $w_1, w_2\in W$ satisfy $gw_1=w_2$ for some $g\in T$ then $w_1$ and $w_2$ are connected by a path of length at most $C$ in which each edge is an $S_j$-edge with both endpoints in $\cV_j$ for some $1\le j\le n$.
\end{enumerate}
We say that a free p.m.p.\ action $G\curvearrowright (X, \mu)$ has {\it property SC} if the singleton class
containing it has property SC.
We say that $G$ itself has {\it property SC} if the class of all free p.m.p\ actions $G\curvearrowright (X, \mu)$ has property SC.
\end{definition}

\begin{remark}\label{R-SC no bound}
When $\fC$ consists of either a single free p.m.p.\ action or
all free p.m.p.\ actions of a fixed $G$, one can omit the bound $C$ since its existence is automatic,
as we will verify in the paragraph following Proposition~\ref{P-Bernoulli}.
\end{remark}

We now record a couple of permanence properties. We will also later see in
Section~\ref{SS-normal} that if $G$ has a normal subgroup
with property SC then $G$ itself has property SC (Proposition~\ref{P-normal SC})
and that a prescribed finite-index subgroup of $G$ has property SC if and only if $G$ does
(Proposition~\ref{P-finite index}).

\begin{proposition} \label{P-SC under OE}
Let $G\curvearrowright (X, \mu)$ and $H\curvearrowright (Y, \nu)$ be
free p.m.p.\ actions which are boundedly orbit equivalent. Suppose that $G\curvearrowright (X, \mu)$ has property SC. Then $H\curvearrowright (Y, \nu)$ has property SC.
\end{proposition}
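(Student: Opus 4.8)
The plan is to push the property-SC data for $G\curvearrowright(X,\mu)$ across the bounded orbit equivalence, exploiting that \emph{both} associated cocycles are bounded. Fix witnesses $X_0,Y_0$ (conull, invariant) and a measure isomorphism $\Psi\colon X_0\to Y_0$ with $\Psi(Gx)=H\Psi(x)$; as always we may assume $X_0=X$ and $Y_0=Y$. Let $\kappa\colon G\times X\to H$ and $\lambda\colon H\times Y\to G$ be the associated cocycles, both bounded by hypothesis. For a finite subset $F$ of $G$ (or of $H$) write $\widehat F:=F\cup F^{-1}\cup\{e\}$ (with $e$ the identity of the relevant group), which lies in $\overline{\cF}(G)$ (resp.\ $\overline{\cF}(H)$). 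The key finiteness input is that for $R\in\cF(G)$ the set $\kappa(R,X)=\bigcup_{g\in R}\kappa(g,X)$ is finite, since $\kappa$ is bounded, and likewise $\lambda(Q,Y)$ is finite for $Q\in\cF(H)$.

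Given $\Upsilon\colon\cF(H)\to[0,\infty)$, I would first define $\Upsilon'\colon\cF(G)\to[0,\infty)$ by $\Upsilon'(R):=\Upsilon(\widehat{\kappa(R,X)})$ and apply property SC for $G\curvearrowright(X,\mu)$ to $\Upsilon'$, obtaining $S\in\overline{\cF}(G)$. Output $\widetilde S:=\widehat{\kappa(S,X)}\in\overline{\cF}(H)$; note this depends only on $\Upsilon$, as Definition~\ref{D-SC} demands. Next, given $\widetilde T\in\overline{\cF}(H)$, set $T:=\widehat{\lambda(\widetilde T,Y)}\in\overline{\cF}(G)$, feed $T$ to property SC for $G\curvearrowright X$ to get $C,n\in\Nb$ and $S_1,\dots,S_n\in\overline{\cF}(G)$, and output $\widetilde C:=C$, $\widetilde n:=n$, and $\widetilde S_j:=\widehat{\kappa(S_j,X)}\in\overline{\cF}(H)$. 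Finally, from the Borel sets $W,\cV_1,\dots,\cV_n\subseteq X$ provided by property SC for $G\curvearrowright X$, put $\widetilde W:=\Psi(W)$ and $\widetilde\cV_j:=\Psi(\cV_j)$; these are Borel and $\nu(\widetilde\cV_j)=\mu(\cV_j)$ since $\Psi$ is measure-preserving.

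It then remains to verify the three conditions of Definition~\ref{D-SC} for $H\curvearrowright(Y,\nu)$ with these choices. Condition~(i) is immediate because $\Upsilon(\widetilde S_j)=\Upsilon'(S_j)$ and $\nu(\widetilde\cV_j)=\mu(\cV_j)$. For~(ii), any $x=sw$ with $s\in S$, $w\in W$ satisfies $\Psi(x)=\kappa(s,w)\Psi(w)$ with $\kappa(s,w)\in\widetilde S$ and $\Psi(w)\in\widetilde W$, so every point of $Y=\Psi(X)$ lies in $\widetilde S\widetilde W$. For~(iii), if $\widetilde w_1,\widetilde w_2\in\widetilde W$ and $t\widetilde w_1=\widetilde w_2$ with $t\in\widetilde T$, then writing $w_i:=\Psi^{-1}(\widetilde w_i)\in W$ the cocycle relation gives $w_2=\lambda(t,\widetilde w_1)w_1$ with $\lambda(t,\widetilde w_1)\in T$; property SC for $G\curvearrowright X$ then yields a path $w_1=z_0,z_1,\dots,z_m=w_2$ with $m\le C$ whose $i$th edge is an $S_{j_i}$-edge with both endpoints in $\cV_{j_i}$. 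Applying $\Psi$ and using $\Psi(g'z)=\kappa(g',z)\Psi(z)$ converts this into a path $\widetilde w_1=\Psi(z_0),\dots,\Psi(z_m)=\widetilde w_2$ of length $m\le\widetilde C$ whose $i$th edge is an $\widetilde S_{j_i}$-edge (the relevant cocycle value sits in $\kappa(S_{j_i},X)\subseteq\widetilde S_{j_i}$) with both endpoints in $\Psi(\cV_{j_i})=\widetilde\cV_{j_i}$.

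I expect this to be essentially pure transport of structure, with no single step presenting a real obstacle; the two things requiring care are the quantifier order in Definition~\ref{D-SC}, which forces $\widetilde S$ to be manufactured from $S$ alone before $\widetilde T$ is revealed, and the fact that both cocycles must be bounded — boundedness of $\kappa$ is what turns $S$, the $S_j$, and the edge labels along a $G$-path into finite subsets of $H$, while boundedness of $\lambda$ is what turns the given $\widetilde T\subseteq H$ into a finite $T\subseteq G$.
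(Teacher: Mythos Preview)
Your proof is correct and follows essentially the same transport-of-structure argument as the paper: define the $G$-side function $\Upsilon'$ via $\kappa$, pull $S$ across via $\kappa$, pull $\widetilde T$ back via $\lambda$, and push $W,\cV_j$ and each path edge across via $\Psi$ and $\kappa$. The only cosmetic difference is that the paper identifies $(X,\mu)=(Y,\nu)$ with $\Psi=\id$ and observes directly that $\kappa(S,X)\in\overline{\cF}(H)$ (symmetry and $e_H\in\kappa(S,X)$ follow from the cocycle identity $\kappa(g^{-1},gx)=\kappa(g,x)^{-1}$ and $\kappa(e_G,x)=e_H$), so your symmetrization $\widehat{\cdot}$ is harmless but unnecessary.
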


\begin{proof}
We may assume that $(X, \mu)=(Y, \nu)$ and that the identity map of $X$ provides a bounded orbit equivalence between the actions $G\curvearrowright (X, \mu)$ and $H\curvearrowright (X, \mu)$.  We may also assume that both $G\curvearrowright X$ and $H\curvearrowright X$ are free. Then we have the cocycles $\kappa$ and $\lambda$ as in Section~\ref{SS-Shannon}.

Let $\Upsilon_H$ be a function $\cF(H)\rightarrow [0, \infty)$.
Define a function $\Upsilon_G: \cF(G)\rightarrow [0, \infty)$ by $\Upsilon_G(F)=\Upsilon_H(\kappa(F, X))$.
Since $G\curvearrowright (X, \mu)$ has property SC, there exists an $S_G\in \overline{\cF}(G)$ such that for every $T_G\in \overline{\cF}(G)$ there are $C_G, n_G\in \Nb$, $S_{G, 1}, \dots, S_{G, n_G}\in \overline{\cF}(G)$, and Borel  subsets $W_G$ and $\cV_{G, j}$ of $X$ for $1\le j\le n_G$  satisfying the following conditions:
\begin{enumerate}
\item $\sum_{j=1}^{n_G} \Upsilon_G(S_{G, j})\mu(\cV_{G, j})\le 1$,
\item $S_G W_G=X$,
\item if $w_1, w_2\in W_G$ satisfy $gw_1=w_2$ for some $g\in T_G$ then $w_1$ and $w_2$ are connected by a path of length at most $C_G$ in which each edge is an $S_{G, j}$-edge with both endpoints in $\cV_{G, j}$ for some $1\le j\le n_G$.
\end{enumerate}

Set $S_H=\kappa(S_G, X)\in \overline{\cF}(H)$.

Let $T_H\in \overline{\cF}(H)$. Set $T_G=\lambda(T_H, X)\in \overline{\cF}(G)$.
Then we have $C_G, n_G$, $S_{G, j}$ for $1\le j\le n_G$, $W$, and $\cV_{G, j}$ for $1\le j\le n_G$ as above. Set $C_H=C_G$, $n_H=n_G$, $S_{H, j}=\kappa(S_{G, j}, X)\in \overline{\cF}(H)$ for $1\le j\le n_H=n_G$. Also, set
$W_H=W_G$ and $\cV_{H, j}=\cV_{G, j}$ for all $1\le j\le n_H=n_G$. Then
\begin{align*}
\sum_{j=1}^{n_H} \Upsilon_H(S_{H, j})\mu(\cV_{H, j})=\sum_{j=1}^{n_G} \Upsilon_H(\kappa(S_{G, j}, X))\mu(\cV_{G, j})=\sum_{j=1}^{n_G} \Upsilon_G(S_{G, j})\mu(\cV_{G, j})\le 1,
\end{align*}
verifying condition (i) in Definition~\ref{D-SC}.
Note that $X=S_GW_G\subseteq S_HW_G=S_HW_H$. Thus $X=S_HW_H$,
which verifies condition (ii) in Definition~\ref{D-SC}.
Let $h\in T_H$ and $w_1, w_2\in W_H$ with $hw_1=w_2$.  Then
\[
w_2=hw_1=\lambda(h, w_1)w_1\in T_Gw_1.
\]
Thus $w_1$ and $w_2$ are connected by a path of length at most $C_G$ in which each edge is an $S_{G, j}$-edge with both endpoints in $\cV_{G, j}$ for some $1\le j\le n_G$.
Such an edge is also an $S_{H, j}$-edge with both endpoints in $\cV_{H, j}$.
This verifies condition (iii) in Definition~\ref{D-SC}.
\end{proof}

Recall that a p.m.p.\ action $G\curvearrowright (X, \mu)$ is said to {\it weakly contain}
another p.m.p.\ action $G\curvearrowright (Y, \nu)$ if for every finite set $F\subseteq G$,
finite collection of Borel sets $B_1 , \dots , B_n \subseteq Y$, and $\delta > 0$ there exist
Borel sets $A_1 , \dots , A_n \subseteq X$ such that
$|\mu (s_i A_i \cap A_j ) - \nu (sB_i \cap B_j )| < \delta$ for all $s\in F$ and $1\leq i,j \leq n$
\cite[Section~10]{Kec10}.

\begin{proposition} \label{P-weak containment}
Let $G\curvearrowright (Y, \nu)$ be a free p.m.p.\ action with property SC. Then the class $\fC$ of all
free p.m.p.\ actions $G\curvearrowright (X, \mu)$ which weakly contain
$G\curvearrowright (Y, \nu)$ has property SC.
\end{proposition}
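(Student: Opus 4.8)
The plan is to transport the witnesses for property SC of the single action $G\curvearrowright(Y,\nu)$ into an arbitrary member of $\fC$ by means of weak containment. Given a function $\Upsilon:\cF(G)\to[0,\infty)$, I would first apply property SC of $G\curvearrowright(Y,\nu)$ to $2\Upsilon$ (rather than to $\Upsilon$ itself, so as to leave slack for later errors), obtaining a set $S\in\overline{\cF}(G)$; this $S$ is what we output for $\fC$. For a given $T\in\overline{\cF}(G)$, property SC of $Y$ then produces $C,n\in\Nb$, sets $S_1,\dots,S_n\in\overline{\cF}(G)$, and Borel sets $W,\cV_1,\dots,\cV_n\subseteq Y$ with $\sum_{j=1}^n2\Upsilon(S_j)\nu(\cV_j)\le1$, with $SW=Y$, and with condition (iii) of Definition~\ref{D-SC} holding for the constant $C$; we output $C+1$, $n+1$, and $S_1,\dots,S_n,S_{n+1}$ with $S_{n+1}:=T$, the extra edge-type being reserved for patching.

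The key observation is that, after fixing a finite symmetric $F\subseteq G$ containing $e_G$ large enough to contain $S\cup T\cup S_1\cup\dots\cup S_n$ together with all products of at most $C$ of their elements, conditions (i)--(iii) for $W,\cV_1,\dots,\cV_n$ become conditions purely on the measures of the atoms of $\sP_F$, where $\sP$ is the finite Borel partition of $Y$ generated by $W,\cV_1,\dots,\cV_n$. Indeed, by freeness, for $w_1\in W$ and $g\in T$ with $gw_1\in W$ an admissible connecting path of length at most $C$ exists precisely when $w_1$ lies in an explicit finite union $\Gamma_g\subseteq Y$ of Boolean combinations of $F$-translates of the $\cV_j$, one Boolean combination for each of the finitely many sequences $((j_1,h_1),\dots,(j_m,h_m))$ with $m\le C$, $h_i\in S_{j_i}$, and $h_m\cdots h_1=g$. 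So (iii) reads $\nu((W\cap g^{-1}W)\setminus\Gamma_g)=0$ for all $g\in T$, (ii) reads $\nu(Y\setminus\bigcup_{s\in S}sW)=0$, and (i) only involves the numbers $\nu(\cV_1),\dots,\nu(\cV_n)$; moreover each of the sets $\cV_j$, $Y\setminus\bigcup_{s\in S}sW$, and $(W\cap g^{-1}W)\setminus\Gamma_g$ is a union of atoms of $\sP_F$ cut out by a combinatorial rule on atom labels that makes no reference to $Y$.

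Now let $G\curvearrowright(X,\mu)$ be any action in $\fC$. I would invoke the standard fact that weak containment of $Y$ in $X$ yields, for every $\delta>0$, a finite Borel partition $\sQ$ of $X$ with the same index set as $\sP$ whose $\sQ_F$-atom measures are $\delta$-close (summed over that index set) to those of $\sP_F$; see \cite[Section~10]{Kec10}. Taking $W',\cV'_1,\dots,\cV'_n\subseteq X$ to be the unions of atoms of $\sQ$ carrying the labels that $W,\cV_1,\dots,\cV_n$ carry in $\sP$, and $\Gamma'_g\subseteq X$ to be built from the $\cV'_j$ by the same recipe as $\Gamma_g$, the same combinatorial rules give $|\mu(\cV'_j)-\nu(\cV_j)|<\delta$, $\mu(X\setminus\bigcup_{s\in S}sW')<\delta$, and $\mu((W'\cap g^{-1}W')\setminus\Gamma'_g)<\delta$ for every $g\in T$. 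It then remains to make these exact. Enlarging $W'$ to $W^\star:=W'\cup(X\setminus\bigcup_{s\in S}sW')$ forces $SW^\star=X$ (since $e_G\in S$) at the cost of adjoining a set of measure $<\delta$, and for each $g\in T$ the pairs in $W^\star$ not already joined by an admissible length-$\le C$ path through $\cV'_1,\dots,\cV'_n$ all lie in a set $B_g$ of measure $<3\delta$ built from that leftover and from $(W'\cap g^{-1}W')\setminus\Gamma'_g$. Setting $B:=\bigcup_{g\in T}B_g$, $\cV^\star_{n+1}:=B\cup\bigcup_{g\in T}gB$, and $\cV^\star_j:=\cV'_j$ for $j\le n$, every such offending pair $(w_1,gw_1)$ then lands with both endpoints in $\cV^\star_{n+1}$ and is joined by a single $S_{n+1}$-edge, so condition (iii) holds with constant $C+1$ and $n+1$ edge-types; condition (ii) holds by construction; and since $\mu(\cV^\star_{n+1})$ is $O(|T|^2\delta)$ while $\mu(\cV^\star_j)\le\nu(\cV_j)+\delta$, a short estimate using $\sum_{j=1}^n2\Upsilon(S_j)\nu(\cV_j)\le1$ shows that $\sum_{j=1}^{n+1}\Upsilon(S_j)\mu(\cV^\star_j)\le1$ once $\delta$ is chosen small in terms of $T$ and $\Upsilon$, which are fixed before $X$ enters.

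I expect the main obstacle to be the tension between conditions (ii) and (iii) under an only-approximate transfer: making the covering $SW=X$ exact forces $W$ to be enlarged, whereas the bounded-path condition constrains $W$ and the pairs that violate it cannot simply be deleted from $W$ without reopening the covering. The device resolving this — collecting all of the small leftover into a single new edge-type indexed by $T$ itself, whose weight $\Upsilon(T)$ is a harmless constant once $T$ is fixed — is the one genuinely new ingredient; the remainder is the bookkeeping that recasts (ii) and (iii) as atom-measure conditions (so that weak containment applies) together with the observation that the combinatorial rules defining the relevant unions of $\sP_F$-atoms are insensitive to whether one works in $Y$ or in $X$.
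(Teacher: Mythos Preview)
Your proposal is correct and follows essentially the same approach as the paper: apply property SC of $Y$ with $2\Upsilon$, introduce an extra edge-type $S_{n+1}=T$, transfer the witnesses to $X$ via weak containment, enlarge $W'$ by the leftover $X\setminus SW'$, and absorb all failures of condition~(iii) into the new $\cV_{n+1}$. The only cosmetic difference is that you package the weak containment step by transporting the whole partition $\sP$ and reading everything off atom labels, whereas the paper transports the sets $W,\cV_j$ and the path-witness sets $Y_{f,h}$ individually and then intersects in $X$; the content is the same.
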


\begin{proof}
Let $\Upsilon$ be a function $\cF(G)\rightarrow [0, \infty)$. Since $G\curvearrowright (Y, \nu)$ has property SC, there is an $S\in \overline{\cF}(G)$ such that for any $T\in \overline{\cF}(G)$ there are $C, n\in \Nb$ and $S_1, \dots, S_n\in \overline{\cF}(G)$ and Borel sets $W, \cV_j\subseteq Y$ for $1\le j\le n$ satisfying the following conditions:
\begin{enumerate}
\item $\sum_{j=1}^n 2\Upsilon(S_j)\nu(\cV_j)\le 1$,
\item $SW=Y$,
\item if $w_1, w_2\in W$ satisfy $tw_1=w_2$ for some $t\in T$ then $w_1$ and $w_2$ are connected by a path of length at most $C$ in which every edge is an $S_j$-edge with both endpoints in $\cV_j$ for some $1\le j\le n$.
\end{enumerate}

Let $T\in \overline{\cF}(G)$. Then we have $C, n, S_1, \dots, S_n, W, \cV_1, \dots, \cV_n$ as above.
Write $[n] = \{ 1,\dots ,n\}$ and $[C] = \{ 1,\dots ,C\}$ for brevity.
Denote by $\Xi$ the set of $(f, h)$ such that $f$ is a function $[C]\rightarrow [n]$ and $h$ is a function $[C]\rightarrow G$ satisfying $h(j)\in S_{f(j)}$ for all $j\in [C]$. For each $t\in T$, denote by $\Xi_t$ the set of $(f, h)\in \Xi$ satisfying $h(C)h(C-1)\dots h(1)=t$. For each $(f, h)\in \Xi$, denote by $Y_{f, h}$ the set of $y\in Y$ satisfying $h(j-1)\dots h(1)y, h(j)h(j-1)\dots h(1)y\in \cV_{f(j)}$ for all $j\in [C]$. Then the above condition (iii) means that for every $t\in T$ the set $W\cap t^{-1}W$ is contained in $\bigcup_{(f, h)\in \Xi_t}Y_{f, h}$.

Put $S_{n+1}=T\in \overline{\cF}(G)$. Take $0<\delta<1$ such that $\delta(\Upsilon(T)|T|(2|T|+2C|\Xi|)+\sum_{j=1}^n\Upsilon(S_j))\le 1/2$.

Let $G\curvearrowright (X, \mu)$ be an action in $\fC$. Because it weakly contains $G\curvearrowright (Y, \nu)$, there are Borel sets $W'\subseteq X$, $\cV_j'\subseteq X$ for $j\in [n]$, and $X_{f, h}\subseteq X$ for $(f, h)\in \Xi$ satisfying the following conditions:
\begin{enumerate}
\item[(i)] $|\mu(SW')-\nu(SW)|<\delta$,
\item[(ii)] $|\mu(\cV_j')-\nu(\cV_j)|<\delta$ for all $j\in [n]$,
\item[(iii)] $\mu(h(j-1)\dots h(1)X_{f, h}\setminus \cV_j'), \mu(h(j)h(j-1)\dots h(1)X_{f, h}\setminus \cV_j')<\delta$ for all $(f, h)\in \Xi$ and $j\in [C]$,
\item[(iv)] $\mu((W'\cap t^{-1}W')\setminus \bigcup_{(f, h)\in \Xi_t}X_{f, h})<\delta$ for each $t\in T$.
\end{enumerate}

Put $W''=W'\cup (X\setminus SW')$. Then $SW''=X$, verifying condition (ii) in Definition~\ref{D-SC}.

For each $(f, g)\in \Xi$, denote by $X_{f, h}'$ the set of $x\in X_{f, h}$ satisfying $h(j-1)\dots h(1)x, h(j)h(j-1)\dots h(1)x\in \cV_{f(j)}'$ for all $j\in [C]$. Then $\mu(X_{f, h}\setminus X_{f, h}')<2C\delta$.

For each $t\in T$, put $W^\dag_t=(W'\cap t^{-1}W')\setminus \bigcup_{(f, h)\in \Xi_t}X_{f, h}'$. Then
\begin{align*}
\mu\bigg(\bigcup_{t\in T}W^\dag_t\bigg)&\le \sum_{t\in T}\mu(W^\dag_t)\\
&\le \sum_{t\in T}\mu\bigg((W'\cap t^{-1}W')\setminus \bigcup_{(f, h)\in \Xi_t}X_{f, h}\bigg)+\sum_{t\in T}\sum_{(f, h)\in \Xi_t}\mu(X_{f, h}\setminus X_{f, h}')\\
&< |T|\delta+2C|\Xi|\delta.
\end{align*}

Put $\cV_{n+1}'=T(T(X\setminus SW')\cup \bigcup_{t\in T}W^\dag_t)\subseteq X$.  Then
\begin{align*}
\sum_{j=1}^{n+1}\Upsilon(S_j)\mu(\cV_j')&\le \Upsilon(S_{n+1})\mu(\cV_{n+1}')+\sum_{j=1}^n\Upsilon(S_j)(\nu(\cV_j)+\delta) \\
&\le \Upsilon(T)|T|(|T|\delta+|T|\delta+2C|\Xi|\delta)+\frac12 +\delta \sum_{j=1}^n\Upsilon(S_j)\le 1,
\end{align*}
verifying condition (i) in Definition~\ref{D-SC}.

Let $t\in T$ and $w_1, w_2\in W''$ with $tw_1=w_2$. If $w_1\in X'_{f, h}$ for some
$(f, h)\in \Xi_t$, then $w_1$ and $w_2$ are connected by the path $w_1, h(1)w_1, \dots, h(C)\dots h(1)w_1=tw_1=w_2$ of length $C$ whose $j$th edge is an $S_{f(j)}$-edge with both endpoints in $\cV_{f(j)}'$ for all $1\le j\le C$. Thus we may assume that $w_1\in (W''\cap t^{-1}W'')\setminus \bigcup_{(f, h)\in \Xi_t}X'_{f, h}\subseteq (X\setminus SW')\cup t^{-1}(X\setminus SW')\cup W^\dag_t$. It follows that $(w_1, w_2)$ is an $S_{n+1}$-edge with both endpoints in $\cV_{n+1}'$. This verifies condition (iii) in Definition~\ref{D-SC}.
\end{proof}

Using the above proposition we derive the following.

\begin{proposition} \label{P-Bernoulli}
The following conditions are equivalent:
\begin{enumerate}
\item $G$ has property SC,
\item every free p.m.p.\ action $G\curvearrowright (X, \mu)$ has property SC,
\item there exists a nontrivial Bernoulli action of $G$ with property SC.
\end{enumerate}
\end{proposition}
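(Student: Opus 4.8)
The plan is to prove the cycle (ii)$\Rightarrow$(iii)$\Rightarrow$(i)$\Rightarrow$(ii). The implication (ii)$\Rightarrow$(iii) is immediate once we know that a nontrivial Bernoulli action of $G$ exists; indeed any nontrivial Bernoulli action is a free p.m.p.\ action of $G$, so (ii) gives it property SC. The implication (iii)$\Rightarrow$(i) is the place where Proposition~\ref{P-weak containment} does the work: the key fact to invoke is that a nontrivial Bernoulli action of $G$ is weakly contained in \emph{every} free p.m.p.\ action of $G$ (this is a well-known consequence of the fact that Bernoulli actions are weakly contained in any free action, e.g.\ via Abért--Weiss, and the reference \cite{Kec10} is already in play). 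Hence if $G\curvearrowright (Y^G,\nu^G)$ is a nontrivial Bernoulli action with property SC, then by Proposition~\ref{P-weak containment} the class of all free p.m.p.\ actions of $G$ that weakly contain it has property SC; but that class is \emph{all} free p.m.p.\ actions of $G$, which is exactly the statement that $G$ has property SC.

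The implication (i)$\Rightarrow$(ii) is essentially trivial from the definition: if the class of \emph{all} free p.m.p.\ actions of $G$ has property SC, then a fortiori each singleton subclass inherits the property, since the quantifiers in Definition~\ref{D-SC} — first choose $S$, then for each $T$ choose $C,n,S_1,\dots,S_n$, then for each action in the class produce the sets $W,\cV_1,\dots,\cV_n$ — specialize directly: the data $S$, $C$, $n$, $S_j$ chosen for the full class work verbatim for the single action, and one just forgets about the other actions. So condition~(ii), that every individual free p.m.p.\ action has property SC, follows.

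The one genuine technical point — and the main thing to get right — is the invocation of weak containment of Bernoulli actions: I would state explicitly that every nontrivial Bernoulli action of a countably infinite group is weakly contained in every free p.m.p.\ action (and cite the appropriate place, e.g.\ Abért--Weiss or \cite{Kec10}), since this is what converts the hypothesis ``some Bernoulli action has property SC'' into ``the class of all free actions has property SC.'' A small subtlety worth a remark: Proposition~\ref{P-weak containment} concerns the class of actions \emph{weakly containing a fixed $G\curvearrowright (Y,\nu)$}, so to apply it we need the Bernoulli action in (iii) to play the role of $G\curvearrowright(Y,\nu)$; this is fine because the Bernoulli action is itself free. Beyond that, everything is a direct unwinding of Definition~\ref{D-SC}, and no new estimates are needed.
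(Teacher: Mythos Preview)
Your proposal is correct and follows essentially the same approach as the paper: the implications (i)$\Rightarrow$(ii)$\Rightarrow$(iii) are declared trivial there, and (iii)$\Rightarrow$(i) is obtained exactly as you do, by combining Proposition~\ref{P-weak containment} with the Ab\'ert--Weiss theorem that every free p.m.p.\ action of $G$ weakly contains every nontrivial Bernoulli action (the paper cites \cite{AbeWei13} for this). Your additional unpacking of why (i)$\Rightarrow$(ii) holds from the quantifier structure of Definition~\ref{D-SC} is accurate and harmless.
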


\begin{proof}
(i)$\Rightarrow$(ii)$\Rightarrow$(iii). Trivial.

(iii)$\Rightarrow$(i). Combine Proposition~\ref{P-weak containment} with the theorem of Ab\'{e}rt and Weiss that every free
p.m.p.\ action of $G$ weakly contains every nontrivial Bernoulli action of $G$ \cite{AbeWei13}.
\end{proof}

We can now verify that the existence of the bound $C$ in Definition~\ref{D-SC} is automatic
when $\fC$ consists of either a single free p.m.p.\ action or all free p.m.p.\ actions of a fixed $G$.
It suffices to check the case of a single free p.m.p.\ action $G\curvearrowright (X, \mu)$ 
in view of the implication (iii)$\Rightarrow$(i) in Proposition~\ref{P-Bernoulli}.
Suppose then that the action $G\curvearrowright (X, \mu)$ satisfies the weaker formulation that omits the $C$
and suppose that we are given $S$, $T$,  $n$, $S_j$ for $1\le j\le n$, and $W$ and $\cV_j$ for $1\le j\le n$ satisfying the weaker condition with respect to the function $2\Upsilon$. For each $g\in T$ and $C\in \Nb$ we define $W_{g, C}$ to be the set of all $w_1\in W\cap g^{-1}W$ such that $w_1$ and $gw_1$ can be connected by a path of length at most $C$ in which each edge is an $S_j$-edge with both endpoints in $\cV_j$ for some $1\le j\le n$. Then $W\cap g^{-1}W$ is the increasing union of the sets $W_{g, C}$ for $C\in \Nb$. Thus given $\tau>0$ we can find some $C\in \Nb$ such that $\mu((W\cap g^{-1}W)\setminus W_{g, C})<\tau$ for all $g\in T$. Set $W^\dag=W\setminus \bigcup_{g\in T}((W\cap g^{-1}W)\setminus W_{g, C})$, $S_{n+1}=T$, and $\cV_{n+1}=T(W\setminus W^\dag)$.
Provided that $\tau$ is small enough so that $\Upsilon(T)|T|^2\tau\le 1/2$, we then have
\[
\sum_{j=1}^{n+1}\Upsilon(S_j)\mu(\cV_j)\le \frac12 +\Upsilon(S_{n+1})\mu(\cV_{n+1})\le \frac12 +\Upsilon(T)|T|^2\tau\le 1.
\]
Let $g\in T$ and $w_1, w_2\in W$ with $gw_1=w_2$. If $w_1\in W^\dag$ then $w_1\in W_{g, C}$,
in which case $w_1$ and $w_2$ are connected by a path of length at most $C$ in which each edge is an $S_j$-edge with both endpoints in $\cV_j$ for some $1\le j\le n$. If on the other hand $w_1\not\in W^\dag$, then $(w_1, w_2)$ is an $S_{n+1}$-edge with both endpoints in $\cV_{n+1}$.

\subsection{The shrinking property} \label{SS-shrinking}

The shrinking property introduced here will be of technical value in the proofs of
Proposition~\ref{P-normal SC} and Lemma~\ref{L-union SC} (both via Proposition~\ref{P-SC'}) as well as
Lemma~\ref{L-small} and Proposition~\ref{P-product}.
For the purpose
of these applications we provide a characterization of when it holds in Proposition~\ref{P-shrinking}.
Notice that, within the chain of quantification, the position of the sets $S_1$ (acting as the generator
of a graph in which the path in (iii) lives) and $S$ (expressing the ``scale'' 
at which the set $W$ is dense in $X$) is a reversal of what occurs in the definition of property SC.
The proof of Proposition~\ref{P-normal SC} gives an illustration of how this can be leveraged 
in direct conjunction with property SC.

\begin{definition} \label{D-shrinking}
We say that $G$ has the {\it shrinking property} if there is an $S_1\in \overline{\cF}(G)$ such that for every $\varepsilon>0$ there is an $S\in \overline{\cF}(G)$ so that
for every $\delta>0$ there is a $C\in \Nb$ such that given any free p.m.p.\ action
$G\curvearrowright (X, \mu)$ we can find Borel sets $Z\subseteq \cV\subseteq X$ satisfying the following conditions:
\begin{enumerate}
\item $S\cV=X$,
\item $\mu(\cV)\le \varepsilon$ and $\mu(Z)\le \delta$,
\item every point of $\cV$ is connected to some point of $Z$ by an $S_1$-path of length at most $C$ whose
points all belong to $\cV$.
\end{enumerate}
\end{definition}

\begin{lemma} \label{L-Folner for virtually cyclic}
Suppose that $s\in G$ generates an infinite subgroup $G_1$ such that $G/G_1$ is finite. Let $B$ be a subset of $G$
containing $e_G$ such that $G=\bigsqcup_{b\in B}G_1b$.  For each $n\in \Nb$ set $F_n=\{s^{j}: -n\le j\le n\}B$. Then $\{F_n\}_{n\in \Nb}$ is a left F{\o}lner sequence for $G$.
\end{lemma}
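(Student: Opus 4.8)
The plan is to prove this directly by checking the Følner condition for each generator $g \in G$, reducing to the structure of $G$ as a finite union of cosets of the infinite cyclic group $G_1 = \langle s \rangle$. Write $I_n = \{ s^j : -n \le j \le n \}$, so that $F_n = I_n B$ with $B$ a fixed transversal for the right cosets $G_1 \backslash G$ and $|B| = [G : G_1] =: m < \infty$. Since $B$ is finite and the union $G = \bigsqcup_{b \in B} G_1 b$ is disjoint, we have $|F_n| = |I_n| \cdot |B| = (2n+1)m$, which grows without bound.

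The key step is to estimate $|gF_n \,\Delta\, F_n|$ for an arbitrary $g \in G$. For each $b \in B$, the element $gb$ lies in a unique coset $G_1 b'$ for some $b' \in B$, so $gb = s^{k(b)} b'$ for a unique integer $k(b)$; let $K = \max_{b \in B} |k(b)|$, a constant depending only on $g$. Then $g I_n b = s^{k(b)} I_n b' = \{ s^j : k(b) - n \le j \le k(b) + n \} b'$, and since $I_n b' = \{ s^j : -n \le j \le n\} b'$, the symmetric difference $(g I_n b) \,\Delta\, (I_n b')$ consists of at most $2|k(b)| \le 2K$ elements of the coset $G_1 b'$. Summing over $b \in B$ and using that the cosets $G_1 b'$ are pairwise disjoint (and that $b \mapsto b'$ need not be injective, but each term is still controlled), one obtains $|g F_n \,\Delta\, F_n| \le \sum_{b \in B} 2|k(b)| \le 2Km$. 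Hence $|gF_n \,\Delta\, F_n| / |F_n| \le 2Km / ((2n+1)m) \to 0$ as $n \to \infty$, which is exactly the left Følner condition.

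The one point requiring a little care is the handling of the map $b \mapsto b'$ on the transversal: it is the permutation of $B$ induced by left multiplication by $g$ composed with the quotient $G \to G_1\backslash G$, so it is in fact a bijection of $B$, and thus $\{ G_1 b' : b \in B \} = \{ G_1 b : b \in B\}$ with no repetitions. This lets us rewrite $g F_n = \bigsqcup_{b \in B} g I_n b = \bigsqcup_{b' \in B} s^{k(b)} I_n b'$ as a disjoint union indexed cleanly by $B$, so that $gF_n \,\Delta\, F_n$ decomposes as the disjoint union over $b' \in B$ of $(s^{k(b)} I_n \,\Delta\, I_n) b'$ where $b$ is the preimage of $b'$; each such piece has size at most $2K$, giving the bound above. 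I do not expect any serious obstacle here — the main (minor) thing to get right is bookkeeping with the transversal and the disjointness of cosets to ensure the symmetric-difference estimate is clean.
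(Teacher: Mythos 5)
Your key identity $g I_n b = s^{k(b)} I_n b'$ is not valid unless $g$ normalizes $G_1 = \langle s\rangle$, which the hypotheses do not supply. The left side is $\{g s^j b : |j|\le n\} \subseteq g G_1 b$, and for this to lie in a single right coset $G_1 b'$ one needs $g G_1 g^{-1} \subseteq G_1$; equivalently, the rewriting $g s^j b = s^{k(b)+j} b'$ tacitly assumes $s^j$ can be moved past $g$. A concrete counterexample: let $G = \langle a, t : a^3 = e,\ t a t^{-1} = a^{-1}\rangle$ and $s = at$, so that $s^{2k}=t^{2k}$ and $s^{2k+1}=at^{2k+1}$; then $G_1 = \langle s\rangle$ has index $3$ but $asa^{-1} = t \notin G_1$, and for $g=a$, $b=e_G$ the set $a I_n = \{at^j : j\ \text{even},\ |j|\le n\} \cup \{a^2 t^j : j\ \text{odd},\ |j|\le n\}$ meets two distinct right cosets of $G_1$, so it is not of the form $s^k I_n b'$. (For the same reason, the claim that $b\mapsto b'$ is a bijection of $B$ is not automatic: left multiplication by $g$ does not descend to a well-defined map on $G_1\backslash G$ when $G_1$ is not normal.)

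The paper's proof is organized precisely to avoid this issue. It passes to a normal finite-index subgroup $G_0 = \langle s^N\rangle$ of $G$ contained in $G_1$ and uses that conjugation by any $g \in G$ acts on $G_0 \cong \Zb$ by $\pm 1$, hence fixes each $A_n = \{s^{Nj} : |j| \le n\}$ setwise; this is the structural fact that replaces the coset-preservation your argument relied on. It then obtains the F{\o}lner property from a containment $K F_n \subseteq F_{(M+k)N}$ together with the ratio estimate $|F_{(M+k)N}|/|F_n| \to 1$, rather than from a direct symmetric-difference count. A repair of your approach would have to perform the cosetwise bookkeeping with respect to $G_0$ rather than $G_1$, at which point it essentially reproduces the paper's computation.
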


\begin{proof}
Since $G_1$ has finite index in $G$ and $G$ is finitely generated, $G_1$ has a subgroup $G_0$ such that $G_0$ is normal and has finite index in $G$. Then $G_0$ is generated by $s^N$ for some $N\in \Nb$.

For each $n\in \Nb$, put $A_n=\{s^{Nj}: -n\le j\le n\}\in \cF(G_0)$. For each $g\in G$, the map $x\mapsto gxg^{-1}$ is an automorphism of $G_0\cong \Zb$ and hence either $gxg^{-1}=x$ for all $x\in G_0$ or $gxg^{-1}=x^{-1}$ for all $x\in G_0$, which implies that $gA_ng^{-1}=A_n$ for every $n\in \Nb$.

Let $K\in \cF(G)$. Then we can find some $M\in \Nb$ such that $KF_NB\subseteq F_{MN}$. For any $n\in \Nb$ satisfying $n\ge N$, writing $n$ as $kN+m$ for some $0\le m\le N-1$ and $k\in \Nb$ we have
\[
KF_n\subseteq K F_NA_{k}B=A_kKF_NB\subseteq A_kF_{MN}=F_{(M+k)N},
\]
and hence $|KF_n|\le |F_{(M+k)N}|=\frac{2(M+k)N+1}{2kN+1}|F_{kN}|\le \frac{2(M+k)N+1}{2kN+1}|F_n|$. As $n\to \infty$ we have $k\to \infty$, whence $\frac{2(M+k)N+1}{2kN+1}\to 1$. Therefore $\{F_n\}_{n\in \Nb}$ is a left F{\o}lner sequence for $G$.
\end{proof}

\begin{lemma} \label{L-increasing virtual cyclic to SC}
Suppose that $G$ is locally virtually cyclic but neither locally finite nor virtually cyclic. Then $G$ has both property SC and the shrinking property.
\end{lemma}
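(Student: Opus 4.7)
The plan is to exploit two structural facts forced by the hypotheses: $G$ contains an element $s_0$ of infinite order (as $G$ is locally virtually cyclic but not locally finite), and $\langle s_0\rangle$ has infinite index in $G$ (as $G$ is not virtually cyclic). By successively adjoining generators to an enumeration of $G$, one obtains an increasing chain $G_1\subseteq G_2\subseteq\cdots$ of finitely generated infinite virtually cyclic subgroups with $\langle s_0\rangle\subseteq G_1$ and $\bigcup_n G_n=G$; since each $G_n$ is virtually cyclic, the infinite subgroup $\langle s_0\rangle$ has finite index $r_n:=[G_n:\langle s_0\rangle]$ in $G_n$, and the hypothesis forces $r_n\to\infty$. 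Fix transversals $B_n\subseteq G_n$ with $e_G\in B_n$. By Lemma~\ref{L-Folner for virtually cyclic}, the sets $F_k^{(n)}=\{s_0^j:|j|\le k\}B_n$ are left F{\o}lner for the amenable group $G_n$, so for any free p.m.p.\ action $G\curvearrowright(X,\mu)$ and $\tau>0$ the Rokhlin lemma applied to the $G_n$-action yields a Borel base $A\subseteq X$ whose $F_k^{(n)}$-translates are pairwise disjoint and fill measure at least $1-\tau$.

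For the shrinking property I would declare $S_1:=\{e_G,s_0,s_0^{-1}\}$ once and for all. Given $\varepsilon>0$, choose $n$ with $r_n\ge 4/\varepsilon$ and set $S:=B_n$; given $\delta>0$, choose $k$ with $(2k+1)r_n\delta\ge 2$, put $C:=k$, and apply Rokhlin with $\tau\le\min(\varepsilon/4,\delta/2)$ to obtain $A$. Define
\[
\cV:=\Big(\bigsqcup_{|j|\le k}s_0^jA\Big)\cup(X\setminus F_k^{(n)}A),\qquad Z:=A\cup(X\setminus F_k^{(n)}A).
\]
The disjointness of the Rokhlin levels gives $\mu(\cV)\le 1/r_n+\tau\le\varepsilon$ and $\mu(Z)\le 1/((2k+1)r_n)+\tau\le\delta$, while the set identity $B_n\cdot\bigsqcup_{|j|\le k}s_0^jA=F_k^{(n)}A$ combined with $e_G\in S$ gives $S\cV=X$. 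Every column point $s_0^ja$ reaches $a\in Z$ via the $S_1$-path $s_0^ja,s_0^{j-1}a,\dots,a$ of length $|j|\le k=C$ inside the column, and points of the exception already lie in $Z$. The non-abelian case requires that $B_n$ normalize $\langle s_0\rangle$ so that the displayed set identity holds; this can be arranged by first passing to a suitable power of $s_0$ with cyclic span normal in a cofinal subchain, at the cost only of constants.

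For property SC the plan is a two-scale refinement. Given $\Upsilon:\cF(G)\to[0,\infty)$, reserve an integer $n_0=n_0(\Upsilon)$ for later specification and set $S:=B_{n_0}$. When $T\in\overline{\cF}(G)$ is presented, pick $m\ge n_0$ with $G_m\supseteq T$ and decompose $B_m=B_{n_0}D$ for $D\subseteq G_m$ a transversal of $G_{n_0}$ in $G_m$; apply Rokhlin to the $G_m$-action with shape $F_k^{(m)}$ for $k=k(T,\Upsilon)$ large, and take $W$ to be the thin slab $\{s_0^jd:|j|\le k,\,d\in D\}\cdot A$ together with the exception. Then $SW=X$ via $B_{n_0}D=B_m$, and each $t$-edge inside the slab is resolved by the $G_m$-decomposition $t=s_0^{j_t}b_{n_0}^{(t)}d_t$ into a path of length $|j_t|+2$ consisting of $s_0$-steps followed by one $b_{n_0}^{(t)}$-step and one $d_t$-step, traversing auxiliary sets $\cV_j$ keyed to the elements of $B_{n_0}\cup D$; residual $t$-edges exiting the slab or touching the exception are absorbed into one more auxiliary set paired with $T\cup T^{-1}$ of measure $\lesssim|T|\tau$.

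The hard part will be to verify condition (i) of Definition~\ref{D-SC}, $\sum_j\Upsilon(S_j)\mu(\cV_j)\le 1$, since $S$ and the reference family of possible $S_j$'s it produces must be committed before $T$ and hence before $\Upsilon(T\cup T^{-1})$ are known. The resolution should rest on the $S_j$'s being drawn from a countable reference family $\{\{e_G,b,b^{-1}\}:b\in B_{n_0}\cup D\}\cup\{T\cup T^{-1}\}$ whose associated $\cV_j$-measures scale like $1/r_{n_0}$ for the $B_{n_0}$-indexed terms, like $1/(2k+1)$ (times $|T|$) for the $D$-indexed terms, and like $|T|\tau$ for the residual term. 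One then chooses $n_0$ enormous at the outset to defeat the $B_{n_0}$-weights uniformly in $T$, chooses $k$ enormous after $T$ appears to defeat the $D$-weights, and finally chooses $\tau$ tiny to defeat the residual term, producing both properties simultaneously.
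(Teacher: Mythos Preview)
Your overall strategy matches the paper's: build an increasing chain $G_1\subseteq G_2\subseteq\cdots$ of virtually cyclic subgroups with $[G_n:\langle s_0\rangle]\to\infty$ and use tower decompositions over the F{\o}lner shapes of Lemma~\ref{L-Folner for virtually cyclic}. The shrinking-property half is essentially right, modulo two repairs. First, a single-shape Rokhlin tower for a general virtually cyclic $G_n$-action is not available off the shelf; the paper instead invokes the Ornstein--Weiss quasitower theorem and carries several shapes $F_{n_1N},\dots,F_{n_lN}$ (together with deformed shapes $F_{n_jN,x}$) through the estimates. Second, the identity $B_nK_k=K_kB_n$ you need for $S\cV=X$ fails unless $\langle s_0\rangle\trianglelefteq G_n$, and your fix of ``passing to a power of $s_0$ normal in a cofinal subchain'' cannot work as written: $S_1$ must be declared before $\varepsilon$, hence before $n$, so the power cannot depend on $n$. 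The paper keeps $S_1=\{e,s,s^{-1}\}$, then at the stage $m$ determined by $\Upsilon$ chooses $N$ with $\langle s^N\rangle\trianglelefteq G_m$ and absorbs the commutation defect into $S$ by taking $S=(K_NB)\cup(K_NB)^{-1}$.

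The substantive gap is your path for property SC. Writing $t=s_0^{j_t}\,b\,d$ and walking along that word does not place the intermediate vertices in small sets: the points $dw_1$ and $bdw_1$ land at essentially arbitrary positions in the $G_m$-tower (left multiplication by $d$ or $b$ scrambles the $(s_0^j,d')$-coordinates of $w_1$), so any $\cV_b,\cV_d$ required to contain them will not obey your claimed bounds. The paper's path is coordinate-based, not word-based: arranging that $w_1,w_2\in W$ lie in the interior of the same tower forces a common base point $z$, say $w_i=t_id_iz$ with $t_i\in K_{n_jN}$ and $d_i\in D$; one then descends each $w_i$ along its $s$-column to $d_iz$ and connects $d_1z$ to $d_2z$ by a single $(D^{-1}D)$-edge inside $\cV_2:=\bigcup_j DZ_j$. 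Only three pairs are needed, namely $S_1=\{e,s,s^{-1}\}$, $S_2=D^{-1}D$, $S_3=T$, with $\mu(\cV_1)\lesssim 1/[G_m:G_1]$ and $\mu(\cV_2)\lesssim 1/(2n_0N+1)$, and the quantifier management you outline (choose $m$ from $\Upsilon$, then $n_0$ large, then $\eta$ small) goes through. Replacing your $t$-word walk by this descend--jump--ascend path is the missing idea.
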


\begin{proof}
Since $G$ is not locally finite, it has a finitely generated infinite subgroup $G_2$. As $G$ is locally virtually cyclic, $G_2$ is virtually cyclic and hence contains a subgroup $G_1$ such that $G_1$ is isomorphic to $\Zb$ and has finite index in $G_2$.

Take an increasing sequence $\{U_n\}_{n\in \Nb}$ of finite subsets of $G$ with union $G$.
Let $n\ge 3$. Denote by $G_n$ the subgroup of $G$ generated by $G_2$ and $U_n$. Then $G_n$ is finitely generated, and hence has a finite-index cyclic subgroup $G_n'$. Since $G_1$ is infinite and $G_n'$ has finite index in $G_n$, the intersection $G_1\cap G_n'$ must be nontrivial and hence have finite index in $G_n'$. Then $G_1\cap G_n'$ has finite index in $G_n$, whence $G_1$ has finite index in $G_n$.

Now we have that $\{G_n\}_{n\in \Nb}$ is an increasing sequence of proper subgroups of $G$ with union $G$, $G_1$ is isomorphic to $\Zb$, and $G_1$ has finite index in $G_n$ for every $n\in \Nb$. Then $[G_n: G_1]\to \infty$ as $n\to \infty$.
Take an $s\in G_1$ generating $G_1$.

Put $S_1=\{s, e_G, s^{-1}\}\in \overline{\cF}(G)$.
Let $\Upsilon$ be a function $\cF(G)\rightarrow [0, \infty)$.
Take $m\in \Nb$ such that $\Upsilon(S_1)\le [G_m:G_1]/6$.
Since $G_1$ has finite index in $G_m$, $G_1$ has a subgroup $G_0$ such that $G_0$ is a finite-index normal subgroup of  $G_m$. Then $G_0$ is generated by $s^N$ for some $N\in \Nb$.

For each $n\in \Nb$, put $A_n=\{s^{Nj}: -n\le j\le n\}\in \cF(G_0)$. For each $g\in G_m$, the map $x\mapsto gxg^{-1}$ is an automorphism of $G_0\cong \Zb$ and hence either $gxg^{-1}=x$ for all $x\in G_0$ or $gxg^{-1}=x^{-1}$ for all $x\in G_0$, which implies that $gA_ng^{-1}=A_n$ for every $n\in \Nb$.

Take a $B\in \cF(G_m)$ containing $e_G$ such that $G_m=\bigsqcup_{b\in B}G_1b$. For each $n\in \Nb$ put $K_n=\{s^j: -n\le j\le n\}\in \cF(G_1)$. Also, put $S=(K_NB)\cup (K_NB)^{-1}\in \overline{\cF}(G_m)$.

Let $T\in \overline{\cF}(G)$.
Take an $M\in \Nb$ bigger than $m$ such that $T\subseteq G_M$. Take a $D\in \cF(G_M)$ containing $e_G$ such that $G_M=\bigsqcup_{d\in D}G_md$.
Put $S_2=D^{-1}D\in \overline{\cF}(G_M)$ and $S_3=T\in \overline{\cF}(G_M)$. Take $\tau>0$ with $\tau|T|\Upsilon(T)\le 1/3$, and take $0<\eta<\min\{1/2, \tau/(4|S|+1)\}$.

For $F\in \cF(G_M)$ write $\partial_TF$ for the set $\{g\in F: Tg\nsubseteq F\}$.
For each $n\in \Nb$ put $F_n=K_nBD\in \cF(G_M)$. From Lemma~\ref{L-Folner for virtually cyclic} we know that $\{F_{nN}\}_{n\in \Nb}$ is an increasing left F{\o}lner sequence for $G_M$. Then we can find some $n_0\in \Nb$ such that $\Upsilon(S_2)/(2n_0N+1)< 1/6$ and  for any $n\ge n_0$ one has $|\partial_TF_{nN}|\le \eta |F_{nN}|$.
By the Ornstein--Weiss quasitower theorem \cite[Theorem 4.44]{KerLi16} there are some $l\in \Nb$ and $n_0<n_1<n_2<\dots <n_l$ in $\Nb$  such that
for any free p.m.p.\ action $G\curvearrowright (X, \mu)$
there are Borel sets $Z_1, \dots, Z_l\subseteq X$ satisfying the following conditions:
\begin{enumerate}
\item for each $1\le j\le l$ there is a set $F_{n_jN, x}\subseteq F_{n_jN}$ depending measurably on $x\in Z_j$
and satisfying $|F_{n_jN, x}|\ge (1-\eta/|T|)|F_{n_jN}|$ for each $x\in Z_j$ so that the sets $F_{n_jN, x}x$ for $x\in Z_j$ are pairwise disjoint,
\item $F_{n_iN}Z_i\cap F_{n_jN}Z_j=\emptyset$ for $i\neq j$,
\item $\mu(\bigcup_{j=1}^lF_{n_jN}Z_j)\ge 1-\eta$.
\end{enumerate}
Put $C=2n_lN+1$.

Now let $G\curvearrowright (X, \mu)$ be a free p.m.p.\ action.
Then we have $Z_1, \dots, Z_l$ as above.
Put
\[
W'=\bigcup_{j=1}^l\bigcup_{x\in Z_j}(K_{n_jN}D\cap (F_{n_jN, x}\setminus \partial_TF_{n_jN, x}))x,
\]
and $W=(X\setminus SW')\cup W'$.
Then $W'$  and $W$ are Borel subsets of $X$. Clearly $SW=X$, verifying condition (ii) in Definition~\ref{D-SC}.

Let $1\le j\le l$.
For each $x\in Z_j$
we have
\[
\partial_TF_{n_jN, x}\subseteq \partial_TF_{n_jN}\cup T^{-1}(F_{n_jN}\setminus F_{n_jN, x})
\]
and hence
\begin{align*}
|K_{n_jN}D\setminus (F_{n_jN, x}\setminus \partial_TF_{n_jN, x})|&\le |F_{n_jN}\setminus (F_{n_jN, x}\setminus \partial_TF_{n_jN, x})|\\
&=|(F_{n_jN}\setminus F_{n_jN, x})\cup \partial_TF_{n_jN, x}|\\
&\le |\partial_TF_{n_jN}\cup T^{-1}(F_{n_jN}\setminus F_{n_jN, x})|\\
&\le |\partial_TF_{n_jN}|+|T|\cdot |F_{n_jN}\setminus F_{n_jN, x}|\\
&\le 2\eta|F_{n_jN}|.
\end{align*}
Set
\[
\cV_1=\bigcup_{j=1}^lK_{n_jN}DZ_j\subseteq X,  \hspace*{4mm} \cV_2=\bigcup_{j=1}^lDZ_j\subseteq X, \hspace*{4mm} \mbox{and} \hspace*{4mm} \cV_3=T(W\setminus W')\subseteq X.
\]
Then
\begin{align*}
\Upsilon(S_1)\mu(\cV_1)
&\le \Upsilon(S_1)\sum_{j=1}^l|K_{n_jN}D|\mu(Z_j)\\
&= \frac{\Upsilon(S_1)}{[G_m: G_1]}\sum_{j=1}^l|K_{n_jN}BD|\mu(Z_j)\\
&=\frac{\Upsilon(S_1)}{[G_m: G_1]}\sum_{j=1}^l|F_{n_jN}|\mu(Z_j)\\
&\le \frac{\Upsilon(S_1)}{[G_m: G_1](1-\eta)}\le  \frac{1}{6(1-\eta)}\le \frac13 ,
\end{align*}
and
\begin{align*}
\Upsilon(S_2)\mu(\cV_2)
&\le \Upsilon(S_2)\sum_{j=1}^l|D|\mu(Z_j)\\
&\le \frac{\Upsilon(S_2)}{|K_{n_0N}|}\sum_{j=1}^l|K_{n_jN}BD|\mu(Z_j)\\
&=\frac{\Upsilon(S_2)}{2n_0N+1}\sum_{j=1}^l|F_{n_jN}|\mu(Z_j)\\
&\le \frac{\Upsilon(S_2)}{(2n_0N+1)(1-\eta)}\le \frac{1}{6(1-\eta)}\le \frac13 .
\end{align*}
For each $1\le j\le l$, since $K_NB\subseteq G_m$ we have
\[
F_{n_jN}=K_{n_jN}BD=A_{n_j-1}K_NBD=K_NBA_{n_j-1}D\subseteq SA_{n_j-1}D\subseteq SK_{n_jN}D.
\]
Thus $S\cV_1\supseteq \bigcup_{j=1}^lF_{n_jN}Z_j$. Therefore
\begin{align*}
\mu(W\setminus W')&\le \mu(X\setminus SW')\\
&\le \mu(X\setminus S\cV_1)+|S|\mu(\cV_1\setminus W')\\
&\le \mu\bigg(X\setminus \bigcup_{j=1}^lF_{n_jN}Z_j\bigg)+|S|\mu\bigg(\bigcup_{j=1}^l\bigcup_{x\in Z_j}(K_{n_jN}D\setminus (F_{n_jN, x}\setminus \partial_TF_{n_jN, x}))x\bigg)\\
&\le \eta+2\eta|S|\sum_{j=1}^l|F_{n_jN}|\mu(Z_j)\\
&\le \eta+\frac{2\eta|S|}{1-\eta}\le \eta+4\eta |S|\le \tau,
\end{align*}
and hence
\[
\Upsilon(S_3)\mu(\cV_3)\le \Upsilon(T)|T|\mu(W\setminus W')\le \tau \Upsilon(T)|T|\le \frac13.
\]
Combining estimates we obtain
\[
\sum_{j=1}^3\Upsilon(S_j)\mu(\cV_j)\le 1,
\]
verifying condition (i) in Definition~\ref{D-SC}.

Next let $g\in T$ and $w_1, w_2\in W$ with $gw_1=w_2$. If $w_1\not\in W'$ or $w_2\not\in W'$, then $(w_1, w_2)$ is an $S_3$-edge with both endpoints in $\cV_3$. Thus we may assume that
$w_1, w_2\in W'$.
For each $i=1, 2$, we have $w_i=h_ix_i$ for some $1\le j_i\le l$, $x_i\in Z_{j_i}$ and $h_i\in  K_{n_{j_i}N}D\cap(F_{n_{j_i}N, x_i}\setminus \partial_TF_{n_{j_i}N, x_i})$. Write $h_i$ as $t_id_i$ for some $t_i\in K_{n_{j_i}N}$ and $d_i\in D$. Then $w_i=t_id_ix_i$ is connected to $d_ix_i$ by some $S_1$-path in $\cV_1$ of length at most $n_lN$.
Since $h_1\in F_{n_{j_1}N, x_1}\setminus \partial_TF_{n_{j_1}N, x_1}$, we have $w_2=gw_1\in F_{n_{j_1}N, x_1}x_1$, and hence $x_1=x_2$. Then $(d_1x_1, d_2x_2)$ is an $S_2$-edge with both endpoints in $\cV_2$. Thus $w_1$ and $w_2$ are connected by a path of length at most $C$ in which each edge is an $S_j$-edge with both endpoints in $\cV_j$ for some $1\le j\le 3$, verifying condition (iii) in Definition~\ref{D-SC}. This proves that $G$ has property SC.

To establish the shrinking property, let $\varepsilon>0$ and consider the function $\Upsilon: \cF(G)\rightarrow [0, \infty)$ which takes the constant value $2/\varepsilon$. Then we have $S$ as above. Take $T=\{e_G\}$. Let $\delta>0$. In the above we can further require that $\eta\le \min\{\varepsilon/2, \delta/2\}$ and $\Upsilon(S_2)/(2n_0N+1)\le \delta/(2\varepsilon)$.  Given a
free p.m.p.\ action $G\curvearrowright (X, \mu)$, we have $\cV_j$ for $1\le j\le 3$  as above. Put $\cV=\cV_1\cup (X\setminus \bigcup_{j=1}^lF_{n_jN}Z_j)$ and $Z=\cV_2\cup (X\setminus \bigcup_{j=1}^lF_{n_jN}Z_j)\subseteq \cV$. From $S\cV_1\supseteq \bigcup_{j=1}^lF_{n_jN}Z_j$ we have $S\cV=X$, verifying condition (i) in Definition~\ref{D-shrinking}. We have
\[
\mu(\cV)\le \mu(\cV_1)+\mu\bigg(X\setminus \bigcup_{j=1}^lF_{n_jN}Z_j\bigg)\le \frac{\varepsilon}{2}
\Upsilon(S_1)\mu(\cV_1)+\eta\le \frac{\varepsilon}{2}+\frac{\varepsilon}{2}=\varepsilon,
\]
and
\[
\mu(Z)\le \mu(\cV_2)+\mu\bigg(X\setminus \bigcup_{j=1}^lF_{n_jN}Z_j\bigg)\le \frac{\varepsilon}{2}
\Upsilon(S_2)\mu(\cV_2)+\eta\le \frac{\varepsilon\Upsilon(S_2)}{2n_0N+1}+\frac{\delta}{2}\le \delta,
\]
verifying condition (ii) in Definition~\ref{D-shrinking}.
Clearly every point of $\cV$ is connected to some point of $Z$ through an $S_1$-path of length at most $n_lN\le C$ whose
points all belong to $\cV$, verifying condition (iii) in Definition~\ref{D-shrinking}. Thus $G$ has the shrinking property.
\end{proof}

For an action $G\curvearrowright X$ and a set $F\in \cF(G)$ we say that a subset $W$ of $X$ is the
{\it base of an $F$-tower} if the sets $tW$ for $t\in F$ are pairwise disjoint.

\begin{lemma} \label{L-measurable maximal}
Let $G\curvearrowright (X, \mu)$ be a free p.m.p.\ action. Let $F\in \overline{\cF}(G)$. Let $Y$ be a Borel subset of $X$ and $E$ a Borel subset of $Y^2$ such that $E\supseteq \Delta_Y=\{(y, y): y\in Y\}$, $E^*=E$ where $E^*:=\{(y, x): (x, y)\in E\}$, and $E$ is contained  in the union of the graphs of $g\in F$. Let $M_1\in \Nb$. Then there are Borel sets $W, \tilde{Y}\subseteq Y$ with $\mu(Y\setminus \tilde{Y})=0$ such that
\begin{enumerate}
\item $W$ is $(E, M_1)$-separated in the sense that no distinct two points of $W$ are connected by an $E$-path of length at most $2M_1$, and

\item every point in $\tilde{Y}$ is connected to some point in $W$ by an $E$-path of length at most $2M_1$.
\end{enumerate}
\end{lemma}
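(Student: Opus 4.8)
The plan is to obtain $W$ as a maximal $(E,M_1)$-separated set by a measure-theoretic exhaustion and to take $\tilde{Y}$ to be the set of points of $Y$ reachable from $W$ by an $E$-path of length at most $2M_1$. First I would recast the problem graph-theoretically. Since $\Delta_Y\subseteq E=E^*$, for points of $Y$ the relation ``connected by an $E$-path of length at most $2M_1$'' coincides with ``connected by an $E$-path of length exactly $2M_1$'' (pad with diagonal edges), and since $E$ is contained in the union of the graphs of the elements of $F$, this relation is contained in the union of the graphs of the elements of the finite set $F^{2M_1}:=\{g_{2M_1}\cdots g_1:g_i\in F\}$. Let $\Gamma$ be the Borel graph on $Y$ whose edges join the distinct points of $Y$ that are connected by an $E$-path of length at most $2M_1$. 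Condition (i) then says exactly that $W$ is $\Gamma$-independent, and for any Borel $\Gamma$-independent $W$ the set $[W]$ of points of $Y$ connected to some point of $W$ by an $E$-path of length at most $2M_1$ is Borel and, by measure-preservation, satisfies $\mu([W])\le|F^{2M_1}|\,\mu(W)$. Hence it suffices to produce a Borel $\Gamma$-independent $W$ with $\mu(Y\setminus[W])=0$ and then put $\tilde{Y}=[W]$, which makes (ii) automatic.

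Everything then hinges on the \emph{claim} that every Borel set $L\subseteq Y$ with $\mu(L)>0$ contains a Borel $\Gamma$-independent set of positive measure. Granting it, put $U_0=\emptyset$, and given a Borel $\Gamma$-independent $U_n$ set $L_n=Y\setminus[U_n]$; if $\mu(L_n)=0$ we stop, and otherwise we use the claim to choose a Borel $\Gamma$-independent $V_n\subseteq L_n$ whose measure is at least half the supremum of the measures of such subsets of $L_n$, and put $U_{n+1}=U_n\cup V_n$. Since $V_n\subseteq L_n=Y\setminus[U_n]$ there is no $\Gamma$-edge between a point of $V_n$ and a point of $U_n$, so $U_{n+1}$ is again $\Gamma$-independent; the $U_n$ increase and $\sum_n\mu(V_n)\le1$, so $\mu(V_n)\to0$. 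Set $W=\bigcup_nU_n$; it is Borel and $\Gamma$-independent, since any $\Gamma$-edge inside $W$ would already lie inside some $U_n$. Writing $L_\infty=\bigcap_nL_n$, we have $Y\setminus[W]\subseteq L_\infty$ because $[W]\supseteq\bigcup_n[U_n]$; and $\mu(L_\infty)=0$, for otherwise the claim applied to $L_\infty$, which is contained in every $L_n$, would produce positive-measure $\Gamma$-independent subsets of the $L_n$ and contradict $\mu(V_n)\to0$. Thus $\mu(Y\setminus[W])=0$, as required.

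It remains to prove the claim. By induction on $|F^{2M_1}\setminus\{e_G\}|$ it reduces to the statement that for a single $a\in G\setminus\{e_G\}$ and a Borel $V'\subseteq Y$ of positive measure there is a Borel $V\subseteq V'$ of positive measure with $aV\cap V=\emptyset$: indeed, by freeness any $\Gamma$-edge $\{x,y\}$ has $y=ax$ for some $a\in F^{2M_1}\setminus\{e_G\}$, so a set $V$ with $aV\cap V=\emptyset$ for all such $a$ has no $\Gamma$-edges. If $a$ has infinite order, then $x\mapsto ax$ is an aperiodic measure-preserving automorphism of $X$; decomposing $V'$ according to the first return time to $V'$ (finite $\mu$-a.e.\ by Poincar\'e recurrence), either the set where this time is at least $2$ has positive measure, and that set $V$ satisfies $aV\cap V'=\emptyset$, or $V'$ is $a$-invariant up to a null set, in which case Rokhlin's lemma applied to $x\mapsto ax$ on $V'$ yields a positive-measure Borel $B\subseteq V'$ with $aB\cap B=\emptyset$.

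If $a$ has finite order $p\ge2$, then by freeness every $\langle a\rangle$-orbit has exactly $p$ points, so the orbit relation of $x\mapsto ax$ on $X$ is a finite Borel equivalence relation; using a Borel linear order on $X$ one chooses a Borel transversal $S$ that picks its representative inside $V'$ whenever the orbit meets $V'$, and then $V:=S\cap V'$ meets every $\langle a\rangle$-orbit that $V'$ meets in exactly one point, so $aV\cap V=\emptyset$, while disjointness of $V,aV,\dots,a^{p-1}V$ together with measure-preservation gives $\mu(V)\ge\mu(V')/p>0$. The step I expect to be the main obstacle is exactly this claim: it is where one genuinely uses that the action is free and measure-preserving, via Poincar\'e recurrence and Rokhlin's lemma, and where the possible presence of torsion in $G$ forces the separate transversal argument; the remaining steps are formal manipulations with Borel sets.
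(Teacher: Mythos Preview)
Your argument is correct and follows the same greedy measure-exhaustion scheme as the paper: both build $W$ as an increasing union of $(E,M_1)$-separated Borel pieces, each chosen with at least half the supremal available measure, and then argue by contradiction that the residual set has measure zero. The only substantive difference lies in how the key sub-claim (that every positive-measure Borel subset of $Y$ contains a positive-measure $(E,M_1)$-separated Borel subset) is established. The paper simply observes that any base of an $F^{M_1}$-tower is automatically $(E,M_1)$-separated---if $u'=gu$ with $u,u'\in U$ distinct and $g\in F^{2M_1}$, write $g=g_1g_2$ with $g_1,g_2\in F^{M_1}$ and note $g_1^{-1}U\cap g_2U\neq\emptyset$ forces $g=e_G$---and then cites an off-the-shelf result (Proposition~A.22 of \cite{KerLi16}) guaranteeing that positive-measure Borel sets contain positive-measure tower bases. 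Your route, reducing by induction to a single nontrivial element $a$ and then splitting into the infinite-order case (Poincar\'e recurrence and Rokhlin) and the torsion case (a Borel transversal), is more self-contained but longer; the tower-base reformulation is worth knowing since it packages this argument once and for all and is what one typically appeals to in this literature. Your estimate $\mu([W])\le|F^{2M_1}|\,\mu(W)$ is correct but not actually needed for the proof.
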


\begin{proof}
Note that if a subset $U$ of $Y$ is the base of an $F^{M_1}$-tower, then $U$ is $(E, M_1)$-separated. By \cite[Proposition A.22]{KerLi16} every Borel subset $U$ of $Y$ with $\mu(U)>0$ contains a Borel subset $U'$ with $\mu(U')>0$ such that $U'$ is the base of an $F^{M_1}$-tower and hence is $(E, M_1)$-separated.

If $\mu(Y)=0$, we may take $W=\tilde{Y}=\emptyset$. Thus we may assume that $\mu(Y)>0$. By the above there are $(E, M_1)$-separated Borel sets $W\subseteq Y$ with $\mu(W)>0$.
 Denote by $\eta_1$ the supremum of $\mu(W)$ over all such $W$. Take one such $W_1$ with $\mu(W_1)\ge \eta_1/2$.

Arguing inductively, suppose that we have chosen Borel sets $W_1, \dots, W_n\subseteq Y$ such that $W_1, \dots, W_n$ are pairwise disjoint and $\bigcup_{j=1}^nW_j$ is $(E, M_1)$-separated.
Denote by $V_n$ the set of points in $Y$ which can be connected to some point in $\bigcup_{j=1}^nW_j$ by an $E$-path of length at most $2M_1$. Then $V_n$ is Borel.
If $\mu(Y\setminus V_n)=0$, we can take $W=\bigcup_{j=1}^nW_j$ and $\tilde{Y}=V_n$. Otherwise, by the first paragraph of the proof  there are $(E, M_1)$-separated Borel sets $Z\subseteq Y\setminus V_n$ with $\mu(Z)>0$. Denote by $\eta_{n+1}$ the supremum of $\mu(Z)$ over all such $Z$. Take an $(E, M_1)$-separated Borel set $W_{n+1}\subseteq Y\setminus V_n$ such that $\mu(W_{n+1})\ge \eta_{n+1}/2$. Then $W_1, \dots, W_{n+1}$ are pairwise disjoint and $\bigcup_{j=1}^{n+1}W_j$ is $(E, M_1)$-separated.

Now we have constructed $\{W_j\}_{j\in \Nb}$. Then $\mu(W_j)\to 0$ as $j\to \infty$ and hence $\eta_j\to 0$ as $j\to \infty$. Set $W=\bigcup_{j\in \Nb}W_j\subseteq Y$. Then $W$ is Borel and $(E, M_1)$-separated. Denote by $\tilde{Y}$ the set of points in $Y$ which can be connected to some point in $W$ by an $E$-path of length at most $2M_1$. Then $\tilde{Y}$ is Borel. If $\mu(Y\setminus \tilde{Y})>0$,  by the first paragraph of the proof there is an $(E, M_1)$-separated Borel set $Z\subseteq Y\setminus \tilde{Y}$ with $\mu(Z)>0$, yielding $0<\mu(Z)\le \eta_n$ for all $n$, which is a contradiction. Therefore $\mu(Y\setminus \tilde{Y})=0$.
\end{proof}

\begin{lemma}\label{L-tree action}
Suppose that $G$ is finitely generated and not virtually cyclic.
Let $A$ be a generating set for $G$ in $\overline{\cF}(G)$.
Then there is a constant $b>0$ such that
given any $r, M\in\Nb$ and free p.m.p.\ action $G\curvearrowright (X, \mu)$
there exist Borel sets $Z\subseteq\cV\subseteq X$
such that $A^{2r}\cV=X$, $\mu(\cV)\le b/r$, $\mu(Z)\le 1/(M+1)$, and every point of $\cV$ is connected to some point of $Z$ by an $A$-path of length at most $2M$ with all points in $\cV$.
\end{lemma}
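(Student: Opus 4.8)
\medskip

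The plan is to realise $\cV$ as a sparse ``skeleton'' sitting inside the orbit graph of the action --- the graph on $X$ joining $x$ to $gx$ for each $g\in A$ --- that meets every $A^{2r}$-ball, and then to carve $Z$ out of $\cV$ by a second application of Lemma~\ref{L-measurable maximal}. Two features of $G$ are used. First, since $G$ is finitely generated, infinite, and not virtually cyclic, its growth is at least quadratic: a finitely generated group of linear growth is virtually cyclic, and by the Bass--Guivarc'h growth dichotomy every other finitely generated group has $|A^n|$ growing at least like $n^2$, so $|A^n|\ge cn^2$ for all $n\ge 1$ with $c>0$ depending only on $(G,A)$. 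Second, by König's lemma $G$ carries a geodesic ray from $e_G$; fixing the lexicographically least such ray $\rho=(e_G,a_1,a_2a_1,\dots)$ with respect to a fixed enumeration of $G$ gives a basepoint-free ``direction'' in which to extend paths in any orbit. Throughout, $E$ denotes the union of the graphs of the elements of $A$.

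First I would dispose of the range $r\le b$, where $b$ is a constant fixed at the end. Here take $\cV=X$, and let $Z$ be a maximal subset of $X$ that is $(E,M)$-separated and $2M$-dominating, produced by Lemma~\ref{L-measurable maximal} with $Y=X$ and $M_1=M$. As in the proof of that lemma, separation together with freeness makes the translates $\{gZ:g\in A^M\}$ pairwise disjoint, whence $\mu(Z)\le 1/|A^M|\le 1/(M+1)$; the domination property is condition (iii) with paths lying trivially in $\cV=X$, and $A^{2r}\cV=X$, $\mu(\cV)=1\le b/r$ hold since $b\ge r$.

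Assume now $r>b$. Apply Lemma~\ref{L-measurable maximal} with $Y=X$, $M_1=r$ to fix a maximal $(E,r)$-separated, $2r$-dominating set $Z_0\subseteq X$; then $\mu(Z_0)\le 1/|A^r|\le 1/(cr^2)$ and $A^{2r}Z_0=X$. For each $z\in Z_0$ build a path $\gamma_z$ starting at $z$, moving $z\mapsto a_1z\mapsto a_2a_1z\mapsto\cdots$ along the direction $\rho$ until it first comes within orbit-distance $2r$ of a point of $Z_0\setminus\{z\}$, and then running a geodesic to that point; since $Z_0$ is $2r$-dense this occurs within $O(r)$ steps, so $\gamma_z$ has length at most $b_0r$ for a constant $b_0$ depending only on $(G,A)$, and its endpoint is another point $z^{\dag}\in Z_0$. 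Set $\cV'=\bigcup_{z\in Z_0}\gamma_z$ and $\cV=\cV'\cup(X\setminus A^{2r}\cV')$, so that $A^{2r}\cV=X$ (and in fact $X\setminus A^{2r}\cV'\subseteq X\setminus A^{2r}Z_0$ is null). Choosing for each $x\in\cV'$ a $z$ with $x\in\gamma_z$ defines a Borel map $\cV'\to Z_0$ of the form $x\mapsto h(x)^{-1}x$ with $h(x)\in A^{b_0r}$ depending measurably on $x$; decomposing it into the finitely many measure-preserving pieces indexed by the value of $h$, each fibre over $z$ having at most $|\gamma_z|\le b_0r+1$ points, gives $\mu(\cV')\le(b_0r+1)\mu(Z_0)\le(b_0r+1)/(cr^2)=O(1/r)$, hence $\mu(\cV)\le b/r$ for a suitable $b$.

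It remains to extract $Z$. Because $\gamma_z$ ends at a point of $Z_0$, the assignment $z\mapsto z^{\dag}$ chains the $\gamma$-segments into sequences $z\to z^{\dag}\to z^{\dag\dag}\to\cdots$; the point of following the fixed direction $\rho$ is that these chains run off to infinity rather than closing up, so every connected component of $\cV$ in the orbit graph contains arbitrarily long embedded paths and in particular every vertex of $\cV$ has at least $M+1$ vertices within $\cV$-distance $M$. Now apply Lemma~\ref{L-measurable maximal} once more with $Y=\cV$, with $E'$ the restriction to $\cV^2$ of the $A$-edge relation, and with $M_1=M$: this yields a Borel $Z\subseteq\cV$ that is $(E',M)$-separated and such that every point of $\cV$, off a null set that we discard from $\cV$, is joined to $Z$ by an $E'$-path --- equivalently an $A$-path inside $\cV$ --- of length at most $2M$, which is condition (iii). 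The $(E',M)$-separation makes the balls $\{B_\cV(z,M):z\in Z\}$ pairwise disjoint, each of size at least $M+1$; the same measure-preservation bookkeeping as above, applied to the map sending $x\in\bigcup_zB_\cV(z,M)$ to the $z$ owning it, gives $\mu(Z)\le\mu(\cV)/(M+1)\le(b/r)/(M+1)\le 1/(M+1)$ since $r>b$. Taking $b$ to be the maximum of the finitely many constants that have appeared (the constant $c$ via the estimate $|A^r|\ge cr^2$, the path-length constant $b_0$, and the requirement $b\ge r$ in the first case) finishes the proof.

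The step I expect to be the main obstacle is the claim in the last paragraph that the chains of $\gamma$-segments genuinely escape to infinity, so that the components of $\cV$ admit arbitrarily long internal paths: one must show that following a fixed geodesic direction from a sequence of $r$-separated net points cannot cycle back, or that the set of net points where it does is negligible and can be absorbed into a separate error cell. This is precisely where non-virtual-cyclicity is used in an essential rather than merely quantitative way --- for a virtually cyclic $G$ a connected $A^{2r}$-dense skeleton would have to occupy a fixed proportion of $X$, and the whole scheme collapses, matching the exclusion of virtually cyclic groups from the hypothesis. (When $G$ is moreover amenable, which is the case in the intended application, one can instead obtain the required sparse connected $A^{2r}$-dense subgraph inside a F{\o}lner set by the graph-theoretic argument abstracted from \cite{Aus16} and transplant it into $X$ via the Ornstein--Weiss quasitower theorem \cite[Theorem~4.44]{KerLi16}, as in the proof of Lemma~\ref{L-increasing virtual cyclic to SC}, sidestepping this difficulty.)
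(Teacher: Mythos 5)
You have correctly identified the crux yourself: the claim that the chains $z\to z^\dag\to z^{\dag\dag}\to\cdots$ escape to infinity is exactly what your construction does not justify, and in general it is false. Because $Z_0$ is $2r$-dense, following $\rho$ from $z$ enters the $2r$-neighbourhood of some other net point after at most a bounded multiple of $r$ steps, so $z^\dag$ lies within a bounded multiple of $r$ of $z$; iterating, the chain can perfectly well return to $z$ after a few segments, producing a finite $\cE$-component of total size $O(r)$. When $M$ is much larger than $r$ such a component has far fewer than $M+1$ vertices, and the bound $\mu(Z)\le 1/(M+1)$ collapses. There is also no easy repair by discarding the net points where the chain cycles, since a priori they could be most of $Z_0$. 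And the amenable/Ornstein--Weiss fallback you mention at the end cannot substitute for the general statement, because Lemma~\ref{L-tree action} is invoked through Proposition~\ref{P-shrinking} for an arbitrary finitely generated non-virtually-cyclic subgroup, which need not be amenable.

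The paper's proof circumvents this completely by never choosing a direction. After producing the net $U'$ with $\mu(U')\le 1/|A^r|\le 1/(cr^2)$ and $A^{2r}U'$ conull, one looks at the graph on $U'$ whose edges $E'$ are pairs joinable by an $A$-path of length at most $4r+1$, and observes that its restriction to a single orbit, $(T_w,E'_w)$ with $T_w=U'\cap Gw$, is connected (walk along any $A$-path in the orbit and replace each vertex by a net point within distance $2r$, then splice) and infinite (since the infinite orbit is covered by $A^{2r}T_w$). An infinite connected graph automatically has at least $M+1$ vertices in the ball of radius $M$ about any vertex; no escape mechanism or preferred direction is needed. One then applies Lemma~\ref{L-measurable maximal} to $(U',E')$ with $M_1=M$ to get an $(E',M)$-separated $2M$-spanning set $Z'$, fills in actual $A$-paths of length $\le 4r+1$ realising a depth-$\le 2M$ $E'$-spanning configuration rooted at $Z'$ (giving $\cV'$ with $\mu(\cV')\le (4r+1)\mu(U')\le b/r$), and the $\ge M+1$ lower bound on $\cE$-components of $\cV'$ follows because each contains one of the disjoint $M$-balls $V'_z\subseteq U'$. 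Your measure estimates are all sound and run parallel to the paper's; the structural difference is that the paper obtains large components from purely combinatorial facts about the net graph in each orbit, where you try to force them by a geometric chase that cannot be controlled. (A smaller point: the choices in your $\gamma_z$ --- which net point is hit first, which geodesic is taken --- must be made Borel in $z$; the paper handles the analogous issue by measurably selecting, for each $E'$-edge, one of the finitely many elements of $A^{4r+1}$ realising it.)
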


\begin{proof}
As $G$ is not virtually cyclic, we can find a $c>0$ such that $|A^n| \geq cn^2$ for all $n\in\Nb$
(Corollary~3.5 of \cite{Man12}). Set $b=5/c$.
Let $r, M$, and $G\curvearrowright (X, \mu)$ be as in the lemma statement.

Applying Lemma~\ref{L-measurable maximal} with $F$ taken to be $A$, $Y$ to be $X$, $E$ to be the union of the graphs of $g\in A$,  and $M_1$ to be $r$, we  find a Borel set  $U\subseteq X$ such that $U$ is the base of an $A^r$-tower and $\mu(A^{2r}U)=1$.
Set $X'=\bigcap_{g\in G}g(A^{2r}U)$. Then $X'$ is $G$-invariant and $\mu(X')=1$. Set $U'=U\cap X'$. Then $U'$ is Borel. For each $x\in X'$, we have $x\in A^{2r}w$ for some $w\in U$. Then $w\in A^{2r}x\subseteq X'$ and hence $w\in U'$. This shows that $X'=A^{2r}U'$.
Note also that
\begin{align*}
|A^r| \mu(U')
= \mu(A^rU')
\leq 1
\end{align*}
whence
\begin{align}\label{E-action E2}
\mu(U')\le  \frac{1}{|A^r|} \leq \frac{1}{cr^2}.
\end{align}

Let $w\in U'$. Set $T_w=U'\cap Gw$. Since $X'=A^{2r}U'$, we have $Gw=A^{2r}T_w$. In particular, $T_w$ is infinite. Consider the graph $(T_w, E'_w)$ whose edges are
those pairs of vertices which can be joined by an $A$-path of length at most $4r+1$. We claim that $(T_w, E'_w)$ is connected. It suffices to show that every $v\in T_w$ is connected to $w$ by some path in $(T_w, E'_w)$. Take an $A$-path from $w$ to $v$. For each point $z$ in this path, we may connect $z$ to some $u_z\in T_w$ by an $A$-path $p_z$ of length at most $2r$.
Inserting $p_z$ and the reverse of $p_z$ at $z$, we find an $A$-path
from $w$ to $v$ in which points of $T_w$ appear in every interval of length $4r+1$. Thus $v$ is connected to $w$ by some path in $(T_w, E'_w)$. This proves our claim.

Consider the graph $(U', E')$ whose edges are
those pairs of vertices which can be joined by an $A$-path of length at most $4r+1$.
Note that $E'$ is a Borel subset of $(U')^2$ and is contained in the union of the graphs of $g\in A^{4r+1}$.

Applying Lemma~\ref{L-measurable maximal} with $F=A^{4r+1}$, $Y=U'$, $E=E'$, and $M_1=M$,  we find an $(E', M)$-separated Borel subset $Z'$ of $U'$
such that $\mu(U'\setminus W')=0$, where $W'$ denotes the Borel set of all points in $U'$ which can be connected to some point in $Z'$ by an $E'$-path of length at most $2M$.
For each $0\le j\le 2M$, denote by $W'_j$ the set of points in $W'$ which can be connected to some point in $Z'$ by an $E'$-path of length  $j$, but cannot be connected to any point in $Z'$ by an $E'$-path of length less than $j$. Then the sets $Z'=W'_0, W'_1, W'_2, \dots, W'_{2M}$ form a Borel partition of $W'$.

Denote by $\Theta$ the set of finite sequences in $A$ with length at most $4r+1$.
Let $1\le j\le 2M$. Take a Borel map $f_j: W'_j\rightarrow W'_{j-1}$ such that $(v, f_j(v))\in E'$ for all $v\in W'_j$.
Also take a Borel map $h_j: W'_j\rightarrow \Theta$ such that for any $v\in W'_j$, say $h_j(v)=(g_1, \dots, g_l)$ with $1\le l\le 4r+1$ and $g_k\in A$ for $1\le k\le l$, one has $g_lg_{l-1}\dots g_1v=f_j(v)$. Denote by $\cV'$ the union of $W'$ and the set consisting of the points $g_1v, g_2g_1v, \dots, g_{l-1}\dots g_1v$ for all $1\le j\le 2M$, $v\in W'_j$, and $h_j(v)=(g_1, \dots, g_l)$. Also, denote by $\cE$ the union of $\Delta_{\cV'}$ and the set consisting of
all pairs of the form $(g_kg_{k-1}\dots g_1v, g_{k-1}\dots g_1v)$ or $(g_{k-1}\dots g_1v, g_kg_{k-1}\dots g_1v)$ for $1\le j\le 2M$, $v\in W'_j$, $h_j(v)=(g_1, \dots, g_l)$, and $1\le k\le l$.
Then $\cV'$ is Borel, and
\begin{align*}
\mu(\cV')\le \sum_{j=0}^{2M}(4r+1)\mu(W'_j)=(4r+1)\mu(W')\le 5r\mu(W')=5r\mu(U')\overset{\eqref{E-action E2}}\le 5r\cdot \frac{1}{cr^2} =\frac{b}{r}.
\end{align*}
Note that $\cE\subseteq (\cV')^2$ is Borel and contained in the union of the graphs of $g\in A$,
and that $\cE\supseteq \Delta_{\cV'}$ and $\cE^*=\cE$.
Also note that every point of $\cV'$ is connected to some point of $Z'$ by an $\cE$-path of length at most $2M(4r+1)$.

Let $z\in Z'$. Denote by $V'_z$ the set of all $w\in U'$ which can be connected to $z$ by an $E'$-path of length at most $M$. Since $T_z$ is infinite, $T_z\neq V'_z$.  Because $(T_z, E'_z)$ is connected, we conclude that $|V'_z|\ge M+1$. Since $Z'$ is $(E', M)$-separated, the sets $V'_z$ for $z\in Z'$ are pairwise disjoint, and hence for each $z\in Z'$ the set $V'_z$ is contained in the $\cE$-connected component of $\cV'$ containing $z$.
As every $\cE$-connected component of $\cV'$ contains some $z\in Z'$, it contains $V'_z$ and hence has at least $M+1$ points.

Applying Lemma~\ref{L-measurable maximal} with $F=A$, $Y=\cV'$, $E=\cE$, and $M_1=M$, we find an $(\cE, M)$-separated Borel subset $Z$ of $\cV'$ such that $\mu(\cV'\setminus \cV)=0$, where $\cV$ denotes the Borel set of all points in $\cV'$ which can be connected to some point in $Z$ by an $\cE$-path of length at most $2M$. Then every point in $\cV$ can be connected to some point of $Z$ by an $A$-path of length at most $2M$ with all points in $\cV$. Note that $\mu(\cV)=\mu(\cV')\le b/r$.

Let $z\in Z$. Denote by $V_z$ the set of $w\in \cV'$ which can be connected to $z$ by an $\cE$-path of length at most $M$.
Since the $\cE$-connected component of $\cV'$ containing $z$ has at least $M+1$ points, we have $|V_z|\ge M+1$.
Note that $V_z=U_zz$ for some subset $U_z$ of $A^{M}$. Denote by $\cD$ the set of all subsets of $A^{M}$ with cardinality at least $M+1$. The map $\psi$ from $Z$ to $\cD$ sending $z$ to $U_z$ is Borel.
Since the sets $V_z$ for $z\in Z$ are pairwise disjoint, we get
\begin{align*}
1\ge \mu\bigg(\bigcup_{z\in Z}V_z\bigg)=\mu\bigg(\bigcup_{z\in Z}U_zz\bigg)&=\sum_{D\in \cD}\mu\bigg(\bigcup_{z\in \psi^{-1}(D)}Dz\bigg) \\
&=\sum_{D\in \cD}|D|\mu(\psi^{-1}(D))\\
&\ge (M+1)\sum_{D\in \cD}\mu(\psi^{-1}(D))=(M+1)\mu(Z),
\end{align*}
whence
\[
\mu(Z)\le \frac{1}{M+1}.
\]

Since $\mu(\cV'\setminus \cV)=0$, we have
\[
\mu(A^{2r}\cV)=\mu(A^{2r}\cV')\ge \mu(A^{2r}W')=\mu(A^{2r}U')=\mu(X')=1,
\]
and hence $\mu(A^{2r}\cV)=1$.

Now put $\cV^\flat=\cV\cup (X\setminus A^{2r}\cV)$ and $Z^\flat=Z\cup (X\setminus A^{2r}\cV)$. Then $Z^\flat\subseteq \cV^\flat$, $A^{2r}\cV^\flat=X$, $\mu(\cV^\flat)=\mu(\cV)\le b/r$, $\mu(Z^\flat)=\mu(Z)\le 1/(M+1)$, and every point of $\cV^\flat$ is connected to some point of $Z^\flat$ by an $A$-path of length at most $2M$ with all points in $\cV^\flat$.
\end{proof}

\begin{proposition} \label{P-shrinking}
$G$ has the shrinking property if and only if it is neither locally finite nor virtually cyclic.
\end{proposition}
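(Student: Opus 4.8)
The plan is to treat the two implications separately, obtaining the ``if'' direction from Lemmas~\ref{L-increasing virtual cyclic to SC} and~\ref{L-tree action} and the ``only if'' direction from a counting argument. For sufficiency, suppose $G$ is neither locally finite nor virtually cyclic. If $G$ is locally virtually cyclic, then Lemma~\ref{L-increasing virtual cyclic to SC} already gives the shrinking property. Otherwise $G$ has a finitely generated subgroup $G_0$ that is not virtually cyclic, hence infinite (finite groups being virtually cyclic), and I would apply Lemma~\ref{L-tree action} to $G_0$. Fix a generating set $A$ for $G_0$ in $\overline{\cF}(G_0)\subseteq\overline{\cF}(G)$, with $b$ the constant that lemma provides, and take $S_1=A$; given $\varepsilon>0$ choose $r\in\Nb$ with $b/r\le\varepsilon$ and set $S=A^{2r}\in\overline{\cF}(G)$; given $\delta>0$ choose $M\in\Nb$ with $1/(M+1)\le\delta$ and set $C=2M$. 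For any free p.m.p.\ action $G\curvearrowright(X,\mu)$, restricting to $G_0$ and invoking Lemma~\ref{L-tree action} yields Borel sets $Z\subseteq\cV\subseteq X$ with $S\cV=A^{2r}\cV=X$, $\mu(\cV)\le b/r\le\varepsilon$, $\mu(Z)\le 1/(M+1)\le\delta$, and every point of $\cV$ joined to a point of $Z$ by an $A$-path of length at most $2M=C$ inside $\cV$ --- exactly conditions (i)--(iii) of Definition~\ref{D-shrinking}.

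For necessity I would argue the contrapositive: if $G$ is locally finite or virtually cyclic, then no $S_1$ witnesses the shrinking property. Fix $S_1\in\overline{\cF}(G)$ and put $F_1=\langle S_1\rangle$. Consider first the case that $F_1$ is finite --- automatic when $G$ is locally finite, and possible also when $G$ is virtually cyclic. Every $S_1$-path stays inside a single $F_1$-orbit, and these orbits have exactly $|F_1|$ points by freeness, so once $C\ge|F_1|$ condition (iii) says exactly that $Z$ meets every connected component of the $S_1$-graph on $\cV\cap F_1x$ for a.e.\ $x$; thus $|Z\cap F_1x|\ge|\cV\cap F_1x|/|F_1|$ pointwise (using $|\cV\cap F_1x|\le|F_1|$), and integrating against $\mu$ (using freeness) gives $\mu(Z)\ge\mu(\cV)/|F_1|$. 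Since condition (i) forces $\mu(\cV)\ge 1/|S|$, we obtain $\mu(Z)\ge 1/(|S|\,|F_1|)$; choosing any $\varepsilon$, and then any $\delta<1/(|S|\,|F_1|)$, contradicts condition (ii) for every $C$ (for $C<|F_1|$ condition (iii) is only more restrictive, so the same lower bound persists).

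The substantive case is that $G$ is virtually cyclic with $F_1=\langle S_1\rangle$ infinite; then $F_1$ is finitely generated, hence virtually $\Zb$ and of finite index in $G$, so $G$ is finitely generated of linear growth. I would exploit one-dimensionality: pass to a finite-index normal subgroup $N\cong\Zb$, so that the $N$-orbits are lines $\cong\Zb$, each $G$-orbit is a union of boundedly many parallel such lines, and $G$ is quasi-isometric to $\Zb$. Condition (i) then forces, on a.e.\ $G$-orbit, the set of integer ``heights'' at which $\cV$ meets some line to be syndetic with gaps bounded by a quantity $R'(S)$, while each line carries $\cV$ with density at most $\varepsilon$. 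Cutting the height axis at $\cV$-free gaps wide enough that no $S_1$-edge can bridge them partitions $\cV$ (on the orbit) into disjoint ``slabs,'' each a union of whole $S_1$-components; a short count --- using that the lines are $\varepsilon$-sparse but their heights are $R'(S)$-syndetic --- shows that, \emph{provided $\varepsilon$ was chosen small enough at the outset} (depending only on $G$ and $S_1$), there are at least $c\,|I|$ slabs over any length-$|I|$ height window, with $c=c(G,S_1,S)>0$. Condition (iii) puts a point of $Z$ in every $S_1$-component, hence in every slab, so $\mu(Z)\ge c'$ for some $c'=c'(G,S_1,S)>0$ that is \emph{independent of $C$}, and taking $\delta<c'$ contradicts condition (ii). I expect this last case to be the main obstacle: the bookkeeping needed to deduce the ``heights are syndetic on lines'' statement from $S\cV=X$, keeping track of how $S$-edges and $S_1$-edges move between the finitely many orbit layers, is the delicate part, and a convenient way to discharge the measure-theoretic side is to carry out the density count inside Ornstein--Weiss towers of $F_1$ with shapes large compared with $C$, so that boundary and layer-crossing corrections are negligible.
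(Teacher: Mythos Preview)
Your ``if'' direction and your treatment of the case $\langle S_1\rangle$ finite match the paper's proof essentially verbatim: the paper also splits according to whether $G$ is locally virtually cyclic, invoking Lemma~\ref{L-increasing virtual cyclic to SC} in one case and Lemma~\ref{L-tree action} (applied to a finitely generated non-virtually-cyclic subgroup) in the other; and for locally finite $G$ the paper likewise observes that $\cV\subseteq \langle S_1\rangle Z$ forces $\mu(Z)\ge 1/(|S|\,|\langle S_1\rangle|)$.

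For the virtually cyclic case with $\langle S_1\rangle$ infinite, your slab-counting strategy is sound in principle but takes a genuinely different route from the paper, and the paper's argument is cleaner. The paper does \emph{not} count components. Instead it passes to $\cV'=B\cV$, $\cV''=K_m\cV'$, $Z'=BZ$ (where $B$ is a set of coset representatives for a normal copy of $\Zb$ and $K_m=\{s^{-m},\dots,s^m\}$), reduces condition (iii) to the statement that every point of $\cV'$ reaches $Z'$ by a $K_1$-path in $\cV''$, and then exploits one-dimensionality directly: for $u$ outside $S'K_RZ'$ the nearby point $w'\in\cV'$ is at $K_1$-distance $>R$ from $Z'$, so the $K_1$-path from $w'$ to $Z'$ inside $\cV''$ must fill an entire interval $\{w',sw',\dots,s^{\pm R}w'\}$. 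Integrating the resulting pointwise inequality
\[
\frac{1}{2R+1}\sum_{j=-R-M}^{R+M}\chi_{\cV''}(s^ju)\ge \tfrac12
\]
over $U=X\setminus S'K_RZ'$ gives $\mu(\cV'')\gtrsim \mu(U)$, which contradicts $\mu(\cV'')\le |K_mB|\,\varepsilon$ once $\varepsilon$ is small and $\delta$ is chosen so that $\mu(U)$ is bounded away from zero. The whole thing is a single averaging step with no ergodic theorem, no towers, and no per-orbit combinatorics.

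Your approach can be made to work, but the step ``there are at least $c|I|$ slabs over any length-$|I|$ height window'' is an orbitwise statement that depends on the $\cV$-density being $\le\varepsilon$ \emph{on that particular window}, whereas you only know $\mu(\cV)\le\varepsilon$ globally. You would need a Markov-type argument over the ergodic decomposition (or the tower argument you mention) to pass from the global bound to a positive-measure set of orbits on which the slab count applies; this is exactly the bookkeeping you anticipate. The paper's density argument sidesteps this entirely by integrating a pointwise inequality that holds for \emph{every} $u\in U$, not just on orbits of small $\cV$-density, so if you carry out your plan it is worth knowing that the contradiction can be reached more directly by bounding $\mu(\cV)$ from below rather than $\mu(Z)$.
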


\begin{proof}
If $G$ is not locally virtually cyclic, then $G$ has  a finitely generated subgroup $G_1$ which is not virtually cyclic. By Lemma~\ref{L-tree action} the group $G_1$ has the shrinking property, and hence  so does $G$.
If $G$ is locally virtually cyclic but neither locally finite nor virtually cyclic, then by Lemma~\ref{L-increasing virtual cyclic to SC} the group $G$ has the shrinking property. This proves the ``if'' part.

To prove the ``only if'' part, we assume that $G$ has the shrinking property and argue by contradiction that $G$ cannot be locally finite or virtually cyclic.

Suppose that $G$ is locally finite. Let $S_1\in \overline{\cF}(G)$ witness the shrinking property for $G$. Denote by $G_1$ the finite subgroup of $G$ generated by $S_1$. Put $\varepsilon=1$.  Then we have $S\in \overline{\cF}(G)$ in Definition~\ref{D-shrinking}.
Take $0<\delta<1/|SG_1|$. Then we have $C$ in Definition~\ref{D-shrinking}.
Let $G\curvearrowright (X, \mu)$ be a free p.m.p.\ action. Then we have Borel sets $Z\subseteq \cV\subseteq X$ satisfying conditions (i)-(iii) in Definition~\ref{D-shrinking}. Note that $\cV\subseteq G_1Z$ and hence $X=S\cV\subseteq SG_1Z$. Thus $\delta\ge \mu(Z)\ge 1/|SG_1|$, which is a contradiction.

Suppose now that $G$ is virtually cyclic but not locally finite. Then $G$ has a finite-index normal subgroup $G_1$ isomorphic to $\Zb$. Take a generator $s$ for $G_1$. For each $n\in \Nb$ put $K_n:=\{s^j: -n\le j\le n\}$. Take a $B\in \cF(G)$ containing $e_G$ such that
$G=\bigsqcup_{b\in B}G_1b$.
Let $S_1\in \overline{\cF}(G)$ witness the shrinking property for $G$.
Take $m\in \Nb$ such that $BS_1\subseteq K_mB$.
Take $0<\varepsilon<1/(8|K_mB|)$. Then we have $S\in \overline{\cF}(G)$ in Definition~\ref{D-shrinking}. Take an $S'\in \overline{\cF}(G_1)$ such that $S'B\supseteq S$. Then $S'\subseteq K_M$ for some $M\in \Nb$. Take $R\in \Nb$ large enough so that  $(2R+2M+1)/(2R+1)\le 2$ and take $0<\delta<1/(2|S'K_RB|)$. Then we have $C\in \Nb$ in Definition~\ref{D-shrinking}. Let $G\curvearrowright (X, \mu)$ be a free p.m.p.\ action. Then we have Borel sets $Z\subseteq \cV\subseteq X$ satisfying conditions (i)-(iii) in Definition~\ref{D-shrinking}.
Put $\cV'=B\cV$, $\cV''=K_m\cV'$, and $Z'=BZ\subseteq \cV'$. Then $X=S\cV\subseteq S'B\cV=S'\cV'$ and hence $X=S'\cV'$.

We claim that every point of $\cV'$ can be connected to some point of $Z'$ by a $K_1$-path whose points all belong to $\cV''$.
Let $w'\in \cV'$. Then $w'=bw$ for some $w\in \cV$ and $b\in B$.
We can find some $1\le l\le C$ and $g_1, \dots, g_l\in S_1$ such that $g_j\dots g_1w\in \cV$ for all $1\le j\le l$ and $g_l\dots g_1w\in Z$.
Put $b_0=b$.
We define $b_j\in B$ and $h_j\in K_m$ for $1\le j\le l$ inductively by $b_{j-1}g_j^{-1}=h_j^{-1}b_j$. For each $1\le j\le l$, we have
\[
b_j^{-1}h_j\dots h_1bw=g_jb_{j-1}^{-1}h_{j-1}\dots h_1bw=\dots=g_jg_{j-1}\dots g_1w\in \cV,
\]
and hence $h_j\dots h_1bw\in b_j\cV\subseteq \cV'$. Furthermore, $h_l\dots h_1bw=b_lg_l\dots g_1w\in b_lZ\subseteq Z'$.
We can connect $h_j\dots h_1bw$ and $h_{j-1}\dots h_1bw$ by a $K_1$-path of length at most $m$ with all points in $\cV''$.
This proves our claim.

Put $U=X\setminus S'K_RZ'$. Let $u\in U$. Since $X=S'\cV'$, we have $gu=w'$ for some $g\in S'\subseteq K_M$ and $w'\in \cV'$. Then $w'\not\in K_RZ'$.
By the claim above we can find a $K_1$-path in $\cV''$ from $w'$ to some point $z'\in Z'$.
It follows that either $s^jw'\in \cV''$ for all $0\le j\le R$ or $s^{-j}w'\in \cV''$ for all $0\le j\le R$. Therefore
\[
\frac{1}{2R+1}\sum_{j=-R-M}^{R+M}\chi_{\cV''}(s^ju)\ge \frac{1}{2R+1}\sum_{j=-R}^R\chi_{\cV''}(s^jw')\ge \frac12 .
\]
Thus
\begin{align*}
\int_U\frac{1}{2R+1}\sum_{j=-R-M}^{R+M}\chi_{\cV''}(s^ju)\, d\mu(u)&\ge \int_U\frac{1}{2}\, d\mu(u)\\
&=\frac12 \mu(U)\\
&\ge \frac12 (1-|S'K_RB|\mu(Z))\\
&\ge \frac12 (1-|S'K_RB|\delta)>\frac14 ,
\end{align*}
while
\begin{align*}
\int_U\frac{1}{2R+1}\sum_{j=-R-M}^{R+M}\chi_{\cV''}(s^ju)\, d\mu(u)&\le \int_X\frac{1}{2R+1}\sum_{j=-R-M}^{R+M}\chi_{\cV''}(s^ju)\, d\mu(u)\\
&=\frac{2R+2M+1}{2R+1}\cdot \mu(\cV'')\\
&\le 2|K_mB|\mu(\cV)\\
&\le 2\varepsilon |K_mB|<\frac14 ,
\end{align*}
a contradiction.
This proves the ``only if'' part.
\end{proof}

\subsection{Variants of property SC} \label{SS-SC'}

For the purposes of Section~\ref{SS-without SC} it will be convenient to formulate the
following variations on property SC. Proposition~\ref{P-SC'} establishes relationships between
these two properties, property SC itself, and the shrinking property.

\begin{definition} \label{D-SC'}
Let $\fC$ be a class of free p.m.p.\ actions of a fixed $G$.
We say that $\fC$ has {\it property SC\primespace$'$} if for any function $\Upsilon: \cF(G)\rightarrow [0, \infty)$ there is some $\eta>0$ such that for any $T\in \overline{\cF}(G)$ there are $C, n\in \Nb$, and $S_1, \dots, S_n\in \overline{\cF}(G)$ so that for any $G\curvearrowright (X, \mu)$ in $\fC$ and any Borel sets $W_1, W_2\subseteq X$ with $\mu(W_1), \mu(W_2)\le \eta$, there are Borel sets $\cV_1, \dots, \cV_n\subseteq X$ satisfying the following conditions:
\begin{enumerate}
\item $\sum_{j=1}^n\Upsilon(S_j)\mu(\cV_j)\le 1$,
%\item $W_1, W_2\subseteq \bigcup_{j=1}^n\cV_j$;
\item if $w_1\in W_1$ and $w_2\in W_2$ satisfy $gw_1=w_2$ for some $g\in T$ then $w_1$ and $w_2$ are connected by a path of length at most $C$ in which each edge is an $S_j$-edge with both endpoints in $\cV_j$ for some $1\le j\le n$.
\end{enumerate}
We say that $\fC$  has {\it property SC\primespace$''$} if the above conditions hold without the bound $C$.
\end{definition}

\begin{lemma} \label{L-locally finite not SC''}
Suppose that $G$ is locally finite. Then no free p.m.p.\ action
$G\curvearrowright (X,\mu )$ has property SC\primespace$''$.
\end{lemma}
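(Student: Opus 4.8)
The plan is to argue by contradiction. Suppose the free p.m.p.\ action $G\curvearrowright(X,\mu)$ has property SC\primespace$''$; I will exhibit a single function $\Upsilon$ for which the defining condition fails. Since $G$ is countably infinite and locally finite, first fix an increasing chain $G_1\subsetneq G_2\subsetneq\cdots$ of finite subgroups with $\bigcup_n G_n=G$, so that $[G_m:G_{m-1}]\ge 2$ for all $m\ge 2$. For $F\in\cF(G)$ let $\ell(F)=\min\{n\ge 1: F\subseteq G_n\}$ be its \emph{level}, and set $\Upsilon(F)=|G_{\ell(F)}|^3$. Feed this $\Upsilon$ into SC\primespace$''$ to obtain $\eta>0$. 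Given $\eta$, choose $m$ with $|G_m|\,\eta>1$, take $T=G_m\in\overline{\cF}(G)$, and let $n,S_1,\dots,S_n$ be as provided; put $A=\langle S_1\cup\cdots\cup S_n\rangle$, which is finite because $G$ is locally finite. I first check that necessarily $A\supseteq G_m$: otherwise, fixing $g\in G_m\setminus A$, any Borel $W_1$ with $0<\mu(W_1)\le\eta$, and $W_2=gW_1$, every $w\in W_1$ satisfies $gw\in W_2$ with $g\in T$ yet $gw\notin Aw$, so no admissible path can connect $w$ to $gw$, contradicting condition~(ii).

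So assume $A\supseteq G_m$, take $W_1=W_2=W$ to be an $A$-invariant Borel set with $\mu(W)=\eta$ (available since the orbit space $X/A$ carries an atomless probability measure, $G$ being infinite), and let $\cV_1,\dots,\cV_n$ be the sets produced. Form the graph $\Gamma$ on $X$ with edges the pairs $(y,sy)$ for which $s\in S_j$ and $y,sy\in\cV_j$ for some $j$, and call such an edge \emph{crossing} when $s\notin G_{m-1}$. Fix an $A$-orbit $O\subseteq W$. By freeness $|O|=|A|$, so $O$ decomposes into $[A:G_m]$ many $G_m$-orbits and into $[A:G_{m-1}]$ many $G_{m-1}$-orbits, each $G_m$-orbit being the union of exactly $[G_m:G_{m-1}]$ of the latter. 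Condition~(ii), applied inside $W$, forces each $G_m$-orbit $P\subseteq O$ into a single connected component of $\Gamma|_O$ (all admissible paths stay in $O$); hence every component of $\Gamma|_O$ is a union of $G_m$-orbits, and a component made of $k$ of them meets $k[G_m:G_{m-1}]$ of the $G_{m-1}$-orbits in $O$, so, being connected, it contains at least $k[G_m:G_{m-1}]-1$ crossing edges. Summing over components and using $[G_m:G_{m-1}]\ge 2$ shows $\Gamma|_O$ contains at least $[A:G_m]$ crossing edges; and each crossing edge $(y,sy)$ has $y,sy\in\cV_j$ for some $j$ with $s\in S_j\setminus G_{m-1}$, whence $\ell(S_j)\ge m$.

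The decisive step is the budget bookkeeping. Write $\cV^{(\ell)}=\bigcup_{j:\,\ell(S_j)=\ell}\cV_j$ and classify the crossing edges above by the level $\ell\ge m$ of a witnessing $S_j$. A source of such an edge lies in $\cV^{(\ell)}$ and is the source of at most $|G_\ell|$ of them (the available group elements all lie in $G_\ell$), and each $A$-orbit $O\subseteq W$ contributes at least $[A:G_m]$ crossing edges, weighted by $\mu(O)/|A|$; integrating over $W$ therefore gives $\sum_{\ell\ge m}|G_\ell|\,\mu(\cV^{(\ell)})\ge\mu(W)/|G_m|=\eta/|G_m|$. Since $\ell\mapsto|G_\ell|^2$ is nondecreasing, it follows that
\[
\sum_{j=1}^n\Upsilon(S_j)\mu(\cV_j)\ \ge\ \sum_{\ell\ge m}|G_\ell|^3\,\mu(\cV^{(\ell)})\ =\ \sum_{\ell\ge m}|G_\ell|^2\cdot|G_\ell|\,\mu(\cV^{(\ell)})\ \ge\ |G_m|^2\cdot\frac{\eta}{|G_m|}\ =\ |G_m|\,\eta\ >\ 1,
\]
contradicting condition~(i). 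Hence $G\curvearrowright(X,\mu)$ does not have property SC\primespace$''$.

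I expect the main obstacle to be the second step, namely producing \emph{enough unavoidable expensive structure}. Unlike arguments confined to a single scale, here one must exploit that at scale $G_m$ a locally finite group cannot wire its orbits together without repeatedly invoking group elements lying outside $G_{m-1}$, and then tune the growth of $\Upsilon$ in the level (here $|G_\ell|^3$ — any growth strictly dominating $|G_\ell|^2$ works) so that these forced crossing edges alone overrun the budget $\sum\Upsilon(S_j)\mu(\cV_j)\le 1$ no matter how small $\eta$ was chosen; this is precisely what rules out the one-scale ``cheap wiring'' strategies that remain available over groups which are not locally finite. Getting the orbit-counting inside the finite group $A$ clean enough to make the edge count genuinely per-orbit (so that it cannot be amortized away across $G_m$-orbits) is the technical heart of the matter.
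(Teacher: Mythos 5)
Your proof is correct, but it takes a genuinely different route from the paper's. The paper works with the lighter weight $\Upsilon(F)=2|G_{\Phi(F)}|$ where $\Phi(F)$ is the smallest $k$ with $F\subseteq G_k$ (so $\Upsilon$ is only linear in $|G_\ell|$), chooses $m$ with $1/|G_m|<\eta$, sets $T=G_{m+1}$, takes a complete Borel transversal $Y$ for the free $G_{m+1}$-action, and puts $W_1=Y$, $W_2=hY$ for a fixed $h\in G_{m+1}\setminus G_m$. Since $hy\notin G_m y$, any admissible path from $y$ to $hy$ must at some step leave $G_m y$, and the first such step is an $S_j$-edge with $S_j\not\subseteq G_m$ whose initial vertex lies in $G_m y\cap\bigcup_{S_j\not\subseteq G_m}\cV_j$; integrating over $Y$ gives $\mu\big(\bigcup_{S_j\not\subseteq G_m}\cV_j\big)\ge 1/|G_{m+1}|$, which already contradicts condition~(i) with the linear $\Upsilon$, and no orbit decomposition or spanning-tree count is needed. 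You instead first establish the reduction $A:=\langle S_1\cup\dots\cup S_n\rangle\supseteq G_m$, take $W_1=W_2=W$ to be $A$-invariant of measure $\eta$, and run a per-$A$-orbit spanning-tree count of crossing edges stratified by level; because a single vertex can emit up to $|G_\ell|$ crossing edges at level $\ell$, this needs the cubic $\Upsilon(F)=|G_{\ell(F)}|^3$. Both arguments are sound; the paper's is shorter and more economical in $\Upsilon$, while yours never needs a transversal for a group strictly larger than $T$ and keeps all the counting internal to the finite group $A$. One small point to tighten in a final write-up: the phrase ``weighted by $\mu(O)/|A|$'' is informal, since each $A$-orbit is $\mu$-null; the precise step is that for every $x\in W$ one has $[A:G_m]\le\sum_{\ell\ge m}|G_\ell|\,|\cV^{(\ell)}\cap Ax|$, and averaging this over $W$ using $\int_W|\cV^{(\ell)}\cap Ax|\,d\mu(x)\le|A|\,\mu(\cV^{(\ell)})$ yields $\eta/|G_m|\le\sum_{\ell\ge m}|G_\ell|\,\mu(\cV^{(\ell)})$, after which your computation finishes the argument.
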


\begin{proof}
Let $G\curvearrowright (X,\mu )$ be a free p.m.p.\ action.
Take a strictly increasing sequence $\{G_k\}_{k\in \Nb}$ of finite subgroups of $G$ such that $G=\bigcup_{k\in \Nb}G_k$.  For each $F\in \cF(G)$, denote by $\Phi(F)$ the smallest $k\in \Nb$ satisfying $F\subseteq G_k$. Define $\Upsilon: \cF(G)\rightarrow [0, \infty)$ by
$\Upsilon(F)=2|G_{\Phi(F)}|$.

Suppose that $G\curvearrowright (X, \mu)$ has property SC$''$.  Then there is an $\eta>0$  satisfying the conditions in Definition~\ref{D-SC'}.
Take an $m\in \Nb$ with $1/|G_m|<\eta$.
Put  $T=G_{m+1}\in \overline{\cF}(G)$. Then there are  $n\in \Nb$ and $S_1, \dots, S_n\in \overline{\cF}(G)$ satisfying the conditions in Definition~\ref{D-SC'}. We may assume that there is a $k$ such that
$S_1, \dots, S_k\subseteq G_m$ and none of $S_{k+1}, \dots, S_n$ is contained in $G_m$.

As $G_{m+1}$ is finite we can find a Borel set $Y\subseteq X$ such that the sets $gY$ for $g\in G_{m+1}$ form a partition of $X$ (see Example~6.1 and Proposition~6.4 in \cite{KecMil04}). Choose an $h\in G_{m+1}\setminus G_m$. Put $W_1=Y$ and $W_2=hY$, and note that $\mu(W_1)=\mu(W_2)=1/|G_{m+1}|<\eta$. Then there are Borel sets $\cV_1, \dots, \cV_n\subseteq X$ satisfying the conditions in Definition~\ref{D-SC'}.

Let $y\in Y$. By condition (ii) in Definition~\ref{D-SC'}, $y$ and $hy$ are connected by a path each of whose
edges is an $S_j$-edge with both endpoints in $\cV_j$ for some $1\le j\le n$. Since $h\not\in G_m$, this implies that $G_m y\cap \bigcup_{j=k+1}^n\cV_j\neq \emptyset$. We infer that $\mu(\bigcup_{j=k+1}^n\cV_j)\ge \mu(Y)=1/|G_{m+1}|$ and hence
\begin{align*}
1\ge \sum_{j=1}^n\Upsilon(S_j)\mu(\cV_j)&\ge \sum_{j=k+1}^n\Upsilon(S_j)\mu(\cV_j)\\
&\ge 2|G_{m+1}|\sum_{j=k+1}^n\mu(\cV_j)\ge 2|G_{m+1}|\mu\bigg(\bigcup_{j=k+1}^n\cV_j\bigg)\ge 2,
\end{align*}
a contradiction. We conclude that $G\curvearrowright (X, \mu)$ does not have property SC$''$.
\end{proof}

\begin{lemma} \label{L-virtually cyclic not SC''}
Suppose that $G$ is virtually cyclic. Then no free p.m.p.\ action
$G\curvearrowright (X,\mu )$ has property SC\primespace$''$.
\end{lemma}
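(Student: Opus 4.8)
The plan is to mimic the structure of the contradiction argument in Lemma~\ref{L-locally finite not SC''}, but now the obstruction to property SC\primespace$''$ comes not from a growing tower of finite subgroups but from the geometry of $\Zb$: a connected subgraph of $X$ generated by a bounded set $S_j\subseteq G$ that realizes a long translation inside the cyclic part must, by a pigeonhole/telescoping argument along the $\Zb$-direction, cover a definite fraction of $X$, and by choosing $\Upsilon$ to grow fast enough in terms of how far $S_j$ reaches in the cyclic direction we force $\sum_j \Upsilon(S_j)\mu(\cV_j)$ to be large. First I would fix a finite-index normal subgroup $G_1\cong\Zb$ of $G$ with generator $s$, pick coset representatives $B$ with $G=\bigsqcup_{b\in B}G_1 b$, and for each $F\in\cF(G)$ let $\Phi(F)$ be the least $m$ with $F\subseteq \{s^j:-m\le j\le m\}B$; then define $\Upsilon(F)=c\,\Phi(F)$ for a constant $c$ to be chosen large (depending on $|B|$).

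Next I would assume for contradiction that $G\curvearrowright(X,\mu)$ has property SC\primespace$''$, obtain the resulting $\eta>0$, and then choose $T=\{s^n\}$ for a large $n$ (so that $T$ forces a translation by $s^n$), yielding $n',S_1,\dots,S_{n'}\in\overline{\cF}(G)$ from the definition. Now I would build the test sets $W_1,W_2$: take a Borel set $Y\subseteq X$ that is the base of an $\{s^j:0\le j< L\}B$-tower (using \cite[Proposition~A.22]{KerLi16} or the Ornstein--Weiss technology, as is done elsewhere in the paper) for a large $L$, so that $\mu(Y)\le \eta$, set $W_1=Y$ and $W_2=s^nY$. For a point $w_1\in W_1$, the guaranteed path from $w_1$ to $s^nw_1$ consists of $S_j$-edges inside the $\cV_j$; projecting this path to the cyclic coordinate (i.e.\ recording, for each vertex $b x_0$ with $x_0$ in the $G_1$-orbit picture, its $\Zb$-component relative to a base point) and using that each edge changes this component by at most $2\Phi(S_j)$, a telescoping argument shows the path must visit $\gtrsim n/\max_j\Phi(S_j)$ distinct $\Zb$-levels; but distinct levels correspond to disjoint translates, so some $\cV_j$ with $\Phi(S_j)$ comparable to the reach of the path must have $\mu(\cV_j)$ bounded below in terms of $n$ and $\Phi(S_j)$.

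The key quantitative step — the main obstacle — is making the last sentence precise: I need to show that along the path from $w_1$ to $s^nw_1$ there is \emph{some} edge, say an $S_j$-edge, such that the translates $\{gy: y\in \cV_j\}$ over the relevant $w_1\in Y$ are disjoint and numerous enough that $\Upsilon(S_j)\mu(\cV_j)\gtrsim n/(|B|\,\Phi(S_j))\cdot \Phi(S_j) = n/|B|$, independently of which $j$ is chosen; then taking $n$ large (with $\eta,c$ fixed first) makes $\sum_j\Upsilon(S_j)\mu(\cV_j)>1$, the desired contradiction. Concretely, partition $\{0,\dots,n\}$ into blocks of size $\Phi(S_j)$ for each $j$ appearing, observe that consecutive blocks force the path to pass through a vertex of $\cV_j$ lying in a distinct $G_1$-coset-translate of a base region, and since $Y$ was chosen as the base of a tall enough tower these witnessing vertices, as $w_1$ ranges over $Y$, sweep out a set of measure $\ge \lfloor n/\Phi(S_j)\rfloor\cdot\mu(Y)/|B|$ inside $\cV_j$; combined with $\Upsilon(S_j)=c\,\Phi(S_j)\ge c\,\Phi(S_j)$ this gives $\Upsilon(S_j)\mu(\cV_j)\gtrsim (cn/|B|)\mu(Y)$, and a careful choice of $\mu(Y)$ (not too small) keeps this $\ge 2$. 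The bookkeeping here — matching the tower height $L$, the translation length $n$, the measure $\mu(Y)$, and the constant $c$ in the right order of quantification — is where the care is needed; the underlying principle is simply that $\Zb$ cannot be connected sparsely at a coarse scale, which is exactly the content that Proposition~\ref{P-shrinking} also exploits.
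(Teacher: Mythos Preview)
Your overall strategy --- fix $\Upsilon$ linear in the ``$\Zb$-reach'' $\Phi(F)$, use a Rokhlin-type tower to produce $W_1=Y$ and $W_2=s^nY$ of small measure, and exploit that any path from $y$ to $s^ny$ must cross every intermediate $\Zb$-level --- matches the paper's. But the quantitative step as you have written it does not work, and the gap is not just bookkeeping.

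You claim that for some fixed $j$ the witnessing vertices ``sweep out a set of measure $\ge \lfloor n/\Phi(S_j)\rfloor\cdot\mu(Y)/|B|$ inside $\cV_j$'', giving $\Upsilon(S_j)\mu(\cV_j)\gtrsim cn\mu(Y)/|B|$. This cannot be established: the path from $y$ to $s^ny$ may use edges of \emph{several} types $S_j$, and different $y\in Y$ may use entirely different types. Since the number $n'$ of sets $S_j$ is handed to you by property SC$''$ after you have fixed $\Upsilon$, $\eta$, and $T$, you have no control over it; any pigeonhole over $j$ costs you an uncontrollable factor of $n'$ (or worse), which kills the contradiction. What the crossing argument \emph{does} give is that each point $s^iy$ (for $0\le i\le n$) lies in $\bigcup_j K_{c'\Phi(S_j)}B^{-1}\cV_j$ for a constant $c'$ depending only on $|B|$, so one obtains a bound on the \emph{sum}:
\[
(n+1)\mu(Y)\le \sum_j (2c'\Phi(S_j)+1)|B|\,\mu(\cV_j),
\]
and with $\Upsilon$ chosen so that the right side is $\le \tfrac{1}{c}\sum_j\Upsilon(S_j)\mu(\cV_j)\le 1/c$, a Rokhlin tower with $(n+1)\mu(Y)>1/2$ gives the contradiction for $c>2$. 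This is exactly what the paper does: it packages $\bigcup_j K_{\Phi(S_j)}B\cV_j$ into a single set $\cV$, enlarges to $\cV''=K_M B^{-1}\cV$, and proves $\{y,sy,\dots,s^my\}\subseteq\cV''$ for every $y\in Y$, whence $\mu(\cV'')\ge 1/2$ while $\mu(\cV'')\le\tfrac13\sum_j\Upsilon(S_j)\mu(\cV_j)\le 1/3$.

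Two further points. First, your ``projection to the $\Zb$-coordinate'' is more delicate than you indicate: elements of $B$ conjugate $s$ to $s^{\pm 1}$, so an $S_j$-edge written as $b s^a$ does not simply shift the $\Zb$-level by $a$ when composed with other steps. The paper spends three paragraphs reducing a $(B\cup K_1)$-path in $\cV$ to a $K_1$-path in $\cV''$ precisely to handle this, and the $K_M$ enlargement (with $M=\Phi(BB)$) absorbs the conjugation defects. Second, a triviality: $T=\{s^n\}$ is not in $\overline{\cF}(G)$; take $T=\{s^n,e_G,s^{-n}\}$ as the paper does.
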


\begin{proof}
By assumption $G$ has a subgroup of finite index isomorphic to $\Zb$. Then $G$ has a normal subgroup $G_1$ of finite index isomorphic to $\Zb$. Take a generator $s$ for $G_1$. Take a finite subset $B$ of $G$ containing $e_G$ such that $G$ is the disjoint union of the sets $hG_1$ for $h\in B$.
For each $m\in \Nb$ put $K_m=\{s^k: -m\le k\le m\}$.
For each $F\in \cF(G)$, denote by $\Phi(F)$ the smallest $m\in \Nb$ satisfying $F\subseteq BK_m$.
Put $M=\Phi(BB)$.
Define  $\Upsilon: \cF(G)\rightarrow [0, \infty)$ by
$\Upsilon(F)=3|K_M|\cdot |B|\cdot |BK_{\Phi(F)}|$.

Let $G\curvearrowright (X, \mu)$ be a free p.m.p.\ action
and suppose that it has property SC$''$.  Then there is an $\eta>0$  satisfying the conditions in Definition~\ref{D-SC'}.
Take an $m\in \Nb$ with $1/(m+1)<\eta$.
Put  $T=\{s^m, e_G, s^{-m}\}\in \overline{\cF}(G)$. Then there are  $n\in \Nb$ and $S_1, \dots, S_n\in \overline{\cF}(G)$ satisfying the conditions in Definition~\ref{D-SC'}.

Applying the Rokhlin lemma \cite[Lemma 4.77]{KerLi16} to $G_1\curvearrowright (X, \mu)$, we find a Borel set $Y\subseteq X$ such that the sets $s^kY$ for $k=0, 1, \dots, m$ are pairwise disjoint and $\mu(\bigcup_{k=0}^{m}s^kY)>1/2$. Put $W_1=Y$ and $W_2=s^{m}Y$. We have $\mu(W_1)=\mu(W_2)=\mu(Y)\le 1/(m+1)<\eta$.
Then there are Borel sets $\cV_1, \dots, \cV_n\subseteq X$ satisfying the conditions in Definition~\ref{D-SC'}.

Put $\cV=\bigcup_{j=1}^nBK_{\Phi(S_j)}\cV_j$. If $(v_1, v_2)$ is an $S_j$-edge with both $v_1$ and $v_2$ in $\cV_j$ for some $1\le j\le n$, then
$v_1$ and $v_2$ are connected by a $(B\cup K_1)$-path in $\cV$. From condition (ii) in Definition~\ref{D-SC'} we conclude that for each $y\in Y$ the points
$y$ and $s^{m}y$ are connected by a $(B\cup K_1)$-path  in $\cV$. Put $\cV'=B^{-1}\cV$ and $\cV''=K_M\cV'$.

Let $x\in \cV'$, $h\in B$, and $l\in \Nb$ be such that $s^khx\in \cV$ for all $0\le k\le l$.
Then either $h^{-1}s^ih=s^i$ for all $i\in \Zb$ or
$h^{-1}s^ih=s^{-i}$ for all $i\in \Zb$. If $h^{-1}s^ih=s^i$ for all $i\in \Zb$, then $x$ is connected to $s^lhx$ by the path
$x, sx, \dots, s^lx, hs^lx$, which is the $K_1$-path $x, sx, \dots, s^lx$ in $\cV'$ followed by the $B$-edge $(s^lx, hs^lx)$ with $s^lx\in \cV'$ and $hs^lx\in \cV$,
since $s^kx=h^{-1}s^khx\in \cV'$ for all $0\le k\le l$. If $h^{-1}s^ih=s^{-i}$ for all $i\in \Zb$, then $x$ is connected to $s^lhx$ by the path
$x, s^{-1}x, \dots, s^{-l}x, hs^{-l}x$, which is the $K_1$-path $x, s^{-1}x, \dots, s^{-l}x$ in $\cV'$ followed by the $B$-edge $(s^{-l}x, hs^{-l}x)$ with $s^{-l}x\in \cV'$ and $hs^{-l}x\in \cV$,
since $s^{-k}x=h^{-1}s^khx\in \cV'$ for all $0\le k\le l$.

Similarly, if $x\in \cV'$, $h\in B$, and $l\in \Nb$ are such that $s^{-k}hx\in \cV$ for all $0\le k\le l$ then $x$ is connected to $s^{-l}hx$ by a path which is a $K_1$-path in $\cV'$ followed by a $B$-edge of the form $(z, s^{-l}hx)$ with $z\in \cV'$ and $s^{-l}hx\in \cV$.

Let $x\in \cV'$ and $h_1, h_2\in B$ be such that $h_2h_1x, h_1x\in \cV$. Then $h_2h_1=hs^k$ for some $h\in B$ and $-M\le k\le M$. When $k\ge 0$, $x$ is connected to $h_2h_1x$ by the path $x, sx, \dots, s^kx, hs^kx$, which is the $K_1$-path $x, sx, \dots, s^kx$ in $\cV''$ followed by the $B$-edge $(s^kx, hs^kx)$ with $s^kx\in \cV'$ and $hs^kx\in \cV$.
When $k\le 0$, $x$ is connected to $h_2h_1x$ by the path $x, s^{-1}x, \dots, s^kx, hs^kx$, which is the $K_1$-path $x, s^{-1}x, \dots, s^kx$ in $\cV''$ followed by the $B$-edge $(s^kx, hs^kx)$ with $s^kx\in \cV'$ and $hs^kx\in \cV$.

From the above three paragraphs, we conclude that if $x\in \cV$ is connected to $y\in \cV$ by a $(B\cup K_1)$-path in $\cV$, then $x$ is connected to $y$ by a path in $\cV''$ that is a $K_1$-path followed by a $B$-edge.

For any $y\in Y$, since $y$ and $s^{m}y$ are connected by a $(B\cup K_1)$-path in $\cV$ we infer that $y$ is connected to $s^{m}y$ by a $K_1$-path in $\cV''$. Therefore $s^ky\in \cV''$ for all $0\le k\le m$. Consequently, $\mu(\cV'')\ge \mu(\bigcup_{k=0}^{m}s^kY)\ge 1/2$. On the other hand, we have
\begin{align*}
\mu(\cV'')\le |K_M|\cdot |B|\mu(\cV)\le  \sum_{j=1}^n|K_M|\cdot |B|\cdot |BK_{\Phi(S_j)}|\mu(\cV_j)=\frac{1}{3}\sum_{j=1}^n\Upsilon(S_j)\mu(\cV_j)\le \frac{1}{3},
\end{align*}
a contradiction.
We conclude that $G\curvearrowright (X, \mu)$ does not have property SC$''$.
\end{proof}

\begin{proposition} \label{P-SC'}
For a fixed $G$, let $\fC$ be a nonempty class of free p.m.p.\ actions $G\curvearrowright (X,\mu)$.
Consider the following conditions:
\begin{enumerate}
\item $\fC$ has property SC,
\item $\fC$ has property SC\primespace$'$,
\item $\fC$ has property SC\primespace$''$,
\item $G$ has the shrinking property.
\end{enumerate}
Then (i)$\Leftrightarrow$(ii)$\Rightarrow$(iii)$\Rightarrow$(iv). Moreover, if $\fC$ consists of
a single action or of all free p.m.p.\ actions of $G$ then (i)$\Leftrightarrow$(ii)$\Leftrightarrow$(iii)$\Rightarrow$(iv).
\end{proposition}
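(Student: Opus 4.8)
The implications fall into a ``soft'' group, obtained by permuting quantifiers in Definitions~\ref{D-SC} and~\ref{D-SC'}, and a ``hard'' group, namely the recovery of property SC from the bound-free variant together with the link to the shrinking property. For (i)$\Rightarrow$(ii), given $\Upsilon$ I would apply property SC to the function $2\Upsilon$ to obtain $S\in\overline{\cF}(G)$ and fix $\eta>0$ with $2(1+|S|)\Upsilon(S)\eta\le 1/2$. Given $T\in\overline{\cF}(G)$, apply property SC again to $2\Upsilon$ and the symmetric set $T':=STS$, obtaining $C',n',S_1',\dots,S_{n'}'$, and set $n:=n'+1$, $C:=C'+2$, $S_{n'+1}:=S$. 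For an action in $\fC$ and Borel sets $W_1,W_2$ of measure $\le\eta$, take the set $W$ and the sets $\cV_1',\dots,\cV_{n'}'$ supplied by property SC for $T'$, keep $\cV_j:=\cV_j'$ for $j\le n'$, and put $\cV_{n'+1}:=W_1\cup W_2\cup SW_1\cup SW_2$, of measure $\le 2(1+|S|)\eta$. If $gw_1=w_2$ with $w_1\in W_1$, $w_2\in W_2$, $g\in T$, write $w_i=s_iu_i$ with $s_i\in S$ and $u_i\in W$; then $u_2=(s_2^{-1}gs_1)u_1$ and $s_2^{-1}gs_1\in T'$, so $u_1$ and $u_2$ are joined through the $\cV_j'$ by a path of length $\le C'$, which I extend by the two $S$-edges $(w_1,u_1)$ and $(u_2,w_2)$, both of whose endpoints lie in $\cV_{n'+1}$. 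The weight bound follows from $\sum_{j\le n'}\Upsilon(S_j')\mu(\cV_j')\le 1/2$ and $\Upsilon(S)\mu(\cV_{n'+1})\le 1/2$.

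For (ii)$\Rightarrow$(i), the observation is that property SC imposes no upper bound on $\mu(W)$, so $W$ may be taken of arbitrarily small measure at the cost of enlarging $S$. Given $\Upsilon$, apply property SC$'$ to get $\eta>0$, choose $F\in\overline{\cF}(G)$ with $|F|\ge 1/\eta$, and set $S:=FF$; by a maximal $F$-tower (Lemma~\ref{L-measurable maximal}, taking $E$ to be the union of the graphs of the elements of $F$ and $M_1=1$) every free p.m.p.\ action admits a Borel $W$ with $SW=X$ and $\mu(W)\le 1/|F|\le\eta$. For a given $T$, retain the $C,n,S_1,\dots,S_n$ from property SC$'$, and for each action in $\fC$ apply property SC$'$ to the pair $W_1=W_2=W$ to obtain the $\cV_j$; then conditions (i)--(iii) of Definition~\ref{D-SC} hold directly. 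The implication (ii)$\Rightarrow$(iii) is trivial, since property SC$'$ is property SC$''$ with an extra requirement.

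For (iii)$\Rightarrow$(iv), nonemptiness of $\fC$ lets me fix $\alpha\in\fC$; restricting the quantifiers of Definition~\ref{D-SC'} to $\{\alpha\}$ shows that $\alpha$ has property SC$''$, so $G$ is neither locally finite nor virtually cyclic by Lemmas~\ref{L-locally finite not SC''} and~\ref{L-virtually cyclic not SC''}, hence has the shrinking property by Proposition~\ref{P-shrinking}. For the ``moreover'' clause it is enough to prove (iii)$\Rightarrow$(i) for a singleton $\fC=\{\alpha\}$: if $\fC$ is the class of all free p.m.p.\ actions, then property SC$''$ of $\fC$ yields property SC$''$ of any fixed nontrivial Bernoulli action of $G$, hence property SC of that Bernoulli action by the singleton case, hence property SC of $G$ by the implication (iii)$\Rightarrow$(i) of Proposition~\ref{P-Bernoulli}. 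For $\alpha=G\curvearrowright(X,\mu)$ with property SC$''$, I would run the (ii)$\Rightarrow$(i) construction applied to $2\Upsilon$ to produce $W$ with $SW=X$, $\mu(W)\le\eta_0$ and sets $\cV_j^0$ from property SC$''$ applied to $W_1=W_2=W$, then eliminate the missing bound exactly as in the paragraph following Remark~\ref{R-SC no bound}: for each $g\in T$ the Borel sets $W_{g,C}$ of points $w\in W\cap g^{-1}W$ for which $w$ and $gw$ are joined by a length-$\le C$ path through the $\cV_j^0$ increase to $W\cap g^{-1}W$, so for a large enough $C$ (allowed to depend on $\alpha$) the deficits are small, and adjoining $S_{n_0+1}:=T$, $\cV_{n_0+1}:=T(W\setminus W^\dag)$ with $W^\dag:=W\setminus\bigcup_{g\in T}((W\cap g^{-1}W)\setminus W_{g,C})$ yields property SC provided $\tau$ is chosen small relative to $\Upsilon(T)$ and $|T|$.

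I expect the main obstacle to be this last step, which requires two ideas: that property SC puts no constraint on the size of its set $W$, so that property SC$'$ and property SC$''$ may be fed the diagonal pair $W_1=W_2=W$ produced by an overlarge tower; and that for a single action the threshold length $C$ may legitimately be chosen after the action is fixed, so the measure-continuity argument removing $C$ goes through unchanged, the all-actions case then being forced to behave uniformly by routing through a single Bernoulli action and Proposition~\ref{P-Bernoulli}. The remaining implications are routine rearrangements of the quantifier structure in Definitions~\ref{D-SC} and~\ref{D-SC'}.
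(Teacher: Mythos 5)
Your argument is correct, and your treatment of (ii)$\Rightarrow$(i) is a genuine and welcome simplification of the paper's. The paper routes this implication through the shrinking property: after establishing (ii)$\Rightarrow$(iii)$\Rightarrow$(iv), it invokes the shrinking data $(Z,\cV,S^\sharp,C^\sharp)$, sets $W=\cV$, feeds $W_1=W_2=Z$ into property SC$'$ for the enlarged set $T_1=(S^\sharp)^{C^\sharp}T(S^\sharp)^{C^\sharp}$, and appends an extra block $\cV_{n+1}=\cV$ with generator $S^\sharp$, at the cost of working with $2\Upsilon$ and reserving half the weight budget for the new block. You instead observe that Definition~\ref{D-SC} is \emph{literally} Definition~\ref{D-SC'} specialized to $W_1=W_2=W$, so all that is needed is a uniform way to produce a Borel $W$ with $SW=X$ and $\mu(W)\le\eta$ where $S$ depends only on $\Upsilon$; a maximal $F$-tower (Lemma~\ref{L-measurable maximal} with $M_1=1$ on the $F$-edge relation, taking $|F|\ge 1/\eta$ and $S=F^2$, so that $(E,1)$-separation makes $W$ the base of an $F$-tower of measure at most $1/|F|$) supplies this for every free p.m.p.\ action of the infinite group $G$, after the routine patching $W\mapsto W\cup(X\setminus SW)$ to upgrade $\mu(SW)=1$ to $SW=X$ --- a step you should state explicitly, since the lemma only returns a conull $\tilde{Y}$. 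This bypasses the shrinking property entirely (so (ii)$\Rightarrow$(i) does not depend on (iii) or (iv)), needs no rescaling of $\Upsilon$ and no auxiliary weighted block, and incidentally yields Lemma~\ref{L-small} without the paper's amenable/shrinking-property case analysis. The remaining implications --- (i)$\Rightarrow$(ii) by bridging $W_1,W_2$ to $W$ with $S$-edges and conjugating $T$ to $STS$, (ii)$\Rightarrow$(iii) trivially, (iii)$\Rightarrow$(iv) via Lemmas~\ref{L-locally finite not SC''} and~\ref{L-virtually cyclic not SC''} together with Proposition~\ref{P-shrinking}, and (iii)$\Rightarrow$(i) for singleton or all-actions $\fC$ via the argument following Remark~\ref{R-SC no bound} together with Proposition~\ref{P-Bernoulli} --- agree with the paper's up to minor bookkeeping (your $\cV_{n'+1}$ is a slightly larger superset of theirs, with the corresponding harmless adjustment of $\eta$).
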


\begin{proof}
(i)$\Rightarrow$(ii). Let $\Upsilon$ be a function $\cF(G)\rightarrow [0, \infty)$.
Then we have an $S\in \overline{\cF}(G)$ witnessing property SC for $\fC$ with respect to the function $2\Upsilon$.
Take an $\eta>0$ with $2\eta|S|\Upsilon(S)<1/2$. Let $T_1\in \overline{\cF}(G)$. Then we have $C, n, S_1, \dots, S_n$ given by Definition~\ref{D-SC} for $T:=ST_1S\in \overline{\cF}(G)$. Put $S_{n+1}=S\in \overline{\cF}(G)$. Let $G\curvearrowright (X, \mu)$ be an action in $\fC$. Then we have $W, \cV_1, \dots, \cV_n$ satisfying conditions (i)-(iii) in Definition~\ref{D-SC}.
Let $W_1, W_2$ be Borel subsets of $X$ satisfying $\mu(W_1), \mu(W_2)\le \eta$.
Put
\[
\cV_{n+1}=(S_{n+1}(W_1\cup W_2)\cap W)\cup (W_1\cup W_2)\subseteq S(W_1\cup W_2).
\]
Then
\[
\mu(\cV_{n+1})\le |S|(\mu(W_1)+\mu(W_2))\le 2\eta |S|
\]
and hence
\[
\sum_{j=1}^{n+1}\Upsilon(S_j)\mu(\cV_j)\le \frac12 +\Upsilon(S)\mu(\cV_{n+1})\le \frac12 +2\eta|S|\Upsilon(S)\le \frac12
+ \frac12 =1,
\]
verifying condition (i) in Definition~\ref{D-SC'}.
Let $g\in T_1$ and $w_1\in W_1$, $w_2\in W_2$ with $gw_1=w_2$.
For $i=1, 2$ take an $s_i\in S$ such that $s_iw_i\in W$. Then $(w_i, s_iw_i)$ is an $S_{n+1}$-edge with both endpoints in $\cV_{n+1}$. Note that $(s_2gs_1^{-1})(s_1w_1)=s_2w_2$ and $s_2gs_1^{-1}\in T$. Thus $s_1w_1$ and $s_2w_2$ are connected by a path of length at most $C$ in which each edge is an $S_j$-edge with both endpoints in $\cV_j$ for some $1\le j\le n$. Then $w_1$ and $w_2$ are connected by a path of length at most $C+2$ in which each edge is an $S_j$-edge with both endpoints in $\cV_j$ for some $1\le j\le n+1$, verifying condition (ii) in Definition~\ref{D-SC'}. Thus $\fC$ has property SC$'$.

(ii)$\Rightarrow$(iii). Trivial.

(iii)$\Rightarrow$(iv). This follows from Lemmas~\ref{L-locally finite not SC''} and \ref{L-virtually cyclic not SC''} and Proposition~\ref{P-shrinking}.

(ii)$\Rightarrow$(i). Since (ii)$\Rightarrow$(iii)$\Rightarrow$(iv), $G$ has the shrinking property.
Let $S^\sharp\in \overline{\cF}(G)$ witness the shrinking property for $G$. Let $\Upsilon$ be a function $\cF(G)\rightarrow [0, \infty)$. Take an $\eta$ as given by property SC$'$ for $\fC$ with respect to the function $2\Upsilon$.
Take an $\varepsilon^\sharp>0$ with $\varepsilon^\sharp\Upsilon(S^\sharp)\le 1/2$. By the shrinking property for $G$ there is an $S\in \overline{\cF}(G)$ so that
for any $\delta>0$ there is a $C^\sharp\in \Nb$ such that given any free p.m.p.\ action $G\curvearrowright (X,\mu )$ we can find Borel sets $Z\subseteq \cV\subseteq X$ satisfying the following conditions:
\begin{enumerate}
\item[(a)] $S\cV=X$,
\item[(b)] $\mu(\cV)\le \varepsilon^\sharp$ and $\mu(Z)\le \delta$,
\item[(c)] every point of $\cV$ is connected to some point of $Z$ by an $S^\sharp$-path of length at most $C^\sharp$ with all points in $\cV$.
\end{enumerate}
Put $\delta=\eta$. Then we have $C^\sharp$ as above.
Let $T\in \overline{\cF}(G)$. Then we have $C, n, S_1, \dots, S_n$ as given by property SC$'$ for $T_1:=(S^\sharp)^{C^\sharp}T(S^\sharp)^{C^\sharp}\in \overline{\cF}(G)$. Put $S_{n+1}=S^\sharp\in \overline{\cF}(G)$. Now let $G\curvearrowright (X, \mu)$ be an action in $\fC$.
Then we have $Z, \cV$ satisfying the above conditions (a)-(c). Put $W_1=W_2=Z$. Then $\mu(W_1), \mu(W_2)\le \delta=\eta$. Thus we have $\cV_1, \dots, \cV_n$ satisfying the conditions in Definition~\ref{D-SC'}. Put $W=\cV$ and $\cV_{n+1}=\cV\subseteq X$. Then
\[
\sum_{j=1}^{n+1}\Upsilon(S_j)\mu(\cV_j)\le \frac12 +\Upsilon(S^\sharp)\mu(\cV)\le \frac12 +\Upsilon(S^\sharp)\varepsilon^\sharp\le \frac12 +\frac12 =1,
\]
verifying condition (i) in Definition~\ref{D-SC}. Clearly  $SW=S\cV=X$, verifying condition (ii) in Definition~\ref{D-SC}.  Let $g\in T$ and $w_1, w_2\in W$ with $gw_1=w_2$. For $i=1, 2$, we can connect $w_i$ to a point $z_i\in Z$ by an $S_{n+1}$-path of length at most $C^\sharp$ with all points in $\cV=\cV_{n+1}$. Then $z_i=s_iw_i$ for some $s_i\in (S^\sharp)^{C^\sharp}$. Note that $(s_2gs_1^{-1})z_1=z_2$ and $s_2gs_1^{-1}\in (S^\sharp)^{C^\sharp}T(S^\sharp)^{C^\sharp}=T_1$. Since $z_i\in W_i$, we can connect $z_1$ and $z_2$ by a path of length at most $C$ in which each edge is an $S_j$-edge with both endpoints in $\cV_j$ for some $1\le j\le n$. Then $w_1$ and $w_2$ are connected by a path of length at most $C+2C^\sharp$ in which each edge is an $S_j$-edge with both endpoints in $\cV_j$ for some $1\le j\le n+1$, verifying condition (iii) in Definition~\ref{D-SC}. Thus $\fC$ has property SC.

Now assume that $\fC$ consists of a single action or of all free p.m.p.\ actions of $G$. We just need to verify (iii)$\Rightarrow$(i). The argument in the above paragraph shows that $\fC$ satisfies the definition of
property SC without the bound $C$. From Remark~\ref{R-SC no bound} we conclude that $\fC$ has property SC.
\end{proof}

\subsection{Normal subgroups and property SC}\label{SS-normal}

\begin{proposition} \label{P-normal SC}
Suppose that $G$ has a normal subgroup $G^\flat$ with property SC.
Then $G$ has property SC.
\end{proposition}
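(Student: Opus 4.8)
The plan is to verify Definition~\ref{D-SC} directly for the class of all free p.m.p.\ actions of $G$, the only facts about $G^\flat$ entering being the two consequences of its having property~SC furnished by Proposition~\ref{P-SC'}: that $G^\flat$ also has property~SC$'$ and the shrinking property. Both will be invoked for the restrictions to $G^\flat$ of free p.m.p.\ actions of $G$, which are again free and p.m.p. The geometric mechanism for pushing a $T$-edge $gw_1=w_2$ from $G$ down into $G^\flat$ is to fix a transversal of $G^\flat$ in $G$ and write $g=g^\flat r$ with $g^\flat$ in a finite set $F_0\in\overline{\cF}(G^\flat)$ and $r$ in a finite set $R_0\in\overline{\cF}(G)$ of coset representatives, and then to build the path in three stages: first collapse $w_1$ along a $G^\flat$-path of bounded length to a point $z$ of an auxiliary, extremely sparse set $Z^\flat$ produced by the shrinking property of $G^\flat$ (whose collapsing generator $S_1^\flat$ lies in $G^\flat$, so the collapse is realized by some $s\in(S_1^\flat)^{C^\flat}\subseteq G^\flat$); then take the single $R_0$-edge $z\mapsto rz$; and finally join $rz$ to $w_2=\gamma^\flat(rz)$ by a $G^\flat$-path supplied by property~SC$'$ of $G^\flat$, where $\gamma^\flat:=g^\flat\,rs^{-1}r^{-1}\in G^\flat$ by normality.

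For the quantifier chain I would proceed as follows. Given $\Upsilon\colon\cF(G)\to[0,\infty)$, set $\Upsilon^\flat:=2\Upsilon|_{\cF(G^\flat)}$ and let $\eta^\flat>0$ be the constant it produces via property~SC$'$ of $G^\flat$; let $S_1^\flat\in\overline{\cF}(G^\flat)$ witness the shrinking property of $G^\flat$; pick $\varepsilon>0$ with $\varepsilon\le\eta^\flat$ and $\Upsilon(S_1^\flat)\varepsilon\le 1/4$, take the corresponding $S^\flat\in\overline{\cF}(G^\flat)$ from the shrinking property, and declare $S:=S^\flat$ to be the set required by Definition~\ref{D-SC}. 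When $T\in\overline{\cF}(G)$ is given, decompose $T\subseteq F_0R_0$ along the transversal as above, choose $\delta>0$ with $|R_0|\delta\le\eta^\flat$ and $\Upsilon(R_0)(1+|R_0|)\delta\le 1/4$, let $C^\flat$ be the constant the shrinking property then yields, put $T^\flat\in\overline{\cF}(G^\flat)$ equal to the symmetrization of $\{e_G\}\cup F_0\bigcup_{r\in R_0}r(S_1^\flat)^{C^\flat}r^{-1}$, and let $C',n',S_1',\dots,S_{n'}'$ be what property~SC$'$ of $G^\flat$ supplies for $T^\flat$. Then take $n:=n'+2$, $S_j:=S_j'$ for $j\le n'$, $S_{n'+1}:=S_1^\flat$, $S_{n'+2}:=R_0$, and $C:=C^\flat+C'+1$.

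Given a free p.m.p.\ action $G\curvearrowright(X,\mu)$, I would apply the shrinking property of $G^\flat$ to $G^\flat\curvearrowright(X,\mu)$ to get Borel $Z^\flat\subseteq\cV^\flat\subseteq X$ with $S^\flat\cV^\flat=X$, $\mu(\cV^\flat)\le\varepsilon$, $\mu(Z^\flat)\le\delta$ and the collapsing property, set $W:=\cV^\flat$, and apply property~SC$'$ of $G^\flat$ to the same restricted action with $W_1:=R_0Z^\flat$ and $W_2:=\cV^\flat$ (of measures $\le\eta^\flat$ by the choices of $\delta$ and $\varepsilon$) to get $\cV_1',\dots,\cV_{n'}'$ with $\sum_j\Upsilon^\flat(S_j')\mu(\cV_j')\le1$; finally set $\cV_j:=\cV_j'$ for $j\le n'$, $\cV_{n'+1}:=\cV^\flat$, $\cV_{n'+2}:=Z^\flat\cup R_0Z^\flat$. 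Then clause~(ii) of Definition~\ref{D-SC} is $SW=S^\flat\cV^\flat=X$; clause~(i) follows by adding $\sum_{j\le n'}\Upsilon(S_j')\mu(\cV_j')\le 1/2$ (as $\Upsilon=\tfrac12\Upsilon^\flat$ on $\cF(G^\flat)$), $\Upsilon(S_1^\flat)\mu(\cV^\flat)\le 1/4$, and $\Upsilon(R_0)\mu(\cV_{n'+2})\le\Upsilon(R_0)(1+|R_0|)\delta\le 1/4$; and clause~(iii) is the three-stage routing, where the collapsing $S_1^\flat$-path runs inside $\cV^\flat=\cV_{n'+1}$, the $R_0$-edge $z\mapsto rz$ has both endpoints in $Z^\flat\cup R_0Z^\flat=\cV_{n'+2}$, and the $G^\flat$-path from $rz\in W_1$ to $w_2\in W_2$ has its edges inside the $\cV_j'$ (here one checks $\gamma^\flat\in T^\flat$).

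The step I expect to demand the most care is the bookkeeping of this chain — ensuring that each smallness requirement is imposed at a stage where the relevant quantities are already fixed. The constraint on $\varepsilon$ is allowed to involve $\Upsilon(S_1^\flat)$ even though $\varepsilon$ precedes $T$, precisely because in the shrinking property $S_1^\flat$ is committed before the scale $S^\flat$ — the reversal noted after Definition~\ref{D-shrinking} — while every genuinely $T$-dependent smallness (keeping $\Upsilon(R_0)$ times the support of the coset edges small, and fitting $R_0Z^\flat$ into the $\eta^\flat$-budget of property~SC$'$) is deferred to $\delta$, which is chosen after $T$. This is also why property~SC of $G^\flat$ cannot simply replace the shrinking property in the collapsing stage: the dense set it provides carries no a priori small measure, so the $R_0$-edges leaving it would be uncontrollable, whereas the shrinking property guarantees a $\delta$-small target set $Z^\flat$ to reach before crossing a coset.
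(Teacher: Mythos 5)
Your proof is correct, and it takes a genuinely different route from the paper's. The shared skeleton is the same two ingredients extracted from $G^\flat$ via Proposition~\ref{P-SC'} --- the shrinking property (to collapse $W$ to a $\delta$-small set $Z^\flat$, with $\delta$ chosen \emph{after} $T$ precisely so the $T$-dependent cross-coset edges leaving $Z^\flat$ stay cheap) and an SC-type property to do the routing inside $G^\flat$, with normality guaranteeing that the conjugated collapsing element lands back in $G^\flat$. Where you diverge: the paper invokes property SC of $G^\flat$ directly and uses its dense set $W^\flat$ (with $S^\flat W^\flat = X$) as the anchor, reaching it from $Z$ via the $G$-edges $S_2 = S^\flat T \cup (S^\flat T)^{-1}$ and collapsing \emph{both} endpoints $w_1, w_2$ before routing between the two anchored images $u_1, u_2 \in W^\flat$. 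You instead invoke property SC\primespace$'$ with the small sets $W_1 = R_0 Z^\flat$ and $W_2 = \cV^\flat$, collapse only $w_1$, cross the coset with a single $R_0$-edge along a finite partial transversal, and let SC\primespace$'$ close the gap to $w_2$ directly. Your route is slightly more economical (one collapse instead of two, and one extra $\cV_j$ rather than two) and makes the coset geometry explicit; the paper's route avoids choosing a transversal and anchors on the $W^\flat$ from Definition~\ref{D-SC} itself, which dovetails notationally with the rest of Section~\ref{SS-normal}. Two small points to tidy up in your write-up: your symmetrized $R_0 \in \overline{\cF}(G)$ is not literally a transversal (inverses of coset reps need not be reps), so state it as ``a finite symmetric set containing $e_G$ and at least one representative of each coset $G^\flat g$, $g \in T$''; and note explicitly that $(S_1^\flat)^{C^\flat}$ is symmetric, which is what makes $rs^{-1}r^{-1} \in r(S_1^\flat)^{C^\flat}r^{-1}$ and hence $\gamma^\flat \in T^\flat$.
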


\begin{proof}
By Proposition~\ref{P-SC'} the group $G^\flat$ has the shrinking property. Let $S_1\in \overline{\cF}(G^\flat)$ witness the shrinking property for $G^\flat$.

Let $\Upsilon$ be a function $\cF(G)\rightarrow [0, \infty)$.
Take $0<\varepsilon<1/(3\Upsilon(S_1))$. Then there is an $S\in \overline{\cF}(G^\flat)$ so that
for any $\delta>0$ there is a $C_1\in \Nb$ such that given any free p.m.p.\ action
$G^\flat \curvearrowright (X, \mu)$ we can find Borel sets $Z\subseteq \cV_1\subseteq X$ satisfying the following conditions:
\begin{enumerate}
\item $S\cV_1=X$,
\item $\mu(\cV_1)\le \varepsilon$ and $\mu(Z)\le \delta$,
\item every point of $\cV_1$ is connected to some point of $Z$ by an $S_1$-path of length at most $C_1$ with all points in $\cV_1$.
\end{enumerate}

Using the function $3\Upsilon$ in the definition of property SC for $G^\flat$,
we find an $S^\flat\in \overline{\cF}(G^\flat)$  such that for any $T^\flat\in \overline{\cF}(G^\flat)$
there are $C^\flat, n^\flat\in \Nb$, and  $S^\flat_1, \dots, S^\flat_{n^\flat}\in \overline{\cF}(G^\flat)$ such that for any free p.m.p.\ action
$G \curvearrowright (X, \mu)$ there are Borel subsets $W^\flat$ and $\cV^\flat_k$ of $X$ for $1\le k\le n^\flat$   satisfying the following conditions:
\begin{enumerate}
\item[(iv)] $3\sum_{k=1}^{n^\flat} \Upsilon(S^\flat_{k})\mu(\cV^\flat_{k})\le 1$,
\item[(v)] $S^\flat W^\flat=X$,
\item[(vi)] if  $w_1, w_2\in W^\flat$ satisfy $gw_1=w_2$ for some $g\in T^\flat$ then $w_1$ and $w_2$ are connected by a path of length at most $C^\flat$ in which each edge is an $S^\flat_k$-edge with both endpoints in $\cV^\flat_k$ for some $1\le k\le n^\flat$.
\end{enumerate}

Let $T\in \overline{\cF}(G)$. Set
\[
S_2=S^\flat T\cup (S^\flat T)^{-1}\in \overline{\cF}(G),
\]
and take $0<\delta<1/(3\Upsilon(S_2)|S_2|)$.
Then we have $C_1$ as above.
Set $T^\flat =\bigcup_{g\in T}(S^\flat S_1^{C_1}gS_1^{C_1}g^{-1}S^\flat\cup S^\flat gS_1^{C_1}g^{-1}S_1^{C_1}S^\flat)\in \overline{\cF}(G^\flat)$.
Then we have $C^\flat$, $n^\flat$, and $S^\flat_{k}$ for $1\le k\le n^\flat$ as above. Set
\[
C=2C_1+2+C^\flat\in \Nb.
\]

Now let $G \curvearrowright (X, \mu)$ be a free p.m.p.\ action.
Then we have Borel $Z\subseteq \cV_1\subseteq X$ satisfying conditions (i)-(iii) above.
Note that
\[
\Upsilon(S_1)\mu(\cV_1)\le \Upsilon(S_1)\varepsilon< \frac13 .
\]
We also have Borel sets $W^\flat$ and $\cV^\flat_{k}$ for $1\le k\le n^\flat$ as above, and
\[
\sum_{k=1}^{n^\flat}\Upsilon(S^\flat_{k})\mu(\cV^\flat_{k})\le \frac13 .
\]
Set $\cV_2=S_2Z$. Then
\[
\Upsilon(S_2)\mu(\cV_2)\le \Upsilon(S_2) |S_2|\mu(Z)\le \Upsilon(S_2) |S_2|\delta< \frac13.
\]
Combining all of these bounds we obtain
\[
\Upsilon(S_1)\mu(\cV_1)+\Upsilon(S_2)\mu(\cV_2)+\sum_{k=1}^{n^\flat}\Upsilon(S^\flat_{k})\mu(\cV^\flat_{k})\le 1,
\]
which verifies condition (i) in Definition~\ref{D-SC}.

Set $W=\cV_1$. Then $SW=S\cV_1=X$,
verifying condition (ii) in Definition~\ref{D-SC}.

Let $g\in T$ and $w_1, w_2\in W$ with $gw_1=w_2$.
For $i=1, 2$, we can connect
 $w_i$  to some $z_i\in Z$  by an $S_1$-path of length at most $C_1$ with all points in $\cV_1$.
 Then $w_i=t_iz_i$ for some $t_i\in S_1^{C_1}$.
Using the fact that $S^\flat W^\flat=X$
we have $gz_1=a_1u_1$ for some $u_1\in W^\flat$ and $a_1\in S^\flat$, and $z_2= a_2 u_2$ for some $u_2\in W^\flat$ and $a_2\in S^\flat$.
Note that $a_1^{-1}g$ and $a_2^{-1}$ are both in $S_2$.
Since $(a_1^{-1}g)z_1=u_1$, the pair $(z_1, u_1)$ is an $S_2$-edge with both endpoints in $\cV_2$. Also, since $a_2^{-1}z_2=u_2$, the pair $(z_2, u_2)$ is an $S_2$-edge with both endpoints in $\cV_2$.
Note that
\begin{align*}
(a_2^{-1}t_2^{-1}gt_1g^{-1}a_1) u_1&=a_2^{-1}t_2^{-1}gt_1z_1=a_2^{-1}t_2^{-1}g w_1=a_2^{-1}t_2^{-1}w_2=a_2^{-1}z_2=u_2.
\end{align*}
Since $a_2^{-1}t_2^{-1}gt_1g^{-1}a_1\in S^\flat S_1^{C_1}gS_1^{C_1}g^{-1}S^\flat\subseteq T^\flat$, this means that $u_2\in T^\flat u_1$. Then $u_1$ and $u_2$ are connected by a path of length at most $C^\flat$ in which each edge is an $S^\flat_k$-edge with both endpoints in $\cV^\flat_k$ for some $1\le k\le n^\flat$.
Therefore $w_1$ and $w_2$ are connected by a path of length at most $2C_1+2+C^\flat=C$ in which each edge is either an $S_j$-edge with both endpoints in $\cV_j$ for some $1\le j\le 2$ or an $S^\flat_k$-edge with both endpoints in $\cV^\flat_k$ for some $1\le k\le n^\flat$,
which verifies condition (iii) in Definition~\ref{D-SC}.
\end{proof}

%We will make use of Proposition~\ref{P-normal SC} in Sections~\ref{SS-without SC} and \ref{SS-with SC}.
In preparation for the next section we record one application of Proposition~\ref{P-normal SC}.
For this we need the following construction.

Let $G^\flat$ be a finite-index subgroup of $G$. Take a $B\in \cF(G)$ such that $G=\bigsqcup_{b\in B}bG^\flat$. Let $G^\flat \curvearrowright (X, \mu)$ be a p.m.p.\ action. Put $Y=\bigsqcup_{b\in B}bX$ and write $\nu$ for the probability measure on $Y$ which for each $b\in B$
is $\frac{1}{[G: G^\flat]}\mu$ on $bX$ under the natural identification of $bX$ and $X$. For every $g\in G$ and $b\in  B$ we have $gb=b'h$ for unique $b'\in B$ and $h\in G^\flat$, using which we set
\[
g(bx)=b'(hx)
\]
for all $x\in X$.
It is easily checked that this defines a p.m.p.\ action $G\curvearrowright (Y, \nu)$.
Furthermore, if $G^\flat \curvearrowright (X, \mu)$ is free
then so is $G\curvearrowright (Y, \nu)$.

\begin{proposition} \label{P-finite index}
Let $G^\flat$ be a finite-index subgroup of $G$. Then $G$ has property SC if and only
if $G^\flat$ does.
\end{proposition}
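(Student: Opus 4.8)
The plan is to prove the two implications separately. For the direction ``$G$ has property SC $\Rightarrow$ $G^\flat$ has property SC'' I will use the induction construction just introduced, transporting property SC for the induced $G$-action down to the original $G^\flat$-action by means of the coordinate-forgetting projection $Y\to X$. The reverse implication will then follow formally by combining this with Proposition~\ref{P-normal SC}.

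So suppose $G$ has property SC, let $G^\flat\curvearrowright(X,\mu)$ be an arbitrary free p.m.p.\ action, choose the transversal $B$ with $e_G\in B$, and form $G\curvearrowright(Y,\nu)$ with $Y=\bigsqcup_{b\in B}bX$. Since $G^\flat\curvearrowright X$ is free so is $G\curvearrowright Y$, hence $G\curvearrowright(Y,\nu)$ has property SC. Write $d=[G:G^\flat]$ and let $\theta\colon Y\to X$ be the map $bx\mapsto x$; it is a $d$-to-one Borel surjection with $\mu(\theta(A))\le d\,\nu(A)$ for all Borel $A\subseteq Y$. Given $\Upsilon\colon\cF(G^\flat)\to[0,\infty)$, define $\tilde\Upsilon\colon\cF(G)\to[0,\infty)$ by $\tilde\Upsilon(F)=d\cdot\Upsilon(B^{-1}FB\cap G^\flat)$ when this intersection is nonempty and $\tilde\Upsilon(F)=0$ otherwise, and apply property SC for $G\curvearrowright Y$ with $\tilde\Upsilon$ to obtain $\tilde S\in\overline{\cF}(G)$; put $S=B^{-1}\tilde SB\cap G^\flat$, which lies in $\overline{\cF}(G^\flat)$ because $B^{-1}\tilde SB$ is symmetric and contains $e_G$. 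For $T\in\overline{\cF}(G^\flat)$ set $\tilde T=BTB^{-1}\in\overline{\cF}(G)$, obtain from property SC for $G\curvearrowright Y$ the data $\tilde C,\tilde n\in\Nb$ and $\tilde S_1,\dots,\tilde S_{\tilde n}\in\overline{\cF}(G)$, and put $C=\tilde C$, $n=\tilde n$, $S_j=B^{-1}\tilde S_jB\cap G^\flat$.

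The crux is the transfer. Given the sets $\tilde W,\tilde\cV_1,\dots,\tilde\cV_{\tilde n}\subseteq Y$ supplied for $G\curvearrowright(Y,\nu)$, set $W=\theta(\tilde W)$ and $\cV_j=\theta(\tilde\cV_j)$; these are Borel. Condition~(i) of Definition~\ref{D-SC} is immediate from $\mu(\theta(A))\le d\,\nu(A)$ and the definition of $\tilde\Upsilon$, giving $\sum_j\Upsilon(S_j)\mu(\cV_j)\le\sum_j\tilde\Upsilon(\tilde S_j)\nu(\tilde\cV_j)\le1$. For~(ii), writing a point $e_Gx\in Y$ as $\tilde s\tilde w$ with $\tilde s\in\tilde S$, $\tilde w\in\tilde W$ and unwinding the induced action gives $\theta(\tilde w)=hx$ with $h\in B^{-1}\tilde SB\cap G^\flat=S$, so $x\in SW$; hence $SW=X$. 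For~(iii), let $g\in T$ and $w_1,w_2\in W$ with $gw_1=w_2$, choose lifts $\tilde w_i\in\tilde W$ with $\theta(\tilde w_i)=w_i$ (so $\tilde w_i$ lies in the piece $\beta_iX$ for some $\beta_i\in B$), and check that $\tilde w_2=(\beta_2g\beta_1^{-1})\tilde w_1$ with $\beta_2g\beta_1^{-1}\in BTB^{-1}=\tilde T$. Property SC for $G\curvearrowright Y$ then yields a path $\tilde w_1=p_0,\dots,p_k=\tilde w_2$ of length $k\le\tilde C$ whose $l$-th edge is an $\tilde S_{j_l}$-edge with endpoints in $\tilde\cV_{j_l}$; applying $\theta$ and recording that $p_l$ lies in the piece $\gamma_lX$, the projected path $w_1=\theta(p_0),\dots,\theta(p_k)=w_2$ has $l$-th edge an $S_{j_l}$-edge (the edge element $\gamma_l^{-1}g_l\gamma_{l-1}$ lands in $B^{-1}\tilde S_{j_l}B\cap G^\flat=S_{j_l}$) with endpoints in $\theta(\tilde\cV_{j_l})=\cV_{j_l}$, and has length $\le C$. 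This proves property SC for $G^\flat$.

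For the converse, suppose $G^\flat$ has property SC and let $N=\bigcap_{g\in G}gG^\flat g^{-1}$ be the normal core of $G^\flat$ in $G$: a normal subgroup of $G$ of finite index, contained in $G^\flat$, and (being of finite index in the countably infinite group $G$) itself a countably infinite finite-index subgroup of $G^\flat$. The forward implication applied to the pair $(G^\flat,N)$ shows $N$ has property SC, and then Proposition~\ref{P-normal SC} yields that $G$ has property SC. The step I expect to be the main obstacle is the transfer in the forward implication: a path produced by property SC for $G\curvearrowright Y$ runs through $G$-edges, which are not $G^\flat$-edges, so it cannot be used verbatim; passing to the projection $\theta$ repairs this but conjugates the generating sets by the transversal $B$, and the definitions of $\tilde\Upsilon$ and $\tilde T$, the factor $d$ in the measure estimate, and the choice of lifts of $w_1,w_2$ must all be arranged so that conditions (i)--(iii) of Definition~\ref{D-SC} survive intact.
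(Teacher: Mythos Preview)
Your proof is correct and follows the same approach as the paper: the induced action $G\curvearrowright Y$ together with the projection $\theta$ for the implication $G\Rightarrow G^\flat$, and the normal core combined with Proposition~\ref{P-normal SC} for $G^\flat\Rightarrow G$. One small point worth tightening: you fix the $G^\flat$-action $X$ before extracting $\tilde S$ via ``property SC for $G\curvearrowright Y$,'' so as written $\tilde S$ (and hence $S$) could in principle depend on $X$; the paper avoids this by first applying property SC for $G$ as a group to get a uniform $S$, and only afterwards specializing to the particular $Y$ --- though your ordering is still ultimately fine by Proposition~\ref{P-Bernoulli}, since showing every free $G^\flat$-action has property SC is equivalent to $G^\flat$ having property SC.
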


\begin{proof}
Assume that $G$ has property SC. Take $B\in \cF(G)$ as above containing $e_G$. Let $\Upsilon^\flat$ be a function $\cF(G^\flat)\rightarrow [0, \infty)$.
For each $F\in \cF(G)$, denote by $\varphi(F)$ the smallest element of $\cF(G^\flat)$ satisfying $F\subseteq B\varphi(F)$, and put
\[
\Upsilon(F)=[G: G^\flat]\Upsilon^\flat(\varphi(FB)\cup (\varphi(FB))^{-1})\ge 0.
\]
%Also put $\varepsilon=\varepsilon^\flat/[G: G^\flat]>0$.
Then we have $S\in \overline{\cF}(G)$ witnessing property SC for $G$. Put $S^\flat=\varphi(SB)\cup (\varphi(SB))^{-1}\in \overline{\cF}(G^\flat)$. Let $T^\flat\in \overline{\cF}(G^\flat)$. Put $T=BT^\flat B^{-1}\in \overline{\cF}(G)$.
Then there are $C, n\in \Nb$ and $S_1, \dots, S_n\in \overline{\cF}(G)$ satisfying the conditions in Definition~\ref{D-SC}. Put $C^\flat=C$, $n^\flat=n$, and $S^\flat_j=\varphi(S_jB)\cup (\varphi(S_jB))^{-1}\in \overline{\cF}(G^\flat)$. Let
$G^\flat \curvearrowright (X, \mu)$ be a free p.m.p.\ action.
Define $(Y, \nu)$ as above. Then we have the free action $G\curvearrowright (Y, \nu)$ as above.
By property SC there exist Borel subsets $W$ and $\cV_j$ of $Y$ for $1\le j\le n$ satisfying the following conditions:
\begin{enumerate}
\item $\sum_{j=1}^n\Upsilon(S_j)\nu(\cV_j)\le 1$,
\item $SW=Y$,
\item if $w_1, w_2\in W$ satisfy $gw_1=w_2$ for some $g\in T$ then $w_1$ and $w_2$ are connected by a path of length at most $C$ in which each edge is an $S_j$-edge with both endpoints in $\cV_j$ for some $1\le j\le n$.
\end{enumerate}
We can write $W$ as $\bigsqcup_{b\in B}bW_b$ for some Borel sets $W_b\subseteq X$. Put $W^\flat=\bigcup_{b\in B} W_b\subseteq X$. Similarly, for each $1\le j\le n$ we can write $\cV_j$ as
$\bigsqcup_{b\in B}b\cV_{j, b}$ for some Borel sets $\cV_{j, b}\subseteq X$, and we put $\cV^\flat_j=\bigcup_{b\in B}\cV_{j, b}\subseteq X$.
We then have
\begin{align*}
\sum_{j=1}^{n^\flat}\Upsilon^\flat(S_j^\flat)\mu(\cV_j^\flat)&=\frac{1}{[G: G^\flat]}\sum_{j=1}^n\Upsilon(S_j)\mu(\cV_j^\flat)\le \sum_{j=1}^n\Upsilon(S_j)\nu(\cV_j)\le 1,
\end{align*}
verifying condition (i) in Definition~\ref{D-SC}.
Clearly $S^\flat W^\flat=X$, verifying condition (ii) in Definition~\ref{D-SC}.

Let $g\in T^\flat$ and $w_1, w_2\in W^\flat$ with $gw_1=w_2$. Then there are $b_1, b_2\in B$ such that $b_1w_1, b_2w_2\in W$. Note that $(b_2gb_1^{-1})b_1w_1=b_2w_2$, and $b_2gb_1^{-1}\in T$. Then there are $1\le l\le C$ and $1\le j_1, \dots, j_l\le n$, and $g_i\in S_{j_i}$ for $1\le i\le l$ such that $g_{i-1}\dots g_1b_1w_1, g_i\dots g_1b_1w_1\in \cV_{j_i}$ for all $1\le i\le l$, and $g_l\dots g_1b_1w_1=b_2w_2$. Recursively define $h_i\in G^\flat$ and $d_i\in B$ for $i=1, \dots, l$ by $g_id_{i-1}=d_ih_i$ and $d_{0}=b_1$. Then $h_i\in S_{j_i}^\flat$ and $g_i\dots g_1b_1w_1=d_ih_i\dots h_1w_1$ for all $1\le i\le l$.
Thus $h_{i-1}\dots h_1w_1, h_i\dots h_1w_1\in \cV^\flat_{j_i}$ for all $1\le j\le l$. Also, from $b_2w_2=g_l\dots g_1b_1w_1=d_lh_l\dots h_1w_1$
we get $w_2=h_l\dots h_1w_1$. Thus $w_1$ and $w_2$ are connected by a path of length at most $C^\flat$ in which each edge is an $S_j^\flat$-edge with both endpoints in $\cV_j^\flat$ for some $1\le j\le n^\flat$, verifying condition (iii) in Definition~\ref{D-SC}. Therefore $G^\flat$ has property SC. This proves the ``only if'' part.

Suppose now that $G^\flat$ has property SC.
Since $G^\flat$ has finite index in $G$, we can find a finite-index normal subgroup $G'$ of $G$ such that $G'\subseteq G^\flat$. By the ``only if'' part, $G'$ has property SC. By Proposition~\ref{P-normal SC}, the group $G$ has property SC. This establishes the ``if'' part.
\end{proof}

\subsection{Groups without property SC}\label{SS-without SC}

\begin{lemma} \label{L-free product}
Let $\Gamma$ be a (not necessarily infinite) countable group.
Let $\fC_{G*\Gamma}$ be a class of free p.m.p.\ actions of $G*\Gamma$.
Denote by $\fC_G$ the class of restriction actions $G\curvearrowright (X, \mu)$ for $G*\Gamma \curvearrowright (X, \mu)$ ranging over the actions in $\fC_{G*\Gamma}$.
Suppose that $\fC_{G*\Gamma}$ has property SC\primespace$'$.
Then $\fC_G$ has property SC\primespace$'$.
\end{lemma}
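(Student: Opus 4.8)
The plan is to transfer the data witnessing property SC$'$ for $\fC_{G*\Gamma}$ to the restriction class $\fC_G$ by simply re-indexing everything along the inclusion $G \hookrightarrow G*\Gamma$. Given a function $\Upsilon_G \colon \cF(G) \to [0,\infty)$, I would first extend it to a function $\Upsilon \colon \cF(G*\Gamma) \to [0,\infty)$, for instance by setting $\Upsilon(F) = \Upsilon_G(F)$ when $F \subseteq G$ and $\Upsilon(F) = 0$ otherwise. Feeding $\Upsilon$ into property SC$'$ for $\fC_{G*\Gamma}$ yields an $\eta > 0$; I claim the same $\eta$ works for $\fC_G$. So let $T \in \overline{\cF}(G)$ be given. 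Since $T$ is also a symmetric finite subset of $G*\Gamma$ containing the identity, property SC$'$ for $\fC_{G*\Gamma}$ supplies $C, n \in \Nb$ and $S_1, \dots, S_n \in \overline{\cF}(G*\Gamma)$ with the stated properties. The issue, of course, is that the $S_j$ need not lie in $G$.

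The key observation that resolves this is that the $S_j$-edges actually used in condition (ii) of Definition~\ref{D-SC'} only ever connect points whose difference lies in $T \subseteq G$ — more precisely, the whole path connecting $w_1$ and $w_2$ has $w_1, w_2$ in $G$-orbit relation (since $gw_1 = w_2$ with $g \in T \subseteq G$), but the intermediate points need not be $G$-related to the endpoints. Here I would invoke freeness of the $G*\Gamma$-action together with the normal form for free products: if $w_1, \dots, w_m, w_{m+1} = gw_1$ is a path whose consecutive steps are multiplications by elements of $\bigcup_j S_j$, and the total product is $g \in G$, then by collapsing the path one can extract a sub-path using only elements of $G$. Concretely, I would pass to the subgraph on the $G$-orbit of $w_1$: because $G * \Gamma$ acts freely, the $\{S_1, \dots, S_n\}$-labelled graph structure on $X$ descends (via the orbit map) to the Cayley-type graph of $G*\Gamma$ with generating set $\bigcup_j S_j$, and the retraction of $G*\Gamma$ onto the subgroup $G$ that comes from killing $\Gamma$ turns any path from $e$ to $g \in G$ into a path staying in $G$. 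Tracking which $S_j$ each retained edge came from, one replaces $S_j$ by $S_j^G := (S_j \cap G) \cup (\text{new elements of } G \text{ produced by the retraction})$ — but one must check this stays finite and that $\Upsilon_G$ of the new sets is controlled.

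A cleaner route, which I would actually pursue, avoids the retraction and instead uses boundedness of $C$: since the original path has length $\le C$ and the endpoints are $G$-related, there is a uniform bound on how far into the $\Gamma$-directions the path can wander, and one can define $\cV_j^G \subseteq X$ to be (essentially) $\cV_j$ itself, with the $S_j$ replaced by a single enlarged symmetric set $S' \in \overline{\cF}(G)$ absorbing all $G$-elements that arise as products $s_{j_k} \cdots s_{j_1}$ of length $\le C$ which happen to land in $G$. One then has a new collection of at most $n' = n + 1$ sets (the extra one being an $S'$-controlled ``shortcut'' set), with the weighted sum $\sum_j \Upsilon_G(S_j^G)\mu(\cV_j^G) \le 1$ inherited from the original estimate because $\Upsilon$ agrees with $\Upsilon_G$ on subsets of $G$ and vanishes elsewhere. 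The main obstacle is precisely this bookkeeping step: ensuring that collapsing a length-$\le C$ path with product in $G$ into a genuine $G$-path does not blow up the measure-weighted sum in condition (i), and that freeness is being used correctly to translate the combinatorics of $G*\Gamma$ into the dynamics. I expect Definition~\ref{D-SC'}'s allowance of the bound $C$ to be exactly what makes the finite enlargement $S'$ possible, and I would lean on Proposition~\ref{P-SC'} only implicitly (property SC$'$ for a single-action or all-actions class is what gets used downstream, but here $\fC_G$ is a general class, so I must stay within the SC$'$ formulation throughout).
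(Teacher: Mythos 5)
The overall strategy (re-index along $G\hookrightarrow G*\Gamma$, then convert $S_j$-paths in $G*\Gamma$ to $G$-paths) is the right one, but your proposal has a fatal gap at the very first step: the choice of extension $\Upsilon$. Setting $\Upsilon(F)=0$ whenever $F\not\subseteq G$ makes property SC$'$ for $\fC_{G*\Gamma}$ give you nothing: the sets $S_1,\dots,S_n$ it hands back need not lie in $G$ (and generically won't), so the inequality $\sum_j\Upsilon(S_j)\mu(\cV_j)\le 1$ degenerates to $0\le 1$ and places no constraint whatsoever on $\mu(\cV_j)$. You therefore have no bound to ``inherit'' when you go to verify condition (i) for $\fC_G$. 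The correct move — and this is the actual crux of the lemma — is to choose $\Upsilon_{G*\Gamma}$ so as to \emph{pre-compensate} for the conversion: for each $F\in\cF(G*\Gamma)$ pick a superset $\Phi(F)=A*_kB$ with $A\in\overline{\cF}(G)$, $B\in\overline{\cF}(\Gamma)$, and set $\Upsilon_{G*\Gamma}(F)=|\Phi(F)|\cdot\Upsilon_G(A)$. The factor $|\Phi(F)|$ exactly offsets the measure inflation of $\cV_{G,j}:=\Phi(S_j)\cV_j$, and $\Upsilon_G(A)$ is the weight of the resulting $G$-set $S_{G,j}:=A_j$; this is what lets $\sum_j\Upsilon_{G*\Gamma}(S_j)\mu(\cV_j)\le 1$ descend to $\sum_j\Upsilon_G(S_{G,j})\mu(\cV_{G,j})\le 1$. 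Your ``cleaner route'' (a single enlarged $S'$ absorbing length-$\le C$ $G$-products) hits the same wall: you have no control on $\Upsilon_G(S')$, since $\Upsilon_G$ is an arbitrary input.

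The path-collapsing step is also not actually carried out, and both of your suggested mechanisms are problematic. The retraction $\pi:G*\Gamma\to G$ acts on group elements, not on points of $X$: applying it to the step sequence produces a new walk through \emph{different} points $y_i=\pi(c_i\cdots c_1)x_0$ which are related to the original $x_i$ by uncontrolled elements of $\ker\pi$, so there is no reason these $y_i$ lie in any set you have measure bounds on. What actually works is more elementary: expand each $S_j$-edge $(x,hx)$, $h\in\Phi(S_j)=A_j*_{k_j}B_j$, into an $(A_j\cup B_j)$-path of length $\le 2k_j$ whose intermediate points stay in the fattened set $\cV_{G,j}$ (this is why you need $\cV_{G,j}=\Phi(S_j)\cV_j$, not ``essentially $\cV_j$ itself''); concatenate; then remove cycles. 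Once the path is cycle-free, freeness of the action says no contiguous subproduct of the step labels is trivial, and a short free-product normal-form argument then forces \emph{every} step to lie in $G$ — the $B_j$-edges simply disappear, with no quantitative ``shortcutting'' needed, which is why no extra $S'$ or enlarged length bound beyond $C_G=2C_{G*\Gamma}\max_jk_j$ enters. Your intuition that the bound $C$ is essential is correct (it controls $\Phi(S_j)$ and hence the expansion length), but it is not what handles the $\Gamma$-excursions; cycle-removal is.
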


\begin{proof}
We may assume that $\Gamma$ is nontrivial.
For any $A\in \overline{\cF}(G)$, $B\in \overline{\cF}(\Gamma)$, and $k\in \Nb$ we denote by $A*_kB$ the subset of $G*\Gamma$ consisting of
all elements of the form $a_kb_ka_{k-1}b_{k-1}\dots a_1b_1$ for $a_1, \dots, a_k\in A$ and $b_1, \dots, b_k\in B$.
For each $F\in \cF(G*\Gamma)$ we take some $k\in \Nb$, $A\in \overline{\cF}(G)$, and $B\in \overline{\cF}(\Gamma)$ such that $F\subseteq A*_kB$, and we put $\Phi(F)=A*_kB \in \cF(G*\Gamma)$.

Let $\Upsilon_G: \cF(G)\rightarrow [0, \infty)$ be a function. Define $\Upsilon_{G*\Gamma}: \cF(G*\Gamma)\rightarrow [0, \infty)$ by
$\Upsilon_{G*\Gamma}(F)=|\Phi(F)|\Upsilon_G(A)$, where $\Phi(F)=A*_kB$.

Let $\eta_{G*\Gamma}>0$ witness property SC$'$ for $\fC_{G*\Gamma}$. Put $\eta_G=\eta_{G*\Gamma}>0$.

Let $T_G\in \overline{\cF}(G)$. Put $T_{G*\Gamma}=T_G\in \overline{\cF}(G*\Gamma)$. Then we have $n$, $C_{G*\Gamma}\in \Nb$, and $S_{G*\Gamma, 1}, \dots, S_{G*\Gamma, n}\in \overline{\cF}(G*\Gamma)$ witnessing property SC$'$ for $\fC_{G*\Gamma}$. For $1\le j\le n$
express $\Phi(S_{G*\Gamma, j})$ as $A_j*_{k_j}B_j$ as per its definition. Put $S_{G, j}=A_j\in \overline{\cF}(G)$ for $1\le j\le n$. Also, put $C_G=2C_{G*\Gamma}\max_{1\le j\le n}k_j\in \Nb$.

Let $G*\Gamma\curvearrowright (X, \mu)$ be an action in $\fC_{G*\Gamma}$.
Let $W_1$ and $W_2$ be Borel subsets of $X$ with $\mu(W_1), \mu(W_2)\le \eta_G=\eta_{G*\Gamma}$. Then there are Borel sets $\cV_{G*\Gamma, 1}, \dots, \cV_{G*\Gamma, n}\subseteq X$ satisfying the following conditions:
\begin{enumerate}
\item $\sum_{j=1}^n\mu(\cV_{G*\Gamma, j})\Upsilon_{G*\Gamma}(S_{G*\Gamma, j})\le 1$,
\item if $w_1\in W_1$ and $w_2\in W_2$ satisfy $gw_1=w_2$ for some $g\in T_{G*\Gamma}$ then $w_1$ and $w_2$ are connected by a path of length at most $C_{G*\Gamma}$ in which each edge is an $S_{G*\Gamma, j}$-edge with both endpoints in $\cV_{G*\Gamma, j}$ for some $1\le j\le n$.
\end{enumerate}

For each $1\le j\le n$ put $\cV_{G, j}=\Phi(S_{G*\Gamma, j})\cV_{G*\Gamma, j}\subseteq X$.
Then
\begin{align*}
\sum_{j=1}^n\mu(\cV_{G, j})\Upsilon_G(S_{G, j})&\le \sum_{j=1}^n\mu(\cV_{G*\Gamma, j})|\Phi(S_{G*\Gamma, j})|\Upsilon_G(S_{G, j})\\
&=\sum_{j=1}^n\mu(\cV_{G*\Gamma, j})\Upsilon_{G*\Gamma}(S_{G*\Gamma, j})\le 1,
\end{align*}
verifying condition (i) in Definition~\ref{D-SC'}.

For each $1\le j\le n$, if $h\in S_{G*\Gamma, j}$ and $x, y\in \cV_{G*\Gamma, j}$ with $hx=y$ then clearly $x$ and $y$ are connected by an $(A_j\cup B_j)$-path in $\cV_{G, j}$ of length at most $2k_j$.

Let $g\in T_G=T_{G*\Gamma}$ and $w_1\in W_1$, $w_2\in W_2$ with $gw_1=w_2$. Then $w_1$ and $w_2$ are connected by a path of length at most $C_{G*\Gamma}$ in which each edge is an $S_{G*\Gamma, j}$-edge with both endpoints in $\cV_{G*\Gamma, j}$ for some $1\le j\le n$. From the above paragraph we conclude that
$w_1$ and $w_2$ are connected by a path of length at most $C_G$ in which each edge is either an $A_j$-edge or a $B_j$-edge with both endpoints in $\cV_{G, j}$ for some $1\le j\le n$. We may assume that $w_1\neq w_2$.
Removing cycles in this path, we may assume that it contains no cycles. Since $g\in G$ and the action $G*\Gamma\curvearrowright (X, \mu)$ is free, we see that no $B_j$-edge for any $1\le j\le n$ appears in this path. Thus each edge of this path is an $S_{G, j}$-edge with both endpoints in $\cV_{G, j}$ for some $1\le j\le n$. This verifies condition (ii) in Definition~\ref{D-SC'}.
\end{proof}

By combining Proposition~\ref{P-SC'} and Lemmas~\ref{L-locally finite not SC''},
\ref{L-virtually cyclic not SC''}, and \ref{L-free product} we obtain the following proposition,
which, with a boost from Proposition~\ref{P-finite index}, then yields Theorem~\ref{T-without SC}.

\begin{proposition} \label{P-free product}
Suppose that $G$ is either locally finite or virtually cyclic,
and let $\Gamma$ be a (not necessarily infinite) countable group. Then no free
p.m.p.\ action $G*\Gamma \curvearrowright (X,\mu )$ has property SC.
\end{proposition}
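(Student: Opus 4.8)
The plan is to obtain the statement by contradiction, simply chaining together Proposition~\ref{P-SC'} with the three lemmas of this subsection. First I would suppose that some free p.m.p.\ action $G*\Gamma \curvearrowright (X,\mu)$ has property SC, and let $\fC_{G*\Gamma}$ be the singleton class consisting of this one action; by definition $\fC_{G*\Gamma}$ then has property SC as well. Applying the implication (i)$\Rightarrow$(ii) of Proposition~\ref{P-SC'}, I conclude that $\fC_{G*\Gamma}$ has property SC$'$.

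Next I would feed this $\fC_{G*\Gamma}$ into Lemma~\ref{L-free product}, which yields that the class $\fC_G$ of restriction actions has property SC$'$; in our situation $\fC_G$ is just the singleton class containing the action $G \curvearrowright (X,\mu)$ obtained by restricting the $G*\Gamma$-action to the subgroup $G$. The only thing to check at this point is that this restriction is an admissible object, i.e.\ a free p.m.p.\ action of the countably infinite group $G$: freeness of the restriction is immediate from freeness of the $G*\Gamma$-action, and the measure $\mu$ (hence its atomlessness) is untouched.

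Finally, the implication (ii)$\Rightarrow$(iii) of Proposition~\ref{P-SC'} gives that $G \curvearrowright (X,\mu)$ has property SC$''$, which contradicts Lemma~\ref{L-locally finite not SC''} when $G$ is locally finite and Lemma~\ref{L-virtually cyclic not SC''} when $G$ is virtually cyclic. This completes the argument. Since everything here is assembled from results already in hand, I do not expect a genuine obstacle; the only points requiring attention are keeping straight which direction of each implication in Proposition~\ref{P-SC'} is being invoked and verifying that passage to the $G$-restriction preserves freeness. Feeding the resulting proposition, together with Proposition~\ref{P-finite index}, into the virtually cyclic and locally finite cases then produces Theorem~\ref{T-without SC}, but that is a separate bookkeeping step.
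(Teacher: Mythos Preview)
Your proof is correct and follows exactly the approach the paper indicates: the paper simply states that the proposition is obtained ``by combining Proposition~\ref{P-SC'} and Lemmas~\ref{L-locally finite not SC''}, \ref{L-virtually cyclic not SC''}, and \ref{L-free product}'', and your argument is precisely the natural unpacking of that combination. The chain $\text{SC} \Rightarrow \text{SC}' \Rightarrow \text{(restriction has SC}') \Rightarrow \text{SC}''$ followed by contradiction with the two negative lemmas is the intended route, and the side checks you flag (freeness of the restriction, $G$ being countably infinite so that the standing conventions apply) are exactly the right ones.
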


\begin{theorem} \label{T-without SC}
Suppose that $G$ is either locally finite or virtually free.
Then $G$ does not have property SC.
\end{theorem}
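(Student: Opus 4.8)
The plan is to derive the theorem formally from results already established, with Proposition~\ref{P-free product} doing the main work and Propositions~\ref{P-finite index} and \ref{P-Bernoulli} effecting the transfer between a group and its finite-index subgroups; no new dynamical construction is needed.

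First I would treat the locally finite case. If $G$ is locally finite, then writing $G = G * \{ e\}$ and applying Proposition~\ref{P-free product} with $\Gamma$ the trivial group, we obtain that no free p.m.p.\ action of $G$ has property SC. By Proposition~\ref{P-Bernoulli}, $G$ has property SC if and only if every free p.m.p.\ action of $G$ does (equivalently, if and only if some nontrivial Bernoulli action does), so this immediately yields that $G$ does not have property SC.

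For the virtually free case I would first reduce to a free subgroup. By hypothesis $G$ has a subgroup $F$ of finite index which is free, and $F$ is countably infinite because $G$ is, hence free of some nonzero (finite or countably infinite) rank. Singling out one element $t$ of a free basis of $F$ and letting $F'$ be the subgroup generated by the remaining basis elements gives an isomorphism $F \cong \langle t\rangle * F' \cong \Zb * F'$, where $F'$ is a countable group, possibly trivial (exactly when $F \cong \Zb$) and possibly $F_\infty$ (when $F$ has countably infinite rank). Now apply Proposition~\ref{P-free product} with $\Zb$ in the role of the locally finite or virtually cyclic group and $F'$ in the role of $\Gamma$: no free p.m.p.\ action of $F \cong \Zb * F'$ has property SC, hence $F$ does not have property SC by Proposition~\ref{P-Bernoulli}, and therefore neither does $G$ by Proposition~\ref{P-finite index}. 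This completes the argument.

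There is no serious obstacle remaining, since all of the substantive content has already been isolated: the failure of property SC\primespace$''$ for locally finite and virtually cyclic groups in Lemmas~\ref{L-locally finite not SC''} and \ref{L-virtually cyclic not SC''} (proved via marker and Rokhlin lemmas) and the free-product reduction of Lemma~\ref{L-free product}, all packaged into Proposition~\ref{P-free product}. The only points needing a little care are purely formal. One is group-theoretic: one must note that a countably infinite free group of \emph{any} nonzero rank splits off a $\Zb$ free factor, so that the virtually cyclic case is subsumed rather than argued separately, and that a finite-index subgroup of a countably infinite group is again countably infinite so that the blanket standing hypotheses apply to $F$. The other is that in Definition~\ref{D-SC'} the data $\eta, n, S_1, \dots, S_n$ witnessing property SC\primespace$''$ for a class depend only on $\Upsilon$ and $T$, so that this property passes from a class of free p.m.p.\ actions to each of its members --- this is what underlies the step from ``the class of all free p.m.p.\ actions has property SC'' to ``no single free p.m.p.\ action can obstruct it'' inside the proof of Proposition~\ref{P-free product}. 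Granting these, Theorem~\ref{T-without SC} is a direct corollary.
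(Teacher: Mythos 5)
Your proof is correct and follows the same route the paper intends: you derive the theorem from Proposition~\ref{P-free product} by taking $\Gamma$ trivial in the locally finite case and by splitting a finite-index free subgroup $F$ as $\Zb * F'$ in the virtually free case, then transferring back to $G$ via Proposition~\ref{P-finite index}. (The appeal to Proposition~\ref{P-Bernoulli} for the locally finite case is harmless but slightly heavier than needed, since the implication ``no free p.m.p.\ action of $G$ has property SC $\Rightarrow$ $G$ does not have property SC'' is already immediate from Definition~\ref{D-SC} by restricting the class to a singleton.)
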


We will see later in Proposition~\ref{P-amenable SC} that if $G$ is amenable then it has
property SC if and only if it is neither virtually cyclic nor locally finite.

\subsection{Groups with property SC} \label{SS-with SC}

The following notion of w-normality was formulated by Popa for the purpose of expressing his
cocycle superrigidity theorem in \cite{Pop07} and will be similarly convenient in our setting.

\begin{definition}\label{D-w-normal}
A subgroup $G_0$ of $G$ is {\it w-normal} in $G$ if there are a countable ordinal $\gamma$ and a subgroup $G_\lambda$ of $G$ for each ordinal $0\le \lambda\le \gamma$ satisfying the following conditions:
\begin{enumerate}
\item for any $\lambda<\lambda'\le \gamma$ one has $G_\lambda\subseteq G_{\lambda'}$,
\item $G=G_\gamma$,
\item for each $\lambda<\gamma$, $G_\lambda$ is normal in $G_{\lambda+1}$,
\item for each limit ordinal $\lambda'\le \gamma$, $G_{\lambda'}=\bigcup_{\lambda<\lambda'}G_\lambda$.
\end{enumerate}
\end{definition}

Our goal is to prove, via several lemmas culminating in Theorem~\ref{T-ordinal SC},
that if $G$ contains an amenable w-normal subgroup which is neither locally finite nor virtually cyclic then $G$ has property SC.
This will involve an elaboration of the graph-theoretic arguments from Section~8.1 of \cite{Aus16}.

Let $S$ be a symmetric finite subset of $G$.
By an {\it $S$-path} in $G$ we mean a finite tuple $(g_0 , g_1 ,\dots , g_n )$ of elements of $G$
such that $g_{i-1} g_i^{-1} \in S$ for all $i=1,\dots , n$, in which case we call $n$ the {\it length}
of the path and say that the path {\it connects} $g_0$ and $g_n$ (its {\it endpoints}).
We say that a set $K\subseteq G$ is {\it $S$-connected} if every pair of distinct elements of $K$ is connected
by an $S$-path.

Given an $r\in\Nb$, a set $E\subseteq G$ is said to be
{\it $(S,r)$-separated} if for all distinct $f,g\in E$ one has $S^r f \cap S^r g = \emptyset$.
Given a set $W\subseteq G$ and an $r\in\Nb$, a set $E\subseteq W$ is said to be
{\it $(S,r)$-spanning} for $W$ if every $f\in W$ is connected to some $g\in E$ by an $S$-path
of length at most $r$. We also simply say {\it $S$-spanning} when $r=1$.
This should not be confused with the graph-theoretic notion of spanning tree, which we also use below.

For finite sets $F,K\subseteq G$ and $\delta > 0$, we say that $K$ is {\it $(F,\delta )$-invariant}
if $|FK\setminus K| < \delta |K|$.
Note that, given an $r\in\Nb$, if $F$ contains $e_G$ and satisfies $|F|>1$
and $K$ is $(F,\delta' )$-invariant where $\delta' = \delta (1-|F|)/(1-|F|^r )$
then $K$ is $(F^r,\delta )$-invariant, for using the fact that $F$ contains $e_G$ we have
\begin{align*}
|F^r K\setminus K|
= \sum_{j=1}^r |F^j K\setminus F^{j-1} K|
\leq \sum_{j=1}^r |F|^{j-1} |FK\setminus K|
< \delta |K| .
\end{align*}

The following is a variation on Lemma~8.3 of \cite{Aus16}.
For an $\eps\geq 0$ and a collection $\sT$ of finite subsets of $G$, we say a finite set $K\subseteq G$ is
{\it tiled to within $\eps$ by $\sT$} if the members of $\sT$ are pairwise disjoint subsets of $K$ and
$|\bigsqcup_{T\in\sT} T| \geq (1-\eps )|K|$.

\begin{lemma}\label{L-weighted}
Suppose that $G$ is finitely generated and let $S$ be a finite symmetric generating set for $G$ containing $e_G$.
Let $\eps > 0$.
Let $F$ be a finite subset of $G$ and $\delta > 0$. Then there exists a $\zeta > 0$
such that every $(S,\zeta )$-invariant nonempty finite subset of $G$
is tiled to within $\eps$ by a collection of $S^2$-connected $(F,\delta )$-invariant finite subsets of $G$.
\end{lemma}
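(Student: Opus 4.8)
The plan is to derive this from the Ornstein--Weiss tiling theorem, once one knows that the tiling shapes can be chosen $S^2$-connected, together with the observation that both relevant properties of a finite set — being $S^2$-connected and being $(F,\delta)$-invariant — survive right translation.

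First I would reduce to the amenable case: if $G$ is non-amenable, choose $\zeta>0$ small enough that $G$ has no nonempty $(S,\zeta)$-invariant finite set (possible precisely because $G$ fails the F{\o}lner condition), and then the statement holds vacuously. So assume from now on that $G$ is amenable. The core step is then to produce finitely many nonempty finite sets $T_1,\dots,T_m\subseteq G$ that are $S^2$-connected, $(F,\delta)$-invariant, and invariant enough — each sufficiently invariant relative to its predecessors — to serve as shapes in the Ornstein--Weiss tiling theorem. The invariance requirements are all of the ``sufficiently invariant'' kind and are met by any deep enough members of a F{\o}lner sequence; the one point needing care is $S^2$-connectedness.

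To handle that, I would work with a F{\o}lner sequence $\{A_n\}$ whose terms are all $S$-connected — hence $S^2$-connected, since $e_G\in S$ — and take the $T_i$ among its terms. Such a sequence can be built from an arbitrary F{\o}lner sequence by adjoining, inside each term, geodesic $S$-paths linking its $S$-connected components (whose number is bounded by the cardinality of the $S$-boundary, hence by the degree of invariance) to a basepoint, and then passing to a subsequence of sufficiently ``concentrated'' sets so that the number of adjoined vertices is a negligible fraction of the cardinality; this keeps the enlarged sets F{\o}lner and as invariant as we wish. This is in essence the content of \cite[Lemma~8.3]{Aus16}, whose argument I would follow, augmented with the extra bookkeeping needed to also arrange $(F,\delta)$-invariance of each $T_i$. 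With such shapes $T_1,\dots,T_m$ fixed, take $\zeta>0$ small enough that the Ornstein--Weiss tiling theorem applies: every $(S,\zeta)$-invariant nonempty finite $K\subseteq G$ is tiled to within $\eps$ by pairwise disjoint right translates $T_ic\subseteq K$. For any finite $L\subseteq G$ and any $h\in G$ one has $|F(Lh)\setminus Lh|=|FL\setminus L|$ and $|Lh|=|L|$, so $Lh$ is $(F,\delta)$-invariant whenever $L$ is; and since $(g_ih)(g_jh)^{-1}=g_ig_j^{-1}$, right translation also preserves $S^2$-connectedness. Hence each $T_ic$ in the tiling is $S^2$-connected and $(F,\delta)$-invariant, so the collection of these sets tiles $K$ to within $\eps$ by sets of the required type.

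The main obstacle, as flagged above, is securing the $S^2$-connectedness of the shapes without spoiling their invariance: a very invariant F{\o}lner set may be geometrically diffuse, so that wiring its connected components together can cost a non-negligible number of vertices, and the remedy is a careful — rather than arbitrary — choice of F{\o}lner sequence that keeps the shapes sufficiently round. Everything after that is routine translation-invariance bookkeeping together with a black-box application of the Ornstein--Weiss tiling theorem.
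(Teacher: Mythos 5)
Your proof is correct, but it is substantially heavier than what is needed and takes a genuinely different route from the paper.

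The paper's argument is a short, entirely elementary decomposition that does not invoke the Ornstein--Weiss theorem and makes no case split on amenability. Given an $(S,\zeta)$-invariant $K$, write $K=K_1\sqcup\dots\sqcup K_n$ as the partition into maximal $S^2$-connected pieces. Since distinct pieces have disjoint $S$-neighbourhoods, the invariance deficits satisfy
\[
\sum_{i=1}^n \frac{|K_i|}{|K|}\cdot\frac{|SK_i\setminus K_i|}{|K_i|} \;=\; \frac{|SK\setminus K|}{|K|} \;<\;\zeta,
\]
so by Markov's inequality the pieces with deficit at least $\sqrt{\zeta}$ occupy at most $\sqrt{\zeta}\,|K|$ of $K$. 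Taking $\zeta=\min\{\eps^2,(\delta(1-|S|)/(1-|S|^r))^2\}$ with $F\subseteq S^r$, the remaining pieces tile $K$ to within $\eps$ and, by the observation preceding the lemma, are $(F,\delta)$-invariant. That is all. Your approach instead reduces to the amenable case (vacuous otherwise) and then constructs $S^2$-connected F{\o}lner shapes to feed into the Ornstein--Weiss quasitower theorem. This does work, and the translation-invariance bookkeeping you record (for both $(F,\delta)$-invariance and $S^2$-connectedness under right translation) is correct. But the black box you invoke is precisely the machinery that the lemma is designed to \emph{feed} later in Lemma~\ref{L-connected OW}; the point of Lemma~\ref{L-weighted} is that it is cheap and does not already require Ornstein--Weiss. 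Your proof essentially derives a special case of Lemma~\ref{L-connected OW} (connected OW tiling) in order to get Lemma~\ref{L-weighted}, which inverts the logical order the paper uses: in the paper it is Lemma~\ref{L-weighted} (the elementary decomposition) together with ordinary OW tiling that gives the connected version. One minor attribution note: Austin's Lemma~8.3 (which both approaches are descended from) is itself a decomposition-and-averaging argument of the kind the paper uses here, not a construction of connected F{\o}lner shapes by adjoining geodesics; the gluing construction you sketch for connected F{\o}lner sets is plausible but has exactly the diameter-versus-invariance subtlety you flag, and the decomposition argument sidesteps it entirely.
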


\begin{proof}
Take an $r\in\Nb$ such that $F\subseteq S^r$.
Set $\zeta = \min \{ \eps^2 , (\delta (1-|S|)/(1-|S|^r ))^2 \}$.
Let $K$ be an $(S,\zeta )$-invariant nonempty finite subset of $G$.
Consider the partition $K_1 \sqcup\dots\sqcup K_n$ of $K$ into maximal $S^2$-connected subsets.
Then $SK_i \cap SK_j = \emptyset$ for $i\neq j$, and so
\begin{align}\label{E-weighted}
\sum_{i=1}^n \frac{|K_i|}{|K|} \cdot\frac{|SK_i\setminus K_i|}{|K_i|}
= \frac{|SK\setminus K|}{|K|}
< \zeta .
\end{align}
Write $I$ for the set of all $i\in \{ 1,\dots , n\}$ such that $|SK_i\setminus K_i|/|K_i| < \sqrt{\zeta}$.
Then by (\ref{E-weighted}) we must have $\sum_{i\in I} |K_i | \geq (1-\sqrt{\zeta})|K|$, which shows, since
$\sqrt{\zeta} \leq\eps$, that the collection $\{ K_i \}_{i\in I}$ tiles $K$ to within $\eps$.
Moreover, since $\sqrt{\zeta} \leq \delta (1-|S|)/(1-|S|^r )$ and $F\subseteq S^r$ we infer from the observation
before the lemma that for each $i\in I$ the set $K_i$ is $(F,\delta )$-invariant, completing the proof.
\end{proof}

We next state a version of the Ornstein--Weiss tiling theorem that demands connectedness of the tiles.
It follows from one of the usual forms of the Ornstein--Weiss tiling theorem
(Theorem~4.46 of \cite{KerLi16}) and Lemma~\ref{L-weighted}.

\begin{lemma}\label{L-connected OW}
Suppose that $G$ is amenable and finitely generated. Let $S$ be a finite symmetric generating set for $G$
containing $e_G$.
Let $E$ be a finite subset of $G$ and $\delta > 0$.
Then there are $(E,\delta )$-invariant $S^2$-connected sets $F_1 ,\dots , F_m\in \cF (G)$
such that for any free p.m.p.\ action $G\curvearrowright (X, \mu)$
we can find Borel sets $Z_1 , \dots , Z_m \subseteq X$ such that
the collection $\{ F_k z : 1\leq k\leq m\text{ and } z\in Z_k \}$ is disjoint and its union
has $\mu$-measure at least $1-\delta$.
\end{lemma}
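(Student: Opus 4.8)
The plan is to deduce this from the usual form of the Ornstein--Weiss tiling theorem (Theorem~4.46 of \cite{KerLi16}) together with Lemma~\ref{L-weighted}: the classical theorem provides dynamical tiles whose shapes are merely invariant, and Lemma~\ref{L-weighted} lets us chop each such shape into $S^2$-connected, $(E,\delta)$-invariant sub-tiles while discarding only a tiny fraction of each shape, and hence of the phase space.

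First I would fix $\eps=\delta/2$ and apply Lemma~\ref{L-weighted} with this $\eps$, the set $E$, and the number $\delta$ to obtain $\zeta>0$ such that every $(S,\zeta)$-invariant nonempty finite subset of $G$ is tiled to within $\eps$ by a collection of $S^2$-connected $(E,\delta)$-invariant finite subsets of $G$. Then I would invoke the Ornstein--Weiss tiling theorem with the finite set $S$ in the role of the generating/invariance set and with error parameter $\min\{\zeta,\delta/2\}$, producing $(S,\zeta)$-invariant sets $K_1,\dots,K_p\in\cF(G)$ such that for any free p.m.p.\ action $G\curvearrowright(X,\mu)$ there are Borel sets $Y_1,\dots,Y_p\subseteq X$ with the family $\{K_iy:1\le i\le p,\ y\in Y_i\}$ disjoint and $\mu\big(\bigcup_{i=1}^p K_iY_i\big)\ge 1-\delta/2$.

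Next, by Lemma~\ref{L-weighted} each $K_i$ contains pairwise disjoint $S^2$-connected $(E,\delta)$-invariant sets $T_{i,1},\dots,T_{i,q_i}$ with $\big|\bigsqcup_j T_{i,j}\big|\ge(1-\eps)|K_i|$. For each sub-tile $T_{i,j}$ I would choose a base point $c_{i,j}\in T_{i,j}$ and set $F_{i,j}=T_{i,j}c_{i,j}^{-1}$; this set contains $e_G$, is $S^2$-connected and $(E,\delta)$-invariant (both properties survive right translation), and lies in the fixed finite set $K_iK_i^{-1}$, so the $F_{i,j}$ range over a finite list which I enumerate as $F_1,\dots,F_m\in\cF(G)$. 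Given a free p.m.p.\ action with the $Y_i$ as above, define
\[
Z_k=\bigcup_{i=1}^p\ \bigcup_{\substack{1\le j\le q_i\\ F_{i,j}=F_k}} c_{i,j}Y_i,
\]
which is Borel, and observe that $\{F_kz:1\le k\le m,\ z\in Z_k\}$ is precisely the family $\{T_{i,j}y:1\le i\le p,\ 1\le j\le q_i,\ y\in Y_i\}$, since $F_kc_{i,j}=T_{i,j}$ whenever $F_{i,j}=F_k$.

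It then remains to verify disjointness and the measure estimate. For disjointness: $T_{i,j}y\subseteq K_iy$ and $T_{i',j'}y'\subseteq K_{i'}y'$, so if $(i,y)\ne(i',y')$ these are disjoint by the Ornstein--Weiss property, while if $(i,y)=(i',y')$ and $j\ne j'$ they are disjoint because $T_{i,j}\cap T_{i,j'}=\emptyset$ and the action is free. For the measure: disjointness of the $K_iy$ makes the translates $gY_i$ ($g\in K_i$) pairwise disjoint, so $\mu\big(\bigcup_j T_{i,j}Y_i\big)=\big|\bigsqcup_j T_{i,j}\big|\,\mu(Y_i)\ge(1-\eps)|K_i|\mu(Y_i)=(1-\eps)\mu(K_iY_i)$, and summing over the pairwise disjoint sets $K_iY_i$ gives $\mu\big(\bigcup_{k=1}^m F_kZ_k\big)=\sum_{i=1}^p\mu\big(\bigcup_j T_{i,j}Y_i\big)\ge(1-\eps)\mu\big(\bigcup_i K_iY_i\big)\ge(1-\delta/2)^2\ge 1-\delta$. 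I do not expect any genuine obstacle: the substance is already contained in Lemma~\ref{L-weighted} and the classical tiling theorem, and the only point requiring care is the bookkeeping that reorganizes the connected sub-tiles of the finitely many shapes $K_i$ into finitely many right-translation classes $F_1,\dots,F_m$ and the consequent choice of base points $c_{i,j}$.
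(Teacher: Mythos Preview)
Your proof is correct and is exactly the argument the paper has in mind: the paper's own ``proof'' merely cites Theorem~4.46 of \cite{KerLi16} and Lemma~\ref{L-weighted}, and you have carefully carried out the bookkeeping that combines them. The only cosmetic point is that the finiteness of the list $F_1,\dots,F_m$ is immediate from there being finitely many pairs $(i,j)$, so the remark about $K_iK_i^{-1}$ is unnecessary (though not wrong).
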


The following is essentially Lemma~8.5 of \cite{Aus16}.

\begin{lemma}\label{L-tree group}
Suppose that $G$ is finitely generated and not virtually cyclic, and
let $A$ be a generating set for $G$ in $\overline{\cF}(G)$.
Then there is a constant $b>0$ such that given any $r\in\Nb$ and 
$A$-connected finite set $F\subseteq G$ satisfying $|A^r F| \leq 2|F |$
there exists an $A$-connected set $T\subseteq A^rF$ such that $T\cap F$ is $(A, 2r)$-spanning for $F$  and $|T|\le b|F|/r$.
\end{lemma}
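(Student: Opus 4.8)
The plan is to exploit the polynomial-growth lower bound for non-virtually-cyclic finitely generated groups, exactly as in Lemma~\ref{L-tree action} but now working inside the group rather than inside a dynamical system, and then to build the connected set $T$ as a spanning tree of an auxiliary graph on an $(A,r)$-spanning subset of $F$. First I would invoke Corollary~3.5 of \cite{Man12} (or the argument already used in Lemma~\ref{L-tree action}) to fix a constant $c>0$ with $|A^n|\ge cn^2$ for all $n\in\Nb$, and set $b$ to be a suitable multiple of $1/c$ (the same $b=5/c$ should work, up to adjusting constants). Given $r$ and an $A$-connected finite set $F$ with $|A^rF|\le 2|F|$, I would first pick a maximal $(A,r)$-separated subset $E\subseteq F$; then $A^rE$ is a disjoint union of translates of $A^r$, so
\[
|E|\cdot cr^2 \le \sum_{e\in E}|A^re| = |A^rE| \le |A^rF| \le 2|F|,
\]
giving $|E|\le 2|F|/(cr^2)$. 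By maximality $E$ is $(A,2r)$-spanning for $F$: every $f\in F$ lies within $A$-distance $r$ of some $e\in E$ (else $E\cup\{f\}$ would still be $(A,r)$-separated), and that $A$-path can be taken inside $F$ only after the usual insertion trick — here I should instead allow the path to leave $F$ but stay in $A^rF$, which is why the conclusion only requires $T\subseteq A^rF$ and $T\cap F$ to be spanning.

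Next I would build a graph structure on $E$: join $e,e'\in E$ by an edge whenever they are connected by an $A$-path of length at most $4r+1$ inside $A^rF$. The key point, mirroring the connectivity claim in the proof of Lemma~\ref{L-tree action}, is that this graph $(E,E'_F)$ is connected: starting from an $A$-path between any two points of $F$ (which exists since $F$ is $A$-connected), one subdivides it and pushes each intermediate vertex to a nearby point of $E$ by an $A$-path of length $\le r$, so that consecutive retained vertices of $E$ are within $A$-distance $4r+1$, and all of this stays inside $A^rF$. Now take a spanning tree of $(E,E'_F)$; realizing each of its at most $|E|-1$ edges by a concrete $A$-path of length $\le 4r+1$ inside $A^rF$, and taking the union of all vertices on all these paths, produces an $A$-connected set $T\subseteq A^rF$ with $T\supseteq E$, hence $T\cap F\supseteq E$ is $(A,2r)$-spanning for $F$, and
\[
|T| \le (4r+1)\,|E| \le 5r\cdot\frac{2|F|}{cr^2} = \frac{10}{c}\cdot\frac{|F|}{r},
\]
so $b=10/c$ works (the precise constant is irrelevant). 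One should double-check the degenerate cases $r$ small relative to $F$, where $|T|\le b|F|/r$ may force $T$ to be most of $A^rF$; since $|A^rF|\le 2|F|$ this is harmless as long as $b$ is chosen $\ge 2r$-proof, i.e.\ the bound is only meaningful for $r$ not too large, but it holds as a genuine inequality in all cases after possibly enlarging $b$ by an absolute constant.

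The main obstacle I anticipate is the bookkeeping around ``inside $A^rF$'': one must be careful that the spanning paths used to connect points of $E$, and the short $A$-paths used to push path-vertices onto $E$, all remain within $A^rF$ rather than merely within $G$, since otherwise the cardinality estimate $|T|\le b|F|/r$ could fail and the separatedness count for $E$ would not apply. This is a purely combinatorial localization issue and is handled by noting that an $A$-path of length $\le r$ emanating from a point of $F$ stays in $A^rF$, and more generally that any $A$-path of length $\le 4r+1$ between two points within $A$-distance $r$ of $F$ can be rerouted (via the short connecting paths to $E$) to stay inside $A^rF$; this is exactly the role played by the hypothesis $|A^rF|\le 2|F|$, which ties the ``thickening'' $A^rF$ back to $F$ itself. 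Everything else is a direct transcription of the graph-theoretic argument of Lemma~8.5 of \cite{Aus16} and of the spanning-tree construction already appearing in the proof of Lemma~\ref{L-tree action}.
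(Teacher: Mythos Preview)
Your proposal is correct and follows essentially the same argument as the paper: choose a maximal $(A,r)$-separated subset of $F$, bound its cardinality via the quadratic growth lower bound and $|A^rF|\le 2|F|$, build the auxiliary graph with edges given by $A$-paths of length $\le 4r+1$ inside $A^rF$, prove connectivity by the insertion trick, take a spanning tree, and let $T$ be the union of the realizing paths, arriving at $b=10/c$. One small slip: maximality of an $(A,r)$-separated set gives that every $f\in F$ lies within $A$-distance $2r$ (not $r$) of some point of $E$---your use of $4r+1$ for the edge length shows you already have this in mind---and your closing worry about ``degenerate cases'' is unnecessary, since the inequality $|T|\le (4r+1)|E|\le (10/c)|F|/r$ holds for all $r$ without further adjustment.
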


\begin{proof}
As $G$ is not virtually cyclic, we can find a $c>0$ such that $|A^n| \geq cn^2$ for all $n\in\Nb$
(Corollary~3.5 of \cite{Man12}).
Let $r$ and $F$ be as in the lemma statement.

Take a maximal $(A,r)$-separated subset $V$ of $F$. Then we have
\begin{align*}
|V||A^r|
= \bigg| \bigsqcup_{g\in V} A^r g \bigg|
\leq |A^r F| \leq 2|F|
\end{align*}
whence
\begin{align}\label{E-E}
|V| \leq \frac{2|F|}{|A^r|} \leq \frac{2}{cr^2} |F| .
\end{align}
The set $V$ is $(A,2r)$-spanning for $F$ by maximality.
Consider the graph $(V , E )$ whose edges are
those pairs of vertices which can be joined by an $A$-path within $A^r F$ of length at most $4r+1$.
Since $F$ is $A$-connected, for all $v_1,v_2\in V$ there is an $A$-path in $F$ connecting
$v_1$ to $v_2$. For each point $z$ in this path, we may connect $z$ to some $v_z\in V$ by an $A$-path $p_z$ in $A^rF$ of length at most $2r$
(using the $(A,2r)$-spanningness of $V$ in $F$ and
the fact that an $A$-path of length at most $2r$ with both endpoints in $F$
must be entirely contained in $A^r F$)
we can construct an $A$-path in $A^rF$
from $v_1$ to $v_2$ in which points of $V$ appear in every interval of length $4r+1$, by inserting $p_z$ and the reverse of $p_z$ at $z$.
This shows that the graph $(V, E)$ is connected.

Applying the standard procedure for producing a spanning tree, we start with $(V, E)$
and then recursively construct a sequence of graphs with vertex set $V$
by removing one edge at each stage so as to destroy some cycle in the
graph at that stage, until there are no more cycles to destroy and we arrive at a spanning tree $(V, E')$.
Then $(V, E')$ is an $A^{4r+1}$-tree in $F$ which is $(A,2r)$-spanning for $F$.

For each pair $(v,w)$ in $E'$, choose an $A$-path in $A^r F$ joining $v$ to $w$
of length at most $4r+1$.
Denote by $T$ the collection
of all vertices which appear in one of these paths.
Note that $T$ is an $A$-connected  set in $A^rF$ such that $T\cap F$ is $(A,2r)$-spanning for $F$.
Moreover, using \eqref{E-E} we have
\begin{align*}
|T| \leq |V|+4r|E'| \leq (4r+1)|V|
\leq 5r\cdot \frac{2}{cr^2} |F| = \frac{10}{cr} |F| .
\end{align*}
We can therefore take $b=10/c$.
\end{proof}

If we take $G^\flat=G$ in Proposition~\ref{P-normal SC}, then the proof there actually shows the following.

\begin{lemma} \label{L-same S}
Let $G_0$ be a subgroup of $G$ which has the shrinking property.
Then in the definition of property SC for a class $\fC$ of free p.m.p.\ actions of $G$
(Definition~\ref{D-SC}) it is possible, for each $\Upsilon$, to choose $S$ to be a subset of $G_0$
depending only on $\Upsilon |_{\cF(G_0)}$.
\end{lemma}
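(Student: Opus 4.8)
The plan is to reprove Proposition~\ref{P-normal SC} in a slightly decoupled form: there the shrinking property and property SC are both furnished by the normal subgroup $G^\flat$, but the argument in fact only needs the shrinking property from $G^\flat$, while property SC can be supplied by the ambient class. The key observation is that the sole place normality of $G^\flat$ enters is to guarantee that the auxiliary set
\[
T^\flat=\bigcup_{g\in T}\bigl(S^\flat S_1^{C_1}gS_1^{C_1}g^{-1}S^\flat\cup S^\flat gS_1^{C_1}g^{-1}S_1^{C_1}S^\flat\bigr)
\]
lies in $\overline{\cF}(G^\flat)$, as is needed in order to invoke property SC of $G^\flat$ at $T^\flat$. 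If instead we invoke property SC of the given class $\fC$ at $T^\flat$, then we only need $T^\flat\in\overline{\cF}(G)$, which is automatic, so no normality is required and the role of $G^\flat$ shrinks to that of the prescribed subgroup $G_0$, through which the shrinking property is exercised.

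Concretely, I would fix a function $\Upsilon\colon\cF(G)\to[0,\infty)$, let $S_1\in\overline{\cF}(G_0)$ witness the shrinking property of $G_0$ (this $S_1$ being determined by $G_0$ alone), set $0<\varepsilon<1/(3\Upsilon(S_1))$, and let $S\in\overline{\cF}(G_0)$ be the set that the shrinking property of $G_0$ then produces. Since $\varepsilon$ depends only on the value $\Upsilon(S_1)$ and $S_1\in\cF(G_0)$, this $S$ lies in $\overline{\cF}(G_0)$ and depends only on $\Upsilon|_{\cF(G_0)}$; it will be the desired witness in Definition~\ref{D-SC}. The rest of the proof of Proposition~\ref{P-normal SC} is then transcribed essentially verbatim: I would use property SC of $\fC$ with the function $3\Upsilon$ (in place of property SC of $G^\flat$) to obtain $S^\flat\in\overline{\cF}(G)$ and, for each $T\in\overline{\cF}(G)$, the data $C^\flat,n^\flat,S^\flat_1,\dots,S^\flat_{n^\flat}\in\overline{\cF}(G)$ together with, for each action in $\fC$, Borel sets $W^\flat,\cV^\flat_1,\dots,\cV^\flat_{n^\flat}$; apply the shrinking property of $G_0$ to each such action restricted to $G_0$ (a free p.m.p.\ action of $G_0$) to obtain Borel sets $Z\subseteq\cV_1$; and take $W=\cV_1$, $S_2=S^\flat T\cup(S^\flat T)^{-1}$, $\cV_2=S_2Z$, $C=2C_1+2+C^\flat$, with $\{S_1,S_2,S^\flat_1,\dots,S^\flat_{n^\flat}\}$ serving as the family $\{S_1,\dots,S_n\}$ of Definition~\ref{D-SC}.

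The verification then proceeds exactly as in Proposition~\ref{P-normal SC}: the three bounds $\Upsilon(S_1)\mu(\cV_1)\le\Upsilon(S_1)\varepsilon<\tfrac13$, $\Upsilon(S_2)\mu(\cV_2)\le\Upsilon(S_2)|S_2|\mu(Z)<\tfrac13$, and $\sum_k\Upsilon(S^\flat_k)\mu(\cV^\flat_k)\le\tfrac13$ yield condition~(i); $SW=S\cV_1=X$ yields condition~(ii); and for $g\in T$ and $w_1,w_2\in W$ with $gw_1=w_2$ one gets condition~(iii) by joining $w_1\to z_1\to u_1\to u_2\to z_2\to w_2$, where each $z_i\in Z$ is reached from $w_i$ by a short $S_1$-path inside $\cV_1$, each $u_i\in W^\flat$ is reached from $z_i$ by an $S_2$-edge inside $\cV_2$, and $u_1,u_2$ are joined through the sets $\cV^\flat_k$ because $a_2^{-1}t_2^{-1}gt_1g^{-1}a_1\in T^\flat$. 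I do not anticipate a genuine obstacle; the entire content is the initial remark that normality of $G^\flat$ in Proposition~\ref{P-normal SC} can be traded for an appeal to property SC of the ambient class, after which one has merely to check routinely that $S^\flat$, $T^\flat$, and the $S^\flat_k$ are never required to lie in $G_0$ (lying in $\overline{\cF}(G)$ suffices), that all the inequalities persist with $\Upsilon$ evaluated on these subsets of $G$, and that the produced $S$ depends on $\Upsilon$ only through $\Upsilon(S_1)$, hence only through $\Upsilon|_{\cF(G_0)}$.
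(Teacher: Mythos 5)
Your proof is correct and coincides with the paper's, which merely points back at the proof of Proposition~\ref{P-normal SC}. You have spelled out exactly the decoupling that reference invites: the shrinking property of $G_0$ alone produces $S$ (so $S\in\overline{\cF}(G_0)$, depending only on $\Upsilon(S_1)$ with $S_1\subseteq G_0$, hence only on $\Upsilon|_{\cF(G_0)}$), while property SC of the ambient class $\fC$ supplies the remaining data, normality of $G^\flat$ having been needed there only to keep $T^\flat$ inside $G^\flat$ and so becoming dispensable once $T^\flat\in\overline{\cF}(G)$ suffices.
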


\begin{lemma} \label{L-union SC}
Let $\gamma$ be a countable ordinal, and
suppose that for each ordinal $\lambda<\gamma$ there is a subgroup $G_\lambda$ of $G$
so that the following conditions hold:
\begin{enumerate}
\item for any $\lambda<\lambda'<\alpha$ one has $G_\lambda\subseteq G_{\lambda'}$,

\item $G_\lambda$ has property SC for every $\lambda<\gamma$.
\end{enumerate}
Then $\bigcup_{\lambda<\gamma}G_\lambda$ has property SC.
\end{lemma}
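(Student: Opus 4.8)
The plan is to reduce to an increasing \emph{sequence} of subgroups and then to use Lemma~\ref{L-same S} to pin down a single generating set $S$ that simultaneously witnesses property SC for every term of the sequence. First I would dispose of the case $\gamma=\beta+1$ a successor ordinal: since the chain is increasing, $\bigcup_{\lambda<\gamma}G_\lambda=G_\beta$, which has property SC by hypothesis. So I may assume $\gamma$ is a limit ordinal; being countable it has cofinality $\omega$, so I would fix an increasing sequence $\lambda_1<\lambda_2<\cdots$ below $\gamma$ with supremum $\gamma$ and set $H_n=G_{\lambda_n}$, obtaining subgroups $H_1\subseteq H_2\subseteq\cdots$ each with property SC and with $\bigcup_{\lambda<\gamma}G_\lambda=\bigcup_n H_n=:H$, which is a countably infinite group (being an increasing union of countably infinite groups), so that property SC applies to it.

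The crucial point is that, by Proposition~\ref{P-SC'}, $H_1$ has the shrinking property, and then by Lemma~\ref{L-same S}, applied with the ambient group taken to be $H_m$ and with $G_0=H_1$, when one invokes property SC of $H_m$ the witnessing set $S$ in Definition~\ref{D-SC} may be chosen inside $H_1$ and depending only on the restriction to $\cF(H_1)$ of the weight function in play. Inspecting the proof of Lemma~\ref{L-same S} (that is, of Proposition~\ref{P-normal SC} with $G^\flat=G$) confirms that this $S$ is manufactured purely from the shrinking property of $H_1$, through a parameter fixed by the value of the weight function on a shrinking-witness of $H_1$, and makes no reference to the ambient group; hence one and the same $S\in\overline{\cF}(H_1)\subseteq\overline{\cF}(H)$ serves as a legitimate witness for property SC of $H_m$ for \emph{every} $m$, provided the weight functions supplied to the various $H_m$ agree on $\cF(H_1)$.

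With this in hand the argument finishes quickly. Given a weight function $\Upsilon:\cF(H)\to[0,\infty)$, I would let $S\in\overline{\cF}(H_1)$ be the set just described for the data $\Upsilon|_{\cF(H_1)}$; this is committed before any $T$ is seen. Given $T\in\overline{\cF}(H)$, finiteness of $T$ gives $T\in\overline{\cF}(H_m)$ for some $m$, and since $(\Upsilon|_{\cF(H_m)})|_{\cF(H_1)}=\Upsilon|_{\cF(H_1)}$ the same $S$ witnesses property SC of $H_m$ with respect to $\Upsilon|_{\cF(H_m)}$; applying that property SC to the set $T$ yields $C,n\in\Nb$ and $S_1,\dots,S_n\in\overline{\cF}(H_m)\subseteq\overline{\cF}(H)$ with the defining conclusion for free p.m.p.\ actions of $H_m$. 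For an arbitrary free p.m.p.\ action $H\curvearrowright(X,\mu)$, I would restrict it to $H_m$ (still a free action on $(X,\mu)$) and extract Borel sets $W,\cV_1,\dots,\cV_n\subseteq X$; since each $S_j\in\cF(H_m)$ one has $(\Upsilon|_{\cF(H_m)})(S_j)=\Upsilon(S_j)$, so condition (i) of Definition~\ref{D-SC} reads $\sum_{j=1}^n\Upsilon(S_j)\mu(\cV_j)\le 1$, while conditions (ii) and (iii) involve only the action of $H_m$ on $X$, which is literally unchanged, and so hold verbatim for the $H$-action. Thus $W,\cV_1,\dots,\cV_n$ witness Definition~\ref{D-SC} for $H\curvearrowright(X,\mu)$, and $H$ has property SC.

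I expect the only genuinely delicate step to be the uniformity claim of the second paragraph: that a \emph{single} $S$ serves all of the $H_m$ rather than a potentially different $S_m$ for each. This is precisely the force of the clause ``depending only on $\Upsilon|_{\cF(G_0)}$'' in Lemma~\ref{L-same S} with $G_0=H_1$ held fixed throughout, and it is the reason the shrinking property---drawn from property SC of $H_1$ via Proposition~\ref{P-SC'}---has to be brought to bear; the remaining steps are routine bookkeeping about restricting actions and weight functions.
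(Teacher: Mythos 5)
Your proof is correct and takes essentially the same route as the paper's. The paper works directly with the $G_\lambda$ rather than passing to a cofinal sequence (every finite subset of $\bigcup_{\lambda<\gamma}G_\lambda$ already lies in some $G_\lambda$, so the $\omega$-cofinality reduction is harmless but unnecessary), and you correctly isolate the key point — that Lemma~\ref{L-same S} produces an $S$ that is intrinsic to the fixed subgroup with the shrinking property and its restricted weight function, and hence serves simultaneously as a witness for all the $G_\lambda$.
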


\begin{proof} 
By Proposition~\ref{P-SC'} we know that $G_0$ has the shrinking property. Let $\Upsilon$ be a function $\cF(\bigcup_{\lambda<\gamma}G_\lambda)\rightarrow [0, \infty)$. By Lemma~\ref{L-same S} there is some $S\in \overline{\cF}(G_0)$ witnessing property SC for $G_\lambda$ with respect to $\Upsilon|_{\cF(G_\lambda)}$ for all $\lambda<\gamma$. Let $T\in \overline{\cF}(\bigcup_{\lambda<\gamma}G_\lambda)$. Then $T\in \overline{\cF}(G_\lambda)$ for some $\lambda<\gamma$. Then we have $C, n\in \Nb$ and $S_1, \dots, S_n\in \overline{\cF}(G_\lambda)$ satisfying the conditions in Definition~\ref{D-SC}.
\end{proof}

\begin{lemma} \label{L-amenable to SC}
Suppose that $G$ is amenable and not locally virtually cyclic. Then $G$ has property SC.
\end{lemma}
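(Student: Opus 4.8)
The plan is to reduce to the case where $G$ is finitely generated and then invoke the machinery already developed in this section, exactly as the earlier lemmas were designed to permit. First I would dispose of the finitely generated case: if $G$ is amenable, finitely generated, and not virtually cyclic, then by Lemma~\ref{L-connected OW} we have an Ornstein--Weiss tiling theorem with $S^2$-connected tiles, and by Lemma~\ref{L-tree group} each such tile, being $A$-connected and suitably invariant, admits a small $A$-connected ``skeleton'' $T$ that is $(A,2r)$-spanning for it with $|T|\le b|F|/r$. Threading these together should yield, for a free p.m.p.\ action $G\curvearrowright (X,\mu)$ and a given weight function $\Upsilon$, a set $W$ that is $S$-dense in $X$ at a controlled scale together with sparse sets $\cV_j$ inside which all the required bounded-length connecting paths live; the sparseness is what makes $\sum_j\Upsilon(S_j)\mu(\cV_j)\le 1$, after choosing $r$ large relative to $\Upsilon(A)$ and the invariance parameters large relative to $r$. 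This is the abstraction of the graph-theoretic argument in Section~8.1 of \cite{Aus16} promised in the introduction to Section~\ref{SS-with SC}, and it is essentially the analogue of Lemma~\ref{L-tree action}'s construction but carried out for the group rather than the phase space.

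Next I would handle the non-finitely-generated case by a transfinite union argument. Write $G$ as an increasing union of finitely generated subgroups $G_\lambda$ indexed by an ordinal; since $G$ is not locally virtually cyclic we may, after passing to a cofinal subsequence, arrange that each $G_\lambda$ is itself not virtually cyclic (any finitely generated subgroup is contained in a larger finitely generated one that is not virtually cyclic, because $G$ is not locally virtually cyclic). Each $G_\lambda$ is amenable, finitely generated, and not virtually cyclic, hence has property SC by the finitely generated case just established. Then Lemma~\ref{L-union SC} applies directly to give that $\bigcup_\lambda G_\lambda = G$ has property SC. One should note the degenerate situation where $G$ is locally virtually cyclic but still not virtually cyclic or locally finite; but that case is explicitly excluded by the hypothesis ``not locally virtually cyclic'', so it does not arise here (it is covered separately by Lemma~\ref{L-increasing virtual cyclic to SC}).

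The main obstacle I expect is the finitely generated case: carefully assembling the Ornstein--Weiss tiles and their spanning skeletons into global Borel sets $W$, $\cV_1,\dots,\cV_n$ on $X$ and verifying the three conditions of Definition~\ref{D-SC} with the weighted inequality (i) holding. In particular one must choose the quantifiers in the right order — first $S$ (governing the density scale of $W$), then for a given $T$ the sets $S_j$ (the edge-generators $A$ and auxiliary sets absorbing boundary effects and the bookkeeping sets like $T$ itself) — and track how the tile invariance must be taken strong enough relative to both $T$ and the separation parameter $r$, which in turn must be large relative to $\Upsilon(A)$. This is where the interplay between Lemmas~\ref{L-weighted}, \ref{L-connected OW}, and \ref{L-tree group} has to be orchestrated, together with a maximal-separation selection (as in Lemma~\ref{L-measurable maximal}) to produce the skeleton sets measurably and to guarantee that each connected component of $\cV_1$ meets a small set $Z$, so that the weight attached to the ``long'' generator does not blow up. By contrast the transfinite union step is routine given Lemmas~\ref{L-same S} and \ref{L-union SC}. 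I would also remark at the end that, combined with Theorem~\ref{T-without SC} and Proposition~\ref{P-finite index}, this lemma yields the promised Proposition~\ref{P-amenable SC}: an amenable $G$ has property SC if and only if it is neither locally finite nor virtually cyclic.
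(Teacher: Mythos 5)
Your overall plan matches the paper's: in the finitely generated case, use Lemma~\ref{L-connected OW} to produce $S^2$-connected Ornstein--Weiss tiles, run Lemma~\ref{L-tree group} inside each tile to get sparse spanning trees $T_k^*$ with $|T_k^*|\le b|F_k|/r$, and choose $r$ large relative to $\Upsilon(S_1)$ to control the weighted measure of $\cV_1=\bigcup_k T_k^*Z_k$; then dispatch the general case via Lemma~\ref{L-union SC}. Your treatment of the transfinite step and of the inheritance of ``not virtually cyclic'' along the chain is correct in substance (the paper fixes one non-virtually-cyclic $G_0$ and shows every $G_n\supseteq G_0$ inherits the property via a finite-index intersection argument, which is the justification your parenthetical remark needs).

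There are, however, two points where your proposal drifts from what the argument actually requires. First, the paper does not verify property SC directly in the finitely generated case: it verifies property SC\primespace$'$ and then invokes Proposition~\ref{P-SC'}. The reason this is cleaner is that in SC\primespace$'$ the sets $W_1,W_2$ come with measure $\le\eta$, and $\eta$ is ours to choose \emph{after} $\Upsilon$, $S_1$, $r$, and hence $S_2=S_1^{2r}$ are fixed; this makes the term $\Upsilon(S_2)\mu(\cV_2)$ with $\cV_2=S_2(W_1\cup W_2)$ arbitrarily small with no further work. A direct SC verification with a dense $W$ (so $SW=X$, hence $\mu(W)\gtrsim 1/|S|$) is also possible -- you can take $W$ to be the union of the tree vertices $T_k^\dag Z_k$ over the quasi-tower together with the uncovered remainder, and the only edge generators needed in the $\cV_j$'s are then $S_1$ (for the in-tree paths) and $T$ (for boundary/error terms), so the ``long generator'' $S=S_1^{2r}$ need not appear among the $S_j$'s at all. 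Either way, your worry that ``the weight attached to the `long' generator'' might ``blow up'' is not where the difficulty lies, and the remedy you suggest -- arranging that each connected component of $\cV_1$ meets a small set $Z$ -- is a shrinking-property consideration (Definition~\ref{D-shrinking}, Lemma~\ref{L-tree action}) that plays no role in this particular verification: the base points $Z_k$ of the quasi-tower already serve as the anchors of the tree components, and their total measure is controlled by the sizes $|F_k|$ automatically.

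Second, the citation of Lemma~\ref{L-measurable maximal} is misplaced. That lemma performs a maximal separated selection in the \emph{phase space} and is used inside Lemma~\ref{L-tree action} (the phase-space analogue of the present construction, used to establish the shrinking property). Here the trees $T_k^*$ produced by Lemma~\ref{L-tree group} are fixed finite subsets of $G$, chosen once per tile $F_k$ (which comes from Lemma~\ref{L-connected OW}), so $\cV_1=\bigcup_k T_k^*Z_k$ is automatically Borel given the Borel base sets $Z_k$ of the quasi-tower; no measurable selection is needed. In short, the ingredients in your proposal are the right ones, but you should reorganize around property SC\primespace$'$ as the target of the finitely generated case and drop the appeal to Lemma~\ref{L-measurable maximal} and the shrinking-property bookkeeping, which belong to a different (already completed) part of the section.
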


\begin{proof}
We consider first the case $G$ is finitely generated. Then $G$ is not virtually cyclic. By Proposition~\ref{P-SC'} it suffices to show that $G$ has property SC$'$.

Take an $S_0\in \overline{\cF}(G)$  generating $G$.
Set $S_1=S_0^2\in \overline{\cF}(G)$. Let $b > 0$ be as given by Lemma~\ref{L-tree group} with respect to the generating set $S_1$ for $G$.

Let $\Upsilon$ be a function $\cF(G)\rightarrow [0, \infty)$.
Choose an $r\in\Nb$ large enough so that
\begin{align}\label{E-action r shrinking}
3b\Upsilon(S_1)\leq r.
\end{align}
Set $S_2=S_1^{2r}\in \overline{\cF}(G)$. Take $\eta>0$ such that $2\eta |S_2|\Upsilon(S_2)<1/3$.

Let $T\in \overline{\cF}(G)$.
Put $n=3$ and $S_3=T\in \overline{\cF}(G)$. Take $0<\delta<1$ such that $2\delta |S_3|\Upsilon(S_3)<1/3$
and put $\zeta=\delta/(1+|S_2|\cdot |S_2TS_2|)>0$.
By Lemma~\ref{L-connected OW} there are $m\in \Nb$ and
$(S_2TS_2,\zeta)$-invariant $S_1$-connected sets $F_1 ,\dots , F_m\in \cF (G)$  such that for any free p.m.p.\ action $G\curvearrowright (X, \mu)$
we can find Borel sets $Z_1 , \dots , Z_m \subseteq X$ such that
the collection $\{ F_k z : 1\leq k\leq m\text{ and } z\in Z_k \}$ is disjoint and its union
has $\mu$-measure at least $1-\zeta $.
By our choice of $b$ via Lemma~\ref{L-tree group},
for each $k=1,\dots,m$ we can find an $S_1$-connected set $T_k^*\subseteq S_1^{r}F_k$  such that $T_k^\dag:=T_k^*\cap F_k$ is $(S_1, 2r)$-spanning for $F_k$ and $|T_k^*|\le b|F_k|/r$.
Put $C=2+\max_{1\le k\le m}|T_k^*|\in \Nb$.
For each $1\le k\le m$ denote by $F_k'$ the set of all $g\in F_k$ satisfying $S_2TS_2g\subseteq F_k$, and note that since $F_k$ is $(S_2TS_2, \zeta)$-invariant
we have $|F_k\setminus F_k'|\le |S_2TS_2|\zeta |F_k|$.

Now let $G\curvearrowright (X, \mu)$ be a free p.m.p.\ action,
and let $W_1, W_2$ be Borel subsets of $X$ with $\mu(W_1), \mu(W_2)\le \eta$.
Take $Z_1, \dots, Z_m$ as above.

For each $i=1, 2$ put $W_i'=W_i\cap S_2\bigcup_{k=1}^m(F_k'\cap T_k^\dag)Z_k\subseteq W_i$
and observe that
\begin{align*}
\mu(W_i\setminus W_i')
&\le \mu\bigg(X\setminus S_2\bigcup_{k=1}^m(F_k'\cap T_k^\dag)Z_k\bigg)\\
&\le \mu\bigg(X\setminus \bigcup_{k=1}^mF_kZ_k\bigg)+\sum_{k=1}^m\mu(F_kZ_k\setminus S_2(F_k'\cap T_k^\dag)Z_k)\\
&\le \zeta+\sum_{k=1}^m\mu(Z_k)|F_k\setminus S_2(F_k'\cap T_k^\dag)|\\
&\le \zeta+\sum_{k=1}^m\mu(Z_k)|S_2|\cdot |T_k^\dag\setminus F_k'|
\hspace*{8mm} \text{(since $F_k\subseteq S_2T_k^\dag$)}\\
&\le \zeta+|S_2TS_2|\zeta\sum_{k=1}^m\mu(Z_k)|S_2|\cdot |F_k|\\
&\le \zeta+|S_2TS_2|\cdot |S_2|\zeta=\delta .
\end{align*}

Set
\[
\cV_1=\bigcup_{k=1}^m T_k^*Z_k , \hspace*{6mm}
\cV_2=S_2(W_1\cup W_2) , \hspace*{6mm}
\cV_3=S_3((W_1\setminus W_1')\cup (W_2\setminus W_2')).
\]
Then
\[\Upsilon(S_1)\mu(\cV_1)\le \Upsilon(S_1)\sum_{k=1}^m\mu(Z_k) |T_k^*|\le \Upsilon(S_1)\sum_{k=1}^m\mu(Z_k)\frac{b|F_k|}{r}\le \Upsilon(S_1)\frac{b}{r}\overset{\eqref{E-action r shrinking}}{\le} \frac13
\]
and
\[
\Upsilon(S_2)\mu(\cV_2)\le \Upsilon(S_2)|S_2|(\mu(W_1)+\mu(W_2))\le 2\eta |S_2|\Upsilon(S_2)\le \frac13
\]
and
\[
\Upsilon(S_3)\mu(\cV_3)\le \Upsilon(S_3)|S_3|(\mu(W_1\setminus W_1')+\mu(W_2\setminus W_2'))\le 2\delta |S_3|\Upsilon(S_3)< \frac13
\]
so that
\[
\sum_{j=1}^3\Upsilon(S_j)\mu(\cV_j)\le 1,
\]
which verifies condition (i) in Definition~\ref{D-SC'}.

Let $g\in T$ and $w_1\in W_1, w_2\in W_2$ with $gw_1=w_2$. If $w_1\not\in W_1'$ or $w_2\not\in W_2'$, then $(w_1, w_2)$ is an $S_3$-edge with both endpoints in $\cV_3$.
Thus we may assume that $w_i\in W_i'$ for $i=1, 2$.
For $i=1, 2$, we have $w_i=s_it_iz_i$ for some $s_i\in S_2$, $1\le k_i\le m$, $t_i\in F_{k_i}'\cap T_{k_i}^\dag$, and $z_i\in Z_{k_i}$.
Then $(w_i, t_iz_i)$ is an $S_2$-edge with both endpoints in $\cV_2$. Note that $s_2^{-1}gs_1\in S_2TS_2$, and hence $(s_2^{-1}gs_1)t_1\in F_{k_1}$.
Since
\[
t_2z_2=s_2^{-1}w_2=s_2^{-1}gw_1=(s_2^{-1}gs_1)t_1z_1,
\]
we get that $k_1=k_2$, $t_2=(s_2^{-1}gs_1)t_1$, and $z_2=z_1$. Then $t_1z_1$ and $t_2z_2$ are connected by an $S_1$-path of length at most $|T_{k_1}^*|$ with all points in $\cV_1$.
Therefore $w_1$ and $w_2$ are connected by a path of length at most $C$ in which each edge is an $S_j$-edge with both endpoints in $\cV_j$ for some $1\le j\le 3$, verifying condition (ii) in Definition~\ref{D-SC'}. Therefore $G$ has property SC$'$, and hence has property SC. 

Now consider the case $G$ is not finitely generated. By hypothesis we can find an increasing sequence
$G_0 \subseteq G_1 \subseteq\dots$ of finitely generated subgroups of $G$ with union $G$ such that
$G_0$ is not virtually cyclic. Suppose for a given $n\in \Nb$ that $G_n$ is virtually cyclic. Then $G_n$ has a finite-index subgroup $G_n'$ isomorphic to $\Zb$.
Since $G_0$ is infinite, $G_0\cap G_n'$ is nontrivial. Then $G_0\cap G_n'$ has finite index in $G_n'$, and hence has finite index in $G_n$. Thus $G_0\cap G_n'$ has finite index in $G_0$. This shows that $G_0$ is virtually cyclic, a contradiction. Therefore for each $n\in \Nb$ the group $G_n$ is not virtually cyclic. Since subgroups of amenable groups are amenable, each $G_n$ is amenable. From the finitely generated case of the lemma we conclude that each $G_n$ has property SC. Then from Lemma~\ref{L-union SC} we get that $G$ has property SC.
\end{proof}

The following we obtain from Theorem~\ref{T-without SC} and
Lemmas~\ref{L-increasing virtual cyclic to SC} and \ref{L-amenable to SC}.

\begin{proposition} \label{P-amenable SC}
An amenable $G$ has property SC if and only if it is neither locally finite nor virtually cyclic.
\end{proposition}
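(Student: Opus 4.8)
The plan is to obtain the proposition by assembling the three results cited just before the statement: Theorem~\ref{T-without SC}, Lemma~\ref{L-increasing virtual cyclic to SC}, and Lemma~\ref{L-amenable to SC}. No new argument is needed beyond a case split and one elementary observation about virtually cyclic groups.

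For the ``only if'' direction I would argue by contraposition. Suppose $G$ is either locally finite or virtually cyclic. If $G$ is locally finite, then Theorem~\ref{T-without SC} immediately gives that $G$ does not have property SC. If instead $G$ is virtually cyclic, then since $G$ is countably infinite its finite-index cyclic subgroup cannot be finite, hence is isomorphic to $\Zb$; thus $G$ is virtually free, and Theorem~\ref{T-without SC} again shows that $G$ does not have property SC. (Note that amenability is not even used in this direction.) This yields the forward implication.

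For the ``if'' direction, assume that $G$ is amenable and neither locally finite nor virtually cyclic, and split into two cases according to whether $G$ is locally virtually cyclic. If $G$ is not locally virtually cyclic, then Lemma~\ref{L-amenable to SC} applies directly and gives that $G$ has property SC. If $G$ is locally virtually cyclic, then, being by assumption neither locally finite nor virtually cyclic, it satisfies the hypothesis of Lemma~\ref{L-increasing virtual cyclic to SC}, which gives that $G$ has property SC (and, incidentally, the shrinking property). Since these two cases exhaust all possibilities, $G$ has property SC.

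The only point requiring a moment of care is the remark that a countably infinite virtually cyclic group is virtually free, so that Theorem~\ref{T-without SC} is applicable; and one should check that the case distinction in the ``if'' direction is genuinely exhaustive, which it is, since every group either is or is not locally virtually cyclic. Neither of these presents a real obstacle, and the substantive work has already been done in the cited lemmas.
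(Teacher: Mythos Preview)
Your proposal is correct and takes essentially the same approach as the paper, which simply states that the proposition follows from Theorem~\ref{T-without SC} and Lemmas~\ref{L-increasing virtual cyclic to SC} and \ref{L-amenable to SC}. Your elaboration of the case split and the observation that an infinite virtually cyclic group is virtually free (so that Theorem~\ref{T-without SC} applies) is exactly what is needed to unpack that citation.
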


From Proposition~\ref{P-amenable SC} and Lemma~\ref{L-union SC}  we finally get:

\begin{theorem} \label{T-ordinal SC}
Suppose that $G$ has a w-normal subgroup $G_0$ which is amenable but neither locally finite nor virtually cyclic.
Then $G$ has property SC.
\end{theorem}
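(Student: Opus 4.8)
The plan is a straightforward transfinite induction along the w-normal chain witnessing that $G_0$ is w-normal in $G$. Fix a countable ordinal $\gamma$ and subgroups $G_\lambda \subseteq G$ for $0 \le \lambda \le \gamma$ as in Definition~\ref{D-w-normal}, with $G_0$ the given subgroup and $G_\gamma = G$. Observe first that the standing hypotheses of the earlier results apply to each $G_\lambda$: being a subgroup of the countable group $G$, each $G_\lambda$ is countable, and since $G_0$ is not locally finite it contains a finitely generated infinite subgroup, so $G_0$ and hence every $G_\lambda \supseteq G_0$ is infinite. I claim that $G_\lambda$ has property SC for every $\lambda \le \gamma$; taking $\lambda = \gamma$ then gives the theorem.

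For the base case $\lambda = 0$, the subgroup $G_0$ is amenable and, by hypothesis, neither locally finite nor virtually cyclic, so Proposition~\ref{P-amenable SC} gives that $G_0$ has property SC. For a successor step, suppose $\lambda + 1 \le \gamma$ and that $G_\lambda$ has property SC. Since $G_\lambda$ is normal in $G_{\lambda+1}$ by condition (iii) of Definition~\ref{D-w-normal}, Proposition~\ref{P-normal SC} (applied with the ambient group taken to be $G_{\lambda+1}$ and the normal subgroup $G^\flat$ taken to be $G_\lambda$) yields that $G_{\lambda+1}$ has property SC. For a limit ordinal $\lambda' \le \gamma$, suppose $G_\lambda$ has property SC for all $\lambda < \lambda'$. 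By condition (iv) of Definition~\ref{D-w-normal} we have $G_{\lambda'} = \bigcup_{\lambda < \lambda'} G_\lambda$, the chain $\{G_\lambda\}_{\lambda < \lambda'}$ is increasing by condition (i), and $\lambda'$ is a countable ordinal because $\gamma$ is; hence Lemma~\ref{L-union SC} (with its internal ordinal taken to be $\lambda'$) applies and shows that $G_{\lambda'}$ has property SC. By transfinite induction the claim holds for all $\lambda \le \gamma$, so in particular $G = G_\gamma$ has property SC.

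Since all of the substantive work has already been done in Propositions~\ref{P-amenable SC} and \ref{P-normal SC} and in Lemma~\ref{L-union SC}, there is no real obstacle in this argument; the only points demanding attention are the bookkeeping of the transfinite recursion and, in particular, the need to invoke Lemma~\ref{L-union SC} rather than any iterated normality statement at limit stages, since the relation ``$G_\lambda$ is normal in $G_{\lambda+1}$'' does not pass to limits and $G_{\lambda'}$ need not be normal in anything. It is also worth keeping in mind the degenerate case $\gamma = 0$, where $G = G_0$ and the conclusion is immediate from Proposition~\ref{P-amenable SC} alone.
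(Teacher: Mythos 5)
Your proof is correct and is essentially what the paper intends: a transfinite induction along the w-normal chain, with Proposition~\ref{P-amenable SC} handling the base case, Proposition~\ref{P-normal SC} handling successors, and Lemma~\ref{L-union SC} handling limits. The paper presents the theorem with the terse attribution ``from Proposition~\ref{P-amenable SC} and Lemma~\ref{L-union SC}'' and leaves the induction and the use of Proposition~\ref{P-normal SC} at successor stages implicit; you have correctly supplied that third ingredient, which is indeed indispensable.
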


\subsection{Product groups} \label{SS-product groups}

\begin{lemma} \label{L-small}
%Let $G$ be a countably infinite group.
For every $\varepsilon>0$ there is an $S\in \overline{\cF}(G)$ such that for every free
p.m.p.\ action $G\curvearrowright (X, \mu)$ there is a Borel
set $\cV\subseteq X$ with $S\cV=X$ and $\mu(\cV)\le \varepsilon$.
\end{lemma}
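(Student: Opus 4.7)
The plan is to split into three cases according to the structure of $G$. Note first that an infinite group cannot simultaneously be locally finite and virtually cyclic, because a finite-index cyclic subgroup would then be finitely generated and hence finite, forcing $G$ itself to be finite. So the three cases below are exhaustive and pairwise disjoint for our infinite $G$. The only substantive input is the shrinking property, and the remaining two cases boil down to standard Rokhlin-type constructions.

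\emph{Case 1: $G$ is neither locally finite nor virtually cyclic.} By Proposition~\ref{P-shrinking} $G$ has the shrinking property, which is strictly stronger than what is needed. Given $\varepsilon>0$, I would take $S$ as supplied by Definition~\ref{D-shrinking} for this $\varepsilon$ and invoke the property with any fixed $\delta>0$. For any free p.m.p.\ action $G\curvearrowright (X,\mu)$ this produces Borel sets $Z\subseteq \cV\subseteq X$ with $S\cV=X$ and $\mu(\cV)\le \varepsilon$; the data $Z$ and the $S_1$-path condition are simply discarded.

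\emph{Case 2: $G$ is virtually cyclic.} Being infinite, $G$ contains an element $s$ of infinite order generating a finite-index subgroup. Choose $n\in\Nb$ with $1/n\le \varepsilon/2$ and set $S=\{s^{j}:-n<j<n\}\in\overline{\cF}(G)$. For any free p.m.p.\ action $G\curvearrowright(X,\mu)$ the restricted $\langle s\rangle$-action is free, so the classical Rokhlin lemma yields a Borel set $Y\subseteq X$ with $Y,sY,\dots,s^{n-1}Y$ pairwise disjoint and $\mu\bigl(\bigsqcup_{j=0}^{n-1}s^{j}Y\bigr)\ge 1-\varepsilon/2$. Taking $\cV=Y\cup\bigl(X\setminus\bigsqcup_{j=0}^{n-1}s^{j}Y\bigr)$ gives $\mu(\cV)\le 1/n+\varepsilon/2\le\varepsilon$, and the inclusion $\{e_{G},s,\dots,s^{n-1}\}\subseteq S$ forces $S\cV=X$.

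\emph{Case 3: $G$ is locally finite.} An infinite locally finite group has finite subgroups of arbitrary cardinality, because upon enumerating $G=\{g_{1},g_{2},\dots\}$ the subgroup $\langle g_{1},\dots,g_{n}\rangle$ is finite (by local finiteness) and contains at least $n$ elements. Pick a finite subgroup $H\le G$ with $|H|\ge 1/\varepsilon$ and set $S=H\in\overline{\cF}(G)$. For any free p.m.p.\ action the restricted $H$-action is free, so by a standard Borel selection (as in Example~6.1 and Proposition~6.4 of \cite{KecMil04}) there is a Borel fundamental domain $\cV\subseteq X$ satisfying $|H|\mu(\cV)=1$, hence $\mu(\cV)=1/|H|\le\varepsilon$ and $H\cV=X$.

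I do not anticipate any real obstacle. The genuine mathematical content sits in Proposition~\ref{P-shrinking}, which has already been proved; the point of the lemma is essentially to repackage the covering aspect of the shrinking property together with the two exceptional group classes into a single uniform statement.
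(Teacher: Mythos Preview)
Your proof is correct. The paper's own argument is organized a little differently: rather than your three-case split, it observes via Proposition~\ref{P-shrinking} that $G$ either has the shrinking property or else is locally finite or virtually cyclic, hence amenable, and in the amenable case it appeals in one stroke to the Ornstein--Weiss quasitower theorem \cite[Theorem~4.46]{KerLi16} to produce the desired small $\cV$. Your Case~1 is identical to the paper's shrinking-property case; your Cases~2 and~3 replace the single black-box invocation of Ornstein--Weiss by two elementary constructions (the Rokhlin lemma for an infinite cyclic subgroup, and a Borel fundamental domain for a large finite subgroup). Your route is more self-contained and avoids importing the full quasitower machinery for what is, in these two special cases, a much simpler statement; the paper's route is shorter to write and leans on a theorem already in use elsewhere in the paper.
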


\begin{proof}
By Proposition~\ref{P-shrinking}, $G$ either has the shrinking property or is amenable.
If $G$ has the shrinking property the lemma is clear, while if $G$ is amenable then the lemma follows from the Ornstein--Weiss quasitower theorem \cite[Theorem 4.46]{KerLi16}.
\end{proof}

For two countably infinite groups $G$ and $\Gamma$, we say that $G\times \Gamma$ has
{\it property SC for product actions} if the class $\fC_{\prdct}$ has property SC, where $\fC_{\prdct}$ consists of all p.m.p.\ actions of the form $G\times \Gamma \curvearrowright (X\times Y, \mu)$
where $G\curvearrowright X$ and $G\curvearrowright Y$ are free actions
on standard Borel spaces
and $G\times \Gamma \curvearrowright X\times Y$ is the product action, with
$\mu$ not necessarily being a product measure.

\begin{lemma} \label{L-product}
Let $G$ and $\Gamma$ be countably infinite groups. Suppose that $G$ has the shrinking property. Then $G\times \Gamma$ has property SC for product actions.
\end{lemma}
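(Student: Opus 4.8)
The plan is to show directly that $\fC_{\prdct}$ has property SC, using the shrinking property of $G$ together with Lemma~\ref{L-small} applied to $\Gamma$. Given $\Upsilon:\cF(G\times\Gamma)\to[0,\infty)$, I first extract from $\Upsilon$ two auxiliary data: a weight function on $\cF(G)$ by restricting along $F\mapsto F\times\{e_\Gamma\}$ (suitably inflated by a factor depending on a $\Gamma$-piece to be chosen later), and a target tolerance for the $\Gamma$-direction. Let $S_1^G\in\overline{\cF}(G)$ witness the shrinking property for $G$. The shrinking property produces, for a small $\varepsilon^G>0$ to be fixed, a set $S^G\in\overline{\cF}(G)$ so that for each $\delta>0$ there is a $C^G\in\Nb$ such that any free p.m.p.\ action $G\curvearrowright(X,\mu_X)$ (here I will apply it to the $X$-factor of the underlying space, or rather to $X\times Y$ viewed through the $G$-action) admits Borel sets $Z\subseteq\cV\subseteq X\times Y$ with $S^G\cV=X\times Y$, $\mu(\cV)\le\varepsilon^G$, $\mu(Z)\le\delta$, and every point of $\cV$ joined to $Z$ by an $S_1^G$-path of length $\le C^G$ inside $\cV$.

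Next I would use Lemma~\ref{L-small} for the group $\Gamma$: for a small $\varepsilon^\Gamma>0$ it gives $S^\Gamma\in\overline{\cF}(\Gamma)$ such that every free p.m.p.\ action $\Gamma\curvearrowright(Y,\nu)$ has a Borel set $\cW\subseteq Y$ with $S^\Gamma\cW=Y$ and $\nu(\cW)\le\varepsilon^\Gamma$. The output set $S\in\overline{\cF}(G\times\Gamma)$ that witnesses property SC will be $S^G\cdot(\{e_G\}\times S^\Gamma)$, or a symmetrized version thereof; the point is that hitting $X\times Y$ first by the $G$-shrinking set and then by $S^\Gamma$ in the $\Gamma$-direction covers everything, because the $G$-action and $\Gamma$-action on $X\times Y$ act on independent coordinates. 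Then given $T\in\overline{\cF}(G\times\Gamma)$, write $T\subseteq T^G\times T^\Gamma$ with $T^G\in\overline{\cF}(G)$, $T^\Gamma\in\overline{\cF}(\Gamma)$; I apply the shrinking data with $\delta$ small enough (depending on $T$, on the eventual weights, and on $C^G$) to get $Z\subseteq\cV$ and $C^G$. The set $W$ required in Definition~\ref{D-SC} will be built from $\cV$ together with a correction set absorbing the complement of a large-measure piece, exactly as in the proofs of Proposition~\ref{P-normal SC} and Lemma~\ref{L-amenable to SC}: put $W=\cV$ (after possibly enlarging by $X\times Y\setminus S^GW'$ for a suitable $W'$), and take $\cV_j$'s of three or four types — one governed by $S_1^G$ (the graph inside which the $G$-part of any connection runs, coming from the $S_1^G$-paths into $Z$), one governed by $\{e_G\}\times S^\Gamma$ (handling the $\Gamma$-coordinate, since $T^\Gamma$-translates of a point in $\cW$-type sets stay controlled), one governed by $S^G\cup(\{e_G\}\times S^\Gamma)$ linking a point to $Z$, and a final "garbage" set governed by $T$ with small measure absorbing the exceptional points. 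The weight bound $\sum_j\Upsilon(S_j)\mu(\cV_j)\le1$ is arranged by choosing $\varepsilon^G$, $\varepsilon^\Gamma$, and $\delta$ small enough relative to the finitely many values of $\Upsilon$ at the (finitely many) sets $S_j$ that appear, distributing a budget of $1$ among the pieces (each $\le 1/3$ or $1/4$), exactly as in the earlier lemmas.

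For the connectivity condition: if $(g,\gamma)\in T$ and $(g,\gamma)w_1=w_2$ with $w_1,w_2\in W$, I first route $w_1$ to a point $z_1\in Z$ and $w_2$ to $z_2\in Z$ via $S_1^G$-paths inside $\cV$ (so each edge is an $S_1^G$-edge in the $S_1^G$-governed $\cV_j$). Then $z_1$ and $z_2$ differ by an element of $(S_1^G)^{C^G}\times\{e_\Gamma\}\cdot T\cdot (S_1^G)^{C^G}\times\{e_\Gamma\}\subseteq \big((S_1^G)^{C^G}T^G(S_1^G)^{C^G}\big)\times T^\Gamma$; because $Z$ has measure $\le\delta\le\eta$-type small, and because the $G$-coordinate displacement lies in a fixed finite set while the $\Gamma$-coordinate displacement lies in $T^\Gamma$, I can connect $z_1$ to $z_2$ by first correcting the $\Gamma$-coordinate using $\{e_G\}\times S^\Gamma$-edges inside the $\Gamma$-governed $\cV_j$ (the two points having the same $X$-coordinate up to the finite $G$-displacement, handled by one more $S_1^G$-style hop) — the key structural fact being that on $X\times Y$ the commuting actions let me treat the two coordinates separately. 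The main obstacle I anticipate is precisely this last bookkeeping: getting a genuinely bounded-length path that alternates between $G$-moves and $\Gamma$-moves while keeping every edge's two endpoints inside the correct $\cV_j$, since the $\Gamma$-correction must be performed at a specific $X$-coordinate and I only control where points of $Z$ sit, not where their $G$-translates sit; I expect to handle this by enlarging $Z$ to a set closed under the relevant finite $G$-translations (still of small measure) so that the $\Gamma$-correction can be carried out at a point already known to lie in a controlled set, and by using the shrinking property's set $S^G$ (rather than just $S_1^G$) for the edge that bridges into $Z$, mirroring the two-scale maneuver in the proof of (ii)$\Rightarrow$(i) in Proposition~\ref{P-SC'}.
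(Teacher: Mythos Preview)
Your overall architecture---shrinking property for $G$ plus Lemma~\ref{L-small} for $\Gamma$, assembled into three or so $\cV_j$'s---is exactly what the paper does, and you have correctly located the main obstacle. But two choices in your plan create real trouble, and the paper resolves both by a single clean move you almost make but then back away from.

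First, apply the shrinking property to the marginal action $G\curvearrowright(X,\mu_X)$, \emph{not} to $G\curvearrowright(X\times Y,\mu)$. You write ``the $X$-factor \dots\ or rather $X\times Y$ viewed through the $G$-action''; the first option is the right one. Shrinking on $(X,\mu_X)$ yields $Z_X\subseteq\cV_X\subseteq X$, and then $\cV_1=\cV_X\times Y$ and $\cV_3=Z_X\times Y$ are automatically $\Gamma$-invariant. So the $\Gamma$-correction edge $(z_i,y_i)\to(z_i,sy_1)$ stays inside $\cV_3$ for free, with $\mu(\cV_3)=\mu_X(Z_X)\le\delta$. Your alternative fix (``enlarge $Z$ to be closed under the relevant finite $G$-translations'') has a circularity: the relevant $G$-set has size depending on $C_1$, but in the shrinking property $C_1$ is produced only \emph{after} $\delta$ is fixed, while you need $\delta$ small relative to that enlargement.

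Second, you have the role of Lemma~\ref{L-small} inverted. The set $\cV_Y\subseteq Y$ it produces is not for the $\Gamma$-correction; it is for the $G$-path. One takes $\cV_2=X\times\cV_Y$, which is $G$-invariant and has $\mu(\cV_2)=\mu_Y(\cV_Y)\le\eta$, and runs the $(T_G\cup S_{1,G})$-path of length $\le 2C_1+1$ from $(z_1,sy_1)$ to $(z_2,sy_1)$ inside $\cV_2$ (here $s\in S_\Gamma$ is chosen so that $sy_1\in\cV_Y$). The quantifier order is then clean: $S_2=(T_G\cup S_{1,G})\times\{e_\Gamma\}$ determines $\eta$, which determines $S_\Gamma$, which determines $S_3=\{e_G\}\times S_\Gamma T_\Gamma S_\Gamma$, which determines $\delta$, which finally determines $C_1$; none of the $S_j$ depend on $C_1$, only the path-length bound $C=4C_1+3$ does.

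With these two fixes your plan becomes the paper's proof. In particular $S=S_G\times\{e_\Gamma\}$ already satisfies $SW=X\times Y$ for $W=\cV_X\times Y$, so no $S^\Gamma$-factor in $S$ and no garbage set are needed.
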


\begin{proof} We have $S_{1, G}\in \overline{\cF}(G)$ witnessing the shrinking property for $G$. Put $S_1=S_{1, G}\times \{e_\Gamma\}\in \overline{\cF}(G\times \Gamma)$.
Let $\Upsilon$ be a function $\cF(G\times \Gamma)\to [0, \infty)$. Take $\varepsilon>0$ such that $\varepsilon\Upsilon(S_1)\le 1/3$.

By our choice of $S_{1, G}$, there is an $S_G\in \overline{\cF}(G)$ such that for any $\delta>0$ there is a $C_1\in \Nb$ so that for any free p.m.p.\ action $G\curvearrowright (X, \mu_X)$ we can find Borel sets $Z_X\subseteq \cV_X\subseteq X$ satisfying the following conditions:
\begin{enumerate}
\item $S_G\cV_X=X$,
\item $\mu_X(\cV_X)\le \varepsilon$ and $\mu_X(Z_X)\le \delta$,
\item every point of $\cV_X$ is connected to some point of $Z_X$ by an $S_{1, G}$-path of length at most $C_1$ with all points in $\cV_X$.
\end{enumerate}

Put $S=S_G\times \{e_\Gamma\}\in \overline{\cF}(G\times \Gamma)$.

Let $T\in \overline{\cF}(G\times \Gamma)$. Take $T_G\in \overline{\cF}(G)$ and $T_\Gamma\in \overline{\cF}(\Gamma)$ such that $T\subseteq T_G\times T_\Gamma$. Put $S_2=(T_G\cup S_{1, G})\times \{e_\Gamma\}\in \overline{\cF}(G\times \Gamma)$.
Take an $\eta>0$ such that $\eta\Upsilon(S_2)\le 1/3$. By Lemma~\ref{L-small}  there is an $S_\Gamma\in \overline{\cF}(\Gamma)$ such that for any free p.m.p.\ action
$\Gamma\curvearrowright (Y, \mu_Y)$ there is a Borel set $\cV_Y\subseteq Y$ so that $S_\Gamma\cV_Y=Y$ and $\mu_Y(\cV_Y)\le \eta$.

Put $S_3=\{e_G\}\times (S_\Gamma T_\Gamma S_\Gamma)\in \overline{\cF}(G\times \Gamma)$. Pick a $\delta>0$ such that $\delta\Upsilon(S_3)\le 1/3$. Then we have $C_1$ as above. Put $C=4C_1+3\in \Nb$.

Let $G$ and $\Gamma$ act freely on standard Borel spaces $X$ and $Y$, respectively. Let $\mu$ be a $(G\times \Gamma)$-invariant Borel probability measure on $X\times Y$. Denote by $\mu_X$ and $\mu_Y$ the push-forward of $\mu$ under the projections $X\times Y\rightarrow X$ and $X\times Y\rightarrow Y$ respectively.
Then we have $Z_X, \cV_X$ and $\cV_Y$ as above.

Put $\cV_1=\cV_X\times Y$, $\cV_2=X\times \cV_Y$, and $\cV_3=Z_X\times Y$. Then
\begin{align*}
\sum_{j=1}^3\Upsilon(S_j)\mu(\cV_j)=\Upsilon(S_1)\mu_X(\cV_X)+\Upsilon(S_2)\mu_Y(\cV_Y)+\Upsilon(S_3)\mu_X(Z_X)\le 1,
\end{align*}
verifying condition (i) in Definition~\ref{D-SC}.

Put $W=\cV_X\times Y$. Then $SW=X\times Y$, verifying condition (ii) in Definition~\ref{D-SC}.

Let $t=(t_G, t_\Gamma)\in T$ and $w_1, w_2\in W$ with $tw_1=w_2$. Say, $w_i=(x_i, y_i)$ for $i=1, 2$. Then $x_i$ is connected to some $z_i\in Z_X$ by an $S_{1, G}$-path of length at most $C_1$ with all points in $\cV_X$. Put $w_i'=(z_i, y_i)$. Then $w_i$ is connected to  $w_i'$ by an $S_1$-path of length at most $C_1$ with all points in $\cV_1$. We have $sy_1\in \cV_Y$ for some $s\in S_\Gamma$. Put $w_1''=(z_1, sy_1)$ and $w_2''=(z_2, sy_1)$.  Note that $sy_1=st_\Gamma^{-1}y_2\in S_\Gamma T_\Gamma S_\Gamma y_2$.
Thus $(w_i', w_i'')$ is an $S_3$-edge with both endpoints in $\cV_3$ for $i=1, 2$. Also note that $z_2\in S_{1, G}^{C_1}x_2\subseteq S_{1, G}^{C_1}T_Gx_1\subseteq S_{1, G}^{C_1}T_GS_{1, G}^{C_1}z_1$. Thus $w_1''$ and $w_2''$ are connected by an $S_2$-path of length at most $2C_1+1$ with all points in $\cV_2$. We conclude that $w_1$ and $w_2$ are connected by a path of length at most $4C_1+3$ in which each edge is an $S_j$-edge with both endpoints in $\cV_j$ for some $1\le j\le 3$, verifying condition (iii) in Definition~\ref{D-SC}.
\end{proof}

Gaboriau showed that if $R$ and $S$ are countable aperiodic Borel equivalence relations on standard Borel spaces $X$ and $Y$, respectively, and $\mu$ is an $(R\times S)$-invariant Borel probability measure on $X\times Y$, then the cost of $R\times S$ on $(X\times Y, \mu)$ is equal to $1$ \cite{Gab00}\cite[Theorem 24.9]{KecMil04}.
The following result is an analogue of Gaboriau's theorem.

\begin{proposition} \label{P-product}
Let $G$ and $\Gamma$ be countably infinite groups. Then $G\times \Gamma$ has property SC for product actions if and only if at least one of $G$ and $\Gamma$ is not locally finite.
\end{proposition}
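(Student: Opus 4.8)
\emph{Plan.} The plan is to handle the two implications separately, in each case reducing to results already established, with the bulk of the work being a short case analysis together with elementary checks of local finiteness and virtual cyclicity for the groups involved.

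\emph{The ``only if'' direction} I would prove by contraposition. Assume that both $G$ and $\Gamma$ are locally finite; I want to show that $G\times\Gamma$ does not have property SC for product actions. First observe that $G\times\Gamma$ is then itself locally finite: a finitely generated subgroup of $G\times\Gamma$ projects onto finitely generated --- hence finite --- subgroups of $G$ and of $\Gamma$, so it embeds into a finite group. Next, the class $\fC_{\prdct}$ is nonempty, since $G$ and $\Gamma$ are countably infinite and so admit free p.m.p.\ (e.g.\ Bernoulli) actions whose product lies in $\fC_{\prdct}$. Now if $\fC_{\prdct}$ had property SC, then for any single action $\alpha\in\fC_{\prdct}$ the singleton class $\{\alpha\}$ would have property SC by Definition~\ref{D-SC}, and hence property SC\primespace$''$ by Proposition~\ref{P-SC'}. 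But $\alpha$ is a free p.m.p.\ action of the locally finite group $G\times\Gamma$, and Lemma~\ref{L-locally finite not SC''} asserts that no such action has property SC\primespace$''$ --- a contradiction. Hence $G\times\Gamma$ does not have property SC for product actions.

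\emph{The ``if'' direction.} Since the property in question is symmetric in $G$ and $\Gamma$ (the flip isomorphism $G\times\Gamma\cong\Gamma\times G$ carries product actions to product actions, and property SC is isomorphism invariant), I may assume that $G$ is not locally finite and argue by cases. If $G$ is also not virtually cyclic, then $G$ has the shrinking property by Proposition~\ref{P-shrinking}, and Lemma~\ref{L-product} immediately gives that $G\times\Gamma$ has property SC for product actions. So suppose $G$ is virtually cyclic; being infinite, it then contains a copy of $\Zb$ of finite index. If $\Gamma$ is neither locally finite nor virtually cyclic, then $\Gamma$ has the shrinking property, and applying Lemma~\ref{L-product} to the pair $(\Gamma,G)$ shows that $\Gamma\times G$, hence $G\times\Gamma$, has property SC for product actions. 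The remaining case is that $G$ is virtually cyclic and $\Gamma$ is locally finite or virtually cyclic; then $\Gamma$ is amenable (locally finite groups and virtually cyclic groups are amenable), and $G$ is amenable, so $G\times\Gamma$ is amenable. It is not locally finite, since it contains $\Zb$, and it is not virtually cyclic: if $\Gamma$ is locally finite then $G\times\Gamma$ is not finitely generated (it surjects onto the non-finitely-generated group $\Gamma$) and virtually cyclic groups are finitely generated, while if $\Gamma$ is virtually cyclic then $G\times\Gamma$ contains a copy of $\Zb\times\Zb$, which is not virtually cyclic. By Proposition~\ref{P-amenable SC} the group $G\times\Gamma$ therefore has property SC, and since $\fC_{\prdct}$ is a subclass of the class of all free p.m.p.\ actions of $G\times\Gamma$, it follows that $G\times\Gamma$ has property SC for product actions.

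The only potential friction is the organization of the case split and the routine verifications above; no ingredient beyond Lemma~\ref{L-product}, Proposition~\ref{P-amenable SC}, Proposition~\ref{P-shrinking}, Proposition~\ref{P-SC'}, and Lemma~\ref{L-locally finite not SC''} is required.
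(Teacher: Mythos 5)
Your proof is correct and uses exactly the same ingredients as the paper's (Lemma~\ref{L-product}, Proposition~\ref{P-shrinking}, Proposition~\ref{P-amenable SC}, and the fact that locally finite groups have no free p.m.p.\ actions with property SC), merely organizing the case split around the two directions of the biconditional rather than around the shrinking property as the paper does. The one small divergence is that for ``both locally finite'' you route through Lemma~\ref{L-locally finite not SC''} and the implication (i)$\Rightarrow$(iii) of Proposition~\ref{P-SC'} applied to a singleton subclass, while the paper cites Theorem~\ref{T-without SC}; your citation is in fact the cleaner one, since Theorem~\ref{T-without SC} literally only says the group lacks property SC, whereas the statement actually being used (that no free p.m.p.\ action of a locally finite group has property SC) is the content of Proposition~\ref{P-free product} or, as you argue, of Lemma~\ref{L-locally finite not SC''} together with Proposition~\ref{P-SC'}.
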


\begin{proof}
If at least one of $G$ and $\Gamma$, say $G$, has the shrinking property, then $G\times \Gamma$ has property SC for product actions by Lemma~\ref{L-product} and $G$ is not locally finite by Proposition~\ref{P-shrinking}. Thus we may assume that neither $G$ nor $\Gamma$ has the shrinking property. By Proposition~\ref{P-shrinking}, each of $G$ and $\Gamma$ is either locally finite or virtually cyclic. If at least one of $G$ and $\Gamma$ is not locally finite, then $G\times \Gamma$ is neither locally finite nor virtually cyclic, and so $G\times \Gamma$, being amenable, has property SC by Proposition~\ref{P-amenable SC}.
If on the other hand $G$ and $\Gamma$ are both locally finite then $G\times \Gamma$
is locally finite, which implies by Theorem~\ref{T-without SC} that $G\times \Gamma$ has no free p.m.p.\ actions
with property SC and hence does not itself have property SC for product actions.
\end{proof}

\section{Measure entropy and Shannon orbit equivalence} \label{S-Shannon OE}

We devote ourselves in this section to the proof of the following theorem,
which together with Theorem~\ref{T-ordinal SC} yields Theorem~\ref{T-measure}.

\begin{theorem} \label{T-Shannon SC to entropy}
Let $G$ and $H$ be countably infinite groups and let $G\curvearrowright (X,\mu )$ and $H\curvearrowright (Y,\nu )$ 
be free p.m.p.\ actions which are Shannon orbit equivalent.
Suppose that $G\curvearrowright (X, \mu)$ has property SC . Then
\[
h_\nu (H\curvearrowright Y)\ge h_\mu (G\curvearrowright X).
\]
\end{theorem}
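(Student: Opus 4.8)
The plan is to transfer the sofic entropy computation for $G \curvearrowright (X,\mu)$ to $H \curvearrowright (Y,\nu)$ along the orbit equivalence $\Psi$, using property SC of the $G$-action to control the combinatorial cost of recoding microstates. Fix a sofic approximation sequence $\Sigma = \{\sigma_k : G \to \Sym(V_k)\}$ realizing $h_\mu(G \curvearrowright X)$, which exists by Proposition~\ref{P-maximal measure entropy}. The heart of the argument is a transfer procedure: from a good enough sofic approximation $\sigma_k$ for $G$ and a rich supply of microstates $\varphi \in \Hom_\mu(\sC, F, \delta, \sigma_k)$ for a fine partition $\sC$ refining a given finite partition $\sP$ of $X$, I would build a sofic approximation $\tau$ for $H$ on a closely related vertex set and an injective (or boundedly-to-one) map from the set of $\sP$-restrictions of these microstates into a set of microstates $\Hom_\nu(\cD, E, \delta', \tau)$ for a corresponding partition $\cD$ of $Y$, where $\cD$ refines the pullback partition $\Psi_*\sP$. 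Since such an injection increases cardinalities by at most a subexponential factor (this is where the Shannon entropy of the cocycle partitions enters, bounding the number of "recoding types"), taking $\limsup \frac{1}{|V_k|}\log(\cdot)$ and then the appropriate infima over refinements and over $F$, $\delta$ gives $h_{\Sigma',\nu}(H \curvearrowright Y) \geq h_{\Sigma,\mu}(G\curvearrowright X)$ for the resulting sofic approximation sequence $\Sigma'$ of $H$, and hence $h_\nu(H\curvearrowright Y) \geq h_\mu(G\curvearrowright X)$ by maximality.

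The mechanism for building $\tau$ out of $\sigma_k$ is where property SC does its work. Fix a finite $T \subseteq H$ whose action on $Y$ we wish to model; via the cocycle $\lambda$, pull $T$ back to sets in $G$ and apply property SC to the $G$-action with a weight function $\Upsilon$ chosen so that the weighted measures of the sets $\cV_j$ are negligible. This produces, inside $X$ (equivalently inside the microstate world $V_k$, by pushing the Borel sets $W, \cV_1, \dots, \cV_n$ through $\varphi$), a subset $W$ which is $S$-dense and in which any two $T$-related points are joined by a path of bounded length $C$ whose edges are $S_j$-edges lying in the small sets $\cV_j$. The sparseness condition $\sum_j \Upsilon(S_j)\mu(\cV_j) \leq 1$ lets me arrange that only a tiny proportion of vertices of $V_k$ carry any "nontrivial" recoding data, so the number of ways to specify how the $H$-action $\tau$ is defined on $V_k$ — namely, reading off $\Psi(tv) = t'\Psi(v)$ type information along these bounded paths — is subexponential in $|V_k|$. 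One then defines $\tau_t$ on the image microstate by following the path prescription, using the partition $\cD$ fine enough to see all the relevant cocycle values; the $(E,\delta')$-approximation conditions for $\tau$ follow from the $(F,\delta)$-approximation conditions for $\sigma_k$ together with the sparseness. I would do this for each $t$ in an exhausting sequence $T_1 \subseteq T_2 \subseteq \cdots$ of finite subsets of $H$, diagonalizing as in the proof of Proposition~\ref{P-maximal measure entropy} to assemble a genuine sofic approximation sequence $\Sigma'$ for $H$.

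The main obstacle — and the reason the theorem is delicate — is controlling the \emph{empirical distribution} of the recoded microstates, i.e., verifying condition (2) in the definition of $\Hom_\nu$: that $\sum_{A}|\m(\psi(A)) - \nu(A)| < \delta'$ for the pushed-forward model $\psi$ on $Y$. The naive recoding via $\Psi$ distorts measures because the cocycle $\kappa$ (or $\lambda$) is only Shannon, not bounded; long cocycle values occur on small but positive-measure sets and can pile up. The fix is to use the finite Shannon entropy of $\{X_{g,t}\}_t$ and $\{X_{g,t}\}_g$ to truncate: restrict attention to a finite subcollection of cocycle values carrying all but $\varepsilon$ of the mass, absorb the exceptional region into one of the sparse sets $\cV_j$ (this is exactly why property SC is phrased with an arbitrary weight function $\Upsilon$ rather than a fixed one — one chooses $\Upsilon$ after seeing which cocycle-value sets need to be controlled), and check that the truncation error propagates controllably through paths of length at most $C$. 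Making this simultaneously compatible with the boundedness of $C$, the sparseness bound, and the approximation conditions for both $\sigma_k$ and $\tau$ is the technical crux; I expect it to require first proving a "local" lemma that recodes a single microstate faithfully modulo a measured error, and then globalizing by an averaging/pigeonhole argument over the rich set of microstates to extract a large subfamily on which the recoding is simultaneously injective and distribution-preserving.
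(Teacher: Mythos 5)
Your proposal captures the structure of the paper's argument: fix a sofic approximation sequence for $G$ attaining $h_\mu$, use property SC (with an $\Upsilon$ tailored via a Stirling-type count) to force the sparse sets $\cV_j$ and the base $W$ to have negligible measure on the finite models, recode a large subfamily of $G$-microstates (obtained by pigeonholing so that they agree on $\varphi(W)$, $\varphi(\cV_j\cap D)$, and the truncated cocycle-value partitions) into $H$-microstates against a single sofic approximation for $H$ built from the data of one fixed microstate, and verify that the recoding is injective on restrictions and loses only a subexponential factor. The paper implements exactly your "local lemma plus pigeonhole globalization" scheme via Lemmas~\ref{L-Shannon approximation for group measure}--\ref{L-Shannon disjoint} and the chain $\Phi\supseteq\Phi_1\supseteq\cdots\supseteq\Phi_4$, and resolves your identified crux (empirical distribution control under unbounded cocycle values) precisely by the truncation-and-absorption into sparse sets that you describe, so this is the same approach.
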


For the purpose of proving the theorem we may assume, by conjugating the 
$H$-action by a Shannon orbit equivalence, that $(X, \mu) = (Y,\nu )$
and that the identity map from $X$ to itself provides a Shannon orbit equivalence
between the two actions. As usual denote the associated cocycles $G\times X \to H$
and $H\times X \to G$ by $\kappa$ and $\lambda$, respectively.

For each $g\in G$ we write $\sP_g$ for the countable Borel partition of $X$ consisting of the sets
$X_{g,t} = \{ x\in X : gx = tx \}$ for $t\in H$, and likewise for $t\in H$ we write
$\sP_t$ for the countable Borel partition of $X$ consisting of the sets $X_{g,t}$ for $g\in G$.
For every $F$ in $\cF(G)$ or $\cF(H)$, set ${}_F\sP=\bigvee_{g\in F}\sP_g$. Then $H_\mu({}_F\sP)<\infty$. For every $F\in \cF(G)$ and $L\in \cF(H)$, denote by ${}_{F, L}\overline{\sP}$ the finite set consisting of all $P\in {}_F\sP$ satisfying $\kappa(g, P)\in L$ for all $g\in F$ and set $X_{F, L}=\bigcup {}_{F, L}\overline{\sP}$. Denote by ${}_{F, L}\sP$ the finite partition ${}_{F, L}\overline{\sP}\cup \{X\setminus X_{F, L}\}$ of $X$. Similarly, denote by ${}_{L, F}\overline{\sP}$ the finite set consisting of all $P\in {}_L\sP$ satisfying $\lambda(t, P)\in F$ for all $t\in L$,
and set $X_{L, F}=\bigcup {}_{L, F}\overline{\sP}$ and ${}_{L, F}\sP={}_{L, F}\overline{\sP}\cup \{X\setminus X_{L, F}\}$.

Given a finite disjoint collection $\sC$ of Borel subsets of $X$ and a nonempty finite set $V$, we define on the set
of all maps with domain some collection of subsets of $X$ containing $\sC$ and codomain $\Pb_V$
the pseudometric
\[
\rho_\sC (\varphi , \psi ) = \sum_{A\in\sC} \m (\varphi (A)\Delta\psi (A)) .
\]

\begin{lemma} \label{L-Shannon approximation for group measure}
Let $L\in \overline{\cF}(H)$ and $0<\tau<1$.
Take an $F^\natural\in \overline{\cF}(G)$ such that $\mu(X_{L^2, F^\natural})\ge 1-\tau/30$, and take an $F\in \overline{\cF}(G)$ such that $F^\natural\subseteq F$ and $\mu(X_{L^2, F})\ge 1-\tau/(30|F^\natural|)$. Let $0<\tau'\le \tau/(60|F|^2)$.
Let $\pi: G\rightarrow \Sym(V)$ be an $(F, \tau')$-approximation for $G$. Let $\varphi\in \Hom_\mu({}_{L^2, F}\sP, F, \tau', \pi)$. Take $\sigma': L^2\rightarrow V^V$ such that
\[
\sigma'_tv=\pi_{\lambda(t, A)}v
\]
for all $t\in L^2$, $A\in {}_{L^2, F}\overline{\sP}$ and $v\in \varphi(A)$. Then there is an $(L, \tau)$-approximation $\sigma: H\rightarrow \Sym(V)$ for $H$ such that
$\rho_{\Hamm}(\sigma_t, \sigma'_t)\le \tau/5$ for all $t\in L^2$.
\end{lemma}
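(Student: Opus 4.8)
The plan is to show that the family $(\sigma_t)$, obtained by correcting each $\sigma'_t$ ($t\in L^2$) to a genuine permutation, is an $(L,\tau)$-approximation for $H$; the whole argument rests on one transfer estimate. For $t\in L^2$ and a piece $A\in{}_{L^2,F}\overline{\sP}$ write $tA:=\lambda(t,A)A$; since $A$ refines $\sP_t$ this is just $\{tx:x\in A\}$, and since $\lambda(t,A)\in F$ (as $A\in{}_{L^2,F}\overline{\sP}$ and $t\in L^2$) it is a union of pieces of $({}_{L^2,F}\sP)_F$, so $\varphi(tA)$ is defined. Now $\sigma'_t\varphi(A)=\pi_{\lambda(t,A)}\varphi(A)$, and grouping the pieces $A$ according to the value of $\lambda(t,\cdot)$ in $F$ and applying condition (1) in the definition of $\Hom_\mu$ once for each of the at most $|F|$ values yields
\[
\sum_{A\in{}_{L^2,F}\overline{\sP}}\m\bigl(\sigma'_t\varphi(A)\,\Delta\,\varphi(tA)\bigr)<|F|\tau'\qquad\text{for every }t\in L^2 .
\]
The point here is that one groups the error by the finitely many cocycle values in $F$, not by the pieces of ${}_{L^2,F}\overline{\sP}$, whose number is not controlled by the data of the lemma; this is exactly why $\tau'\le\tau/(60|F|^2)$ and $\mu(X_{L^2,F})\ge 1-\tau/30$ are the right calibrations, every error term below being a bounded multiple of $\tau$ together with a bounded multiple of $|F|^2\tau'$.

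From this I would first deduce that $\sigma'_t$ is close to a permutation: the sets $tA$ over $A\in{}_{L^2,F}\overline{\sP}$ are pairwise disjoint with union $tX_{L^2,F}$, a union of $({}_{L^2,F}\sP)_F$-pieces of $\mu$-measure $\mu(X_{L^2,F})$, so condition (2) of $\Hom_\mu$ and the transfer estimate give $\m(\sigma'_t(V))\ge 1-\tau/30-(|F|+1)\tau'\ge 1-\tau/15$. Redirecting the surplus preimages of $\sigma'_t$ onto the missed points produces a permutation $\sigma_t\in\Sym(V)$ with $\rho_{\Hamm}(\sigma_t,\sigma'_t)\le 1-\m(\sigma'_t(V))\le\tau/15<\tau/5$; for $t\in H\setminus L^2$ let $\sigma_t$ be arbitrary. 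This yields the displayed bound of the lemma, and it remains to verify the two defining properties of an $(L,\tau)$-approximation for $(\sigma_t)_{t\in L}$. By the triangle inequality for $\rho_{\Hamm}$, and the elementary fact that precomposition by the possibly non-injective $\sigma'_s$ enlarges the measure of a subset of $V$ by at most $1-\m(\sigma'_s(V))\le\tau/15$, it suffices to show that the maps $\sigma'_t$ are approximately multiplicative and pairwise far apart over $L$.

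For the separation, let $s,t\in L$ be distinct. On each $\varphi(A)$ with $A\in{}_{L^2,F}\overline{\sP}$ we have $\sigma'_sv=\pi_{\lambda(s,A)}v$ and $\sigma'_tv=\pi_{\lambda(t,A)}v$ with $\lambda(s,A)\ne\lambda(t,A)$ in $F$, since $\lambda(s,A)=\lambda(t,A)$ would force $sx=tx$, hence $s=t$ by freeness of the $H$-action. Grouping the pieces $A$ by the pair $(\lambda(s,A),\lambda(t,A))\in F\times F$ and using condition (2) of the sofic approximation $\pi$ (so that $|\{v:\pi_gv=\pi_{g'}v\}|\le\tau'|V|$ for distinct $g,g'\in F$), one gets $\m(\{v:\sigma'_sv=\sigma'_tv\})\le\m(V\setminus\varphi(X_{L^2,F}))+|F|^2\tau'\le\tau/15$, whence $\rho_{\Hamm}(\sigma'_s,\sigma'_t)\ge 1-\tau/15$ and therefore $\rho_{\Hamm}(\sigma_s,\sigma_t)\ge 1-\tau$.

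The multiplicativity is the crux, and I expect it to be the main obstacle. Fix $s,t\in L$, so $ts\in L^2$, a piece $A\in{}_{L^2,F}\overline{\sP}$, and $v\in\varphi(A)$, and set $g_1:=\lambda(s,A)\in F$. The cocycle identity for $\lambda$, together with the fact that $A$ refines $\sP_{us}$ for every $u\in L$ (because $us\in L^2$), shows that $\lambda(u,\cdot)$ is constant on $sA=g_1A$ for every $u\in L$; thus $sA$ lies inside a single piece of ${}_L\sP$ and $\lambda(ts,A)=\lambda(t,sA)\,g_1$. Using the transfer estimate for $s$, condition (2) of $\Hom_\mu$, and $\mu\bigl((sX_{L^2,F})\setminus X_{L^2,F}\bigr)\le\tau/30$, I would show that off a set of $v$ of $\m$-measure $O(\tau)$ the point $\sigma'_sv=\pi_{g_1}v$ lies in $\varphi(sA\cap C)$ for the unique piece $C\in{}_{L^2,F}\overline{\sP}$ containing it; for such $v$ one has $sA\cap C\ne\emptyset$, hence $\lambda(t,C)=\lambda(t,sA)\in F$ and $\lambda(t,C)g_1=\lambda(ts,A)$, so that $\sigma'_t\sigma'_sv=\pi_{\lambda(t,C)}\pi_{g_1}v$ while $\sigma'_{ts}v=\pi_{\lambda(ts,A)}v$. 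Grouping by the pair $(g_1,\lambda(t,C))\in F\times F$ and using near-multiplicativity of $\pi$ ($|\{v:\pi_g\pi_{g'}v\ne\pi_{gg'}v\}|\le\tau'|V|$ for $g,g'\in F$) discards a further set of $\m$-measure $\le|F|^2\tau'$, and on the rest $\sigma'_t\sigma'_sv=\sigma'_{ts}v$; hence $\rho_{\Hamm}(\sigma'_{ts},\sigma'_t\sigma'_s)=O(\tau)$ and the triangle inequality gives $\rho_{\Hamm}(\sigma_{ts},\sigma_t\sigma_s)\le\tau$. As the sketch shows, the genuine difficulty is forcing $\sigma'_sv$ to land in the piece of ${}_{L^2,F}\sP$ predicted by the dynamics, and it is this that compels the use of the square $L^2$ (to leave room for the two-step composition $\sigma'_t\sigma'_s$), the two scales $F^\natural\subseteq F$, and the systematic organization of all error terms by the finitely many cocycle values in $F$.
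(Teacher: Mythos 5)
Your argument is correct, and the skeleton is the same as the paper's: group all errors by the finitely many cocycle values in $F$, exploit the cocycle identity to relate $\sigma'_s\sigma'_t$ to $\sigma'_{st}$ through the pieces of ${}_{L^2,F}\overline{\sP}$, and use freeness plus the separation property of $\pi$ for the injectivity estimate. The one genuinely different ingredient is how you pass from $\sigma'_t$ to the permutation $\sigma_t$: you observe directly that $\m(\sigma'_t(V))\ge 1-\tau/15$ (via the transfer estimate and condition (ii) of $\Hom_\mu$ applied to $tX_{L^2,F}$), which immediately lets you correct $\sigma'_t$ on a set of measure $\le\tau/15$. The paper instead chooses $\sigma_t$ to agree with $\sigma'_t$ on the fixed-point set $\{v:\sigma'_{t^{-1}}\sigma'_t v=v\}$ and derives the bound $\tau/5$ from the multiplicativity estimate $\rho_\Hamm(\sigma'_s\sigma'_t,\sigma'_{st})\le 2\tau/15$ specialized at $s=t^{-1}$ together with $\rho_\Hamm(\sigma'_{e_H},\id_V)\le\tau/15$. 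Your construction of $\sigma_t$ is more elementary and independent of the multiplicativity step, and yields a slightly sharper constant. Two small remarks: first, the bound $\rho_\Hamm(f\circ\sigma'_s,g\circ\sigma'_s)\le\rho_\Hamm(f,g)+(1-\m(\sigma'_s(V)))$ that you use implicitly for the final triangle inequality is correct, but it deserves a one-line justification, since precomposition by a non-injective map does not in general preserve Hamming distance. Second, your argument never actually exploits the two-scale hypothesis $\mu(X_{L^2,F})\ge 1-\tau/(30|F^\natural|)$ beyond the weaker consequence $\mu(X\setminus X_{L^2,F})\le\tau/30$: once one observes $A\cap\lambda(t,A)^{-1}(X\setminus X_{L^2,F})=A\cap t^{-1}(X\setminus X_{L^2,F})$, summing over $A$ gives $\mu(t^{-1}(X\setminus X_{L^2,F}))$ with no factor of $|F^\natural|$, so your sentence attributing the necessity of the two scales to the composition step is a misdiagnosis (the paper's own bound $\m(V^\sharp_t)\ge 1-\tau/12$ does use the factor $|F^\natural|$, but only because it bounds $A\cap\lambda(t,A)^{-1}(X\setminus X_{L^2,F})$ by the generous $\lambda(t,A)^{-1}(X\setminus X_{L^2,F})$).
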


\begin{proof}
Denote by $V_F$ the set of all $v\in V$ satisfying $\pi_g\pi_hv=\pi_{gh}v$ for all $g, h\in F$ and $\pi_gv\neq \pi_hv$ for all distinct $g, h\in F$.
Then
\[
\m(V\setminus V_F)\le 2|F|^2\tau'\le \frac{\tau}{30}.
\]
Set
\[
V'=\bigcup_{g\in F}\bigcup_{B\in {}_{L^2, F}\overline{\sP}}(\varphi(g^{-1}B)\Delta \pi_{g^{-1}}\varphi(B))\subseteq V.
\]
Then
\[
\m(V')\le  |F|\tau'\le \frac{\tau}{60}.
\]
Set $V^*=V_F\setminus V'$. Then
\[
\m(V^*)\ge \m(V_F)-\m(V')\ge 1-\frac{\tau}{30}-\frac{\tau}{60}=1-\frac{\tau}{20}.
\]

For each $t\in L^2$ set $V^\sharp_t=\bigsqcup_{A, B\in {}_{L^2, F}\overline{\sP}}\varphi(A\cap \lambda(t, A)^{-1}B)$.
Then
\begin{align*}
\m(V^\sharp_t)&\ge \mu\bigg(\bigsqcup_{A, B\in {}_{L^2, F}\overline{\sP}}(A\cap \lambda(t, A)^{-1}B)\bigg)-\tau'\\
&\ge \mu\bigg(\bigsqcup_{A\in {}_{L^2, F^\natural}\overline{\sP}}\bigsqcup_{B\in {}_{L^2, F}\overline{\sP}} (A\cap \lambda(t, A)^{-1}B)\bigg)-\tau'\\
&\ge \mu(X_{L^2, F^\natural})-|F^\natural|(1-\mu(X_{L^2, F}))-\tau'\\
&\ge 1-\frac{\tau}{30}-\frac{\tau}{30}-\frac{\tau}{60}=1-\frac{\tau}{12}.
\end{align*}

Let $s, t\in L^2$ with $st\in L^2$.
Let $v\in V^*\cap V^\sharp_t$. Then $v\in \varphi(A\cap \lambda(t, A)^{-1}B)=\varphi(A)\cap \varphi(g^{-1}B)$ for some $A, B\in {}_{L^2, F}\overline{\sP}$, where $g=\lambda(t, A)\in F$.
Since $v\notin V'$, we have $v\in \varphi(A)\cap \pi_{g^{-1}}\varphi(B)$. Using
the fact that $v\in V_F$, we get $v\in \varphi(A)\cap \pi_g^{-1}(\varphi(B))$.
Put $h=\lambda(s, B)\in F$. Note that $\sigma'_tv=\pi_g v\in \varphi(B)$. Thus
\begin{align*}
\sigma_s'\sigma_t'v=\sigma_s'\pi_gv=\pi_h\pi_gv=\pi_{hg}v.
\end{align*}
For every $x\in A\cap g^{-1}B$ we have $tx=gx\in B$ and hence
\[
\lambda(st, x)=\lambda(s, tx)\lambda(t, x)=hg.
\]
Since $st\in L^2$ and $A\in {}_{L^2, F}\overline{\sP}$, we have $hg=\lambda(st, A)\in F$.
Thus $\sigma'_{st}w=\pi_{hg}w$ for all $w\in \varphi(A\cap g^{-1}B)$, and so
\[
\sigma_s'\sigma_t'v=\pi_{hg}v=\sigma'_{st}v.
\]
We conclude that
\begin{align} \label{E-Shannon group measure}
\rho_{\Hamm}(\sigma'_s\sigma'_t, \sigma'_{st})\le 1-\m(V^*\cap V^\sharp_t)\le
\frac{\tau}{20}+ \frac{\tau}{12}=\frac{2\tau}{15}.
\end{align}

Note that $\sigma'_{e_H}=\pi_{e_G}$ on $\varphi(X_{L^2, F})$.
Since $\pi$ is an $(F,\tau' )$-approximation for $G$ we have
$\rho_{\Hamm} (\pi_{e_G}, \id_V ) = \rho_{\Hamm} (\pi_{e_G}\pi_{e_G},\pi_{e_G}) \leq \tau'$ and hence
\begin{align*}
\rho_{\Hamm}(\sigma'_{e_H}, \id_V)&\le \rho_{\Hamm}(\sigma'_{e_H}, \pi_{e_G})+\rho_{\Hamm}(\pi_{e_G}, \id_V)\\
&\le 1-\m(\varphi(X_{L^2, F}))+\tau'\\
&\le 1-\mu(X_{L^2, F})+2\tau'\\
&\le \frac{\tau}{30}+\frac{\tau}{30}=\frac{\tau}{15}.
\end{align*}
For each $t\in L^2$ pick a $\sigma_t\in \Sym(V)$ such that $\sigma_tv=\sigma'_tv$ for all $v\in V$ satisfying $\sigma'_{t^{-1}}\sigma'_tv=v$.
For each $t\in L^2$, taking $s=t^{-1}$ in \eqref{E-Shannon group measure} we conclude that
\begin{align*}
\rho_{\Hamm}(\sigma_t, \sigma'_t)&\le \rho_{\Hamm}(\sigma'_{t^{-1}}\sigma'_t, \id_V)\\
&\le \rho_{\Hamm}(\sigma'_{t^{-1}}\sigma'_t, \sigma'_{e_H})+\rho_{\Hamm}(\sigma'_{e_H}, \id_V)\\
&\le \frac{2\tau}{15}+\frac{\tau}{15}=\frac{\tau}{5}.
\end{align*}
For any $s,t \in L$ we then have
\begin{align*}
\rho_{\Hamm}(\sigma_s\sigma_t, \sigma_{st})&\le \rho_{\Hamm}(\sigma_s, \sigma'_s)+\rho_{\Hamm}(\sigma_t, \sigma'_t)+\rho_{\Hamm}(\sigma'_s\sigma'_t, \sigma'_{st})+\rho_{\Hamm}(\sigma_{st}, \sigma'_{st})\\
&\le \frac{\tau}{5} +\frac{\tau}{5}+\frac{2\tau}{15}+\frac{\tau}{5}<\tau.
\end{align*}
Let $s, t\in L^2$ be distinct. Let $A\in {}_{L^2, F}\overline{\sP}$. Say, $g=\lambda(t, A)\in F$ and $h=\lambda(s, A)\in F$. Take $x\in A$. Then
$gx=tx\neq sx=hx$, and hence $g\neq h$. Thus for any $v\in \varphi(A)\cap V_F$ we have
$\sigma'_tv=\pi_gv\neq \pi_hv=\sigma'_sv$. This shows that $\sigma'_tv\neq \sigma'_sv$ for all $v\in V_F\cap \varphi(X_{L^2, F})$.
Therefore
\begin{align*}
\rho_{\Hamm}(\sigma_s, \sigma_t)&\ge \rho_{\Hamm}(\sigma'_s, \sigma'_t)-\rho_{\Hamm}(\sigma_s, \sigma'_s)-\rho_{\Hamm}(\sigma_t, \sigma'_t)\\
&\ge \m(V_F\cap \varphi(X_{L^2, F}))-\frac{\tau}{5}-\frac{\tau}{5}\\
&\ge \m(\varphi(X_{L^2, F}))-\frac{\tau}{30}-\frac{2\tau}{5}\\
&\ge \mu(X_{L^2, F})-\tau'-\frac{13\tau}{30}\\
&\ge 1-\frac{\tau}{30}-\frac{\tau}{60}-\frac{13\tau}{30}> 1-\tau.\qedhere
\end{align*}
\end{proof}

\begin{lemma} \label{L-Shannon hom and approximation for group}
Let $\sC$ be a finite Borel partition of $X$, $L\in \overline{\cF}(H)$, and $0<\tau<1$.
Take
an $F\in \overline{\cF}(G)$ such that
$\mu(X_{L^2, F})\ge 1-\tau/30$. Let $0<\tau'\le \tau/(60|F|^2)$.
Let $\pi: G\rightarrow \Sym(V)$ be a sofic approximation for $G$. Let $\varphi\in \Hom_\mu(\sC_L\vee {}_{L^2, F}\sP, F, \tau', \pi)$. Take a $\sigma': L^2\rightarrow V^V$ such that
\[
\sigma'_tv=\pi_{\lambda(t, A)}v
\]
for all $t\in L^2$, $A\in {}_{L^2, F}\overline{\sP}$ and $v\in \varphi(A)$. Let $\sigma: H\rightarrow \Sym(V)$ be  a sofic approximation for $H$ such that
$\rho_{\Hamm}(\sigma_t, \sigma'_t)\le \tau/5$ for all $t\in L^2$. Then the restriction of $\varphi$ to $\alg(\sC_L)$ lies in $\Hom_\mu(\sC, L, \tau, \sigma)$.
\end{lemma}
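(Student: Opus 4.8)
The plan is to verify the two conditions that define membership of $\varphi|_{\alg(\sC_L)}$ in $\Hom_\mu(\sC,L,\tau,\sigma)$, namely the measure-matching condition $\sum_{A\in\sC_L}|\m(\varphi(A))-\mu(A)|<\tau$ and the approximate $H$-equivariance condition $\sum_{A\in\sC}\m(\sigma_t\varphi(A)\,\Delta\,\varphi(tA))<\tau$ for $t\in L$. The first is essentially free: $\sC_L$ is coarser than $(\sC_L\vee{}_{L^2,F}\sP)_F$, so each atom of $\sC_L$ is a disjoint union of atoms of $(\sC_L\vee{}_{L^2,F}\sP)_F$, and since $\varphi$ is a unital homomorphism the sum above is dominated by $\sum_{D\in(\sC_L\vee{}_{L^2,F}\sP)_F}|\m(\varphi(D))-\mu(D)|<\tau'<\tau$, the measure-matching bound built into $\varphi\in\Hom_\mu(\sC_L\vee{}_{L^2,F}\sP,F,\tau',\pi)$. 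So all the work is in the equivariance condition.

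The mechanism I would exploit is that for $B\in{}_{L^2,F}\overline{\sP}$ and $t\in L\subseteq L^2$ the cocycle value $g_B:=\lambda(t,B)\in F$ is constant on $B$, because $B$ lies in a single atom of $\sP_t$; hence $tx=g_Bx$ on $B$, so $tB=g_BB$, and by the definition of $\sigma'$ we have $\sigma'_t=\pi_{g_B}$ on $\varphi(B)$. Therefore, for $A\in\sC$, the set $t(A\cap B)=g_B(A\cap B)$ lies in $\dom(\varphi)=\alg((\sC_L\vee{}_{L^2,F}\sP)_F)$ (here one uses that $\sC_L$ refines $\sC$, so $A\cap B$ is an atom-union of $\sC_L\vee{}_{L^2,F}\sP$ and $g_B\in F$), and $\sigma'_t\varphi(A\cap B)=\pi_{g_B}\varphi(A\cap B)$. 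Expanding $A\cap B$ into the atoms $C$ of $\sC_L\vee{}_{L^2,F}\sP$ it contains, the homomorphism property gives $\m(\sigma'_t\varphi(A\cap B)\,\Delta\,\varphi(t(A\cap B)))\le\sum_{C\subseteq A\cap B}\m(\pi_{g_B}\varphi(C)\,\Delta\,\varphi(g_BC))$. Summing over $A\in\sC$ and $B\in{}_{L^2,F}\overline{\sP}$, each atom $C\subseteq X_{L^2,F}$ occurs exactly once, carrying the weight $\lambda(t,C)\in F$; grouping by this value and applying condition (1) of $\varphi\in\Hom_\mu(\sC_L\vee{}_{L^2,F}\sP,F,\tau',\pi)$ once for each $g\in F$ bounds the total by $|F|\tau'\le\tau/60$.

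It then remains to absorb the part lying outside $\varphi(X_{L^2,F})$ and to pass from $\sigma'_t$ to $\sigma_t$. From the measure-matching bound for $\varphi$, $\m(V\setminus\varphi(X_{L^2,F}))=\m(\varphi(X\setminus X_{L^2,F}))\le\tau/30+\tau'\le\tau/20$; since $\m(\sigma'_t(S))\le\m(S)$ for every $S$ (as $\sigma'_t$ need not be injective), $\sum_{A\in\sC}\m(\sigma'_t\varphi(A\setminus X_{L^2,F}))\le\m(\varphi(X\setminus X_{L^2,F}))\le\tau/20$; and writing $t(X_{L^2,F})=\bigsqcup_B g_BB$ and reusing the atomwise comparison gives $\m(\varphi(t(X_{L^2,F})))\ge\m(\varphi(X_{L^2,F}))-|F|\tau'$, so $\sum_{A\in\sC}\m(\varphi(t(A\setminus X_{L^2,F})))=\m(\varphi(t(X\setminus X_{L^2,F})))\le\tau/15$. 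Finally $\rho_{\Hamm}(\sigma_t,\sigma'_t)\le\tau/5$ yields $\sum_{A\in\sC}\m(\sigma_t\varphi(A)\,\Delta\,\sigma'_t\varphi(A))\le2\tau/5$, since over the partition $\{\varphi(A)\}_{A\in\sC}$ each of the two one-sided differences is dominated by the image under $\sigma_t$, respectively $\sigma'_t$, of $\{v:\sigma_tv\ne\sigma'_tv\}$. Splitting $\m(\sigma_t\varphi(A)\,\Delta\,\varphi(tA))$ across $\sigma'_t$ and across the decomposition $A=(A\cap X_{L^2,F})\sqcup(A\setminus X_{L^2,F})$, together with the companion decomposition of $tA$, and adding the four estimates $2\tau/5$, $\tau/60$, $\tau/20$, $\tau/15$, gives a total below $\tau$.

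The hard part is not conceptual but organizational: $\sigma'_t$ is prescribed only on $\varphi(X_{L^2,F})$ and is not a bijection, so one must consistently replace $=$ by $\subseteq$ when pushing disjoint unions through $\sigma'_t$, invoke $\m(\sigma'_t(S))\le\m(S)$, and keep the decomposition of $\varphi(tA)$ aligned with that of $\varphi(A)$. The one point that needs a brief check is that this alignment is legitimate: although $t\in H$, the set $t(A\cap X_{L^2,F})=\bigsqcup_B g_B(A\cap B)$ is a finite union of members of the $G$-algebra $\dom(\varphi)$, hence so is $t(A\setminus X_{L^2,F})=tA\setminus t(A\cap X_{L^2,F})$, and $\varphi$ respects the splitting. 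Beyond that, every estimate is a single application of one of the two defining inequalities for $\varphi\in\Hom_\mu(\sC_L\vee{}_{L^2,F}\sP,F,\tau',\pi)$ or of the Hamming bound, with the constants $\tau/30$, $\tau/(60|F|^2)$, $\tau/5$ in the hypotheses calibrated precisely so that the final sum stays under $\tau$.
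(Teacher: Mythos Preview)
Your proof is correct and follows essentially the same approach as the paper's: both verify the measure-matching condition by passing to the finer partition $(\sC_L\vee{}_{L^2,F}\sP)_F$, and both handle the equivariance condition by splitting each $A\in\sC$ over the atoms $B\in{}_{L^2,F}\overline{\sP}$, using $\sigma'_t=\pi_{\lambda(t,B)}$ on $\varphi(B)$ to reduce to the $G$-equivariance of $\varphi$, then absorbing the small remainder outside $X_{L^2,F}$ and the Hamming error between $\sigma_t$ and $\sigma'_t$. The only cosmetic difference is that the paper bounds $\m(\varphi(t(X\setminus X_{L^2,F})))$ directly via the measure-matching condition (since $t(X\setminus X_{L^2,F})\in\dom(\varphi)$), whereas you route through $\m(\varphi(tX_{L^2,F}))\ge\m(\varphi(X_{L^2,F}))-|F|\tau'$; both yield the same numerical conclusion.
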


\begin{proof}
We have
\begin{align*}
\sum_{A\in \sC_L}|\m(\varphi(A))-\mu(A)|&\le \sum_{A\in (\sC_L\vee {}_{L^2, F}\sP)_F}|\m(\varphi(A))-\mu(A)|\le \tau'\le \tau ,
\end{align*}
while for each $t\in L$ we have
\begin{align*}
\lefteqn{\sum_{A\in \sC}\m(\varphi(tA)\Delta \sigma_t\varphi(A))}\hspace*{15mm}\\
&\le \sum_{A\in \sC}\m(\varphi(tA)\Delta \sigma'_t\varphi(A))+\sum_{A\in \sC}\m(\sigma'_t\varphi(A)\Delta \sigma_t\varphi(A))\\
&\le \sum_{A\in \sC}\sum_{B\in {}_{L^2, F}\overline{\sP}}\m(\varphi(t(A\cap B))\Delta \sigma'_t\varphi(A\cap B))\\
&\hspace*{15mm} \ +\sum_{A\in \sC}\m(\varphi(t(A\setminus X_{L^2, F}))\Delta \sigma_t'\varphi(A\setminus X_{L^2, F}))+2\rho_\Hamm(\sigma'_t, \sigma_t)\\
&\le \sum_{A\in \sC}\sum_{B\in {}_{L^2, F}\overline{\sP}}\m(\varphi(\lambda(t, B)(A\cap B))\Delta \pi_{\lambda(t, B)}\varphi(A\cap B))\\
&\hspace*{15mm} \ +\m(\varphi(t(X\setminus X_{L^2, F})))+\m(\varphi(X\setminus X_{L^2, F}))
+\frac{2\tau}{5}\\
&\le \sum_{g\in F}\sum_{A\in \sC}\sum_{B\in {}_{L^2, F}\overline{\sP}}\m(\varphi(g(A\cap B))\Delta \pi_g\varphi(A\cap B))\\
&\hspace*{15mm} \ +\mu(t(X\setminus X_{L^2, F}))+\mu(X\setminus X_{L^2, F})+2\tau'+\frac{2\tau}{5}\\
&\le |F|\tau'+\frac{\tau}{15}+\frac{\tau}{30}+\frac{2\tau}{5}\le \frac{\tau}{60}+\frac{\tau}{2}< \tau.\qedhere
\end{align*}
\end{proof}

For a finite family $\sC$ of Borel subsets of $X$,  a finite Borel partition $\sP$ of $X$, and $\tau\geq 0$ we write $\sC\overset{\tau}\subseteq \alg(\sP)$ if there is a $B\in \alg(\sP)$ such that $\mu(B)\ge 1-\tau$ and $A\cap B\in \alg(\sP)$ for every $A\in \sC$.

\begin{lemma} \label{L-almost inclusion}
Let $\sC$ be a finite Borel partition of $X$. Let $F\in \cF(G)$ and $L\in \cF(H)$. Then $\sC_F\overset{\tau}\subseteq \alg((\sC\vee {}_{F, L}\sP)_L)$ for $\tau=\mu(X\setminus X_{F, L})|F|$.
\end{lemma}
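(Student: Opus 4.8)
The plan is to produce an explicit Borel set $B\in\alg\big((\sC\vee{}_{F,L}\sP)_L\big)$ with $\mu(B)\ge 1-\tau$ such that $A\cap B\in\alg\big((\sC\vee{}_{F,L}\sP)_L\big)$ for every atom $A$ of $\sC_F$. I will take $B=\bigcap_{g\in F}gX_{F,L}$. Since the action is measure-preserving and $g(X\setminus X_{F,L})=X\setminus gX_{F,L}$, we get $\mu(X\setminus B)\le\sum_{g\in F}\mu(g(X\setminus X_{F,L}))=|F|\,\mu(X\setminus X_{F,L})=\tau$, so the measure bound is automatic; everything else is a matter of membership in $\alg\big((\sC\vee{}_{F,L}\sP)_L\big)$.

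The one genuinely substantive ingredient is the following consequence of having normalized the Shannon orbit equivalence to the identity: for every $g\in G$ and $x\in X$ one has $gx=\kappa(g,x)x$ (the left side being the $G$-action, the right the $H$-action). Now fix $P\in{}_{F,L}\overline{\sP}$ and $g\in F$. By definition of ${}_{F,L}\overline{\sP}$ the map $\kappa(g,\cdot)$ is constant on $P$ with value some $t_{g,P}\in L$, so $gx=t_{g,P}x$ for all $x\in P$; hence for any Borel set $S\subseteq P$ we have $gS=t_{g,P}S$. Since $t_{g,P}\in L$, the partition $t_{g,P}(\sC\vee{}_{F,L}\sP)$ is one of the partitions whose join defines $(\sC\vee{}_{F,L}\sP)_L$, and so whenever $S$ lies in $\alg(\sC\vee{}_{F,L}\sP)$ the set $gS=t_{g,P}S$ lies in $\alg\big((\sC\vee{}_{F,L}\sP)_L\big)$.

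With this in hand both verifications are immediate. First, each $P\in{}_{F,L}\overline{\sP}$ is an atom of the partition ${}_{F,L}\sP$ and hence lies in $\alg(\sC\vee{}_{F,L}\sP)$, so taking $S=P$ gives $gX_{F,L}=\bigsqcup_{P\in{}_{F,L}\overline{\sP}}t_{g,P}P\in\alg\big((\sC\vee{}_{F,L}\sP)_L\big)$, and therefore $B=\bigcap_{g\in F}gX_{F,L}$ lies there too. Second, write an atom of $\sC_F$ as $A=\bigcap_{g\in F}gC_g$ with $C_g\in\sC$; then
\[
A\cap B=\bigcap_{g\in F}g(C_g\cap X_{F,L})=\bigcap_{g\in F}\bigsqcup_{P\in{}_{F,L}\overline{\sP}}t_{g,P}(C_g\cap P),
\]
and since each $C_g\cap P$ is a member of $\alg(\sC\vee{}_{F,L}\sP)$ (being the intersection of an atom of $\sC$ with an atom of ${}_{F,L}\sP$), each $t_{g,P}(C_g\cap P)$ lies in $\alg\big((\sC\vee{}_{F,L}\sP)_L\big)$; a finite disjoint union followed by a finite intersection keeps us in this algebra, so $A\cap B\in\alg\big((\sC\vee{}_{F,L}\sP)_L\big)$, which is exactly what is required.

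I do not expect any real obstacle here: the lemma is essentially the observation that off the set $X\setminus\bigcap_{g\in F}gX_{F,L}$, whose measure is controlled by $|F|\,\mu(X\setminus X_{F,L})$, the $G$-refinement $\sC_F$ can be rewritten atom-by-atom using $L$-translates under the $H$-action. The only mildly fiddly part is the bookkeeping with the two ambient partitions $\sC$ and ${}_{F,L}\sP$ — keeping straight which sets are atoms of which join, together with the elementary fact that $\alg(t\,\sQ)\subseteq\alg\big(\bigvee_{t'\in L}t'\sQ\big)$ for $t\in L$ — and I would write that out carefully, but I anticipate no surprises.
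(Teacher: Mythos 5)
Your proof is correct and follows essentially the same route as the paper's: same choice of $B=\bigcap_{g\in F}gX_{F,L}$, same measure estimate, and the same key observation that on each atom $P$ of ${}_{F,L}\sP$ the cocycle is constant with value $\kappa(g,P)\in L$, so $g$-translates of subsets of $P$ can be rewritten as $L$-translates under the $H$-action. The only difference is cosmetic: the paper works with atoms of $(\sC\vee{}_{F,L}\sP)_F$ and deduces the statement for the coarser $\sC_F$ at the end, while you work with atoms of $\sC_F$ directly by decomposing them along ${}_{F,L}\overline{\sP}$; you also spell out the verification that $B$ itself lies in $\alg\big((\sC\vee{}_{F,L}\sP)_L\big)$, which the paper leaves slightly implicit (it records $B\in\alg((\sC\vee{}_{F,L}\sP)_F)$ and lets the reader note that $B$ is the union of the atoms already shown to lie in the $L$-algebra).
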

\begin{proof} Put
$B=\bigcap_{g\in F}gX_{F, L}\in \alg((\sC\vee {}_{F, L}\sP)_F)$. Then $\mu(X\setminus B)\le \tau$.

Let $A\in (\sC\vee {}_{F, L}\sP)_F$. Then $A=\bigcap_{g\in F}gA_g$ for some $A_g\in \sC\vee {}_{F, L}\sP$. If $A_g\not\subseteq X_{F, L}$ for some $g\in F$, then $A\cap B=\emptyset$.
If $A_g\subseteq X_{F, L}$ for all $g\in F$, then $A\subseteq B$ and $A=\bigcap_{g\in F}gA_g=\bigcap_{g\in F}\kappa(g, A_g)A_g\in \alg((\sC\vee {}_{F, L}\sP)_L)$.  Thus
\[
(\sC\vee {}_{F, L}\sP)_F\overset{\tau}\subseteq \alg((\sC\vee {}_{F, L}\sP)_L).
\]
Since $\sC_F\subseteq \alg((\sC\vee {}_{F, L}\sP)_F)$, we conclude that $\sC_F\overset{\tau}\subseteq \alg((\sC\vee {}_{F, L}\sP)_L)$.
\end{proof}

\begin{lemma} \label{L-Shannon separation}
Let $\sC$ be a finite Borel partition of $X$. Let $S\in \overline{\cF}(G)$ and let $W$ be a Borel subset of $X$ such that $SW=X$.
Let $L$ be a set in $\cF(H)$ containing $e_H$.
Set $\sC'=\{W, X\setminus W\}$ and $\sC''=\sC\vee \sC'\vee {}_{S^2, L}\sP$.
Then  there are a finite Borel partition $\sQ$ of $W$ contained in $\alg((\sC\vee \sC')_{S^2})$ and a map $\Theta: \sQ\rightarrow \cF(S)$ such that
$e_G\in \Theta(B)$ for every $B\in \sQ$ and
the sets $gB$ for $B\in \sQ$ and $g\in \Theta(B)$ form a partition $\sR$ of $X$ finer than $\sC$. Furthermore, for any such $\sQ$ and $\sR$ and
 any sofic approximations $\pi: G\rightarrow \Sym(V)$, $\sigma: H\rightarrow \Sym(V)$, any $\delta, \delta'>0$, any $\varphi, \psi\in \Hom_\mu(\sR, S, \delta', \pi)$, and any $\tilde{\varphi}, \tilde{\psi}\in \Hom_\mu(\sR\vee \sC'', L, \delta, \sigma)$ satisfying
\begin{enumerate}
\item $\tilde{\varphi}(B)=\varphi(B)$ and $\tilde{\psi}(B)=\psi(B)$  for all $B\in \sQ$,
\item $\varphi(W)=\psi(W)$
\end{enumerate}
one has
\[
\rho_{\sC}(\varphi, \psi)\le  2(\delta+\delta')|S|+2\delta|S|\cdot |L|+2|S|^3\mu(X\setminus X_{S^2, L})+|S|\cdot |L|\rho_{\sC\vee {}_{S^2, L}\sP}(\tilde{\varphi}, \tilde{\psi}).
\]
\end{lemma}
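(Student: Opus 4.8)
The plan is to first produce $\sQ$, $\Theta$, and $\sR$, and then to bound $\rho_\sC(\varphi,\psi)$ by a chain of reductions. For the construction, fix a linear order on $S$ with $e_G$ as least element. Since $SW=X$ and $S$ is symmetric, for every $x\in X$ the set $\{g\in S: g^{-1}x\in W\}$ is nonempty; let $\iota(x)$ be its least element, and note $\iota(w)=e_G$ for $w\in W$. For $w\in W$ put $\Theta(w)=\{g\in S:\iota(gw)=g\}$, which contains $e_G$. As $\iota(gw)=g$ holds precisely when $kw\notin W$ for every $k$ of the form $h^{-1}g$ with $h\in S$, $h<g$, and every such $k$ lies in $S^2$, the map $w\mapsto\Theta(w)$ is a function of $(\chi_{k^{-1}W}(w))_{k\in S^2}$, so its level sets belong to $\alg((\sC\vee\sC')_{S^2})$ and lie inside $W$. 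Let $\sQ$ be the partition of $W$ obtained by intersecting these level sets with the sets $g^{-1}C\cap W$ for $g\in S$ and $C\in\sC$; then $\sQ\subseteq\alg((\sC\vee\sC')_{S^2})$, $\Theta$ is constant on each member of $\sQ$ (hence descends to a map $\sQ\to\cF(S)$ with $e_G\in\Theta(B)$ for every $B$), and for $B\in\sQ$ and $g\in\Theta(B)$ the set $gB$ lies in a single member of $\sC$. Setting $\sR=\{gB:B\in\sQ,\ g\in\Theta(B)\}$, the choice of $\iota$ makes $\sR$ a partition of $X$ refining $\sC$: it covers $X$ because $x=\iota(x)\cdot(\iota(x)^{-1}x)$ with $\iota(x)^{-1}x\in W$ and $\iota(x)\in\Theta(\iota(x)^{-1}x)$, and it is disjoint because $x\in gB$ forces $g=\iota(x)$, whence $B$ must be the member of $\sQ$ containing $\iota(x)^{-1}x$.

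For the estimate, fix $\pi,\sigma,\delta,\delta',\varphi,\psi,\tilde\varphi,\tilde\psi,\sQ,\sR$ as in the statement. Since $\sR$ refines $\sC$ and $\varphi,\psi$ are homomorphisms, $\rho_\sC(\varphi,\psi)\le\rho_\sR(\varphi,\psi)=\sum_{B\in\sQ}\sum_{g\in\Theta(B)}\m(\varphi(gB)\Delta\psi(gB))$. For $g\in\Theta(B)\subseteq S$ I would insert $\pi_g\varphi(B)$ and $\pi_g\psi(B)$ and use that $\pi_g$ is a bijection; since $\varphi,\psi\in\Hom_\mu(\sR,S,\delta',\pi)$ and every member of $\sQ$ is a member of $\sR$, the sums $\sum_{B,g}\m(\varphi(gB)\Delta\pi_g\varphi(B))$ and $\sum_{B,g}\m(\pi_g\psi(B)\Delta\psi(gB))$ are each at most $|S|\delta'$, and what is left is $\sum_{B\in\sQ}|\Theta(B)|\,\m(\varphi(B)\Delta\psi(B))\le|S|\sum_{B\in\sQ}\m(\varphi(B)\Delta\psi(B))$. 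By (i) this last sum equals $\sum_{B\in\sQ}\m(\tilde\varphi(B)\Delta\tilde\psi(B))$, so it suffices to prove
\[
\sum_{B\in\sQ}\m(\tilde\varphi(B)\Delta\tilde\psi(B))\le 2\delta+2\delta|L|+2|S|^2\mu(X\setminus X_{S^2,L})+|L|\,\rho_{\sC\vee{}_{S^2,L}\sP}(\tilde\varphi,\tilde\psi).
\]

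For this I would apply Lemma~\ref{L-almost inclusion} with $\sC\vee\sC'$ and $S^2$ in place of $\sC$ and $F$, which gives $(\sC\vee\sC')_{S^2}\overset{\tau_0}\subseteq\alg(\sC''_L)$ with $\tau_0=|S|^2\mu(X\setminus X_{S^2,L})$, hence a set $B_0\in\alg(\sC''_L)$ with $\mu(X\setminus B_0)\le\tau_0$ and $B\cap B_0\in\alg(\sC''_L)$ for every $B\in\sQ$, together with the explicit description of $B\cap B_0$ from that lemma's proof, in which each $G$-translate $gA_g$ of a $\sC''$-atom $A_g\subseteq X_{S^2,L}$ is rewritten as the $H$-translate $\kappa(g,A_g)A_g$ with $\kappa(g,A_g)\in L$. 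Since the sets $B\setminus B_0$ ($B\in\sQ$) are pairwise disjoint and $\tilde\varphi$ is a homomorphism, $\sum_{B\in\sQ}\m(\tilde\varphi(B\setminus B_0))=\m(\tilde\varphi(W\setminus B_0))\le\mu(X\setminus B_0)+\delta\le\tau_0+\delta$ by condition (2) for $\tilde\varphi$, and likewise for $\tilde\psi$; this reduces everything to bounding $\sum_{B\in\sQ}\m(\tilde\varphi(B\cap B_0)\Delta\tilde\psi(B\cap B_0))$. To estimate the latter, decompose each $B\cap B_0$ via the above rewriting, invoke condition (1) for $\tilde\varphi$ and $\tilde\psi$ to peel off the at most $|L|$ distinct translates $t\in L$ that occur—each peeling costing at most $\delta$ and replacing $\tilde\varphi(tD)$ by $\sigma_t\tilde\varphi(D)$ for the relevant $\sC''$-atoms $D$—and then split each such $D$ into its $\sC\vee{}_{S^2,L}\sP$-component and its $\sC'$-component. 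The $\sC'$-components contribute nothing: by (i) and (ii) one has $\tilde\varphi(W)=\tilde\psi(W)$ (both equal $\psi(W)$), hence $\tilde\varphi(E)=\tilde\psi(E)$ for each of the two sets $E\in\sC'$, and after translation by $t\in L$ the discrepancy is again absorbed into the peeling cost. What survives is controlled by $|L|\,\rho_{\sC\vee{}_{S^2,L}\sP}(\tilde\varphi,\tilde\psi)$, the factor $|L|$ reflecting the $|L|$-fold peeling; collecting the error terms yields the displayed inequality and hence the lemma.

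The main obstacle is this final step: one must organize the cocycle-rewriting and the translate-peeling so that each $\sC\vee{}_{S^2,L}\sP$-atom gets charged only $O(|L|)$ times instead of an uncontrolled number of times—here one needs to exploit carefully both the disjointness of the sets being translated and the fact that, within a single atom of $(\sC\vee\sC')_{S^2}$ contained in $B_0$, all translating elements producing a given $t\in L$ act on one and the same $\sC''$-atom—and so that the accumulated $\delta$-, $\delta'$- and $\mu(X\setminus X_{S^2,L})$-errors line up with the stated coefficients exactly. By comparison, the construction of $\sQ$, $\Theta$, $\sR$ and the passage through the $\pi$-side are routine.
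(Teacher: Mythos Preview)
Your construction of $\sQ,\Theta,\sR$ via the selector $\iota$ is correct and equivalent to the paper's (the paper picks, for each atom $A$ of $\sC\vee(\sC')_S$, a $g_A\in S$ with $A\subseteq g_A^{-1}W$; your $\iota$ is a pointwise version of the same choice). Your reduction on the $\pi$-side, giving $\rho_\sC(\varphi,\psi)\le 2|S|\delta'+|S|\rho_\sQ(\tilde\varphi,\tilde\psi)$, and your handling of the $B\setminus B_0$ part via Lemma~\ref{L-almost inclusion} also match the paper.

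The gap is exactly where you flag it: the ``translate-peeling'' in the last step is not an argument, and your sketch cannot be completed as written. Once you pass to atoms $A$ of $(\sC'')_L$ (or of $(\sC'')_{S^2}$), each $A$ is an \emph{intersection} $\bigcap_{t\in L} tD_t$ of $H$-translates of $\sC''$-atoms, not a union, and a given $D\in\sC''$ occurs as $D_t$ for many different atoms $A$. A naive telescoping $\m(\tilde\varphi(A)\Delta\tilde\psi(A))\le\sum_t\m(\tilde\varphi(tD_t)\Delta\tilde\psi(tD_t))$ therefore overcounts uncontrollably when summed over $A$, and no amount of ``disjointness of the sets being translated'' fixes this: the disjointness is among the $A$'s, not among the $tD_t$'s for fixed $t$.

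What the paper does instead is a one-line subadditivity: for any two partitions $\sC_1,\sC_2$ coarser than $(\sR\vee\sC'')_L$,
\[
\rho_{\sC_1\vee\sC_2}(\tilde\varphi,\tilde\psi)\ \le\ \rho_{\sC_1}(\tilde\varphi,\tilde\psi)+\rho_{\sC_2}(\tilde\varphi,\tilde\psi),
\]
proved by writing $(P\cap Q)\Delta(P'\cap Q')\subseteq((P\Delta P')\cap Q)\cup(P'\cap(Q\Delta Q'))$ and using that $\{\tilde\varphi(A_2)\}_{A_2\in\sC_2}$ and $\{\tilde\psi(A_1)\}_{A_1\in\sC_1}$ are partitions of $V$ (this is where the homomorphism hypothesis does the work). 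Iterating over $t\in L$ gives $\rho_{(\sC'')_L}\le\sum_{t\in L}\rho_{t\sC''}$; each term satisfies $\rho_{t\sC''}\le 2\delta+\rho_{\sC''}$ by the $\Hom_\mu$ condition for $\sigma_t$; and one more application of subadditivity with $\sC_1=\sC\vee{}_{S^2,L}\sP$, $\sC_2=\sC'$ plus your observation $\tilde\varphi(W)=\tilde\psi(W)$ kills the $\sC'$-term. This replaces your entire peeling paragraph and delivers the exact constants. In short: the missing idea is to compare partitions, not atoms, and use the homomorphism property to make $\rho$ subadditive under $\vee$.
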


\begin{proof}
We prove the existence of $\sQ$ first.
Note that $(\sC')_S$ is the partition of $X$ generated by $gW$ for $g\in S$.
Since $SW=X$, every member of $(\sC')_S$ is contained in $gW$ for some $g\in S$.
Each member $A$ of $\sC\vee (\sC')_S$
is contained in some member of $(\sC')_S$, and hence is contained in $g_A^{-1}W$ for some $g_A\in S$. We shall choose $g_A=e_G$ when $A\subseteq W$. Denote by $\sQ$ the partition of $W$ generated by $g_AA$ for $A\in \sC\vee (\sC')_S$. Then $\sQ\subseteq \alg((\sC\vee (\sC')_S)_S)$. Note that if $A$ is a member of $\sC\vee (\sC')_S$
then it can be written as $g_A^{-1}(g_AA)$ and hence is the disjoint union of sets of the form
$g_A^{-1}B$ with $B$ belonging to $\sQ$.
Thus we can find a map $\Theta: \sQ\rightarrow \cF(S)$ such that $e_G\in \Theta(B)$ for every $B\in \sQ$ and such that the sets $gB$ for $B\in \sQ$ and $g\in \Theta(B)$ form a partition $\sR$ of $X$ finer than $\sC\vee (\sC')_S$. Note that
\[
(\sC\vee (\sC')_S)_S\preceq ((\sC\vee \sC')_S)_S=(\sC\vee \sC')_{S^2}.
\]
Thus
\[
\sQ\subseteq \alg((\sC\vee (\sC')_S)_S)\subseteq \alg((\sC\vee \sC')_{S^2}).
\]

Now let $\sQ, \sR, \pi, \sigma, \delta, \delta', \varphi, \psi, \tilde{\varphi}, \tilde{\psi}$ be as in the lemma statement.
For any partitions $\sC_1$ and $\sC_2$ of $X$ coarser than $(\sR\vee\sC'')_L$, we have
\begin{align*}
\rho_{\sC_1\vee \sC_2}(\tilde{\varphi}, \tilde{\psi})&=\sum_{A_1\in \sC_1, \, A_2\in \sC_2}\m((\tilde{\varphi}(A_1)\cap \tilde{\varphi}(A_2))\Delta (\tilde{\psi}(A_1)\cap \tilde{\psi}(A_2)))\\
&\le \sum_{A_1\in \sC_1, \, A_2\in \sC_2}\m((\tilde{\varphi}(A_1)\cap \tilde{\varphi}(A_2))\Delta (\tilde{\psi}(A_1)\cap \tilde{\varphi}(A_2))\\
&\hspace*{15mm} \ +\sum_{A_1\in \sC_1, \, A_2\in \sC_2}\m((\tilde{\psi}(A_1)\cap \tilde{\varphi}(A_2))\Delta (\tilde{\psi}(A_1)\cap \tilde{\psi}(A_2)))\\
&=\rho_{\sC_1}(\tilde{\varphi}, \tilde{\psi})+\rho_{\sC_2}(\tilde{\varphi}, \tilde{\psi}).
\end{align*}

For each $t\in L$, we have
\begin{align} \label{E-Shannon sep}
\rho_{t\sC''}(\tilde{\varphi}, \tilde{\psi})&=\rho_{\sC''}(\tilde{\varphi}\circ t, \tilde{\psi}\circ t)\\
&\le  \rho_{\sC''}(\tilde{\varphi}\circ t, \sigma_t\circ \tilde{\varphi})+\rho_{\sC''}(\sigma_t\circ \tilde{\varphi}, \sigma_t\circ \tilde{\psi})+\rho_{\sC''}(\sigma_t\circ \tilde{\psi}, \tilde{\psi}\circ t)\nonumber \\
&\le 2\delta+\rho_{\sC''}(\tilde{\varphi}, \tilde{\psi}). \nonumber
\end{align}
By Lemma~\ref{L-almost inclusion} we have $(\sC\vee \sC')_{S^2}\overset{\tau}\subseteq \alg((\sC'')_L)$ for $\tau=\mu(X\setminus X_{S^2, L})|S^2|$.
Then $\sQ\overset{\tau}\subseteq \alg((\sC'')_L)$.
Thus
\begin{align*}
\rho_{\sQ}(\tilde{\varphi}, \tilde{\psi})
&\le \rho_{(\sC'')_L}(\tilde{\varphi}, \tilde{\psi})+2\tau+2\delta\\
&\le \sum_{t\in L}\rho_{t\sC''}(\tilde{\varphi}, \tilde{\psi})+2\tau+2\delta\\
&\overset{\eqref{E-Shannon sep}}\le 2\delta|L|+|L|\rho_{\sC''}(\tilde{\varphi}, \tilde{\psi})+2\tau+2\delta\\
&\le 2\delta|L|+|L|\rho_{\sC\vee {}_{S^2, L}\sP}(\tilde{\varphi}, \tilde{\psi})+|L|\rho_{\sC'}(\tilde{\varphi}, \tilde{\psi})+2\tau+2\delta\\
&=2\delta|L|+|L|\rho_{\sC\vee {}_{S^2, L}\sP}(\tilde{\varphi}, \tilde{\psi})+2\tau+2\delta.
\end{align*}

As in \eqref{E-Shannon sep}, for each $g\in S$ we have
\begin{align*}
\rho_{g\sQ}(\varphi, \psi)\le 2\delta'+\rho_{\sQ}(\varphi, \psi).
\end{align*}
Thus
\begin{align*}
\rho_{\sC}(\varphi, \psi)&\le \rho_{\sR}(\varphi, \psi)\\
&\le \sum_{g\in S}\rho_{g\sQ}(\varphi, \psi)\\
&\le 2\delta'|S|+ |S|\rho_{\sQ}(\varphi, \psi)\\
&=2\delta'|S|+ |S|\rho_{\sQ}(\tilde{\varphi}, \tilde{\psi})\\
&\le 2\delta'|S|+2\delta|S|\cdot |L|+|S|\cdot |L|\rho_{\sC\vee {}_{S^2, L}\sP}(\tilde{\varphi}, \tilde{\psi})+2|S|\tau+2|S|\delta\\
&\le 2(\delta+\delta')|S|+2\delta|S|\cdot |L|+2|S|^3\mu(X\setminus X_{S^2, L})+|S|\cdot |L|\rho_{\sC\vee {}_{S^2, L}\sP}(\tilde{\varphi}, \tilde{\psi}).\qedhere
\end{align*}
\end{proof}

\begin{lemma} \label{L-Shannon partition}
Let $S$ be a set in $\cF(G)$ containing $e_G$ and let $W$ be a Borel subset of $X$ such that $SW=X$. Let $\sQ$ be a finite Borel partition of $W$ and $\Theta$  a function $\sQ\rightarrow \cF(S)$ such that $e_G\in \Theta(B)$ for every $B\in \sQ$ and such that the sets $gB$ for $B\in \sQ$ and $g\in \Theta(B)$ form a partition of $X$. Also, let $\sD$ be a finite Borel partition of $X$. Then there is a finite Borel partition $\sQ_1$ of $W$
satisfying the following conditions:
\begin{enumerate}
\item $\sQ\preceq \sQ_1$,
\item defining $\Theta(D)=\Theta(B)$ for $D\in \sQ_1$ and $B\in \sQ$ satisfying $D\subseteq B$ and denoting by $\sR_1$ the partition of $X$ consisting of $gD$ for $D\in \sQ_1$ and $g\in \Theta(D)$, one has $\sD\preceq \sR_1$.
\end{enumerate}
\end{lemma}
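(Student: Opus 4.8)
The plan is to refine $\sQ$ by pulling the partition $\sD$ back along the finitely many translations indexed by $S$. First I would set $\sP^* = \bigvee_{g\in S} g^{-1}\sD$, a finite Borel partition of $X$, and take $\sQ_1$ to be the common refinement of $\sQ$ with the restriction of $\sP^*$ to $W$, that is, the partition of $W$ whose members are the nonempty sets $B\cap A$ with $B\in\sQ$ and $A\in\sP^*$ (all of these lie in $W$ since each $B$ does). Then $\sQ_1$ is a finite Borel partition of $W$ with $\sQ\preceq\sQ_1$, which is condition (i); moreover each $D\in\sQ_1$ is contained in a unique member $B$ of $\sQ$, so the assignment $\Theta(D)=\Theta(B)$ in the statement is well defined and satisfies $e_G\in\Theta(D)\subseteq S$.

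Next, write $\sR$ for the partition of $X$ given by the sets $gB$ with $B\in\sQ$ and $g\in\Theta(B)$, and $\sR_1$ for the collection of sets $gD$ with $D\in\sQ_1$ and $g\in\Theta(D)$. I would check that $\sR_1$ is again a partition of $X$, in fact one refining $\sR$: for a fixed member $gB$ of $\sR$, the sets $gD$ with $D\in\sQ_1$ and $D\subseteq B$ are precisely the images under the Borel automorphism $x\mapsto gx$ of the partition $\{D\in\sQ_1 : D\subseteq B\}$ of $B$, hence they partition $gB$, and ranging over the members of $\sR$ we conclude that the sets $gD$ partition $X$.

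Finally, for condition (ii), let $D\in\sQ_1$, let $B\in\sQ$ be the member containing it, and let $g\in\Theta(D)=\Theta(B)$, so that $g\in S$. Since $D$ is contained in a single member of $\sP^*$, it is a fortiori contained in a single member of the coarser partition $g^{-1}\sD$, say $D\subseteq g^{-1}D'$ with $D'\in\sD$, whence $gD\subseteq D'$. Thus every member of $\sR_1$ is contained in a member of $\sD$, i.e.\ $\sD\preceq\sR_1$, which completes the verification. I do not expect any substantial obstacle: this lemma is essentially a bookkeeping statement, and the only point needing a moment's care is that the translated refinement $\sR_1$ is a genuine partition of $X$, which as indicated follows at once from the corresponding property of $\sR$ together with the fact that translation by a fixed group element is a Borel automorphism of $X$.
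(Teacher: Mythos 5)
Your proof is correct and follows essentially the same route as the paper's one-line proof, which takes $\sQ_1$ to be the partition of $W$ generated by the sets $B\cap g^{-1}D$ for $B\in\sQ$, $g\in\Theta(B)$, $D\in\sD$. Your version refines slightly more (using all of $S$ rather than just $\Theta(B)$), which is harmless, and you supply the verification that the paper leaves to the reader.
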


\begin{proof}
Denote by $\sQ_1$ the partition of $W$ generated by the sets $B\cap g^{-1}D$ for $B\in \sQ$, $g\in \Theta(B)$, and $D\in \sD$.
It is easily checked that  $\sQ_1$ satisfies the conditions.
\end{proof}

\begin{lemma} \label{L-Shannon hom}
Let $\sC$ be a Borel finite partition of $X$, $L\in \overline{\cF}(H)$, and $0<\delta<1$.
Let $S\in \overline{\cF}(G)$  and let $W$ be a Borel subset of $X$ such that $SW=X$. Take an  $L^\bullet\in \overline{\cF}(H)$ such that
$\mu(X\setminus X_{S, L^\bullet})\le \delta/(20|S|)$ and an $\overline{L}\in \overline{\cF}(H)$ containing $L^\bullet$. Let $\sQ_1\preceq\sQ_2$ be finite Borel partitions of $W$ and
let $\Theta: \sQ_1\cup \sQ_2\rightarrow \cF(S)$ be such that
\begin{enumerate}
\item $e_G\in \Theta(B_2)=\Theta(B_1)$ for all $B_2\in \sQ_2$ and $B_1\in \sQ_1$ with $B_2\subseteq B_1$,

\item for $j=1,2 $ the sets $gB$ for $B\in \sQ_j$ and $g\in \Theta(B)$ form a partition $\sR_j$ of $X$,

\item $\sR_1\succeq \sD:=\sC_L\vee {}_{S, \overline{L}}\sP$, and

\item $\sR_2\succeq (\sR_1)_{L\overline{L}}$.
\end{enumerate}
Denote by $\sQ_1'$ the set of all $B\in \sQ_1$ satisfying $B\subseteq X_{S, L^\bullet}$ and by $\sQ_2'$ the set of all $B\in \sQ_2$ satisfying $B\subseteq X_{S, \overline{L}}$. Denote by $\Lambda$ the set consisting of all $(B, g)$ for $B\in \sQ_2'$ and $g\in \Theta(B)\setminus \{e_G\}$.
Take $0<\delta'\le \delta/(20|S|)$.
Let $\sA$ be a finite Borel partition of $X$ refining $\sR_2$. Let $V$ be a nonempty finite set and
$\varphi$ a homomorphism $\alg(\sA)\rightarrow \Pb_V$ such that $\sum_{A\in \sA}|\m(\varphi(A))-\mu(A)|\le \delta'$.
Let $0<\bar{\tau}\le \delta/(20|L^\bullet|)$.
Let $\sigma: H\rightarrow \Sym(V)$ be an $(L\cup L^\bullet, \bar{\tau})$-approximation for $H$.
Define a map $\tilde{\varphi}': \sR_2\rightarrow \Pb_V$ by
$\tilde{\varphi}'(B)=\varphi(B)$ for all $B\in \sQ_2$, $\tilde{\varphi}'(gB)=\emptyset$ for all $B\in \sQ_2\setminus \sQ_2'$ and $g\in \Theta(B)\setminus \{e_G\}$, and
\[
\tilde{\varphi}'(gB)=\sigma_{\kappa(g, B)}\varphi(B)
\]
for all $(B, g)\in \Lambda$, and  extend $\tilde{\varphi}'$ to a map $\alg(\sR_2)\rightarrow \Pb_V$ by setting $\tilde{\varphi}'(D)=\bigcup_{A\in \sR_2, A\subseteq D}\tilde{\varphi}'(A)$ for $D\in \alg(\sR_2)$.
Suppose that
\begin{align} \label{E-Shannon disjoint1}
\sum_{(B, g)\in \Lambda}\m(\tilde{\varphi}'(gB)\cap \varphi(W))\le \frac{\delta}{40},
\end{align}
\begin{align} \label{E-Shannon disjoint2}
\sum_{(B, g), (B', g')\in \Lambda, (B, g)\neq (B', g')}\m(\tilde{\varphi}'(gB)\cap \tilde{\varphi}'(g'B'))\le \frac{\delta}{40},
\end{align}
and
\begin{align} \label{E-Shannon hom}
\sum_{B\in \sQ_1'}\sum_{t\in LL^\bullet}\m(\tilde{\varphi}'(tB)\Delta \sigma_t\varphi(B))\le \frac{\delta}{20}.
\end{align}
Then there is a homomorphism $\tilde{\varphi}: \alg(\sR_2)\rightarrow \Pb_V$ such that
$\tilde{\varphi}(B)=\varphi(B)$ for every $B\in \sQ_2$ and $\sum_{A\in \sR_2}\m(\tilde{\varphi}(A)\Delta \tilde{\varphi}'(A))\le \delta/5$.
Furthermore, the restriction of any such $\tilde{\varphi}$ to $\alg(\sC_L)$ lies in  $\Hom_\mu(\sC, L, \delta, \sigma)$.
\end{lemma}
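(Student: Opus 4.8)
The plan is to split the statement into two independent parts: (a) the construction of one homomorphism $\tilde{\varphi}$ with the two displayed properties, and (b) the verification that the ``furthermore'' clause holds for an arbitrary homomorphism $\tilde{\varphi}\colon\alg(\sR_2)\to\Pb_V$ with $\tilde{\varphi}(B)=\varphi(B)$ for all $B\in\sQ_2$ and $\sum_{A\in\sR_2}\m(\tilde{\varphi}(A)\Delta\tilde{\varphi}'(A))\le\delta/5$. Three elementary facts will be used throughout: the sets $\varphi(B)$ for $B\in\sQ_2$ are pairwise disjoint with union $\varphi(W)$ (as $\varphi$ is a homomorphism); $\sum_{A\in\sR_2}|\m(\varphi(A))-\mu(A)|\le\delta'$, with the analogous bound $|S|\delta'$ for sums weighted by $|\Theta(B)|$; and $\m(\rho_1U\Delta\rho_2U)\le\rho_\Hamm(\rho_1,\rho_2)$ for $U\subseteq V$ and $\rho_1,\rho_2\in\Sym(V)$.

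\emph{Construction of $\tilde{\varphi}$.} I keep $\tilde{\varphi}=\varphi$ on the base atoms $B\in\sQ_2$ (already pairwise disjoint) and distribute $V\setminus\varphi(W)$ among the remaining atoms of $\sR_2$ by the usual greedy disjointification: enumerate the non-base atoms, replace each $\tilde{\varphi}'(A)$ by what remains of it after deleting $\varphi(W)$ and all sets already assigned, then throw the still-uncovered part of $V$ into the last non-base atom (if $W=X$ there are no non-base atoms and $\tilde{\varphi}=\varphi|_{\alg(\sR_2)}$ works, since $\varphi(W)=\varphi(X)=V$). The error from disjointification is bounded by \eqref{E-Shannon disjoint1} and \eqref{E-Shannon disjoint2} together with the fact that $\tilde{\varphi}'$ vanishes on the atoms $gB$ with $B\in\sQ_2\setminus\sQ_2'$; the measure of the uncovered remainder is bounded by comparing $\sum_{A\in\sR_2}\m(\tilde{\varphi}'(A))$ with $\sum_{A\in\sR_2}\mu(A)=1$, the gap being at most $(|S|-1)\mu(W\setminus X_{S,\overline{L}})+|S|\delta'$, where one uses $X_{S,L^\bullet}\subseteq X_{S,\overline{L}}$ (since $L^\bullet\subseteq\overline{L}$), $\mu(X\setminus X_{S,L^\bullet})\le\delta/(20|S|)$ and $\delta'\le\delta/(20|S|)$. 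Summing, $\sum_{A\in\sR_2}\m(\tilde{\varphi}(A)\Delta\tilde{\varphi}'(A))\le\delta/5$, and $\tilde{\varphi}$ extends uniquely to a Boolean homomorphism on $\alg(\sR_2)$.

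\emph{Membership in $\Hom_\mu(\sC,L,\delta,\sigma)$.} Fix such a $\tilde{\varphi}$; since $\sC_L\preceq\sD\preceq\sR_1\preceq\sR_2$, it restricts to a homomorphism $\alg(\sC_L)\to\Pb_V$. Condition~(2) in the definition follows from $\sum_{A\in\sC_L}|\m(\tilde{\varphi}(A))-\mu(A)|\le\sum_{A\in\sR_2}\bigl(\m(\tilde{\varphi}(A)\Delta\tilde{\varphi}'(A))+|\m(\tilde{\varphi}'(A))-\mu(A)|\bigr)\le\delta/5+\delta/10<\delta$, using for the second sum that the $\emptyset$-atoms contribute $\le(|S|-1)\mu(W\setminus X_{S,\overline{L}})$ and the base and $\Lambda$ atoms contribute $\le|S|\delta'$. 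For condition~(1), fix $t\in L$. As $\tilde{\varphi}$ commutes with disjoint unions and both $A$ and $tA$ are unions of members of $\sR_1$ for $A\in\sC$ (here $\sR_2\succeq(\sR_1)_{L\overline{L}}\succeq\bigvee_{t'\in L}t'\sR_1$, using $e_H\in\overline{L}$), we get $\sum_{A\in\sC}\m(\sigma_t\tilde{\varphi}(A)\Delta\tilde{\varphi}(tA))\le\sum_{Q\in\sR_1}\m(\sigma_t\tilde{\varphi}(Q)\Delta\tilde{\varphi}(tQ))$; replacing $\tilde{\varphi}$ by $\tilde{\varphi}'$ here costs at most $2\delta/5$, since each of the two error sums collapses (by disjointness of the members of $\sR_1$ and of their $t$-translates) to $\sum_{A\in\sR_2}\m(\tilde{\varphi}(A)\Delta\tilde{\varphi}'(A))\le\delta/5$. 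So it remains to bound $\sum_{Q\in\sR_1}\m(\sigma_t\tilde{\varphi}'(Q)\Delta\tilde{\varphi}'(tQ))$, with $Q=gB$, $B\in\sQ_1$, $g\in\Theta(B)$. For $B\notin\sQ_1'$, bound each term by $\m(\tilde{\varphi}'(Q))+\m(\tilde{\varphi}'(tQ))$ and use $\m(\tilde{\varphi}'(g''B''))\le\m(\varphi(B''))$ together with the fact that the relevant $\sR_2$-atoms, and their $t$-translates, lie in a set of $\mu$-measure $\le|S|\mu(W\setminus X_{S,L^\bullet})\le\delta/20$; this gives a total $\le\delta/5$ for the bad atoms. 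For $B\in\sQ_1'$ one has $B\subseteq X_{S,L^\bullet}$ (because $\sR_1\succeq{}_{S,\overline{L}}\sP\succeq{}_{S,L^\bullet}\sP$ and $\sR_1|_W=\sQ_1$), so $\kappa(g,\cdot)\equiv\kappa(g,B)\in L^\bullet$ on $B$; hence $gB=\kappa(g,B)B$, $tgB=(t\kappa(g,B))B$ for the $H$-action (the latter a union of $\sR_2$-atoms since $t\kappa(g,B)\in LL^\bullet\subseteq L\overline{L}$), $\tilde{\varphi}'(gB)=\sigma_{\kappa(g,B)}\varphi(B)$ for $g\neq e_G$, and $\tilde{\varphi}'(B)=\varphi(B)$. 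Then
\[
\m\bigl(\sigma_t\tilde{\varphi}'(gB)\Delta\tilde{\varphi}'(tgB)\bigr)\le\m\bigl(\sigma_t\sigma_{\kappa(g,B)}\varphi(B)\Delta\sigma_{t\kappa(g,B)}\varphi(B)\bigr)+\m\bigl(\sigma_{t\kappa(g,B)}\varphi(B)\Delta\tilde{\varphi}'((t\kappa(g,B))B)\bigr),
\]
with an obvious simplification when $g=e_G$. Summed over the $B\in\sQ_1'$ atoms, the second term is $\le\delta/20$ by \eqref{E-Shannon hom}, because for fixed $B$ the map $g\mapsto t\kappa(g,B)$ is injective (freeness of the $G$-action) with image in $LL^\bullet$.

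\emph{Main obstacle.} The crux is the first error term $\m\bigl(\sigma_t\sigma_{\kappa(g,B)}\varphi(B)\Delta\sigma_{t\kappa(g,B)}\varphi(B)\bigr)$: the only obvious bound is $\rho_\Hamm(\sigma_t\sigma_{\kappa(g,B)},\sigma_{t\kappa(g,B)})\le\bar{\tau}$ per atom, which is useless since $\bar{\tau}$ has not been taken small compared with the number of atoms of $\sR_1$. The key observation is that for a \emph{fixed} $s\in L^\bullet$ the set $D$ on which $\sigma_t\sigma_s$ and $\sigma_{ts}$ disagree is a single set with $\m(D)\le\bar{\tau}$, and $\sigma_t\sigma_s\varphi(B)\Delta\sigma_{ts}\varphi(B)\subseteq\sigma_{ts}(D)$; since the sets $\varphi(B)$ are pairwise disjoint, any point of $\sigma_{ts}(D)$ lies in this symmetric difference for at most two values of $B$, so $\sum_B\m\bigl(\sigma_t\sigma_s\varphi(B)\Delta\sigma_{ts}\varphi(B)\bigr)\le2\bar{\tau}$ for each $s$. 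Grouping the pairs $(B,g)$ with $B\in\sQ_1'$ by $s=\kappa(g,B)\in L^\bullet$ (legitimate as $g\mapsto\kappa(g,B)$ is injective on $\Theta(B)$) and summing over $s\in L^\bullet$ yields a total at most $2|L^\bullet|\bar{\tau}\le\delta/10$. Hence the $B\in\sQ_1'$ atoms contribute $\le\delta/10+\delta/20<\delta/5$, the $B\notin\sQ_1'$ atoms $\le\delta/5$, so $\sum_{Q\in\sR_1}\m(\sigma_t\tilde{\varphi}'(Q)\Delta\tilde{\varphi}'(tQ))<2\delta/5$ and therefore $\sum_{A\in\sC}\m(\sigma_t\tilde{\varphi}(A)\Delta\tilde{\varphi}(tA))<2\delta/5+2\delta/5=4\delta/5<\delta$, completing the verification.
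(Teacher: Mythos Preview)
Your proof is correct and follows essentially the same approach as the paper's: greedy disjointification to build $\tilde{\varphi}$, then verification of the $\Hom_\mu$ conditions by reducing to $\sR_1$, splitting into atoms with base in $\sQ_1'$ versus $\sQ_1\setminus\sQ_1'$, and exploiting the disjointness of the sets $\varphi(B)$ to control the $\sigma_t\sigma_s$ versus $\sigma_{ts}$ discrepancy (the paper records this as $\sum\le 2\sum_{t_1\in L^\bullet}\rho_\Hamm(\sigma_{tt_1},\sigma_t\sigma_{t_1})$ without further comment). One small slip: the containment $\sigma_t\sigma_s\varphi(B)\Delta\sigma_{ts}\varphi(B)\subseteq\sigma_{ts}(D)$ is not quite right---it should be $\subseteq\sigma_{ts}(\varphi(B)\cap D)\cup\sigma_t\sigma_s(\varphi(B)\cap D)$---but your bound $\sum_B\le 2\bar\tau$ is still correct via $\sum_B 2\m(\varphi(B)\cap D)\le 2\m(D)$.
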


\begin{proof}
Note that
\begin{align} \label{E-Shannon complement}
\mu\bigg(X\setminus \bigg(W\cup \bigcup_{(B, g)\in \Lambda}gB\bigg)\bigg)
&=\mu\bigg(\bigcup_{B\in \sQ_2\setminus \sQ_2'}\,\bigcup_{g\in \Theta(B)\setminus \{e_G\}}gB\bigg)\\
\nonumber &\le (|S|-1)\mu(X\setminus X_{S, L^\bullet})\le \frac{\delta}{20}.
\end{align}

We prove the existence of $\tilde{\varphi}$ first. If $W=X$, then we may take $\tilde{\varphi}(A)=\varphi(A)$ for all $A\in \alg(\sR_2)$.  Thus we may assume that $W\neq X$. Consider the case that $\Lambda$ is nonempty. List the elements of $\Lambda$ as $(B_1, g_1), \dots, (B_{|\Lambda|}, g_{|\Lambda|})$. We set
$ \tilde{\varphi}(B)=\varphi(B)$ for all $B\in \sQ_2$, $\tilde{\varphi}(gB)=\emptyset$ for all $B\in \sQ_2\setminus \sQ_2'$ and $g\in \Theta(B)\setminus \{e_G\}$,
\[
\tilde{\varphi}(g_kB_k)=\tilde{\varphi}'(g_kB_k)\setminus \bigg(\varphi(W)\cup \bigcup_{j=1}^{k-1}\tilde{\varphi}'(g_jB_j)\bigg)
\]
for all $1\le k<|\Lambda|$, and
\[
\tilde{\varphi}(g_{|\Lambda|}B_{|\Lambda|})=V\setminus \bigg(\varphi(W)\cup \bigcup_{j=1}^{|\Lambda|-1}\tilde{\varphi}'(g_jB_j)\bigg).
\]
Then the sets $\tilde{\varphi}(A)$ for $A\in \sR_2$ form a partition of $V$, and hence $\tilde{\varphi}$ extends uniquely to a homomorphism $\alg(\sR_2)\rightarrow \Pb_V$. We have
\begin{align*}
\m(\varphi(W))+\sum_{j=1}^{|\Lambda|}\m(\tilde{\varphi}'(g_jB_j))&=\m(\varphi(W))+\sum_{j=1}^{|\Lambda|}\m(\varphi(B_j))\\
&=\m(\varphi(W))+\sum_{g\in S\setminus \{e_G\}}\sum_{1\le j\le |\Lambda|, g_j=g}\m(\varphi(B_j))\\
&=\m(\varphi(W))+\sum_{g\in S\setminus \{e_G\}}\m\bigg(\varphi\bigg(\bigcup_{1\le j\le |\Lambda|, g_j=g}B_j\bigg)\bigg)\\
&\ge \mu(W)-\delta'+\sum_{g\in S\setminus \{e_G\}}\bigg(\mu\bigg(\bigcup_{1\le j\le |\Lambda|, g_j=g}B_j\bigg)-\delta'\bigg)\\
&=\mu\bigg(W\cup \bigcup_{(B, g)\in \Lambda}gB\bigg)-|S|\delta'\\
&\overset{\eqref{E-Shannon complement}}{\ge} 1-\frac{\delta}{20}-|S|\delta'\ge 1-\frac{\delta}{10},
\end{align*}
and hence
\begin{align*}
\lefteqn{\m\bigg(V\setminus \bigg(\varphi(W)\cup \bigcup_{j=1}^{|\Lambda|}\tilde{\varphi}'(g_jB_j)\bigg)\bigg)}\hspace*{25mm} \\
&=1-\m\bigg(\varphi(W)\cup \bigcup_{j=1}^{|\Lambda|}\tilde{\varphi}'(g_jB_j)\bigg)\\
&\le \frac{\delta}{10}+\m(\varphi(W))+\sum_{j=1}^{|\Lambda|}\m(\tilde{\varphi}'(g_jB_j))-\m\bigg(\varphi(W)\cup \bigcup_{j=1}^{|\Lambda|}\tilde{\varphi}'(g_jB_j)\bigg)\\
&=\frac{\delta}{10}+\sum_{k=1}^{|\Lambda|}\m\bigg(\tilde{\varphi}'(g_kB_k)\cap \bigg(\varphi(W)\cup \bigcup_{j=1}^{k-1}\tilde{\varphi}'(g_jB_j)\bigg)\bigg).
\end{align*}
Therefore
\begin{align*}
\sum_{A\in \sR_2}\m(\tilde{\varphi}(A)\Delta \tilde{\varphi}'(A))&=\sum_{k=1}^{|\Lambda|}\m(\tilde{\varphi}(g_kB_k)\Delta \tilde{\varphi}'(g_kB_k))\\
&=\m\bigg(V\setminus \bigg(\varphi(W)\cup \bigcup_{j=1}^{|\Lambda|}\tilde{\varphi}'(g_jB_j)\bigg)\bigg)\\
&\hspace*{15mm} \ +\sum_{k=1}^{|\Lambda|}\m\bigg(\tilde{\varphi}'(g_kB_k)\cap \bigg(\varphi(W)\cup \bigcup_{j=1}^{k-1}\tilde{\varphi}'(g_jB_j)\bigg)\bigg)\\
&\le \frac{\delta}{10}+2\sum_{k=1}^{|\Lambda|}\m\bigg(\tilde{\varphi}'(g_kB_k)\cap \bigg(\varphi(W)\cup \bigcup_{j=1}^{k-1}\tilde{\varphi}'(g_jB_j)\bigg)\bigg)\\
&\le \frac{\delta}{10}+2\sum_{k=1}^{|\Lambda|}\m\big(\tilde{\varphi}'(g_kB_k)\cap \varphi(W))\\&\hspace*{15mm} \ +2\sum_{1\le j<k\le |\Lambda|}\m(\tilde{\varphi}'(g_kB_k)\cap \tilde{\varphi}'(g_jB_j))\\
&\overset{\eqref{E-Shannon disjoint1}, \eqref{E-Shannon disjoint2}}\le \frac{\delta}{10}+\frac{\delta}{20}+\frac{\delta}{20}=\frac{\delta}{5}.
\end{align*}
This proves the existence of $\tilde{\varphi}$ when $\Lambda$ is nonempty. Next consider the case that $W\neq X$ and $\Lambda=\emptyset$. Choose a $B_0\in \sQ_2\setminus \sQ_2'$ and a $g_0\in \Phi(B)\setminus \{e_G\}$. Set $\tilde{\varphi}(B)=\varphi(B)$ for all $B\in \sQ_2$, $\tilde{\varphi}(g_0B_0)=V\setminus \varphi(W)$, and $\tilde{\varphi}(gB)=\emptyset$ for all $B\in \sQ_2\setminus \sQ_2'$ and $g\in \Phi(B)\setminus \{e_G\}$ satisfying $(B, g)\neq (B_0, g_0)$. Then
the sets $\tilde{\varphi}(A)$ for $A\in \sR_2$ form a partition of $V$, and hence $\tilde{\varphi}$ extends uniquely to a homomorphism $\alg(\sR_2)\rightarrow \Pb_V$. We have
\begin{align*}
\sum_{A\in \sR_2}\m(\tilde{\varphi}(A)\Delta \tilde{\varphi}'(A))
&=\m(\tilde{\varphi}(g_0B_0))\\
&=\m(V\setminus \varphi(W))\\
&\le \mu(X\setminus W)+\delta'\\
%&\le (|S|-1)\mu(X\setminus X_{S, L^\bullet})+\delta'\\
&\overset{\eqref{E-Shannon complement}}{\le} \frac{\delta}{20}+\frac{\delta}{20}<\frac{\delta}{5}.
\end{align*}
This proves the existence of $\tilde{\varphi}$.

Now let $\tilde{\varphi}$ be any homomorphism $\alg(\sR_2)\rightarrow \Pb_V$ satisfying
$\sum_{A\in \sR_2}\m(\tilde{\varphi}(A)\Delta \tilde{\varphi}'(A))\le \delta/5$.
We have
\begin{align} \label{E-Shannon hom20}
\lefteqn{\sum_{A\in \sR_2}|\m(\tilde{\varphi}'(A))-\mu(A)|}\hspace*{10mm} \\
&= \sum_{B\in \sQ_2, g\in \Theta(B)}|\m(\tilde{\varphi}'(gB))-\mu(gB)| \nonumber\\
&\le \sum_{B\in \sQ_2}|\m(\varphi(B))-\mu(B)|+\sum_{(B, g)\in \Lambda}|\m(\varphi(B))-\mu(B)|+\sum_{\substack{B\in \sQ_2\setminus \sQ_2' ,\\ g\in \Theta(B)\setminus \{e_G\}}}\mu(B) \nonumber\\
&\le \delta'+\sum_{g\in S\setminus \{e_G\}}\sum_{1\le j\le |\Lambda|, g_j=g}|\m(\varphi(B_j))-\mu(B_j)|+(|S|-1)\mu(X\setminus X_{S, L^\bullet})\nonumber\\
&\le |S|\delta'+\frac{\delta}{20}\le \frac{\delta}{20}+\frac{\delta}{20}=\frac{\delta}{10}, \nonumber
\end{align}
and hence
\begin{align*}
\sum_{A\in \sC_L}|\m(\tilde{\varphi}(A))-\mu(A)|&\le \sum_{A\in \sR_2}|\m(\tilde{\varphi}(A))-\mu(A)|\\
&\le \sum_{A\in \sR_2}|\m(\tilde{\varphi}(A))-\m(\tilde{\varphi}'(A))|+\sum_{A\in \sR_2}|\m(\tilde{\varphi}'(A))-\mu(A)|\\
&\overset{\eqref{E-Shannon hom20}}\le  \sum_{A\in \sR_2}\m(\tilde{\varphi}(A)\Delta \tilde{\varphi}'(A))+\frac{\delta}{10}\le \frac{\delta}{5}+\frac{\delta}{10}<\delta.
\end{align*}
Let $t\in L$.
Since $\sR_1$ refines $\sC_L$, it refines $\sC$ and $t\sC$. Thus
\begin{align} \label{E-Shannon hom9}
\sum_{A\in \sC}\m(\tilde{\varphi}(A)\Delta \tilde{\varphi}'(A))\le \sum_{A\in \sR_2}\m(\tilde{\varphi}(A)\Delta \tilde{\varphi}'(A))\le \frac{\delta}{5}
\end{align}
and
\begin{align} \label{E-Shannon hom10}
\sum_{A\in \sC}\m(\tilde{\varphi}(tA)\Delta \tilde{\varphi}'(tA))\le \sum_{A\in \sR_2}\m(\tilde{\varphi}(A)\Delta \tilde{\varphi}'(A))\le \frac{\delta}{5}.
\end{align}
For every $B\in \sQ_1'$ and $g\in \Theta(B)\setminus \{e_G\}$, we have
\begin{align} \label{E-Shannon disjoint4}
\tilde{\varphi}'(gB)=\tilde{\varphi}'\bigg(\bigcup_{B'\in \sQ_2', B'\subseteq B}gB'\bigg)
&=\bigcup_{B'\in \sQ_2', B'\subseteq B}\tilde{\varphi}'(gB')\\
&=\bigcup_{B'\in \sQ_2', B'\subseteq B}\sigma_{\kappa(g, B)}\varphi(B') \nonumber \\
&=\sigma_{\kappa(g, B)}\varphi(B). \nonumber
\end{align}
Note that
\begin{align} \label{E-Shannon hom21}
\lefteqn{\sum_{\substack{B\in \sQ_1\setminus \sQ_1',\\ g\in \Theta(B)}}\m(\tilde{\varphi}'(t gB)\Delta \sigma_t\tilde{\varphi}'(gB))}\hspace*{10mm}\\
&\le \sum_{\substack{B\in \sQ_1\setminus \sQ_1',\\ g\in \Theta(B)}}\m(\tilde{\varphi}'(t gB))+\sum_{B\in \sQ_1\setminus \sQ_1'}\m(\varphi(B))+\sum_{\substack{B\in \sQ_1\setminus \sQ_1', B\subseteq X_{S, \overline{L}},\\ g\in \Theta(B)\setminus \{e_G\}}}\m(\sigma_{\kappa(g, B)}\varphi(B))\nonumber \\
&\overset{\eqref{E-Shannon hom20}}\le \frac{\delta}{10}+\sum_{\substack{B\in \sQ_1\setminus \sQ_1',\\ g\in \Theta(B)}}\mu(t gB)+\sum_{B\in \sQ_1\setminus \sQ_1'}\m(\varphi(B))+\sum_{\substack{B\in \sQ_1\setminus \sQ_1', B\subseteq X_{S, \overline{L}},\\ g\in \Theta(B)\setminus \{e_G\}}}\m(\varphi(B))\nonumber\\
&\le \frac{\delta}{10}+|S|\mu(X\setminus X_{S, L^\bullet})+|S|\m(\varphi(X\setminus X_{S, L^\bullet}))\nonumber\\
&\le \frac{\delta}{10}+|S|\mu(X\setminus X_{S, L^\bullet})+|S|\mu(X\setminus X_{S, L^\bullet})+|S|\delta'\nonumber\\
&\le \frac{\delta}{10}+\frac{\delta}{20}+\frac{\delta}{20}+\frac{\delta}{20}=\frac{\delta}{4}. \nonumber
\end{align}
Thus
\begin{align*}
\lefteqn{\sum_{A\in \sC}\m(\tilde{\varphi}'(tA)\Delta \sigma_t\tilde{\varphi}'(A))}\hspace*{10mm}\\
&=\sum_{A\in \sC}\m\bigg(\bigg(\bigcup_{D\in \sR_1, D\subseteq A}\tilde{\varphi}'(tD)\bigg)\Delta \bigg(\bigcup_{D\in \sR_1, D\subseteq A}\sigma_t\tilde{\varphi}'(D)\bigg)\bigg) \\
&\le \sum_{A\in \sC}\sum_{D\in \sR_1, D\subseteq A}\m(\tilde{\varphi}'(tD)\Delta \sigma_t\tilde{\varphi}'(D))\\
&=\sum_{D\in \sR_1}\m(\tilde{\varphi}'(tD)\Delta \sigma_t\tilde{\varphi}'(D))\\
&= \sum_{B\in \sQ_1, g\in \Theta(B)}\m(\tilde{\varphi}'(tgB)\Delta \sigma_t\tilde{\varphi}'(gB))\\
&= \sum_{B\in \sQ_1', g\in \Theta(B)}\m(\tilde{\varphi}'(t\kappa(g, B)B)\Delta \sigma_t\tilde{\varphi}'(gB))+\sum_{\substack{B\in \sQ_1\setminus \sQ_1',\\ g\in \Theta(B)}}\m(\tilde{\varphi}'(t gB)\Delta \sigma_t\tilde{\varphi}'(gB))\\
&\overset{\eqref{E-Shannon hom21}}\le \sum_{B\in \sQ_1', g\in \Theta(B)}\m(\tilde{\varphi}'(t\kappa(g, B)B)\Delta \sigma_{t\kappa(g, B)}\varphi(B))\\
&\hspace*{15mm} \ +\sum_{B\in \sQ_1', g\in \Theta(B)}\m(\sigma_{t\kappa(g, B)}\varphi(B)\Delta \sigma_t\sigma_{\kappa(g, B)}\varphi(B))\\
&\hspace*{15mm} \ +\sum_{B\in \sQ_1', g\in \Theta(B)}\m(\sigma_{t}\sigma_{\kappa(g, B)}\varphi(B)\Delta \sigma_t\tilde{\varphi}'(gB))+\frac{\delta}{4}\\
&\overset{\eqref{E-Shannon disjoint4}}\le \sum_{B\in \sQ_1', t_1\in L L^\bullet}\m(\tilde{\varphi}'(t_1B)\Delta \sigma_{t_1}\varphi(B))+2\sum_{t_1\in L^\bullet}\rho_\Hamm(\sigma_{tt_1}, \sigma_t\sigma_{t_1})\\
&\hspace*{20mm} \ +\sum_{B\in \sQ_1'}\m(\sigma_t\sigma_{e_H}\varphi(B)\Delta \sigma_t\varphi(B))+\frac{\delta}{4}\\
&\overset{\eqref{E-Shannon hom}}\le \frac{\delta}{20}+2\bar{\tau}|L^\bullet|+\sum_{B\in \sQ_1'}\m(\sigma_{e_H}\varphi(B)\Delta \varphi(B))+\frac{\delta}{4}\\
&\le \frac{\delta}{20}+2\bar{\tau}|L^\bullet|+2\rho_{\Hamm}(\sigma_{e_H}, \id_V)+\frac{\delta}{4}\\
&= \frac{\delta}{20}+2\bar{\tau}|L^\bullet|+2\rho_{\Hamm}(\sigma_{e_H}\sigma_{e_H}, \sigma_{e_H})+\frac{\delta}{4}\\
&\le \frac{\delta}{20}+4\bar{\tau}|L^\bullet|+\frac{\delta}{4}\le \frac{\delta}{20}+\frac{\delta}{5}
+\frac{\delta}{4}=\frac{\delta}{2},
\end{align*}
and hence
\begin{align*}
\sum_{A\in \sC}\m(\tilde{\varphi}(tA)\Delta \sigma_t\tilde{\varphi}(A))
&\le  \sum_{A\in \sC}\m(\tilde{\varphi}(tA)\Delta \tilde{\varphi}'(tA))+ \sum_{A\in \sC}\m(\tilde{\varphi}'(tA)\Delta \sigma_t\tilde{\varphi}'(A))\\
&\hspace*{20mm} \ + \sum_{A\in \sC}\m(\sigma_t\tilde{\varphi}'(A)\Delta \sigma_t\tilde{\varphi}(A))\\
&\overset{\eqref{E-Shannon hom10}}\le \frac{\delta}{5}+\frac{\delta}{2}+\sum_{A\in \sC}\m(\tilde{\varphi}'(A)\Delta \tilde{\varphi}(A))\\
&\overset{\eqref{E-Shannon hom9}}\le \frac{\delta}{5}+\frac{\delta}{2}+\frac{\delta}{5}<\delta.
\end{align*}
Therefore the restriction of $\tilde{\varphi}$ to $\alg(\sC_L)$ lies in $\Hom_\mu(\sC, L, \delta, \sigma)$.
\end{proof}

\begin{lemma} \label{L-Shannon disjoint}
Let $\sC$ be a Borel finite partition of $X$, $L\in \overline{\cF}(H)$, and $0<\delta<1$.
Let $S\in \overline{\cF}(G)$ and $L^\bullet, \overline{L}\in \overline{\cF}(H)$ be such that
%$\mu(X\setminus X_{S, L^\bullet})\le \delta/(20|S|)$,
$L^\bullet\subseteq \overline{L}$ and
\[
\mu(X\setminus X_{S, \overline{L}})\le \gamma:=\delta/(200 |S|\cdot |LL^\bullet|).
\]
Set $L^\sharp:=\overline{L} L \overline{L}\in \overline{\cF}(H)$ and let $T\in \overline{\cF}(G)$ be such that
\[
\mu(X\setminus X_{L^\sharp, T})\le \kappa:=\delta/(10^4|\overline{L}|^2\cdot |LL^\bullet|).
\]
Let $C\in \Nb$ and $S_1, \dots, S_n\in \overline{\cF}(G)$, and let $W, \cV_1, \dots, \cV_n$ be Borel subsets of $X$ such that $SW=X$,
and for every $g\in T$ and $w\in W\cap g^{-1}W$ the points $w$ and $gw$ are connected by a path of length at most $C$ in which each edge is an $S_j$-edge with both endpoints in $\cV_j$ for some $1\le j\le n$.
Let $L^\dag\in \overline{\cF}(H)$ be such that
\[
\mu(X\setminus X_{\bigcup_{j=1}^nS_j, L^\dag})\le \zeta:=\kappa/(100|\textstyle\bigcup_{j=1}^nS_j|^C).
\]
Let $0\le \bar{\tau}\le \kappa/(10C|\bigcup_{j=1}^nS_j|^{C}\cdot |L^\dag|^{3C})$.
Let $F\in \overline{\cF}(G)$ be such that
$T\cup (\bigcup_{j=1}^nS_j)^C\subseteq F$ and  $\mu(X\setminus X_{(L^\sharp\cup (L^\dag)^C)^2, F})\le \min\{\zeta, \bar{\tau}/30\}$.
Denote by $\sD'$ the partition of $X$ generated by $W, \cV_1, \dots, \cV_n$.
Let $\sQ_1\preceq\sQ_2$ be finite Borel partitions of $W$ and let
$\Theta: \sQ_1\cup \sQ_2\rightarrow \cF(S)$ be such that
\begin{enumerate}
\item $e_G\in \Theta(B_2)=\Theta(B_1)$ for all $B_2\in \sQ_2$ and $B_1\in \sQ_1$ with $B_2\subseteq B_1$,

\item for $i=1,2 $ the sets $gB$ for $B\in \sQ_i$ and $g\in \Theta(B)$ form a partition $\sR_i$ of $X$,

\item $\sR_1\succeq \sD:=\sC_L\vee (\sD')_{T}\vee (\bigvee_{j=1}^n{}_{S_j, L^\dag}\sP)\vee {}_{\bigcup_{j=1}^nS_j, L^\dag}\sP\vee {}_{L^\sharp, T}\sP$, and

\item $\sR_2\succeq (\sR_1)_{L \overline{L}}$.
\end{enumerate}
Denote by $\sQ_1'$ the set of all $B\in \sQ_1$ satisfying $B\subseteq X_{S, L^\bullet}$, and denote by $\sQ_2'$ the set of all $B\in \sQ_2$ satisfying $B\subseteq X_{S, \overline{L}}$.
Denote by $\Lambda$ the set consisting of all $(B, g)$ for $B\in \sQ_2'$ and $g\in \Theta(B)\setminus \{e_G\}$.
Let $0<\tau'\le \min\{\kappa/(100|F|^3), \bar{\tau}/(60|F|^2)\}$ and  $0<\delta'\le \min\{\tau', \kappa/(10n|F|), \delta/(50 |LL^\bullet|(|S|+1))\}$.
Let $\sA$ be a finite Borel partition of $X$ refining $(\sR_2)_{(L^\sharp\cup (L^\dag)^C)^2}\vee {}_{(L^\sharp\cup (L^\dag)^C)^2, F}\sP\vee \sD_{(\bigcup_{j=1}^nS_j)^C}$.
Let $\pi: G\rightarrow \Sym(V)$
be an $(F, \tau')$-approximation for $G$
and
$\varphi, \varphi_0\in \Hom_\mu(\sA, F, \delta', \pi)$.
Let $\sigma: H\rightarrow \Sym(V)$ be an $(L^\sharp\cup (L^\dag)^C, \bar{\tau})$-approximation for $H$
such that $\rho_{\Hamm}(\sigma_t, \sigma'_t)\le \bar{\tau}/5$ for all $t\in (L^\sharp\cup (L^\dag)^C)^2$, where $\sigma'_t\in V^V$ for $t\in (L^\sharp\cup (L^\dag)^C)^2$ satisfies
\[
\sigma'_tv=\pi_{\lambda(t, A)}v
\]
for all $A\in {}_{(L^\sharp\cup (L^\dag)^C)^2, F}\overline{\sP}$ and $v\in \varphi_0(A)$.
Define $\tilde{\varphi}': \sR_2\rightarrow \Pb_V$ by
$\tilde{\varphi}'(B)=\varphi(B)$ for all $B\in \sQ_2$, $\tilde{\varphi}'(gB)=\emptyset$ for all $B\in \sQ_2\setminus \sQ_2'$ and $g\in \Theta(B)\setminus \{e_G\}$, and
\[
\tilde{\varphi}'(gB)=\sigma_{\kappa(g, B)}\varphi(B)
\]
for all $(B, g)\in \Lambda$, and  extend $\tilde{\varphi}'$ to a map $\alg(\sR_2)\rightarrow \Pb_V$ by setting $\tilde{\varphi}'(D)=\bigcup_{A\in \sR_2, A\subseteq D}\tilde{\varphi}'(A)$ for $D\in \alg(\sR_2)$.
Assume that $\varphi(W)=\varphi_0(W)$ and $\varphi(\cV_j\cap D)=\varphi_0(\cV_j\cap D)$ for all $1\le j\le n$ and $D\in {}_{S_j, L^\dag}\sP$.
Then \eqref{E-Shannon disjoint1}, \eqref{E-Shannon disjoint2}, and \eqref{E-Shannon hom} hold.
\end{lemma}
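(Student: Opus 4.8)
The plan is to prove all three estimates by showing that the twisted reconstruction $\tilde{\varphi}'$ agrees, up to small \emph{aggregate} error, with the ``honest'' images obtained by applying the reference microstate $\varphi_0$ to the relevant sets in $X$. Concretely, I would first establish the single auxiliary estimate that $\sum_{(B,g)\in\Lambda}\m(\tilde{\varphi}'(gB)\Delta\varphi_0(gB))$ is small, and then derive \eqref{E-Shannon disjoint1} and \eqref{E-Shannon disjoint2} from it together with two facts valid in $X$: the sets $gB$ for $(B,g)\in\Lambda$ are pairwise disjoint subsets of $X\setminus W$ (being distinct non-identity cells of $\sR_2$), so that the $\varphi_0(gB)$ are pairwise disjoint and disjoint from $\varphi_0(W)=\varphi(W)$; applying the homomorphism $\varphi_0$ then kills the intersections, leaving only the discrepancy terms $\m(\tilde{\varphi}'(gB)\Delta\varphi_0(gB))$. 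For \eqref{E-Shannon hom} I would likewise show that the $\tilde{\varphi}'$-pieces assembled inside $tB$ match $\sigma_t\varphi(B)$ cell by cell over $B\in\sQ_1'$, $t\in LL^\bullet$, after reducing (up to the controlled measure $\mu(X\setminus X_{L^\sharp,T})\le\kappa$) to the case $B\subseteq X_{L^\sharp,T}$, where $tB=g_tB$ with $g_t:=\lambda(t,B)\in T$.

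The auxiliary estimate is the heart of the matter, and I would carry it out in three moves, mirroring the techniques of Lemmas~\ref{L-Shannon approximation for group measure} and \ref{L-Shannon hom}. First, replace $\sigma_{\kappa(g,B)}$ by $\sigma'_{\kappa(g,B)}$; since $\kappa(g,B)\in\overline{L}\subseteq(L^\sharp\cup (L^\dag)^C)^2$ this costs at most $\rho_{\Hamm}(\sigma_{\kappa(g,B)},\sigma'_{\kappa(g,B)})\le\bar{\tau}/5$, which aggregates correctly because for each fixed value $s\in\overline{L}$ the pairs $(B,g)\in\Lambda$ with $\kappa(g,B)=s$ have pairwise distinct $B$ (by freeness), hence pairwise disjoint images $\varphi(B)$, so the total contribution is at most $|\overline{L}|\bar{\tau}/5$. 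Second, on each cell $\varphi_0(A)$ with $A\in{}_{(L^\sharp\cup (L^\dag)^C)^2, F}\overline{\sP}$ lying inside $B$, the map $\sigma'_{\kappa(g,B)}$ acts as $\pi_{\lambda(\kappa(g,B),A)}$, and the cocycle identity $\lambda(\kappa(g,x),x)=g$---together with the refinement hypotheses, which force $\kappa(g,\cdot)$ and hence $\lambda(\kappa(g,B),\cdot)$ to be constant on $B$---turns this into $\pi_g$. Third, since $g\in S\subseteq F$ and $\varphi_0\in\Hom_\mu(\sA,F,\delta',\pi)$, one has $\pi_g\varphi_0(B)\approx\varphi_0(gB)$, the errors aggregating over $g\in S$ to at most $|S|\delta'$ by the same disjointness mechanism applied at the level of $\sA$-cells. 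The remaining point is the passage from $\varphi(B)$ to $\varphi_0(B)$: here the hypotheses $\varphi(W)=\varphi_0(W)$ and $\varphi(\cV_j\cap D)=\varphi_0(\cV_j\cap D)$ enter, using that $\sR_1\succeq\sD\succeq(\sD')_T$ forces every $B\in\sQ_2$ into a single cell of the partition $\sD'$ generated by $W$ and the $\cV_j$, and that $\sR_1\succeq{}_{S_j,L^\dag}\sP$ and $\sR_1\succeq{}_{L^\sharp,T}\sP$ keep the relevant translates inside single cells of the corresponding ${}_{\bullet,\bullet}\sP$; the parts of the cells not covered by $W$ or by some $\cV_j\cap D$ contribute only to the error measures $\mu(X\setminus X_{\bullet,\bullet})$, and for \eqref{E-Shannon hom} the sparse-connectivity hypothesis is exactly what guarantees that $g_t\in T$, acting between $W$-points that return to $W$, factors as a product of at most $C$ edges inside the $\cV_j$, so that the corresponding composites of $\pi$'s act identically on $\varphi$ and on $\varphi_0$.

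The genuine difficulty, and where I expect the bulk of the write-up to go, is the bookkeeping: every sum in \eqref{E-Shannon disjoint1}, \eqref{E-Shannon disjoint2}, \eqref{E-Shannon hom} ranges over an a priori unbounded number of cells, so no estimate may be applied cellwise. The recurring device is to group the cells according to the value---lying in one of the bounded sets $F$, $\overline{L}$, $T$, $LL^\bullet$, or their products---of the group element relating a cell to the one it is being compared with, and then to use that distinct cells have disjoint $\varphi$- (resp. $\varphi_0$-, $\pi_g$-, $\sigma_s$-) images, so that each fixed group element absorbs at most one unit of microstate error: $\delta'$ from the approximate $\pi$-equivariance of $\varphi$ and $\varphi_0$, $\bar{\tau}/5$ from $\sigma$ versus $\sigma'$, and the measures $\mu(X\setminus X_{\bullet,\bullet})$ from the sets where the cocycles escape the prescribed finite sets. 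The nested hierarchy of constants $\gamma,\kappa,\zeta,\bar{\tau},\tau',\delta'$ in the statement was tuned precisely so that, after this aggregation, the totals fall below $\delta/40$, $\delta/40$, $\delta/20$ respectively; verifying that each of the dozen-or-so error terms is dominated by the bound allotted to it---and, in particular, tracking exactly which refinement hypothesis legitimizes each instance of ``the cocycle is constant on this cell'' and each replacement of $\varphi$ by $\varphi_0$---is the main obstacle, though it requires no idea beyond those already present in the earlier lemmas of this section.
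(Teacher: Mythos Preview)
Your auxiliary estimate—that $\sum_{(B,g)\in\Lambda}\m(\tilde{\varphi}'(gB)\Delta\varphi_0(gB))$ is small—is in general false, and the step you flag as ``the remaining point,'' the passage from $\varphi(B)$ to $\varphi_0(B)$, is exactly where it breaks. The hypotheses give $\varphi=\varphi_0$ only on the \emph{specific} sets $W$ and $\cV_j\cap D$, not on arbitrary Borel subsets of them; knowing that each $B\in\sQ_2$ lies in a single $\sD'$-cell tells you nothing about $\varphi(B)$ versus $\varphi_0(B)$, any more than $f([0,\tfrac12])=g([0,\tfrac12])$ forces $f([0,\tfrac14])=g([0,\tfrac14])$ for two measure automorphisms. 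Since $\sigma'$ is defined through $\varphi_0$, your second move—that $\sigma'_{\kappa(g,B)}$ acts as $\pi_g$ on cells $\varphi_0(A)$ with $A\subseteq B$—is correct but irrelevant: you are applying $\sigma'_{\kappa(g,B)}$ to $\varphi(B)$, whose points lie in $\varphi_0(A')$-cells with $A'$ bearing no relation to $B$, so the cocycle identity $\lambda(\kappa(g,x),x)=g$ is unavailable. Two homomorphisms in $\Hom_\mu(\sA,F,\delta',\pi)$ that agree on $W$ can partition $\varphi(W)$ along $\sQ_2$ in completely unrelated ways, making $\sum_{B\in\sQ_2}\m(\varphi(B)\Delta\varphi_0(B))$ of order $\mu(W)$.

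The paper never tries to match $\tilde{\varphi}'(gB)$ with $\varphi_0(gB)$. Instead, the sparse-connectivity factorization drives \emph{all three} estimates, not just \eqref{E-Shannon hom}. For each $t\in L^\sharp$ and each $B\in\sQ_2$ with $B\subseteq X_{L^\sharp,T}$, one has $g:=\lambda(t,B)\in T$; since $B\subseteq W$, any point of $W\cap tB=W\cap gB$ is joined to $B$ by an $(S_j)$-path through the $\cV_j$'s, and it is precisely at those stepping-stone sets $\cV_{k_j}\cap D_j$ that the hypothesis $\varphi=\varphi_0$ is invoked, allowing the replacement of each $\pi_{g_j}$ by $\sigma'_{t_j}$ and hence by $\sigma_{t_j}$. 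This produces the pointwise inclusion $V'\cap\varphi(W\cap tB)\subseteq\varphi(W)\cap\sigma_t\varphi(B)$, both sides in terms of $\varphi$ alone. The three estimates then follow from a measure-balance: summed over $B\in\sQ_2$, both sides have total $\m$-measure close to $\mu(W\cap tW)$ (the right side via $\varphi(W)=\varphi_0(W)$ and the fact—from Lemma~\ref{L-Shannon hom and approximation for group}—that $\varphi_0$ is itself a $\sigma$-microstate), so the inclusions are near-equalities, which is what makes the overlaps in \eqref{E-Shannon disjoint1} and \eqref{E-Shannon disjoint2} vanish. In short, the agreement on $\cV_j\cap D$ is not a substitute for $\varphi(B)\approx\varphi_0(B)$; it is the mechanism by which the path factorization transports between the $\varphi$- and $\varphi_0$-pictures one edge at a time.
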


\begin{proof}
Denote by $V_1$ the set of all $v\in V$ satisfying $\pi_{g_1g_2}v\neq \pi_{g_1}\pi_{g_2}v$ for some $g_1, g_2\in (\bigcup_{j=1}^nS_j)^C$. Then
$\m(V_1)\le |F|^2\tau'\le \bar{\tau}/60$. Denote by $V_2$ the set of all $v\in V$ satisfying $\sigma'_tv\neq \sigma_tv$ for some $t\in L^\dag$.
Then $\m(V_2)\le |L^\dag|\bar{\tau}/5$.
Set $V_3=\bigcup_{g\in (\bigcup_{j=1}^nS_j)^C}\pi_g(V_1\cup V_2)$. Then
\[
\m(V_3)\le \Big|\bigcup_{j=1}^nS_j \Big|^C(\m(V_1)+\m(V_2))\le 2\Big|\bigcup_{j=1}^nS_j\Big|^{C}\cdot \frac{|L^\dag|\bar{\tau}}{5}\le \frac{\kappa}{25}.
\]
Denote by $V_4$ the set of all $v\in V$ satisfying $\sigma_{t_1t_2}v\neq \sigma_{t_1}\sigma_{t_2}v$ for some $t_1, t_2\in (L^\dag)^C$. Then
$\m(V_4)\le |L^\dag|^{2C}\bar{\tau}$.
Set $V_5=\bigcup_{l=1}^C\bigcup_{t_1, \dots, t_l\in  L^\dag}\sigma_{t_l}\dots \sigma_{t_1}V_4$. Then
\[
\m(V_5)\le C|L^\dag|^C\m(V_4)\le C|L^\dag|^{3C}\bar{\tau}\le \frac{\kappa}{10}.
\]
Denote by $V_6$ the union of the sets $\varphi(g(\cV_j\cap D))\Delta \pi_g\varphi(\cV_j\cap D)$ for $g\in (\bigcup_{i=1}^nS_i)^C$,
$1\le j\le n$, and $D\in {}_{S_j, L^\dag}\sP$.
Then
$\m(V_6)\le n|F|\delta'\le \kappa/10$.
Also, denote by $V_7$ the union of the sets $\varphi(g A)\Delta \pi_g\varphi(A)$ for $A\in \sA$ and $g\in T$.
Then $\m(V_7)\le |T|\delta'\le \kappa/10$. Put
\[
V_8 = \bigcup_{g\in (\bigcup_{j=1}^nS_j)^C}\pi_g \varphi_0(X\setminus (X_{\bigcup_{1\le j\le n}S_j, L^\dag}\cap X_{(L^\sharp\cup (L^\dag)^C)^2, F})).
\]
Then
$\m(V_8)\le |(\bigcup_{j=1}^nS_j)^C|(2\zeta+\delta')\le \kappa/5$.
Set $V'=V\setminus (V_3\cup V_5\cup V_6\cup V_7\cup V_8)$. We then have
\[
\m(V\setminus V')\le \m(V_3)+\m(V_5)+\m(V_6)+\m(V_7)+\m(V_8)\le \kappa.
\]

Denote by $\sQ_2''$ the set of all $B\in \sQ_2$ satisfying $B\subseteq X_{L^\sharp, T}$.
Let $t\in L^\sharp$ and $B\in \sQ_2''$. We claim that
\begin{align} \label{E-Shannon disjoint3}
V'\cap \varphi(W\cap tB)\subseteq \varphi(W)\cap \sigma_t\varphi(B).
\end{align}
Set $g=\lambda(t, B)\in T$. Denote by $\Xi_{t, B}$ the set consisting of all tuples $\xi=(k_1, \dots, k_l, g_1, \dots, g_l, D_1, \dots, D_l)$ such that $1\le l\le C$, $1\le k_1, \dots, k_l\le n$, $g_j\in S_{k_j}$ for all $1\le j\le l$, $g=g_l\dots g_1$, and $D_j\in {}_{S_{k_j}, L^\dag}\sP$ for all $1\le j\le l$ and such that the set
\[
\Omega_\xi:=B\cap g^{-1}W\cap \bigcap_{j=1}^l((g_j\cdots g_1)^{-1}\cV_{k_j}\cap (g_{j-1}\dots g_1)^{-1}(\cV_{k_j}\cap D_j))\in \alg(\sA)
\]
consisting of all $x\in B\cap t^{-1}W=B\cap g^{-1}W$ satisfying
$g_j\dots g_1x\in \cV_{k_j}$ and $g_{j-1}\dots g_1x\in \cV_{k_j}\cap D_j$ for all $1\le j\le l$
is nonempty.
Then
\[
B\cap g^{-1}W=\bigcup_{\xi\in \Xi_{t, B}}\Omega_\xi.
\]
Denote by $\Xi_{t, B}'$ the set of all $\xi=(k_1, \dots, k_l, g_1, \dots, g_l, D_1, \dots, D_l)\in \Xi_{t, B}$ such that $D_j\in {}_{S_{k_j}, L^\dag}\overline{\sP}$ for all $1\le j\le l$.
For each $\xi=(k_1, \dots, k_l, g_1, \dots, g_l, D_1, \dots, D_l)\in \Xi_{t, B}$,
we have
\begin{align*}
\lefteqn{V'\cap \varphi(g\Omega_\xi)}\hspace*{10mm}\\
\hspace*{0mm} &=V'\cap \varphi\bigg(gB\cap W\cap \bigcap_{j=1}^l((g_l\cdots g_{j+1}\cV_{k_j})\cap (g_l\dots g_j(\cV_{k_j}\cap D_j)))\bigg)\\
&=V'\cap \varphi(gB\cap W)\cap \bigcap_{j=1}^l(\varphi(g_l\dots g_{j+1}\cV_{k_j})\cap \varphi(g_l\dots g_j(\cV_{k_j}\cap D_j)))\\
&=V'\cap \varphi(gB\cap W)\cap \bigcap_{j=1}^l(\pi_{g_l\dots g_{j+1}}\varphi(\cV_{k_j})\cap \pi_{g_l\dots g_j}\varphi(\cV_{k_j}\cap D_j))\\
&=V'\cap \varphi(gB\cap W)\cap \bigcap_{j=1}^l(\pi_{g_l\dots g_{j+1}}\varphi_0(\cV_{k_j})\cap \pi_{g_l\dots g_j}\varphi_0(\cV_{k_j}\cap D_j))\\
&= V'\cap \varphi(gB\cap W)\cap \bigcap_{j=1}^l(\pi_{g_l}\dots \pi_{g_{j+1}}\varphi_0(\cV_{k_j})\cap \pi_{g_l}\dots \pi_{g_{j+1}}\pi_{g_j}\varphi_0(\cV_{k_j}\cap D_j))\\
&=V'\cap \varphi(gB\cap W)\cap \bigcap_{j=1}^l\pi_{g_l}\dots \pi_{g_{j+1}}(\varphi_0(\cV_{k_j})\cap \pi_{g_j}\varphi_0(\cV_{k_j}\cap D_j)).
\end{align*}
If $\xi\in \Xi_{t, B}\setminus \Xi_{t, B}'$, then $D_j=X\setminus X_{S_{k_j}, L^\dag}\subseteq X\setminus X_{\bigcup_{i=1}^nS_i, L^\dag}$ for some $1\le j\le l$ and hence
$V'\cap \varphi(g\Omega_\xi)=\emptyset$. Thus
\begin{gather*}
V'\cap \varphi(W\cap tB)=V'\cap \varphi(gB\cap W)=V'\cap \varphi\bigg(\bigcup_{\xi\in \Xi_{t, B}}g\Omega_\xi\bigg) \hspace*{30mm}\\
\hspace*{35mm} \ =\bigcup_{\xi\in \Xi_{t, B}}(V'\cap \varphi(g\Omega_\xi))=\bigcup_{\xi\in \Xi_{t, B}'}(V'\cap \varphi(g\Omega_\xi)).
\end{gather*}
Now let $\xi\in \Xi_{t, B}'$.
Let $w\in V'\cap \varphi(g\Omega_\xi)$. For each $1\le j\le l$ one has $w=\pi_{g_l\dots g_{j+1}}w_j=\pi_{g_l}\dots \pi_{g_{j+1}}w_j$ for some $w_j\in \varphi_0(\cV_{k_j})\cap \pi_{g_j}\varphi_0(\cV_{k_j}\cap D_j)$. We can also find some $w_0\in V$ such that $w=\pi_{g_l}\dots \pi_{g_1}w_0=\pi_gw_0$.  Note that
\[
w_0=\pi_g^{-1}w\in \pi_g^{-1}(V'\cap \varphi(g\Omega_\xi))\subseteq \varphi(\Omega_\xi)\subseteq \varphi(B).
\]
We have $w_j=\pi_{g_j}w_{j-1}$ for all $1\le j\le l$, and hence
$w_{j-1}\in \varphi_0(\cV_{k_j}\cap D_j)\cap \varphi_0(X_{(L^\sharp\cup (L^\dag)^C)^2, F})$ for all $1\le j\le l$. Set $t_j=\kappa(g_j, D_j)\in L^\dag$ for $1\le j\le l$. Then
$w_j=\pi_{g_j}w_{j-1}=\sigma_{t_j}'w_{j-1}=\sigma_{t_j}w_{j-1}$ for all $1\le j\le l$. Therefore
$w=\sigma_{t_l}\dots \sigma_{t_1}w_0=\sigma_{t_l\dots t_1}w_0$. Take $x\in \Omega_\xi$. We have
\begin{align*}
t=\kappa(g, x)=\kappa(g_l\dots g_1, x)&=\kappa(g_l\dots g_2, g_1x)\kappa(g_1, x)\\
&=\kappa(g_l\dots g_2, g_1x)t_1=\cdots=t_l\dots t_1.
\end{align*}
Thus $w=\sigma_tw_0\in \sigma_t\varphi(B)$ and hence $V'\cap \varphi(g\Omega_\xi)\subseteq \varphi(W)\cap \sigma_t\varphi(B)$. Therefore
\begin{align*}
V'\cap \varphi(W\cap tB)=\bigcup_{\xi\in \Xi_{t, B}'}(V'\cap \varphi(g\Omega_\xi))\subseteq \varphi(W)\cap \sigma_t\varphi(B).
\end{align*}
This proves our claim \eqref{E-Shannon disjoint3}.

Now let $t\in L^\sharp$.
Applying Lemma~\ref{L-Shannon hom and approximation for group} with $\sC=\sR_2$, $L=L^\sharp\cup (L^\dag)^C$, $\tau=\bar{\tau}$, $F=F$, $\tau'=\tau'$, $\pi=\pi$, $\sigma=\sigma$, $\sigma'=\sigma'$, and $\varphi$ being the restriction of $\varphi_0$ to $\alg(((\sR_2)_{(L^\sharp\cup (L^\dag)^C)^2}\vee {}_{(L^\sharp\cup (L^\dag)^C)^2, F}\sP)_F)$, we have that
the restriction of
$\varphi_0$ to $\alg((\sR_2)_{L^\sharp\cup (L^\dag)^C})$ lies in $\Hom_\mu(\sR_2, L^\sharp\cup (L^\dag)^C, \bar{\tau}, \sigma)$.
Thus
\[
\m((\varphi_0(W)\cap \sigma_t\varphi_0(W))\Delta(\varphi_0(W)\cap \varphi_0(tW))\le \m(\sigma_t\varphi_0(W)\Delta \varphi_0(tW))\le \bar{\tau}.
\]
Therefore
\begin{align} \label{E-Shannon hom30}
\m(\varphi_0(W)\cap \sigma_t\varphi_0(W))&\le \m(\varphi_0(W)\cap \varphi_0(tW))+\bar{\tau}\\
&=\m(\varphi_0(W\cap tW))+\bar{\tau}\nonumber \\
&\le \mu(W\cap tW)+\delta'+\bar{\tau}\nonumber \\
&\le \m(\varphi(W\cap tW))+2\delta'+\bar{\tau}\nonumber \\
&\le \m(V'\cap \varphi(W\cap tW))+\kappa+2\delta'+\bar{\tau}. \nonumber
\end{align}
Also note that
\begin{align*}
\lefteqn{\m((V'\cap \varphi(W\cap tW))\setminus (V'\cap \varphi(W\cap t(W\cap X_{L^\sharp, T}))))}
\hspace*{30mm}\\
&\le \m(\varphi(W\cap tW)\setminus \varphi(W\cap t(W\cap X_{L^\sharp, T})))\\
&=\m(\varphi((W\cap tW)\setminus (W\cap t(W\cap X_{L^\sharp, T}))))\\
&\le \m(\varphi(t(X\setminus X_{L^\sharp, T})))\\
&\le \delta'+\mu(t(X\setminus X_{L^\sharp, T}))\le \delta'+\kappa,
\end{align*}
and hence
\begin{align} \label{E-Shannon hom32}
\m(V'\cap \varphi(W\cap tW))\le \m(V'\cap \varphi(W\cap t(W\cap X_{L^\sharp, T})))+\delta'+\kappa.
\end{align}
Then
\begin{align} \label{E-Shannon hom3}
\lefteqn{\sum_{B\in \sQ_2}\m((\varphi(W)\cap \sigma_t\varphi(B))\setminus \varphi(W\cap tB))}\hspace*{10mm}\\
&\le \sum_{B\in \sQ_2\setminus \sQ_2''}\m(\varphi(W)\cap \sigma_t\varphi(B))+\sum_{B\in \sQ_2''}\m((\varphi(W)\cap \sigma_t\varphi(B))\setminus \varphi(W\cap tB))\nonumber\\
&\le \sum_{B\in \sQ_2\setminus \sQ_2''}\m(\varphi(B))+\sum_{B\in \sQ_2''}\m((\varphi(W)\cap \sigma_t\varphi(B))\setminus (V'\cap \varphi(W\cap tB))) \nonumber\\
&\overset{\eqref{E-Shannon disjoint3}}=\m(\varphi(W\setminus X_{L^\sharp, T})) \nonumber\\
&\hspace*{15mm} \ +\m((\varphi(W)\cap \sigma_t\varphi(W\cap X_{L^\sharp, T}))\setminus (V'\cap \varphi(W\cap t(W\cap X_{L^\sharp, T})))) \nonumber\\
&\le \m(\varphi(X\setminus X_{L^\sharp, T}))+\m(\varphi(W)\cap \sigma_t\varphi(W))-\m(V'\cap \varphi(W\cap t(W\cap X_{L^\sharp, T}))) \nonumber\\
&\overset{\eqref{E-Shannon hom32}}\le \mu(X\setminus X_{L^\sharp, T})+\delta'+\m(\varphi_0(W)\cap \sigma_t\varphi_0(W))\nonumber \\
&\hspace*{45mm} \ -\m(V'\cap \varphi(W\cap tW))+\delta'+\kappa \nonumber\\
&\overset{\eqref{E-Shannon hom30}}\le \kappa+\delta'+\kappa+2\delta'+\bar{\tau}+\delta'+\kappa=3\kappa+4\delta'+\bar{\tau}\le 8\kappa, \nonumber
\end{align}
and
\begin{align} \label{E-Shannon hom4}
\lefteqn{\sum_{B\in \sQ_2}\m(\varphi(W\cap tB)\setminus (\varphi(W)\cap \sigma_t\varphi(B)))}\hspace*{12mm}\\
&\le \sum_{B\in \sQ_2\setminus \sQ_2''}\m(\varphi(W\cap tB))+\sum_{B\in \sQ_2''}\m(\varphi(W\cap tB)\setminus (\varphi(W)\cap \sigma_t\varphi(B)))\nonumber \\
&\overset{\eqref{E-Shannon disjoint3}}\le \m(\varphi(W\cap t(W\setminus X_{L^\sharp, T})))+\m(V\setminus V')\nonumber\\
&\le \m(\varphi(t(X\setminus X_{L^\sharp, T})))+\kappa \nonumber \\
&\le \mu(t(X\setminus X_{L^\sharp, T}))+\delta'+\kappa\le 2\kappa+\delta'. \nonumber
\end{align}

For each $(B, g)\in \Lambda$, we have $ \tilde{\varphi}'(gB)=\sigma_{\kappa(g, B)}\varphi(B)$, $\kappa(g, B)\in \overline{L}$, and  $W\cap \kappa(g, B)B=W\cap gB=\emptyset$.
Thus
\begin{align*}
\sum_{(B, g)\in \Lambda}\m(\tilde{\varphi}'(gB)\cap \varphi(W))&= \sum_{(B, g)\in \Lambda}\m(\sigma_{\kappa(g, B)}\varphi(B)\cap \varphi(W))\\
&\le \sum_{t\in \overline{L}}\sum_{\substack{B\in \sQ_2',\\ tB\cap W=\emptyset}}\m(\sigma_t\varphi(B)\cap \varphi(W))\\
&\overset{\eqref{E-Shannon hom3}}\le |\overline{L}|8\kappa\\
&\le \frac{\delta}{40},
\end{align*}
verifying \eqref{E-Shannon disjoint1}.

For distinct $(B_1, g_1), (B_2, g_2)$ in $\Lambda$, we have $\kappa(g_1, B_1)B_1\cap \kappa(g_2, B_2)B_2=\emptyset$, and hence $B_1\cap \kappa(g_1, B_1)^{-1}\kappa(g_2, B_2)B_2=\emptyset$. For any $t_1, t_2\in L\overline{L}$, we have
\begin{align} \label{E-Shannon Hamming}
\rho_\Hamm(\sigma_{t_1}^{-1}\sigma_{t_2}, \sigma_{t_1^{-1}t_2})&\le \rho_\Hamm(\sigma_{t_1}^{-1}, \sigma_{t_1^{-1}})+\rho_\Hamm(\sigma_{t_1^{-1}}\sigma_{t_2}, \sigma_{t_1^{-1}t_2})\\
&\le\rho_\Hamm(\id_V, \sigma_{t_1}\sigma_{t_1^{-1}})+\bar{\tau} \nonumber\\
&\le \rho_\Hamm(\id_V, \sigma_{e_H})+\rho_\Hamm(\sigma_{e_H}, \sigma_{t_1}\sigma_{t_1^{-1}})+\bar{\tau} \nonumber\\
&= \rho_\Hamm(\sigma_{e_H}, \sigma_{e_H}\sigma_{e_H})+\rho_\Hamm(\sigma_{e_H}, \sigma_{t_1}\sigma_{t_1^{-1}})+\bar{\tau} \nonumber\\
&\le 3\bar{\tau}. \nonumber
\end{align}
Thus
\begin{align*}
\lefteqn{\sum_{\substack{(B_1, g_1), (B_2, g_2)\in \Lambda,\\ (B_1, g_1)\neq (B_2, g_2)}}\m(\tilde{\varphi}'(g_1B_1)\cap \tilde{\varphi}'(g_2B_2))}\hspace*{12mm}\\
&=\sum_{\substack{(B_1, g_1), (B_2, g_2)\in \Lambda,\\ (B_1, g_1)\neq (B_2, g_2)}}\m(\sigma_{\kappa(g_1, B_1)}\varphi(B_1)\cap \sigma_{\kappa(g_2, B_2)}\varphi(B_2))\\
&\le \sum_{t_1, t_2\in \overline{L}}\sum_{\substack{B_1, B_2\in \sQ_2',\\ B_1\cap t_1^{-1}t_2B_2=\emptyset}}\m(\sigma_{t_1}\varphi(B_1)\cap \sigma_{t_2}\varphi(B_2))\\
&=\sum_{t_1, t_2\in \overline{L}}\sum_{\substack{B_1, B_2\in \sQ_2',\\ B_1\cap t_1^{-1}t_2B_2=\emptyset}}\m(\varphi(B_1)\cap \sigma_{t_1}^{-1}\sigma_{t_2}\varphi(B_2))\\
&\le \sum_{t_1, t_2\in \overline{L}}\Bigg[\rho_\Hamm(\sigma_{t_1}^{-1}\sigma_{t_2}, \sigma_{t_1^{-1}t_2})+\sum_{\substack{B_1, B_2\in \sQ_2',\\ B_1\cap t_1^{-1}t_2B_2=\emptyset}} \m(\varphi(B_1)\cap \sigma_{t_1^{-1}t_2}\varphi(B_2))\Bigg]\\
&\overset{\eqref{E-Shannon Hamming}}\le 3|\overline{L}|^2 \bar{\tau}+\sum_{t_1, t_2\in \overline{L}}\sum_{\substack{B_1, B_2\in \sQ_2', \\B_1\cap t_1^{-1}t_2B_2=\emptyset}} \m(\varphi(B_1)\cap \varphi(W)\cap \sigma_{t_1^{-1}t_2}\varphi(B_2))\\
&\overset{\eqref{E-Shannon hom3}}\le 3|\overline{L}|^2 \bar{\tau}+|\overline{L}|^28\kappa +\sum_{t_1, t_2\in \overline{L}}\sum_{\substack{B_1, B_2\in \sQ_2',\\ B_1\cap t_1^{-1}t_2B_2=\emptyset}} \m(\varphi(B_1)\cap \varphi(W\cap t_1^{-1}t_2B_2))\\
&=|\overline{L}|^2(3\bar{\tau}+8\kappa)\le \frac{\delta}{40},
\end{align*}
verifying \eqref{E-Shannon disjoint2}.

For each $t\in LL^\bullet$, we have
\begin{align} \label{E-Shannon hom40}
\lefteqn{\sum_{B\in \sQ_1'}\m(\sigma_t\varphi(B)\setminus \tilde{\varphi}'(tB))}\hspace*{10mm}\\
&=\sum_{B\in \sQ_1'}\m(\varphi(B)\setminus \sigma_t^{-1}\tilde{\varphi}'(tB))\nonumber \\
&=\sum_{B\in \sQ_1'}\big[\m(\varphi(B))-\m(\varphi(B)\cap \sigma_t^{-1}\tilde{\varphi}'(tB))\big]\nonumber\\
&=\sum_{B\in \sQ_1'}\bigg[\m(\varphi(B))-\m\bigg(\varphi(B)\cap  \sigma_t^{-1}\tilde{\varphi}'\bigg(\bigcup_{\substack{B_1\in \sQ_2, g_1\in \Theta(B_1),\\ g_1B_1\subseteq tB}}g_1B_1\bigg)\bigg)\bigg]\nonumber\\
&= \sum_{B\in \sQ_1'}\bigg[\m(\varphi(B))-\m\bigg(\bigcup_{\substack{B_1\in \sQ_2, g_1\in \Theta(B_1),\\ g_1B_1\subseteq tB}}(\varphi(B)\cap  \sigma_t^{-1}\tilde{\varphi}'(g_1B_1))\bigg)\bigg]\nonumber\\
&= \sum_{B\in \sQ_1'}\bigg[\m(\varphi(B))-\m\bigg(\bigcup_{B_1\in \sQ_2, B_1\subseteq tB}(\varphi(B)\cap  \sigma_t^{-1}\varphi(B_1))\bigg)\nonumber\\
&\hspace*{20mm} \ -\m\bigg(\bigcup_{\substack{(B_1, g_1)\in \Lambda,\\  g_1B_1\subseteq tB}}(\varphi(B)\cap \sigma_t^{-1}\sigma_{\kappa(g_1, B_1)}\varphi(B_1))\bigg)\bigg]\nonumber\\
&\le \sum_{B\in \sQ_1'}\bigg[\m(\varphi(B))-\m\bigg(\bigcup_{B_1\in \sQ_2', B_1\subseteq tB}(\varphi(B)\cap  \sigma_t^{-1}\sigma_{e_H}\varphi(B_1))\bigg)\nonumber\\
&\hspace*{20mm} \ -\m\bigg(\bigcup_{\substack{(B_1, g_1)\in \Lambda,\\  g_1B_1\subseteq tB}}(\varphi(B)\cap \sigma_t^{-1}\sigma_{\kappa(g_1, B_1)}\varphi(B_1))\bigg)\bigg]+\rho_\Hamm(\sigma_{e_H}, \id_V)\nonumber\\
&= \sum_{B\in \sQ_1'}\bigg[\m(\varphi(B))-\m\bigg(\bigcup_{t_1\in \overline{L}}\bigcup_{\substack{B_1\in \sQ_2',\\ \lambda(t_1, B_1)\in \Theta(B_1),\\ t_1B_1\subseteq tB}}(\varphi(B)\cap \varphi(W)\cap \sigma_t^{-1}\sigma_{t_1}\varphi(B_1))\bigg)\bigg]\nonumber\\
&\hspace*{20mm} \ +\rho_\Hamm(\sigma_{e_H}\sigma_{e_H}, \sigma_{e_H})\nonumber\\
&\le \sum_{B\in \sQ_1'}\bigg[\m(\varphi(B))-\m\bigg(\bigcup_{t_1\in \overline{L}}\bigcup_{\substack{B_1\in \sQ_2',\\ \lambda(t_1, B_1)\in \Theta(B_1),\\ t_1B_1\subseteq tB}}(\varphi(B)\cap \varphi(W)\cap \sigma_{t^{-1}t_1}\varphi(B_1))\bigg)\bigg]\nonumber\\
&\hspace*{20mm} \ +\sum_{t_1\in \overline{L}}\rho_\Hamm(\sigma_t^{-1}\sigma_{t_1}, \sigma_{t^{-1}t_1})+\bar{\tau}\nonumber\\
&\overset{\eqref{E-Shannon hom4}, \eqref{E-Shannon Hamming}}\le \sum_{B\in \sQ_1'}\bigg[\m(\varphi(B))-\m\bigg(\bigcup_{t_1\in \overline{L}}\bigcup_{\substack{B_1\in \sQ_2',\\ \lambda(t_1, B_1)\in \Theta(B_1),\\ t_1B_1\subseteq tB}}(\varphi(B)\cap \varphi(W\cap t^{-1}t_1B_1))\bigg)\bigg]\nonumber\\
&\hspace*{20mm} \ + |\overline{L}|(2\kappa+\delta')+3 |\overline{L}|\bar{\tau}+\bar{\tau}\nonumber\\
&\le \sum_{B\in \sQ_1'}\bigg[\m(\varphi(B))-\m\bigg(\bigcup_{\substack{t_1\in \overline{L}, B_1\in \sQ_2',\\ \lambda(t_1, B_1)\in \Theta(B_1),\\ t_1B_1\subseteq tB}}\varphi(B\cap t^{-1}t_1B_1)\bigg)\bigg]
+|\overline{L}|(2\kappa+\delta'+4\bar{\tau})\nonumber\\
&= \sum_{B\in \sQ_1'}\m\bigg(\varphi\bigg(B\cap \bigcup_{\substack{B_1\in \sQ_2\setminus \sQ_2',\\ g_1\in \Theta(B_1)}}t^{-1}g_1B_1\bigg)\bigg)+ |\overline{L}|(2\kappa+\delta'+4\bar{\tau})\nonumber\\
&\le \m\bigg(\varphi\bigg(\bigcup_{\substack{B_1\in \sQ_2\setminus \sQ_2',\\ g_1\in \Theta(B_1)}}t^{-1}g_1B_1\bigg)\bigg)+|\overline{L}|(2\kappa+\delta'+4\bar{\tau})\nonumber\\
&\le \mu\bigg(\bigcup_{\substack{B_1\in \sQ_2\setminus \sQ_2',\\ g_1\in \Theta(B_1)}}t^{-1}g_1B_1\bigg)+\delta'+|\overline{L}|(2\kappa+\delta'+4\bar{\tau})\nonumber\\
&\le  |S|\mu(X\setminus X_{S, \overline{L}})+|\overline{L}|(2\kappa+2\delta'+4\bar{\tau})\nonumber\\
&\le |S|\gamma +|\overline{L}|8\kappa\le \frac{\delta}{100|LL^\bullet|}, \nonumber
\end{align}
and also
\begin{align} \label{E-Shannon hom41}
 \sum_{B\in \sQ_1'}\m(\tilde{\varphi}'(tB))&\le \sum_{B\in \sQ_1'}\,\sum_{\substack{B_1\in \sQ_2, g_1\in \Theta(B_1),\\ g_1B_1\subseteq tB}}\m(\tilde{\varphi}'(g_1B_1)) \\
&\le \sum_{B\in \sQ_1'}\,\sum_{\substack{B_1\in \sQ_2, g_1\in \Theta(B_1),\\ g_1B_1\subseteq tB}}\m(\varphi(B_1))\nonumber \\
&\le |S|\delta'+\sum_{B\in \sQ_1'}\,\sum_{\substack{B_1\in \sQ_2, g_1\in \Theta(B_1),\\ g_1B_1\subseteq tB}}\mu(B_1)\nonumber \\
&=  |S|\delta'+\sum_{B\in \sQ_1'}\,\sum_{\substack{B_1\in \sQ_2, g_1\in \Theta(B_1),\\ g_1B_1\subseteq tB}}\mu(g_1B_1)\nonumber \\
&=  |S|\delta'+\sum_{B\in \sQ_1'}\mu(B)\nonumber \\
&\le (|S|+1)\delta'+\sum_{B\in \sQ_1'}\m(\varphi(B))\nonumber.
\end{align}
Thus
\begin{align*}
\lefteqn{\sum_{B\in \sQ_1'}\sum_{t\in LL^\bullet}\m(\sigma_t\varphi(B)\Delta \tilde{\varphi}'(tB))}\hspace*{15mm}\\
&=\sum_{B\in \sQ_1'}\sum_{t\in LL^\bullet}\big[2\m(\sigma_t\varphi(B)\setminus \tilde{\varphi}'(tB))+\m(\tilde{\varphi}'(tB))-\m(\sigma_t\varphi(B))\big]\\
&\overset{\eqref{E-Shannon hom40}}\le \frac{\delta}{50}+\sum_{B\in \sQ_1'}\sum_{t\in LL^\bullet}\big[\m(\tilde{\varphi}'(tB))-\m(\varphi(B))\big]\\
&\overset{\eqref{E-Shannon hom41}}\le \frac{\delta}{50}+|LL^\bullet|(|S|+1)\delta'\le \frac{\delta}{20},
\end{align*}
verifying \eqref{E-Shannon hom}.
\end{proof}

For $F\in \cF(G)$ and $L\in \cF(H)$ we denote by ${}_{F, L}\sW$ the countable Borel partition of $X$ consisting of $X_{F, L}$ and $P\in {}_F\sP\setminus {}_{F, L}\sP$.
For any $F\in \cF(G)$, the fact that ${}_F\sP$ has finite Shannon entropy means that for every $\varepsilon>0$ we can find a $\Gamma(F, \varepsilon)\in \cF(H)$ such that $H_\mu({}_{F, \Gamma(F, \varepsilon)}\sW)<\varepsilon$.

\begin{proof}[Proof of Theorem~\ref{T-Shannon SC to entropy}]
We may assume that $h_\mu (G\curvearrowright X) \neq-\infty$, which means
in particular that $G$ is sofic.

By Stirling's formula there is a function $\Psi: (0, 1)\rightarrow (0, 1)$ such that  for any nonempty finite set $V$ and any $0<\varepsilon<1$ the number of subsets $V'$ of $V$ satisfying $|V'|/|V|\le \Psi(\varepsilon)$ is at most $e^{\varepsilon|V|}$.

Let $\Pi = \{ \pi_k : G\to \Sym (V_k ) \}_{k=1}^\infty$ be a sofic approximation sequence for $G$ and $\overline{\sC}$ a finite Borel partition of $X$ with $h_{\Pi, \mu}(G\curvearrowright X, \overline{\sC})\ge 0$. Let $0<\varepsilon<1$.
To establish the theorem it is enough to show the existence of
a sofic approximation sequence $\Sigma$ for $H$ and a finite Borel partition $\sC^\flat$ of $X$ such that $h_{\Sigma, \mu}(H\curvearrowright X, \sC^\flat)\ge h_{\Pi, \mu}(G\curvearrowright X, \overline{\sC})-6\varepsilon$.

Enumerate the elements of $\cF(G)$ as $\bar{F}_1, \bar{F}_2, \dots$.
Take two decreasing sequences $1>\delta_1>\delta_2>\dots$ and $1>\tau_1>\tau_2>\dots$ converging to $0$,
an increasing sequence $\{L_k\}_{k\in \Nb}$ in $\overline{\cF}(H)$ with union $H$,
and an increasing sequence $\{\sU_k\}_{k\in \Nb}$ of finite Borel partitions of $X$ such that the algebra generated by $\bigcup_{k\in \Nb}\sU_k$ is dense in the Borel $\sigma$-algebra of $X$ with respect to the
pseudometric $d(A,B) = \mu (A\Delta B)$ (such a sequence can be found in view of the fact that every 
atomless standard probability space is measure isomorphic to the unit interval equipped with the Lebesgue measure
on its Borel $\sigma$-algebra \cite[Theorem~A.20]{KerLi16}).

We define $\Upsilon: \cF(G)\rightarrow [0, \infty)$
by $\Upsilon(\bar{F}_\ell)=2/(\Psi(\varepsilon)\Psi(\varepsilon/(2^\ell|{}_{\bar{F}_\ell, \Gamma(\bar{F}_\ell, \varepsilon/2^\ell)}\sP|)))$ for all $\ell\in \Nb$.

Since $G\curvearrowright (X, \mu)$ has property SC, there is an $S\in \overline{\cF}(G)$ such that for any $T_k\in \overline{\cF}(G)$ there are $n_k, C_k\in \Nb$,   $S_{k, 1}, \dots, S_{k, n_k}\in \overline{\cF}(G)$, and
Borel sets $W_k, \cV_{k, 1}, \dots, \cV_{k, n_k}\subseteq X$
satisfying the following conditions:
\begin{enumerate}
\item $\sum_{j=1}^{n_k} \Upsilon(S_{k,j})\mu(\cV_{k, j})\le 1$,
\item $SW_k=X$,
\item if $w_1, w_2\in W_k$ satisfy $gw_1=w_2$ for some $g\in T_k$ then $w_1$ and $w_2$ are connected by a path of length at most $C_k$ in which each edge is an $S_{k, j}$-edge with both endpoints in $\cV_{k, j}$ for some $1\le j\le n_k$.
\end{enumerate}
We may
assume that the sets $S_{k, 1}, \dots, S_{k, n_k}$ are distinct.
From (iii) we have the inclusion $W_k\subseteq \bigcup_{j=1}^{n_k}\cV_{k, j}$.
Take an $L^\flat\in \overline{\cF}(H)$ such that $\mu(X\setminus X_{S^2, L^\flat})\le \Psi(\varepsilon/|\overline{\sC}|)/(10|S|^3)$.

Fix $k\in \Nb$. Put $L_k^*=L_k\cup L^\flat\in \overline{\cF}(H)$. Take $0<\delta_k^*\le \min\{\delta_k, \Psi(\varepsilon/|\overline{\sC}|)/(10|S|\cdot |L_k^*|)\}$. Take also
an $L^\bullet_k\in \overline{\cF}(H)$ such that  $\mu(X\setminus X_{S, L_k^\bullet})\le \delta_k^*/(20|S|)$ and an $\overline{L}_k\in \overline{\cF}(H)$ such that $L_k^\bullet\subseteq \overline{L}_k$  and  $\mu(X\setminus X_{S, \overline{L}_k})\le \gamma_k:=\delta_k^*/(200 |S|\cdot |L_k^*L_k^\bullet|)$. Put
$L_k^\sharp=\overline{L}_k L_k^* \overline{L}_k\in \overline{\cF}(H)$. Choose a $T_k\in \overline{\cF}(G)$ such that $\mu(X\setminus X_{L_k^\sharp, T_k})\le \kappa_k:=\delta_k^*/(10^4|\overline{L}_k|^2\cdot |L_k^*L_k^\bullet|)$. Then we have $n_k, C_k$, $S_{k, j}$ for $1\le j\le n_k$, $W_k$, and $\cV_{k, j}$ for $1\le j\le n_k$ as above.

Say $S_{k, j}=\bar{F}_{\ell_{k, j}}$ for $1\le j\le n_k$.
Take an $L_k^\dag\in \overline{\cF}(H)$ such that $\bigcup_{j=1}^{n_k}\Gamma(S_{k, j}, \varepsilon/2^{\ell_{k, j}})\subseteq L_k^\dag$ and
\[
\mu\big(X\setminus X_{\bigcup_{j=1}^{n_k}S_{k, j}, L_k^\dag}\big)\le \zeta_k:=\kappa_k/(100|\textstyle\bigcup_{j=1}^{n_k}S_{k, j}|^{C_k}),
\]
and take $0< \bar{\tau}_k\le \min\{\tau_k, \kappa_k/(10C_k|\bigcup_{j=1}^{n_k}S_{k, j}|^{C_k}\cdot |L_k^\dag|^{3C_k})\}$.
Take an $F^\natural_k\in \overline{\cF}(G)$ such that $\mu(X\setminus X_{(L_k^\sharp\cup (L_k^\dag)^{C_k})^2, F^\natural_k})\le \bar{\tau}_k/30$, and
take an $F_k\in \overline{\cF}(G)$ containing $F^\natural_{k}\cup T_k\cup (\bigcup_{j=1}^{n_k}S_{k, j})^{C_k}\cup S$ such that $\mu(X\setminus X_{(L_k^\sharp\cup (L_k^\dag)^{C_k})^2, F_k})\le \min\{\zeta_k, \bar{\tau}_k/(30|F^\natural_{k}|)\}$.

Set $\sC'_k=\{W_k, X\setminus W_k\}$ and  $\sC''_k=\overline{\sC}\vee \sC_k'\vee {}_{S^2, L^\flat}\sP$.   Applying Lemma~\ref{L-Shannon separation} with
$\sC=\overline{\sC}$, $S=S$, $W=W_k$, and $L=L^\flat$ we find
 a finite  Borel partition $\sQ_k$ of $W_k$ contained in $\alg((\overline{\sC}\vee \sC_k')_{S^2})$ and a map $\Theta_k: \sQ_k\rightarrow \cF(S)$ such that $e_G\in \Theta_k(B)$ for every $B\in \sQ_k$
and the sets $gB$ for $B\in \sQ_k$ and $g\in \Theta_k(B)$ form a partition $\sR_k$ of $X$ finer than $\overline{\sC}$. Set $\sC_k^*=\sR_k\vee \sC_k''\vee \sU_k$.

Denote by $\sD'_k$ the partition of $X$ generated by $W_k, \cV_{k, 1}, \dots, \cV_{k, n_k}$.
Put
\[
\sD_k=(\sC_k^*)_{L_k^*}\vee (\sD'_k)_{T_k}\vee \bigg(\bigvee_{j=1}^{n_k}{}_{S_{k, j}, L_k^\dag}\sP\bigg)\vee {}_{\bigcup_{j=1}^{n_k}S_{k, j}, L_k^\dag}\sP\vee {}_{L_k^\sharp, T_k}\sP\vee {}_{S, \overline{L}_k}\sP.
\]
Applying Lemma~\ref{L-Shannon partition} first with $S=S$, $W=W_k$, $\Theta=\Theta_k$, $\sQ=\sQ_k$, and $\sD=\sD_k$ 
to get partitions $\sQ_{k, 1}$ and $\sR_{k, 1}$ and then
again with $S=S$, $W=W_k$, $\Theta=\Theta_k$, $\sQ=\sQ_{k, 1}$, and $\sD=(\sR_{k, 1})_{L_k^*\overline{L}_k}$, we find finite Borel partitions
$\sQ_{k, 1}$ and $\sQ_{k, 2}$ of $W_k$ such that $\sQ_k\preceq \sQ_{k, 1}\preceq \sQ_{k, 2}$,
and $\sD_k\preceq \sR_{k, 1}$ and $(\sR_{k, 1})_{L_k^*\overline{L}_k}\preceq \sR_{k, 2}$, where for $i=1, 2$ we set $\Theta_k(B_i)=\Theta_k(B)$ for $B\in \sQ$ and $B_i$ in $\sQ_{k, i}$  satisfying $B_i\subseteq B$, and $\sR_{k, i}$ 
is the partition of $X$ consisting of the sets $gB_i$ for $B_i\in \sQ_{k, i}$ and $g\in \Theta_k(B_i)$.
Denote by $\sQ_{k, 1}'$ the set of all $B\in \sQ_{k, 1}$ satisfying $B\subseteq X_{S, L_k^\bullet}$, and denote by $\sQ_{k, 2}'$ the set of all $B\in \sQ_{k, 2}$ satisfying $B\subseteq X_{S, \overline{L}_k}$.
Denote by $\Lambda_k$ the set consisting of the pairs $(B, g)$ for all $B\in \sQ_{k, 2}'$ and $g\in \Theta_k(B)\setminus \{e_G\}$.

Let $1\le j\le n_k$. Put $L_{k, j}^\dag=\Gamma(S_{k, j}, \varepsilon/2^{\ell_{k, j}})$. Since $L_{k,j}^\dag\subseteq L_k^\dag$, we have ${}_{S_{k, j}, L_{k, j}^\dag}\overline{\sP}\subseteq {}_{S_{k, j}, L_k^\dag}\overline{\sP}$. Denote by $\sW_{k, j}$ the finite partition of $X$ consisting of $X_{S_{k, j}, L_{k, j}^\dag}, X\setminus X_{S_{k, j}, L_k^\dag}$, and the elements of ${}_{S_{k, j}, L_k^\dag}\overline{\sP}\setminus {}_{S_{k, j}, L_{k, j}^\dag}\overline{\sP}$. Then $\sW_{k, j}$ is coarser than ${}_{S_{k, j}, L_{k, j}^\dag}\sW$ and ${}_{S_{k, j}, L_k^\dag}\sP$, and hence
\[
H_\mu(\sW_{k, j})\le H_\mu( {}_{S_{k, j}, L_{k, j}^\dag}\sW)\le \frac{\varepsilon}{2^{\ell_{k, j}}}.
\]
By \cite[Proposition 10.2]{KerLi16} we can find an $\eta_{k, j}>0$ such that for any large enough finite set $V$ the number of homomorphisms $\varphi: \alg(\sW_{k, j})\rightarrow \Pb_V$ satisfying
$\sum_{A\in \sW_{k, j}}|\m(\varphi(A))-\mu(A)|\le \eta_{k, j}$ is at most $e^{(H_\mu(\sW_{k, j})+\varepsilon/2^{\ell_{k, j}})|V|}\le e^{2(\varepsilon/2^{\ell_{k, j}})|V|}$.

Take
\[
0<\tau_k'\le \min\{\kappa_k/(100|F_k|^3), \bar{\tau}_k/(60|F_k|^2)\}
\]
and
\begin{gather*}
0<\delta_k'\le \min\Big\{\kappa_k/(10n_k|F_k|), \delta_k^*/(50 |L_k^*L_k^\bullet|(|S|+1)), \tau_k', \hspace*{40mm}\\
\hspace*{60mm} \Psi(\varepsilon)/2,  \min_{1\le j\le n_k}1/\Upsilon(S_{k, j}), \min_{1\le j\le n_k}\eta_{k, j}\Big\}.
\end{gather*}
Let $\sA_k$ be a finite Borel partition of $X$ refining
\[
(\sR_{k, 2})_{(L_k^\sharp\cup (L_k^\dag)^{C_k})^2}\vee {}_{(L_k^\sharp\cup (L_k^\dag)^{C_k})^2, F_k}\sP\vee (\sD_k)_{(\bigcup_{j=1}^{n_k}S_{k, j})^{C_k}}.
\]

Take $m_k\ge k$ large enough so that
\[
\frac{1}{|V_{m_k}|} \log| \Hom_\mu(\sA_k, F_k, \delta'_k, \pi_{m_k})|_{\overline{\sC}}
\ge h_{\Pi, \mu}(G\curvearrowright X, \overline{\sC})-\varepsilon
\]
and so that $\pi_{m_k}: G\rightarrow \Sym(V_{m_k})$ is an $(F_k, \tau_k')$-approximation for $G$.

Pick a subset $\Phi$ of $\Hom_\mu(\sA_k, F_k, \delta'_k, \pi_{m_k})$ such that
different elements of $\Phi$ have different restrictions to $\overline{\sC}$ and
\[
|\Phi|=|\Hom_\mu(\sA_k, F_k, \delta'_k, \pi_{m_k})|_{\overline{\sC}}.
\]
Take a maximal subset $\Phi_1$ of $\Phi$ which is $(\rho_{\overline{\sC}}, \Psi(\varepsilon/|\overline{\sC}|))$-separated 
in the sense that $\rho_{\overline{\sC}} (\varphi ,\psi ) > \Psi(\varepsilon/|\overline{\sC}|)$
for all distinct $\varphi ,\psi\in\Phi_1$. For each $\varphi\in \Phi_1$, if $\psi\in \Phi$ satisfies $\rho_{\overline{\sC}}(\varphi, \psi)\le \Psi(\varepsilon/|\overline{\sC}|)$ then for each $A\in \overline{\sC}$ the number of possibilities for $\psi(A)$ is at most $e^{\varepsilon|V_{m_k}|/|\overline{\sC}|}$ since $\m(\varphi(A)\Delta \psi(A))\le \Psi(\varepsilon/|\overline{\sC}|)$. Thus for each $\varphi\in \Phi_1$ the number of $\psi\in \Phi$ satisfying $\rho_{\overline{\sC}}(\varphi, \psi)\le \Psi(\varepsilon/|\overline{\sC}|)$ is at most $e^{\varepsilon|V_{m_k}|}$. Therefore
\[
|\Phi|\le |\Phi_1|e^{\varepsilon|V_{m_k}|}.
\]

For every $1\le j\le n_k$ and $\varphi\in \Phi_1$ we have
\[
\m(\varphi(\cV_{k, j}))\le \mu(\cV_{k, j})+\delta'_k\le \frac{1}{\Upsilon(S_{k, j})}+\delta_k'\le \frac{2}{\Upsilon(S_{k, j})}\le \Psi(\varepsilon/(2^{\ell_{k, j}}|{}_{S_{k, j}, L_{k, j}^\dag}\sP|)).
\]
Thus for every $1\le j\le n_k$ and $D\in {}_{S_{k, j}, L_{k, j}^\dag}\sP$, the number of possibilities for $\varphi(\cV_{k, j}\cap D)$ for $\varphi\in \Phi_1$ is at most $e^{\varepsilon|V_{m_k}|/(2^{\ell_{k, j}}|{}_{S_{k, j}, L_{k, j}^\dag}\sP|)}$.
We can then find
a subset $\Phi_2$ of $\Phi_1$ such that for every $1\le j\le n_k$ and $D\in {}_{S_{k, j}, L_{k, j}^\dag}\sP$ the set
$\varphi(\cV_{k, j}\cap D)$ is the same for all $\varphi\in \Phi_2$ and
\[
|\Phi_1|\le |\Phi_2|\prod_{j=1}^{n_k}e^{\varepsilon|V_{m_k}|/2^{\ell_{k, j}}}\le |\Phi_2|e^{\varepsilon|V_{m_k}|}.
\]
In particular, the sets $\varphi(\cV_{k,j})$ for $1\le j\le n_k$ are the same for all $\varphi\in \Phi_2$. 

Since $\Upsilon\ge 2/\Psi(\varepsilon)$, for each $\varphi\in \Phi_2$ we have
\[
\m(\varphi(W_k))\le \mu(W_k)+\delta'_k\le \sum_{j=1}^{n_k}\mu(\cV_{k, j})+\delta'_k\le \frac{\Psi(\varepsilon)}{2}+\delta_k'\le \Psi(\varepsilon).
\]
Thus the number of possibilities of $\varphi(W_k)$ for $\varphi\in \Phi_2$ is at most $e^{\varepsilon|V_{m_k}|}$. It follows that there is a subset $\Phi_3$ of $\Phi_2$ such that
$\varphi(W_k)$ is the same for all $\varphi\in \Phi_3$ and
\[
|\Phi_2|\le |\Phi_3|e^{\varepsilon|V_{m_k}|}.
\]

For each $1\le j\le n_k$, since $\delta_k'\le \eta_{k, j}$ the number of possibilities for $\varphi|_{\sW_{k, j}}$ for $\varphi\in \Phi_3$ is at most $e^{2(\varepsilon/2^{\ell_{k, j}})|V_{m_k}|}$. Thus there is a subset $\Phi_4$ of $\Phi_3$ such that for each $1\le j\le n_k$ the restriction
$\varphi|_{\sW_{k, j}}$ is the same for all $\varphi\in \Phi_4$ and
\[
|\Phi_3|\le |\Phi_4|\prod_{j=1}^{n_k} e^{2(\varepsilon/2^{\ell_{k, j}})|V_{m_k}|}\le |\Phi_4|e^{2\varepsilon|V_{m_k}|}.
\]
Note that the set $\varphi(W_k)$ is the same for all $\varphi\in \Phi_4$,
and for every $1\le j\le n_k$ and $D\in {}_{S_{k, j}, L_k^\dag}\sP$ the set
$\varphi(\cV_{k, j}\cap D)$ is the same for all $\varphi\in \Phi_4$.

Fix a $\varphi_0\in \Phi_4$. For each  $t\in (L^\sharp_k\cup (L^\dag_k)^{C_k})^2$ take a map
$\sigma'_{k,t}:V_{m_k}\rightarrow V_{m_k}$ such that
%for each  $t\in (L^\sharp_k\cup (L^\dag_k)^{C_k})^2$ one has
\[
\sigma'_{k, t}v=\pi_{m_k, \lambda(t, A)}v
\]
for all $A\in {}_{(L_k^\sharp\cup (L_k^\dag)^{C_k})^2, F_k}\overline{\sP}$  and $v\in \varphi_0(A)$.
Applying Lemma~\ref{L-Shannon approximation for group measure} with $L=L^\sharp_k\cup (L^\dag_k)^{C_k}$, $\tau=\bar{\tau}_k, F^\natural=F^\natural_k$, $F=F_k$, $\tau'=\tau'_k, \pi=\pi_{m_k}$,  $\sigma'=\sigma'_k$, and $\varphi$ being the restriction of $\varphi_0$ to $\alg(({}_{(L^\sharp_k\cup (L^\dag_k)^{C_k})^2, F_k}\sP)_{F_k})$, we find an $(L^\sharp_k\cup (L^\dag_k)^{C_k}, \bar{\tau}_k)$-approximation $\sigma_k: H\rightarrow \Sym(V_{m_k})$ for $H$
such that $\rho_{\Hamm}(\sigma_{k, t}, \sigma'_{k, t})\le \bar{\tau}_k/5$ for all $t\in (L^\sharp_k\cup (L^\dag_k)^{C_k})^2$.

Let $\varphi\in \Phi_4$.
Define $\tilde{\varphi}': \sR_{k, 2}\rightarrow \Pb_{V_{m_k}}$ by
$\tilde{\varphi}'(B)=\varphi(B)$ for all $B\in \sQ_{k, 2}$, $\tilde{\varphi}'(gB)=\emptyset$ for all $B\in \sQ_{k, 2}\setminus \sQ_{k, 2}'$ and $g\in \Theta_k(B)\setminus \{e_G\}$,  and
\[
\tilde{\varphi}'(gB)=\sigma_{k, \kappa(g, B)}\varphi(B)
\]
for all $(B, g)\in \Lambda_k$.  Extend $\tilde{\varphi}'$ to a map $\alg(\sR_{k,2})\rightarrow \Pb_{V_{m_k}}$ by setting $\tilde{\varphi}'(D)=\bigcup_{A\in \sR_{k, 2}, A\subseteq D}\tilde{\varphi}'(A)$ for $D\in \alg(\sR_{k, 2})$. Applying Lemma~\ref{L-Shannon disjoint}
with $\sC=\sC_k^*$, $L=L_k^*$, $\delta=\delta_k^*$, $S=S$, $L^\bullet=L_k^\bullet$, $\overline{L}=\overline{L}_k$, $T=T_k$, $C=C_k$, $n=n_k$, $S_j=S_{k, j}$, $W=W_k$, $\cV_j=\cV_{k, j}$, $L^\dag=L_k^\dag$, $\bar{\tau}=\bar{\tau}_k$, $F=F_k$, $\sQ_1=\sQ_{k, 1}$, $\sQ_2=\sQ_{k, 2}$, $\Theta=\Theta_k$, $\tau'=\tau'_k$, $\delta'=\delta'_k$, $\sA=\sA_k$, $\pi=\pi_{m_k}$, $\varphi=\varphi$, $\varphi_0=\varphi_0$, and $\sigma=\sigma_k$,
by our choice of $\Phi_4$ we have
\begin{align*}
\sum_{(B, g)\in \Lambda_k}\m(\tilde{\varphi}'(gB)\cap \varphi(W_k))\le \frac{\delta_k^*}{40},
\end{align*}
\begin{align*}
\sum_{\substack{(B, g), (B', g')\in \Lambda_k,\\ (B, g)\neq (B', g')}}\m(\tilde{\varphi}'(gB)\cap \tilde{\varphi}'(g'B'))\le \frac{\delta_k^*}{40},
\end{align*}
and
\begin{align*}
\sum_{B\in \sQ_{k, 1}'}\sum_{t\in L_k^*L_k^\bullet}\m(\tilde{\varphi}'(tB)\Delta \sigma_{k, t}\varphi(B))\le \frac{\delta_k^*}{20}.
\end{align*}
Applying Lemma~\ref{L-Shannon hom} with $\sC=\sC_k^*$, $L=L_k^*$, $\delta=\delta_k^*$, $S=S$, $W=W_k$, $L^\bullet=L_k^\bullet$, $\overline{L}=\overline{L}_k$, $\sQ_1=\sQ_{k, 1}$, $\sQ_2=\sQ_{k, 2}$, $\Theta=\Theta_k$, $\sA=\sA_k$, $\varphi=\varphi$, $\delta'=\delta_k'$, $\bar{\tau}=\bar{\tau}_k$, and $\sigma=\sigma_k$,
we find a homomorphism $\tilde{\varphi}: \alg(\sR_{k,2})\rightarrow \Pb_{V_{m_k}}$ such that
$\tilde{\varphi}(B)=\varphi(B)$ for every $B\in \sQ_{k, 2}$ and $\sum_{A\in \sR_{k, 2}}\m(\tilde{\varphi}(A)\Delta \tilde{\varphi}'(A))\le \delta_k^*/5$.
Furthermore, the restriction of $\tilde{\varphi}$ to $\alg((\sC_k^*)_{L_k^*})$ lies in $\Hom_\mu(\sC_k^*, L_k^*, \delta_k^*, \sigma_k)$.

For any distinct $\varphi, \psi$ in $\Phi_4$, applying Lemma~\ref{L-Shannon separation} with $\sC=\overline{\sC}$, $S=S$, $W=W_k$, $L=L^\flat$, $\sQ=\sQ_k$, $\pi=\pi_{m_k}$, $\sigma=\sigma_k$, $\delta=\delta_k^*$, and $\delta'=\delta'_k$ we have
\begin{align*}
\Psi(\varepsilon/|\overline{\sC}|)&\le \rho_{\overline{\sC}}(\varphi, \psi)\\
&\le 2(\delta_k^*+\delta'_k)|S|+2\delta_k^*|S|\cdot |L^\flat|+2|S|^3\mu(X\setminus X_{S^2, L^\flat})\\
&\hspace*{30mm} \ + |S|\cdot |L^\flat|\rho_{\overline{\sC}\vee {}_{S^2, L^\flat}\sP}(\tilde{\varphi}, \tilde{\psi})\\
&\le \frac{4}{5}\Psi(\varepsilon/|\overline{\sC}|)+|S|\cdot |L^\flat|\rho_{\overline{\sC}\vee {}_{S^2, L^\flat}\sP}(\tilde{\varphi}, \tilde{\psi}),
\end{align*}
and hence
\[
\rho_{\overline{\sC}\vee {}_{S^2, L^\flat}\sP}(\tilde{\varphi}, \tilde{\psi})\ge \varepsilon':=
\frac{\Psi(\varepsilon/|\overline{\sC}|)}{5|S|\cdot |L^\flat|}.
\]
Thus
\begin{align*}
\lefteqn{\frac{1}{|V_{m_k}|}\log |\Hom_\mu(\overline{\sC}\vee {}_{S^2, L^\flat}\sP\vee \sU_k, L_k, \delta_k, \sigma_k)|_{\overline{\sC}\vee {}_{S^2, L^\flat}\sP}}\hspace*{30mm}\\
&\ge
\frac{1}{|V_{m_k}|}\log |\Hom_\mu(\sC_k^*, L_k^*, \delta_k^*, \sigma_k)|_{\overline{\sC}\vee {}_{S^2, L^\flat}\sP}\\
&\ge \frac{1}{|V_{m_k}|}\log |\Phi_4|\\
&\ge \frac{1}{|V_{m_k}|}\log |\Phi|-5\varepsilon\\
&\ge h_{\Pi, \mu}(G\curvearrowright X, \overline{\sC})-6\varepsilon.
\end{align*}

Since $L_k\subseteq L_k^\sharp$ and $\bar{\tau}_k\le \tau_k$ for every $k\in \Nb$, the sequence
$\Sigma=\{\sigma_k\}_{k\in \Nb}$ is a sofic approximation sequence for $H$.
Set $\sC^\flat=\overline{\sC}\vee {}_{S^2, L^\flat}\sP$.
For any finite partition $\sU$ of $X$ contained in the algebra generated by $\bigcup_{k\in \Nb}\sU_k$, any $L\in \cF(H)$ containing $e_H$, and any $\delta>0$, we have $\sU\preceq \sU_k$, $L\subseteq L_k$, and $\delta>\delta_k$ for all large enough $k$, and hence
\begin{align*}
\lefteqn{h_{\Sigma, \mu}(\sC^\flat, \sC^\flat\vee \sU, L, \delta)}\hspace*{15mm}\\
&= \varlimsup_{k\to \infty}\frac{1}{|V_{m_k}|}\log |\Hom_\mu(\sC^\flat\vee \sU, L, \delta, \sigma_k)|_{\sC^\flat} \\
&\ge \varlimsup_{k\to \infty}\frac{1}{|V_{m_k}|}\log |\Hom_\mu(\overline{\sC}\vee {}_{S^2, L^\flat}\sP\vee \sU_k, L_k, \delta_k, \sigma_k)|_{\overline{\sC}\vee {}_{S^2, L^\flat}\sP}\\
&\ge h_{\Pi, \mu}(G\curvearrowright X, \overline{\sC})-6\varepsilon.
\end{align*}
Since the algebra generated by $\bigcup_{k\in \Nb}\sU_k$ is dense in the Borel $\sigma$-algebra of $X$ with respect to the
pseudometric $d(A,B) = \mu (A\Delta B)$, by \cite[Lemma 10.13]{KerLi16} we conclude that
\begin{gather*}
h_{\Sigma, \mu}(H\curvearrowright X, \sC^\flat)\ge h_{\Pi, \mu}(G\curvearrowright X, \overline{\sC})-6\varepsilon ,
\end{gather*}
as desired.
\end{proof}

\begin{remark}
Theorem~\ref{T-Shannon SC to entropy}, and hence also Theorem~\ref{T-measure}, actually
uses only that $\kappa$ is Shannon, not that $\lambda$ is Shannon.
\end{remark}

\end{document}